\newcommand{\seq}[1]{\left \{ {#1} \right \}}
\newtheorem{prop}{Proposition}
\newtheorem{example}{Example}
\def\presuper#1#2%
\title{Koopman Theory and Linear Approximation Spaces}
\author{Andrew J. Kurdila \thanks{W.M. Johnson Professor of Mechanical Engineering, Virginia Tech}\and Parag S. Bobade \thanks{Postdoctoral Research Fellow, Aerospace Engineering, University of Michigan}} 
\begin{document}

\maketitle

% REQUIRED

\begin{abstract}
Koopman theory studies dynamical systems in terms of operator theoretic properties 
 of the Perron-Frobenius and Koopman operators $\mathcal{P}$ and $\mathcal{U}$, respectively. This paper derives rates of convergence for   estimates of these operators, corresponding to generally nonlinear dynamical systems,  under a variety of situations. We also derive convergence rates for some probability measures associated with  these operators, as well as for specific data-driven algorithms constructed from them.    This paper  introduces  a suitably general  class of priors, which describes the information available for constructing approximations, one  that facilitates the development of error estimates in many applications of interest. These priors are defined in terms of the action of $\mathcal{P}$ or $\mathcal{U}$ on certain  linear  approximation spaces. For cases where it is feasible to obtain eigenfunctions of either the Perron-Frobenius or Koopman operator, priors are defined in terms of the {\em spectral approximation space} $A^{r,2}_{\lambda}(H):=A^{r,2}_{\lambda(T)}(H)\subset H$ with $r>0$ the approximation rate, $T$ a given self-adjoint and compact operator on $H$, $\lambda:=\lambda(T)$ the eigenvalues of $T$,   and $H$ a Hilbert space of functions over the domain $\Omega\subseteq \mathbb{R}^d$.     More generally, we utilize priors expressed in terms of the {\em Banach spaces of linear approximation}  $A^{r,q}(X):=A^{r,q}(X,\left \{A_j \right\}_{j\in \mathbb{N}_0})\subset X$ with $X$ a  Banach  space of functions on the domain $\Omega$, $r>0$ a measure of  the  approximation rate (or smoothness),  $\left \{A_j \right\}_{j\in \mathbb{N}_0}$ a sequence of approximant spaces, and   $1\leq q \leq \infty$.  
The most common cases studied in the paper choose the Hilbert space $H$ to be $U:=L^2_\mu(\Omega)$ with $\mu$ a measure or $V\subset U$ a reproducing kernel Hilbert space (RKHS).  This paper characterizes the rates of convergence of approximations of $\mathcal{P}$ or $\mathcal{U}$  that are generated by  finite dimensional bases of wavelets, multiwavelets, and eigenfunctions,  as well as approaches that use samples of the input and output of the system in conjunction with these bases.  Since the wavelets and multiwavelets are  selected to reproduce piecewise polynomials of a certain order,  the results of this analysis also can be used to  understand the best attainable approximation rates that are achievable by  common spline or finite element approximations in (Petrov-)Galerkin approximations of the Frobenius-Perron or Koopman operators.   When the estimates of the operators are generated by samples, it is shown that the error in approximation of the Perron-Frobenius  or Koopman  operators can be decomposed into two parts, the approximation and sample errors.  This  result emphasizes that  sample-based estimates of Perron-Frobenius and Koopman operators are subject to  the well-known trade-off  between the  bias and variance that contribut to the error, a balance  that also features in  nonlinear regression and  statistical learning theory.   
\end{abstract}

%\tableofcontents
%
%\positiontextbox[fill=white]{14cm}{1cm}{\Large \underline{Draft} Manuscript}
\begin{keywords}
	Dynamical Systems, Koopman Theory, Approximation Spaces, Wavelets, Distributed Learning Theory.
\end{keywords}

\begin{AMS}
	37M99
\end{AMS}
\section{Introduction}
Data-driven approaches for the study of dynamical systems have flourished over the past decade. Applications using such methodologies have arisen in  several fields of science and engineering, spanning  biology \cite{Shlizerman2013Olfactory}, neurosciences \cite{BinBrunton2016Extracting}, chemistry \cite{Kevrekidis2004Equationfree}, solid mechanics \cite{Ortiz2016Data}, and fluid mechanics \cite{Noack2016Machine, Kutz2017Deep}. The underlying philosophy of building accurate, yet simple,  models  that predict the output of a dynamical system based on  observations from experiments has been a central theme of science and mathematics. While there is an ongoing debate about  the fidelity and robustness of such data-driven models as compared to those derived from the first principles of physics, it undeniable that data-driven approaches  have been demonstrated to be highly efficient and effective. This success is due in part  to advances in computational technologies, especially with respect to  data-driven modeling.

The theory underlying many of these data-driven algorithms is referred to as Koopman theory, and has emerged as an important discipline in the study of dynamical systems. The approach studies  dynamical systems via  an operator theoretic framework, and it dates back to the papers by  Koopman in \cite{koopman1931,koopman1932}. It has also found use as a theoretical tool for the study of Markov chains since the 1960's, see  \cite{meyn} for an overall account of the operators induced by transition probability kernels. The theoretic background of Koopman theory can be found in treatises on operator or ergodic theory such as in \cite{ergodicnagel}  or in references on Markov semigroups \cite{markovsemigroups}.   Most recently, it  has been popularized for data-driven analysis of high dimensional dynamical systems in  the seminal work of Mezic in \cite{Mezic2005spectal,Mezic2004comparison,MezicAFR,MezicKoopmanism}. 

This paper examines the approximation of the pair of  dual or adjoint 
Perron-Frobenius and Koopman operators $\mathcal{P}$ and $\mathcal{U}$,  respectively, in the context of the study of dynamical systems.   
A large collection of papers over the past few years have studied the use of Perron-Frobenius and Koopman operators to analyze evolutions in discrete and continuous time,   \cite{Mezic2017ddsa}, \cite{Giannikis2017}, and \cite{Das2018}. Just over the past two years,  studies such as  \cite{Korda2017Convergence,Nitin2018,klus2016,klusTO,klusdmd,mezic2017kosda}, and the references cited  therein,  provide careful analyses  that establish the convergence of  approximations in many situations.  Results are stated for discrete or continuous  systems, as well as deterministic or stochastic flows. Both semiflows and flows are studied, and convergence in various norms is examined.  As is natural, there is a trade-off between the strength and the generality of the conclusions regarding convergence of approximations in these many references.  Still, it is fair to say that while convergence is often studied and proven, the rates of convergence are rarely studied  in these references
{\em per se}. 

We limit the study of the Perron-Frobenius and Koopman operators in this paper to those that are easily associated to certain discrete, deterministic flows or stochastic flows generated by  Markov chains. While many of the results in this paper can also be used as a foundation for the study of continuous flows, we leave this topic for a future study. 
We discuss in more detail in Section \ref{sec:operators} how  the  form of the operators  vary  depending on whether they are being used to study the dynamics of deterministic or  stochastic flows. For now, it suffices to note  that we  will focus primarily, but not exclusively, on  two particular classes of applications. In the first class of examples,  the Perron-Frobenius operator takes the form 
\begin{equation}
(\mathcal{P}f)(x):=\int p(y,x)f(y)\mu(dy) \label{eq:simpleP}
\end{equation}
for $x\in \Omega$, $f\in L^2_\mu(\Omega)$,  the kernel $p:\Omega \times \Omega \rightarrow \mathbb{R}$, and a measure $\mu$ on the domain $\Omega \subseteq \mathbb{R}^d$. This form of the operator $\mathcal{P}$ arises in the study of Markov chains that have a transition probabability density $p(y,x)$ so that the transition probability kernel is given by $\mathbb{P}(dy|x):=p(y,x)\mu(dy)$. Here the Koopman operator is the adjoint $\mathcal{U}=\mathcal{P}^*$ and is induced by the adjoint  kernel relative to the Hilbert space $L^2_\mu(\Omega)$.

A second class of examples are related to studies of the Koopman operator 
\begin{equation}
\label{eq:simpleU}
(\mathcal{U}f)(x)=(f\circ w)(x)
\end{equation}
for $x\in \Omega$ and  a  mapping $w:\Omega \rightarrow \Omega$. This operator arises in connection to discrete, deterministic flows in Equation \ref{eq:det_Phi}  and also for some stochastic flows determined by a  Markov chain as  in Equation \ref{eq:stoch_Phi}.
In these cases the Perron-Frobenius and Koopman operators are transposes of one another, $\mathcal{P}'=\mathcal{U}$, with respect to the dual pairing $<\cdot,\cdot>_{X^* \times X}$ for a Banach space $X$.  In one setting the transpose is defined  in terms of the duality pairing between the continuous functions $C(\Omega)$ and its topological dual space  $C^*(\Omega)$, a space of signed measures. This case is quite familiar in the study of Markov chains. \cite{meyn} This is not the only common choice of $X$.  It is also possible to define these operators as dual with respect to the pairing $<\cdot,\cdot>_{L^\infty_\mu \times L^1_\mu}$ as in the popular treatise \cite{lasota}, or as adjoints with respect to the inner product on $L^2_\mu(\Omega)$.  

The limitation of the examples to these two general types  enables a cohesive and reasonably general theory of approximations of Perron-Frobenius and Koopman operators. Still, the details of the analysis of these examples remains complex owing the variety of choices for the domain $\Omega$, the density $p(\cdot,\cdot)$, the mapping $w:\Omega \rightarrow \Omega$, and the measure $\mu$. In addition, the problem of approximating the operators $\mathcal{P}, \mathcal{U}$ is made all the more difficult because the exact form of the kernel $p(\cdot,\cdot)$, measure $\mu$, mapping $w$, or even the domain $\Omega$ may be uncertain or unknown.
It may be the case that the kernel $p$ is known, but the domain $\Omega\subseteq \mathbb{R}^d$ that supports the dynamics is unknown.  Or,  the domain $\Omega$ may be known, but the form of the measure $\mu$ over $\Omega$ is uncertain. It might be the case that both $\mu$ and $\Omega$ are unknown. 
It is clear that there are a large number of approximation problems that can arise depending on what combination of these  constituents are known or unknown. We do not systematically consider all combinations of such problems in this paper, but focus on only a few of the key scenarios. These typical cases then can inform the study of problems subject to other types of  uncertainty. We briefly discuss some of the  specific cases we study in this paper in the next section.  

\subsection{Types of Approximation Problems}
\label{sec:types}
This paper introduces a novel approach for the approximation of a particular class of Perron-Frobenius and Koopman operators whose regularity is characterized by their action on  different types of  approximation spaces. The most general of these is the Banach space $A^{r,q}(X):=A^{r,q}(X,\left \{A_j\right \}_{j\in \mathbb{N}_0})$ contained in a Banach space $X$ of functions on $\Omega$.  Here the parameter $r>0$ measures the rate that certain {\em linear} approximations converge in this space,  $\left \{A_j\right\}_{j\in \mathbb{N}_0}$ are a family of approximant subspaces each having dimensions $n_j:=\#(A_j)$ from which estimates are constructed, and $1\leq q \leq \infty$. 
These spaces have come to assume a central role in understanding  many recent advances in the theoretical foundations of signal and image processing, denoising, compressed sensing, optimal recovery, nonlinear regression, and harmonic analysis. \cite{devorenonlinear}
We also construct estimates using the spectral approximation spaces $A^{r,2}_\lambda(H)$ over a Hilbert space $H$, which can be shown to be a special case of the $A^{r,q}(X)$ under some circumstances. 
We show that by combining approximation space theory and probabilistic error estimates based on confidence functions, an overall convergence rate is obtained for approximation of $\mathcal{P}$,  $\mathcal{U}$,  and some of their associated 
data-driven algorithms for the study of dynamical systems. We  describe below several  specific cases treated in this paper.

\medskip
\noindent \underline{ The case when $p,w,\Omega,\mu$ are known:} In this case we  want to define algorithms and  create finite dimensional estimates $\mathcal{U}_jf$ or $\mathcal{P}_jf$  that are guaranteed to converge to  $\mathcal{U}f$ or $\mathcal{P}f$, respectively,  for all $f$ in a given class of functions. Here the finite dimensional operators $\mathcal{U}_j$, for instance,  are mappings onto some $n_j$ dimensional approximant space $A_j$ contained in the family of functions over which $\mathcal{U}$ is defined. Ideally,  we would like to know at what rate $r>0$  the estimates $\mathcal{P}_jf$ or $\mathcal{U}_jf$  converge to true values for different classes of functions $f$. Specifically, we  seek   conditions that guarantee  the error decays such as $\|(\mathcal{P}-\mathcal{P}_j)f \|_X\lesssim ao(r,j)$ for all  functions $f$ in a given class. Here $ao(r,j)$ is the
approximation order of rate $r$ for estimates constructed from the subspace $A_j$.  Examples of this form with $ao(r,j)=2^{-rj}$ or $ao(r,j)=n_j^{-r}$  can be found in Theorems \ref{th:th1}, \ref{th:Ar2_NSA_a}, and  \ref{th:approx_meas}. Similar types of convergence rates are also derived in Theorems \ref{th:spectral_approx} and \ref{th:spectral_approx_ns} in terms of powers of eigenvalues $ao(r,j)={\lambda^{r/2}_j}$ when the bases used for approximations are eigenfunctions of $\mathcal{P}$ or $\mathcal{U}$. We also are interested in determining conditions that $\mathcal{U}_j,\mathcal{P}_j$ converge to $\mathcal{U},\mathcal{P}$ in some suitable operator norm, as stated in Theorem  \ref{th:th1}.  In most recent  studies of Koopman theory, the approximations  are  constructed using bases that consist of algebraic polynomials, trigonometric polynomials, piecewise polynomial spaces such as splines or finite element spaces, as well as eigenfunctions  if their calculation is tractable.  

This first category of results is important to  Koopman theory for a few reasons.  As a general rule, we have already observed that  it is  more common that  convergence of approximations is established in many recent  studies of Koopman theory  \cite{Korda2017Convergence,Mezic2017ddsa,Nitin2018,Das2018,mezic2017kosda,Mezic2017Random,Tu2014ExactDMD,Williams2015EDMD}, but not the rate of convergence. 
It is also important to note that the case here, which assumes exact knowledge of all the problem data $p,w,\Omega,\mu$, serves as the foundation for treating much more difficult analyses when some problem data  are unknown or uncertain. This case is the starting point for the next two  cases that feature uncertainty in the problem data.

\medskip
\noindent \underline{The case when $p$, $w$, $\Omega$ are known,  the measure $\mu$ is unknown:}   Even for the study of deterministic systems, Koopman theory includes aspects of probability theory. The role of a measure, or measures,  is central to the approach. It should come as little surprise then that  there is an interplay of deterministic and stochastic contributions to the overall error in many of the associated  approximation problems.   The class of problems when the measure $\mu$ is unknown plays an  important theoretical role. It serves to bridge the theoretical gap between the aforementioned case when all problem data $p,w,\Omega,\mu$ are  known, and the most  complex case studied next when all problem data are unknown or uncertain. It is because the measure is often unknown in applications that algorithms based on samples of the underlying dynamical system are so popular. 

Examples of this case are plentiful when we have an accepted model of the discrete evolution, and we seek to understand or characterize the subset or submanifold of the full configuration  space over which evolutions are concentrated. 
%\begin{framed}
One such case arises in developing lower order models from  studies of computational fluid dynamics.  A fluid flow discrete model arising from a numerical approximation of the Navier-Stokes equations might evolve on a state space having dimension  $d\approx O(10^6)$ or $ O(10^7)$. So,  there does in  principle exist an exact model, given by the Navier-Stokes equations.  Sufficiently high dimensional grids can be assumed to yield approximations with small, perhaps negligible,  error of the large scale flow dynamics. It is not practical to use the  direct numerical simulation in many applications that require interactive or near real-time  predictions. Such applications can include models of aerodynamic loads for use in air vehicle design or flight control synthesis. In some studies the goal is to gain an understanding of the inherent or underlying mechanics of a particular flow. The determination of regions of recirculation, the identification of coherent structures, or the determination of overall lift and drag trends from the dynamics of shed vortices are but a few of the common examples.   Low dimensional proxies are needed for these applications.   It is not unusual that low dimensional approximations are generated from data-driven models that evolve on a state space having  $d\approx O(10)$ to $ O(10^2)$ degrees of freedom.  In this application  $\mu$ describes the concentration or support of the dynamics on the small subset of  $\mathbb{R}^d$, and some low order models can be interpreted in terms of its approximation.

%\end{framed}

\medskip
\noindent \underline{The case when  $p$, $w$, $\Omega$, $\mu$ are unknown:} In applications it is perhaps of the greatest interest when the only explicit information regarding the system under study is a collection of observations $z:=\{(x_i,y_i)\}_{i\leq m}\subset \Omega \times \Omega$ from an  experiment  that measures  the  output state $y\in \Omega$ obtained from the input state $x\in \Omega$.  
We again consider the study of  fluid flows for an application. Some fluid flow systems are so complex that it is prohibitively difficult,  time-consuming, or just infeasible to construct an accurate numerical simulation of the Navier-Stokes equations without supporting experiments. It is then common that  time-indexed experimental measurements  of the velocity  field are made using techniques such as particle image velocimetry (PIV), or observations are made of pressure distributions at points on surfaces, say, from pressure sensitive paints. These samples can be the source of discrete approximations of the fluid dynamics. The  discrete dynamics between consecutive experimentally observed velocity fields, for instance, can  have a dimension  that is still large, although  perhaps not has large as that in a full resolution simulation of the Navier-Stokes equations. In effect  both the measure $\mu$ describing the concentration of trajectories in the configuration space and the underlying dynamic model may be  largely unknown in these cases.   One goal in studying  these systems is the generation of data-driven models of the Perron-Frobenius and Koopman operators from the observations $z=\{(x_i,y_i)\}_{i\leq m}$ of the input-output pairs $(x_i,y_i)$ of the system.

In this situation we  want  to derive practical estimates $\mathcal{P}_{j,z}f$ or $\mathcal{U}_{j,z}f$ that depend on the samples $z$ and the approximant subspace $A_j$ and to prove in what sense these estimates converge to $\mathcal{P}f$ and $\mathcal{U}f$. These types of  estimators have been proposed and investigated in many of the recent studies \cite{Williams2015EDMD,Tu2014ExactDMD,Kutz2016MRDMD,Mezic2017Random} and are embodied in  popular algorithms such as the Dynamic Mode Decomposition (DMD) method and the Extended Dynamic Mode Decomposition (EDMD).  These  studies are important in that they make clear the 
structural relationship between the EDMD algorithm and methods to approximate the Koopman or Frobenius Perron operators, and they establish cases in which convergence is proven. This paper will explore in what sense rates of convergence depend on the number $m$ of samples and dimension $n_j$ of the approximant spaces $A_j$. 

\subsection{Approach and Philosophy}
\label{sec:philosophy} 
The primary aim of this paper is to introduce a common theoretical framework for Koopman theory and associated data-driven algorithms that are used in the study of dynamical systems. We focus on  determination of the  rates of convergence of approximations of the Koopman and Frobenius operators, as well as  some classes of  measures that arise in the study of associated data-driven algorithms. These results facilitate the study of the rates of  convergence of  practical data-driven algorithms.

An essential feature of the theory in this paper is the introduction and use of a  class of priors that determine the rates of convergence. 
The  set of  priors define what information is available about the function, measure, or  operator to be approximated \cite{devore2006approxmethodsuperlearn}. (We are not referring to the notion of priors that arises in Bayesian estimation or stochastic filtering.)  In the most general analysis,  this paper defines the priors in terms of  the  {\em linear} approximation spaces $A^{r,q}(X)$ of order $r>0$ with $1\leq q\leq \infty$ that are contained in a Banach space $X$ of functions defined on a domain $\Omega$. We also study the {\em linear} spectral approximation spaces $A_\lambda^{r,2}(U)$ that are relevant to many approaches in Koopman theory, which can be understood as a special case of the spaces $A^{r,q}(X)$ in some situations.  
While we have noted the broad spectrum of advances that have been facilitated by this theory, 
as far as the authors can tell, the systematic use  of such priors   has not been pursued  in the study of Koopman theory. 
The intended audience of this paper is primarily the researchers, engineers, scientists, and academics that want to understand  or use Koopman theory for the  study of dynamical systems.
We have worked carefully to try and  balance the generality of the approach in this paper, the practicality of the theory for  the study  of data-driven algorithms, and the intuitive understanding of what membership in an approximation space means,  pragmatically speaking. Those who are familiar with the underlying theory of approximation spaces will be well-aware that the methodology introduced here can be generalized substantially. 

When they are defined axiomatically, the approximation spaces $A^{r,q}(X)$ for a (quasi-)Banach space $X$ can seem quite abstract. A discussion of the  theory can be found in the brief paper \cite{piestch}, and a full account of the theory is given in \cite{devore1993constapprox}. Reference  \cite{devorenonlinear} is a particularly good introduction to the theory and gives a readable, interesting  motivation  for  the approach.  Also, Dahmen's work in \cite{dahmenmultiscale} is particularly relevant to most of the theory as it is applied in this paper. The approach there utilizes biorthogonal families of bases for the construction of approximation spaces, and the approach here based on orthonormal wavelets or multiwavelets to construct $A^{r,2}(U)$ is  but a specific case of the biorthogonal construction. 

Intuition regarding the approximation spaces is perhaps most easily developed  considering $A^{r,2}(H)$ when $H$ is a Hilbert space of functions over a domain $\Omega$, the functions $\{\psi_i\}_{i\in \mathbb{N}}$ are an $H-$orthonormal basis for $H$,  and $A_j:=\{\psi_i\}_{i\leq j}$ is the approximant space from which approximations are built.  This space can be understood in terms of the generalized Fourier coefficients $\left \{ (f,\psi_i)_H\right \}_{i\in \mathbb{N}}$ of $f\in H$ in the expansion 
$$
f:=\sum_{i\in \mathbb{N}} (f,\psi_i)_H 
\psi_i.
$$
While the precise definition of the approximation space $A^{r,2}(H)$ in Sections \ref{sec:overview}, \ref{sec:approx_spaces_Arq}, or \ref{sec:approx_spaces} might seem a  bit lengthy, it is important to keep in mind a one simple fact. If $f\in A^{r,2}(H)$, then the approximation $\Pi_jf$ that is obtained by truncating the orthonormal decomposition  $\Pi_jf=\sum_{i<j}(f,\psi_i)\psi_i$,  is an example of a {\em linear approximation method}.  These linear   approximation strategies  are explained in the context of the  more general theory of nonlinear  approximation spaces in  Appendix \ref{sec:linear_approx} or in references \cite{devorenonlinear,piestch}.  Linear  strategies have  an error for $f\in A^{r,2}(H)$ that decays like 
$$
\|(I-\Pi_j)f\|_H \lesssim n_j^{-r}|f|_{A^{r,2}(H)}.
$$
with $|\cdot|_{A^{r,2}(H)}$ the seminorm on $A^{r,2}(H)$, which is defined in Equation  \ref{eq:A_S}, and $n_j=\#(A_j)$. \footnote{In this particular exemplary case $n_j:=\#(A_j)=O(j)$, but  in many examples $n_j$ is some  other function of $j$. If we are approximating functions $f:\Omega\subseteq \mathbb{R}^d \rightarrow \mathbb{R}^d$ using a tensor product wavelet or dyadic spline  basis for each coordinate direction, one  often obtains an expression such as  $n_j\approx O(d\cdot 2^{dj})$, for instance.}
In fact a function $f\in A^{r,2}(H)$ if and only if  the sequence $\left \{i^r (f,\psi_i)_H \right \}_{i\in \mathbb{N}}$ is square summable, that is, it is contained in  $\ell^2(\mathbb{N})$. It follows that the greater the approximation rate $r>0$, the faster  the generalized Fourier coefficients must converge to zero for a function $f\in A^{r,2}(H)$.

We   emphasize again  that  approaches we study in this paper are  referred to as linear methods of approximation in the references on approximation theory \cite{devorenonlinear}.  {\em This should not be confused with the types of dynamical systems that are to be studied using these techniques.} The discrete flow, whether deterministic or stochastic,  will generally be nonlinear. Our canonical example of a deterministic system evolves according to the recursion
\begin{equation}
\label{eq:det_Phi}
x_{n+1}=w(x_n),
\end{equation}
where  the function $w:\Omega \rightarrow \Omega$ is generally nonlinear.
This evolution is discussed in more detail following Equation \ref{eq:flow}.   We use linear approximation methods to estimate the Perron-Frobenius or Koopman operators generated by this  discrete, nonlinear, deterministic  dynamics. An analogous observation is true for our examples of discrete stochastic dynamics. Example \ref{ex:meas2} considers an iterated function system, or IFS, {\cite{barnsley,lasota}}. It is the special case of the discrete,  stochastic evolution
\begin{equation}
\label{eq:stoch_Phi}
x_{n+1}=w(x_n,\lambda_n) 
\end{equation}
where $w: \Omega \times \Lambda \rightarrow \Omega$ is  generally a nonlinear function and $\{\lambda_n\}_{n\in\mathbb{N}}$ is a sequence of  random values  in a symbol space $\Lambda$. We describes rates of convergence of linear methods of approximation of the Koopman and Perron-Frobenius operators generated by the above  discrete, nonlinear, stochastic dynamics.  

Intuition about approximation spaces is also improved by noting that they are in many instances equivalent to other function spaces that may be much more familiar.  We only use a few of the  simplest of the numerous equivalent characterizations of these spaces  \cite{devore1993constapprox} in this paper.  Again, in this motivating  introduction, we mostly restrict consideration to $A^{r,2}(H)$ for a Hilbert space $H$. For  analysts that study evolutionary partial differential equations, it is frequently the case that studies are carried out in terms of the Sobolev space $W^{r}(L^2(\Omega))$. The Sobolev space $W^{r}(L^2(\Omega))$ contains all functions in the  Lebesgue space $L^2(\Omega)$ that have  weak derivatives in $L^2(\Omega)$ of order less than or equal to $r$. The Banach space $C^{r}(\Omega)$, the set of functions having  classical derivatives through order $r$, is a subset of the corresponding   Sobolev space, $C^{r}(\Omega)\subseteq W^{r}(L^2(\Omega))$.  It will be important in many of our examples that  linear approximation spaces are often equivalent to Sobolev spaces, $A^{r,2}(L^2(\Omega),\left \{A_j \right\}_{j\in \mathbb{N}_0}) \approx W^{r}(L^2(\Omega))$ for a range of $r$ that depends on the smoothness of the basis for $A_j$. This is made precise via an argument summarized in Appendix \ref{sec:approx_spaces} and in the book    \cite{devore1993constapprox}.   This equivalence is the reason why the index $r$ is understood heurstically as a measure of  smoothness and rate of approximation.  A practical implication of this fact is that a bound in the error in terms of the approximation space norm (which may seem rather abstract) implies that the bound holds for the more common or conventional space of $r-$time continuously differentiable functions.

Analysts who study evolutionary partial or ordinary differential equations encounter Lipschitz conditions in many theorems that guarantee the existence of unique solutions of initial value problems.
The Lipschitz spaces play an important role in this paper, and particularly in the examples, in that they are also often equivalent to approximation spaces. For connecting the approximation spaces to certain Lipschitz spaces, we rely on the generalized Lipschitz space $\text{Lip}^*(r,L^p(\Omega))$. This space is not usually encountered in theorems that guarantee solutions to initial value problems, but features prominently in approximation theory.  
As summarized in Section \ref{sec:notation},  the generalized   Lipschitz spaces include the spaces $\text{Lip}(r,C(\Omega))$ and $\text{Lip}(r,L^p(\Omega))$ as special cases for some ranges of smoothness $r$. 
In the former,   functions  satisfy the   pointwise Lipschitz inequality that is so common in existence theorems for ODEs.    In the latter the Lipschitz condition in defined in terms of an integral in $L^p(\Omega)$.  When $0<r<1$, for instance, we have 
$ \text{Lip}^*(r,L^p(\Omega)) \approx \text{Lip}(r,L^p(\Omega))$ and $\text{Lip}(r,L^\infty(\Omega))\approx \text{Lip}(r,C(\Omega))$. 
We have found that the two special cases of  Lipschitz spaces make for simpler computations such as in the Example \ref{ex:warp3}, while the generalized Lipschitz spaces are particularly amenable to determine their relationship to linear approximation spaces over a wider range of smoothness $r$ via Theorem \ref{th:approx_besov} in Appendix \ref{sec:approx_splines}.    

One potential limitation of the approach in this paper might be that our choice to use orthonormal bases  for the construction of the approximations is too restrictive. In fact, in this paper we  mostly limit our analysis to approximations associated with projection onto families of orthonormal wavelets, multiwavelets, and eigenfunctions.  By far, most studies of convergence of approximations of Koopman and Perron-Frobenius operators use finite dimensional spaces of polynomials, spaces of piecewise polynomials, or eigenfunctions. While the orthogonal eigenfunctions of compact self-adjoint operators do fit nicely within the theory outlined in this paper, the choice of higher order  piecewise polynomial approximations in the references are seldom  constructed from   orthonormal projections. It is much more common, for example, that piecewise polynomial approximations are based on quadratures, an interpolation  formula, or on a (Petrov-)Galerkin approximation.  There are at least two points we want to make  regarding this issue. 

First, the  general theory of approximation spaces is not cast in terms of orthogonal projections onto finite dimensional subspaces spanned by orthonormal bases. Instead the general theory is presented in many  equivalent  forms in the literature. A pragmatic general theory can be  expressed in terms of quasi-interpolant projections on spline spaces \cite{devore1993constapprox} or   biorthogonal bases of splines or wavelets \cite{dahmenmultiscale}. Not too surprisingly,  the general theory is more complex to describe and less intuitive compared to the case in this paper. In \cite{dahmenmultiscale,dahmenman1,dahmenman2} it requires an introduction of a primal collection of finite dimensional approximation spaces, as well as  an associated family of dual approximation spaces. It is precisely this complexity we seek to avoid in this  overview of how these methods can be applied to Koopman theory.    In short, we have elected to limit the discussion to such orthonormal bases primarily for pedogogic reasons. We have chosen to stick to the theory that is simpler to state, to understand it in an intuitive sense, and to note where the more general case could be brought to bear when the result is fairly direct. Such is the case when we discuss approximation of measures in Section \ref{sec:equiv_metric}. In this section we include a discussion of approximations of some measures that are understood in terms of duality to the more general class of linear approximation spaces $A^{r,q}(X)$ for a Banach space $X$. Pragmatic implementations are certainly possible in a the  more general context.  The   multiscale analysis defined on a variety of  manifolds such as in  \cite{dahmenman1,dahmenman2} could play an important role in Koopman theory, for those who are not faint of heart. 

Second, we have elected to employ orthonormal wavelets and multiwavelets that exactly reproduce some common families of  piecewise polynomials, finite element spaces, or  splines in   all of our examples. Such choices do generate practical algorithms, of arbitrarily high approximation order, depending on the smoothness of the basis.   It should be observed  that the  approximation rates in spaces such as  $A^{r,2}(L_\mu^2(\Omega))$  can be used to obtain the best rates achievable by linear approximation methods based on  piecewise polynomial, finite element,  or spline functions that are  contained in the span of the selected wavelets or multiwavelets. 
In fact, when the measure $\mu$ is just Lebesgue measure, there are Daubechies orthonormal wavelets, ``Coiflets'',  and orthonormal multiwavelets that span a large selection of polynomial, piecewise polynomial, and spline spaces. The approach in this paper therefore gives the best possible convergence rates for linear approximations built from all of these more traditional, and perhaps more familiar,  approximant  spaces that do not enforce  any type of orthonormality among the basis functions. This interpretation of the best possible approximation rates for piecewise polynomial spaces is discussed in more detail in  Examples \ref{ex:ON_wave} and \ref{ex:ON_multi}.

%
%%  overview
%

\section{Overview of Primary Results}
\label{sec:overview}  
We have categorized a number of  problems that arise naturally in the study of approximations of the Perron-Frobenius and Koopman  operators  in Section \ref{sec:types}.  Here  we specifically summarize the contributions of this paper to each of the categories when 1) the problem data $p,w, \mu,\Omega$ are known, 2) the measure $\mu$ is unknown but $p,w,\Omega$ are known, and 3) the problem data $p,w,\mu,\Omega$ are unknown or our knowledge of them is uncertain, but we are given a collection of observations of the input-output behavior of the dynamical system.  Roughly speaking, the presentation in this paper proceeds from rather general results to those that are most specific to applications of Koopman theory. In this sense the paper progesses from a well-known framework to investigate its implications for Koopman theory.  

The simplest case is presented  first, when the problem data is known. This situation must be studied to address more difficult problems of interest to dynamical systems and Koopman theory.  We follow this discussion with a summary of the last two categories, where some of the problem data is uncertain or unknown. 

\noindent 
\subsection*{\noindent The case when $p,w,\mu,\Omega$ are known}
\label{sec:mu_p_known}

When the problem data $p,w,\mu,\Omega$  are known, this paper strengthens several existing results that study the  convergence of approximations of the Koopman or Perron-Frobenius operators. For the most part these results are direct applications of the theory of linear approximation spaces to Koopman theory. Perhaps the most unique or novel insight in this section are the results that tie the approximation of the dual (or adjoint) Perron-Frobenius and Koopman operators to linear approximation spaces defined in terms of warped wavelets.  Thi section begins with introduction of  the spectral approximation spaces $A^{r,2}_\lambda(U)$ in Equation \ref{eq:spectral_space} with $U:=L^2_\mu(\Omega)$. The spaces $A^{r,2}_\lambda(U)$ depend on a self-adjoint  compact operator $T$ that has eigenpairs $\left \{
( \lambda_i,u_i),
\right \}_{ i\in \mathbb{N}}$ with $\lambda_i$ the eigenvalue corresponding to the eigenvector $u_i$. 
From spectral theory we know that the eigenvalues are nonincreasing, and they can only accumulate at zero. 
The Koopman or Perron-Frobenius operators are assumed to be contained in a family of operators $\mathbb{A}^{r,2}_\lambda(U)$ acting on  the Hilbert space $U$ that are analogous to the spaces $A^{r,2}_\lambda(U)$.   The principal results in this context are Theorems \ref{th:spectral_approx}  and \ref{th:spectral_approx_ns} that  construct approximations $\mathcal{P}_j$ of $\mathcal{P}$, or through duality $\mathcal{U}_j$ of $\mathcal{U}$, from the approximant spaces $A_j=\text{span}_{i<j}\{u_i\}$  that converge at a rate $O(\lambda_j^{r/2})$ whenever $\mathcal{P}f\in A^{r,2}_\lambda(U)$.  Essentially, the membership of a function in the space $A_\lambda^{r,2}(U)$ guarantees that its  projection error decays like $O(\lambda_j^{r/2}$).  This analysis can be used to obtain rates of convergence in numerous  approaches that currently only guarantee convergence.

The final result for the spectral approximation spaces rewrites the error bounds by grouping the basis functions into blocks of length $n_j$. 
This grouping  is important when $\{n_j\}_{j\in \mathbb{N}_0}$ is a quasigeometric sequence of integers defined in Section \ref{sec:notation}.  
If the eigenvalues $\left \{\lambda_{2^j}\right \}_{j\in \mathbb{N}}$ are quasigeometric in the sense that they satisfy 
\begin{equation}
\lambda_{n_j}^{1/2} \approx n_j^{-1} 
\label{eq:quasi}
\end{equation}
for such a sequence of integers $\{n_j\}_{j\in \mathbb{N}_0}$, 
then we have the error estimate
$$
|\mathcal{P}-\mathcal{P}_{n_j}|_{\mathbb{A}^{r,2}_\lambda(U)} \lesssim n_j^{-(s-r)} 
$$
for all $s>r>0$.  
Perhaps the most common choice of the sequence selects $n_j:=2^{dj}$ for $U:=L^2_\mu(\Omega)$ and $\Omega\subset \mathbb{R}^d$.  This result for the spectral approximation spaces  is written this way to emphasize its resemblance  to the more general error estimates derived in Section \ref{sec:approx_spaces_Arq}. Specifically, we have $n_j\approx 2^{j}$ when $d=1$ in many applications using wavelets,  multiwavelets, or dyadic splines  to define the approximant spaces $A_j$. 

For the most part, this paper estimates  the Perron-Frobenius and Koopman operators  in a more general framework using  the approximation spaces $A^{r,q}(U)$ for $r>0$ and $1\leq q \leq \infty$ that are contained in the Hilbert space $U$. These are introduced in  Section \ref{sec:approx_spaces_Arq}.  Approximations   are constructed from  families of orthonormal bases that determine the  approximant space $A_j$ that has dimension $n_j=\#(A_j)$ for $j\in \mathbb{N}$. Theorem \ref{th:th_equiv} shows that the spectral approximation spaces $A^{r,2}_\lambda(U)$ can be viewed as special cases of the approximation spaces $A^{r,q}(U)$ in  the important case that the eigenvalues are quasigeometric as in Equation \ref{eq:quasi}. The  approximations in $A^{r,2}(U)$ are cast in this paper in terms of compactly supported wavelets and multiwavelets that reproduce families of piecewise polynomials and splines. It is important to see that these basis functions are readily available: they have been constructed  from first principles in a host of references. Perhaps the most well-known are the Daubechies compactly supported orthonormal wavelets described in \cite{daubechies88,daubechiesbook}. Another well-known family of compactly supported $L^2(\mathbb{R})-$orthonormal wavelets are the ``Coiflets'' described in \cite{coifman1,coifman2}. We outline the use of compactly supported orthonormal multiwavelets of \cite{dgh, dgh18}. Unlike the Daubechies wavelets these functions are piecewise polynomials. The wavelets and multiwavelets above are not constructed from eigenfunctions, and therefore they avoid the challenges associated with computation of the bases  from  nontrivial Koopman or Perron-Frobenius operators.  Approximations $\mathcal{P}_j$ are constructed, and when $\mathcal{P}f\in A^{r,2}(U)$, their   error is bounded by  $\|(\mathcal{P}-\mathcal{P}_j)f\|_U\approx O(2^{-rj})$  with $j$ being the resolution level of a uniform dyadic grid over $\Omega \subset \mathbb{R}^d$ composed of cells having side length $2^{-j}$. If $d=1$, we have $n_j:= \#(A_j) \approx 2^j$ when using wavelets or multiwavelet families, which again yields the approximation rate $O(n_j^{-r})$ analogous to that for the spectral spaces   with quasigeometric eigenvalues. 

An additional theoretical result of this approach follows, since the wavelet and multiwavelet bases we use reproduce certain spline spaces. The study of rates of convergence of approximants constructed from the wavelet and multiwavelets determine upper bounds for the  the best possible (linear) convergence rates   of some   common approaches that are  expressed in terms of algebraic polynomials,  piecewise algebraic polynomials, finite element spaces,   or splines.  Some of the (generalized) Galerkin projection methods fall into this category, which are discussed in \cite{klus2016}, for instance. 

Usually  in this paper we employ approximation spaces that are contained in a Hilbert space $U$.  When the problem data $\mu,p,\Omega$ and $w$ are known, some results for the construction of approximations of signed measures are possible using the Hilbert spaces $A^{r,2}(U)$ contained in the Hilbert space $U$.  A bit more generality can be  advantageous when we study approximation of signed and probability measures in Section \ref{sec:equiv_metric}. 
 Approximation methods that  are more widely applicable are  obtained with  introduction  of the  spaces $A^{r,q}(X)$ for a Banach space $X$.  These  priors  enable the derivation of rates of convergence for approximations of signed  measures or probability measures  in a weak$^*$ sense. This is the one section of the paper in which we do not restrict attention to approximations in $A^{r,2}(U)$ with $U$ a Hilbert space.  Approximation of signed  measures is achieved using a  uniformly bounded family of dual operators $\tilde{\Pi}'_j$ of the linear projection operators $\tilde{\Pi}_j:X\rightarrow A_j$ that are onto the approximant spaces $A_j$.  When approximation of signed measures is carried out in $A^{r,2}(U)$, for instance, it is shown that  then we have
$$
\left |  \left <(I-{\Pi}'_j)\nu , f \right > _{C^*(\Omega) \times C(\Omega)} \right | 
\approx O(2^{-jr}) \approx O(n_j^{-r}) 
$$
for $r>0$,  $d=1$,  $f\in A^{r,2}(U)\subset C(\Omega) \subset U$, and $\nu\in U^* \subset  C^*(\Omega)$ with 
$\Pi'_j:C^*(\Omega)\subset A_j^* \rightarrow U^*\subset C^*(\Omega)$ the transpose of the  orthogonal projection operator $\Pi_j:U\rightarrow A_j$  
in Theorem \ref{th:approx_meas}. 
By construction, this example yields approximations that are signed measures that converge in $C^*(\Omega)$, but the approximations  are not guaranteed to be probability measures even if $\nu$ is. 
An algorithmic approach based on this result that generates approximations that are  probability measures  is  discussed in Section \ref{sec:approx_PM}.  
 
Again, to the best of our  knowledge, the  approach exploiting priors has not been  studied in a systematic way for Koopman theory. Several examples in Section \ref{sec:approx_measures1} illustrate the approximation of signed and probability measures, and likewise demonstrates how they can be used in the  study of Perron-Frobenius or Koopman operators.  Applications  of this strategy are given in Examples \ref{ex:meas0}, \ref{ex:meas1}, and \ref{ex:meas2}.
We show in our analysis that  it is possible to base approximations $\mathcal{P}_j$, and therefore by duality approximations $\mathcal{U}_j$, on approximations $\mu_j$ of the measure $\mu$. 
We  show that in some cases this notion of convergence, which relies on the definition of an approximation space, implies convergence in the bounded Lipschitz metric $d_{BL}$ on probability measures.

\subsection*{The case when $p,w,\Omega$ are known but $\mu$ is unknown}
\label{sec:mu_unknown_p_known}
\noindent 
The approaches above enable the determination of rates of convergence for several important problems in Koopman theory, but require full knowledge of the data $\mu,p,w,\Omega$ of the dynamic system under study.
Also, if the eigenfunctions are to be used to construct approximations, their calculation must be tractable. If these bases are not known or cannot be computed, the theorems and results above  do not give a realizable algorithm to approximate the Perron-Frobenius operator $\mathcal{P}$ or Koopman operator $U$. Generally, the popularity of Koopman theory over the past few years can be attributed in large degree to its success in deriving approximations from samples when some or much of  the problem data is unknown.
In this case it is popular to construct approximations of $\mathcal{P}$ or $\mathcal{U}$ from samples $z_n:=\{(x_n,y_n)\} \in  \Omega\times \Omega$ of the input state $x_n$ and output state $y_n$ of the dynamic system. In this paper we study two different scenarios for how observations of the system are collected. In the first, we simply make observations of the state process over time for a fixed initial condition $x_0\in \Omega$, and we have $\{x_n,y_n\}_{n\leq m}$ for time steps $n=0,1,\ldots,n$. In this case the observations are along a {\em  sample path } starting at $x_0$. There is another important case where observations { are not } collected over a sample path of the system.  We consider the case in which a family of initial conditions   $\{x_i\}_{i\leq m}$ are determined either deterministically or stochastically, and each $\{z_i\}_{i\leq m}=\{(x_i,y_i)\}_{i\leq m}$ records  the single step response of the system for each selected $x_i$. This way of collecting samples can  be thought of as a set of input-output responses  for a number  of test cases. Here, the samples $z_i$ are indexed by  the initial conditions or test case, not time step. Of course, there also are hybrids of the two above realizations of experiments: it is possible that  initial conditions are selected  randomly according to some measure $\mu$ on $\Omega$ and subsequently  observations along the sample path of each are made, and so forth.  It remains an exciting area of research to explore rates of convergence of  approximations for these various cases, with the problem of fusing approximations obtained over different sample paths being one important subproblem.

As discussed in more detail in Section \ref{sec:chains_operators} and Appendix \ref{sec:app_processes}, the assumptions about the statistics of the samples to a large extent determines how the overall rates of convergence can be derived. It is almost always  easier to derive approximation rates when the samples  are independent and identically distributed (IID). Such is the case if we randomly choose an initial condition $x_i$ independently  according to some fixed probability measure $\mu$ on $\Omega$ and then measure the single step response $y_i$ that is generated by this $x_i$. This means that for the deterministic system the states $z_i:=(x_i,y_i)$ are IID according to the probability distribution
$\nu(dx,dy):=\delta_{w(x)}(dy)\mu(dx)$. More generally, the canonical  discrete dynamical systems such as in Equations \ref{eq:det_Phi} and \ref{eq:stoch_Phi} are all examples of a Markov chain.  For  any Markov chain that has transition probability kernel $\mathbb{P}(dy,x)$, as discussed in Section \ref{sec:processes}, the  samples of single step input-output response  $z_i:=(x_i,y_i)$ are IID according to the probability measure $\nu:=\mathbb{P}(dy,x)\mu(dx)$. Here we want to emphasize that the state process is still a  Markov chain, as in Equations \ref{eq:det_Phi} and \ref{eq:stoch_Phi},   and it generates successive  samples $\{x_n\}_{n\in \mathbb{N}_0}$   over time that are dependent. It is the collection of single step  responses $\{(x_i,y_i)\}_{i\leq m}$ indexed by initial condition or test case that are IID. In this paper, the study of a particular problem is first carried out under the IID assumption: the samples correspond to a collection of test cases or different selections of initial conditions. Then, based on insights gained from the IID scenario, the case when samples are along a sample path is considered. It is important to note that this latter case  seems to be the one studied most frequently in the literature on Koopman theory, while the former is closely related to methods of nonlinear regression and statistical learning theory. \cite{gyorfy, kerkyacharianwarped, devore2006approxmethodsuperlearn, cs2002,vapnik}

Suppose that the family of samples $\{z_i\}_{i\leq m}:=\{x_i,y_i\}_{i\leq m}$ are IID, indexed by initial condition, as discussed above. We define an approximation $\mathcal{P}_{j,z}$ in Section \ref{sec:mu_unknown_p_known} that depends on the $m$ samples and approximant space $A_j$, and we show that the error decays like
$$
\|\mathcal{P}f-\mathcal{P}_{j,z}f\|_U\lesssim \lambda_{j}^{r/2} + \epsilon
$$
when the analysis is carried out in a spectral approximation space such as  $A^{r,2}_\lambda(U)$, 
or 
$$
\|\mathcal{P}f-\mathcal{P}_{j,z}f\|_U\lesssim 2^{-rj} + \epsilon
$$
where $j$ is the level of resolution of the grid used in an approximation from $A^{r,2}(U)$. Again, these estimates reflect the same rate for $d=1$ if it so happens that the eigenvalues $\lambda_{2^j}^{1/2}\approx n_j:=2^j$. 
In both cases $\epsilon$ is the error due to  stochastic contributions from the samples $z$  to the error.
The error bounds derived here  therefore depend on the samples and the selected finite dimensional bases. Theorem \ref{th:th3}  expresses a novel convergence rate for Koopman theory that illustrates the classical bias versus variance tradeoff for probabilistic error estimates.  The rate of convergence is cast in terms of an accuracy confidence function $AC(\epsilon,j):=AC(\epsilon,j;f,\mathcal{P})$ that describes the measure of the set of ``bad samples''  where the probabilistic error is large.   The error $\epsilon$ is small except for a set of samples that has exponentially low probability. The size of this set of bad samples is measured by the accuracy confidence function. See Theorem \ref{th:th3} for a detailed discussion. 

This result is generalized and considers observations collected along the sample path of certain types of  Markov chain in Theorem  \ref{th:mixing}.  In this case, the Markov chains are assumed to be exponentially strongly mixing, an assumption closely related to  the ergodicity assumptions  of many studies in Koopman theory.

\subsection*{The case when $p,w,\mu,\Omega$ are unknown}
Finally, this paper also derives new estimates of the rate of convergence that can be achieved when the problem data $p,w,\mu,\Omega$ is  uncertain or unknown.  Specifically, we use a combination of the above error analyses to derive rates of convergence for  one version of the  data-driven Extended Dynamic Mode Decomposition (EDMD) algorithm that is used in the study of some  discrete dynamic systems.   Again, this novel error rate is derived based on the  tradeoff between contributions of  deterministic errors from estimates in the approximation space $A^{r,2}(U)$ and the probabilistic errors that arise from dependence on the samples $z$. 
 It  assumes that the approximant space $A_j$ is the span of the characteristic functions $1_{\square_{j,k}}$ of dyadic cubes
$$
\square_{j,k}:=\left \{ x\in \mathbb{R}^d \ | \ k2^{-j} \leq x_i < (k+1)2^{-j},\  i=1,\ldots, d  \right \}
$$ 
that define a partition of $\Omega$. We show in this case that the EDMD algorithm can be understood in terms of certain estimates developed via techniques of empirical risk minimization (ERM) in distribution-free learning theory.  We denote by $\mathcal{U}^{edmd}_{j,z}f$ and $\mathcal{U}_{j,z}^{erm}f$ the  estimates generated by the EDMD and ERM approaches 
when  the estimates are constructed from the approximant space $A_j$ and $z$ denotes the dependence  on a family of observations $z$.  When $A_j$ is the span of  piecewise constant functions over a dyadic partition of $\Omega\subset \mathbb{R}$,  
we have 
$$
\mathcal{U}^{edmd}_{j,z}\equiv \mathcal{U}^{erm}_{j,z}\Pi_j
$$
with $\Pi_j$ the orthonormal projection onto $A_j$. 
In fact  it follows that 
if $f\in A^{s,2}(U)\subset C(\Omega)$  with $U:=L_\mu^p(\Omega)$ we have 
$$
\|\mathcal{U}_{j,z}^{edmd}f-\mathcal{U}^{erm}_{j,z}f\|_U \lesssim 2^{-(s-r)j}\|f\|_{A^{s,2}(U)}
$$
for $s>r>0$. 
Also, if  the samples $\{z_i\}_{i\leq m}=\{(x_i,y_i)\}_{i\leq m}$ are  a collection of observations along the sample path of a Markov chain, and the function $f\in A_j$, we also show that the expected value over  $m$ samples of the $L_\mu^2(\Omega)$-error satisfies 
$$
\mathbb{E}_{\mathbb{P}^m_{\{x\}}} \left (
\|\mathcal{U}f- \mathcal{U}_{j,z}^{edmd}f\|_{L_\mu^2(\Omega)}\right ) \lesssim
\left (  \frac{\text{log}(e(m))}{e(m)}\right )^{2r/(2r+1)}
$$
for certain exponentially strongly mixing Markov chains with $e(m)$ the effective number of samples. In the above equation 
$\mathbb{P}^m_{\{ x\}}$ denotes the probability distribution of the first $m$ steps of the Markov chain as summarized in Section \ref{sec:processes} or in reference \cite{meyn}. 
This new result is expressed when $\Omega\subset \mathbb{R}^d, \ d=1$ in Theorem \ref{eq:error_EDMD}.

%
%%%%%%%%%%%%%%%%%%%%%%%%%%%%%%%%%%%%%%%%%%%%%%%%%%%%%%%%%%%%%%%%%%%%%%%
%

\subsection{Notation and Basic  Definitions}
\label{sec:notation}
In this paper we denote by $\mathbb{Z}, \mathbb{N}$, and $\mathbb{N}_0$, the integers, integers greater than zero, and nonnegative integers, respectively. We write $a\approx b$ if there are two positive constants $c_1,c_2$ such that  $c_1 a \leq b \leq c_2 a$. The notation $a\lesssim b$ implies that there is a constant $c>0$ such that $a \leq c b$, and the relation $\gtrsim$ is defined similarly. 
We use $\#(S)$ to denote the cardinality of any set $S$.
The Banach spaces of $p$-summable sequences $\ell^p$ have the usual norms $\|a\|_{\ell^p}^p:=\sum_{i}|a_i|^p$ for $1\leq p<\infty$ and $\|a\|_{\ell^\infty}:=\sup_{i}|a_i|$.  A sequence of integers $\{n_j\}_{j\in \mathbb{J}}$ is quasigeometric provided there exists two positive constants $c_1,c_2$ such that 
$1<c_1 \leq n_j/n_{j+1} \leq c_2$ for all $j\in \mathbb{J}\subset \mathbb{N}$. We say that a sequence of real numbers $\{\lambda_j\}_{j\in \mathbb{J}}$, such as eigenvalues of an operator, is quasigeometric whenever there exists positive constants $c_1,c_2$ with $1<c_1 \leq \lambda_{n_j}/\lambda_{n_{j+1}} \leq c_2$ for all $j\in \mathbb{J}$. 
A domain $\Omega$ in this paper is usually either $\Omega:=\mathbb{R}^d$, or  a compact set $\Omega\subset \mathbb{R}^d$. Some constructions of bases for instance are carried out over $\Omega=\mathbb{R}^d$, and subsequently the basis set  is modified so that their support is  a compact set $\Omega\subset \mathbb{R}^d$. 
We denote by $L^p_\mu(\Omega)$ the Banach space of functions $f:\Omega \rightarrow \mathbb{R}$ that are  $p$-integrable with respect to a measure $\mu$ on $\Omega\subseteq \mathbb{R}^d$.  We have $\|f\|_{L^p_\mu(\Omega)}^p :=\int |f(\xi)|^p\mu(d\xi)$ for $1\leq p<\infty$ and $\|f\|_{L^\infty_\mu(\Omega)}|=\sup\left \{ |f(\xi)| \ :  \text{ $\mu-$a.e. } \xi\in  \Omega \right \}$. 
We overload our notation and also denote  the 
vector-valued Lebesgue space $(L_\mu^p(\Omega))^d$ as $L^p_\mu(\Omega)$. Correspondingly, we define  
\begin{align*}
&\|f\|^p_{L^p_\mu(\Omega)}:=\int_\Omega \|f(\xi)\|^p_{\mathbb{R}^d} \mu(d\xi),\\
&\|f\|_{L^\infty_\mu(\Omega)} := \underset{\text{ $\mu$-a.e. } x\in \Omega}{ess\sup} \|f(\xi)\|_{\mathbb{R}^d}, 
\end{align*}
for $1\leq p<\infty$ and functions $f:\Omega\to \mathbb{R}^d$. 
We define $C(\Omega)$ to be the Banach space of all continuous functions on $\Omega$ with $\|f\|_{C(\Omega)}:=\sup_{x\in \Omega}\|f(x)\|_{\mathbb{R}^d}$.  By $UC(\Omega)$ we denote the subspace of $C(\Omega)$ that consists of uniformly continuous functions endowed with the norm it inherits. If $\Omega$ is compact, these two spaces are identical, or course.  

We will also have occasion to use certain spaces of Lipschitz functions in this paper. 
The most general of these are the generalized Lipschitz spaces $\text{Lip}^*(\alpha, L^p(\Omega))$ for $\alpha>0$ and $1\leq p \leq \infty$. Define the $r^{th}$ order difference operator
$$
\Delta^r_{h}(f,x):= \sum_{k=0}^r \left ( \begin{array}{c} r\\k \end{array}\right )(-1)^{r-k}f(x+kh)
$$
and the $r^{th}$ modulus of smoothness
$$
\omega_r(f,t)_p:=\sup_{0<h\leq t}\|\Delta^r_h(f,\cdot)\|_{L^p(\Omega)}.
$$
The $r^{th}$ modulus of smoothness is well-defined for $f\in L^p(\Omega)$ with $1\leq p<\infty$, for $f\in C(\Omega)$ when $\Omega$ is compact, or for $f\in UC(\Omega)$ if $p=\infty$ and $\Omega$ is not compact. The seminorm on the generalized Lipschitz space is given by 
$$
|f|_{\text{Lip}^*(\alpha,L^p(\Omega))}:=\sup_{t>0} \left ( t^{-\alpha} w_r(f,t)_p \right ).
$$
While this definition of a Lipschitz space might seem rather abstract, we introduce it here since it is quite useful in relating linear approximation spaces to Lipschitz spaces in Theorem \ref{th:approx_besov} in Appendix \ref{sec:approx_spaces}. Fortunately, for a restricted range of $\alpha$, these spaces reduce to some more familiar definitions. 

For instance, we say that a function $f\in C(\Omega)$  satisfies an  $r-$Lipschitz inequality   if there is a constant $L>0$ such that 
$$
\|f(x) - f(y)\|_{\mathbb{R}^d} \leq L \|x-y\|^r_{\mathbb{R}^d}
$$
for all $x,y\in \Omega$. We define the seminorm 
$$
|f|_{\text{Lip}(r,C(\Omega))} := \sup_{x,y\in \Omega} \frac{\|f(x)-f(y)\|_{\mathbb{R}^d}}{\|x-y\|_{\mathbb{R}^d}^r}.
$$
Some references simply refer to this space as $\text{Lip}\ r$ \cite{devore1993constapprox}.
 With the norm  $$\|f\|_{\text{Lip}(r,C(\Omega))}:=\|f\|_{C(\Omega)}+|f|_{\text{Lip}(r,C(\Omega))},$$ the set of  functions   
$$\text{Lip}(r,C(\Omega))\subset UC(\Omega) \subset C(\Omega)$$ 
is a Banach space for $0<r\leq 1$.  When $0<r<1$, functions in $\text{Lip}(r,C(\Omega))$ coincide with functions $\text{Lip}^*(r,L^\infty(\Omega))$. 

We also will have occasion to employ the space of Lipschitz functions contained in $L^p(\Omega)$ for $1\leq p\leq \infty$. 
 We define the family $\text{Lip}(L,r,L^p(\Omega))\subset L^p(\Omega)$ for a fixed constant $L\geq 0$ as those functions for which 
$$
\|f(\cdot + h)-f \|_{L^p(\tilde{\Omega})} \leq L \|h\|^r
$$
for every $\|h\|>0$, with  the integration above  over the set $\tilde{\Omega}:=\left \{ x \in \Omega \ | \ x,x+h\in\Omega\right \}$. 
 For $0<r\leq 1$ the Banach space $\text{Lip}(r,L^p(\Omega))$ is given by 
\begin{equation}
\label{eq:lip_Lp}
\text{Lip}(r,L^p(\Omega)):=\bigcup_{L>0} \left \{ f\in \text{Lip}(L,r,L^p(\Omega))\right \}, 
\end{equation}
and the seminorm $|f|_{\text{Lip}(r,L^p(\Omega))}$ is the smallest constant $L$ for which the integral Lipschitz inequality holds for $f$.
When $0<r<1$, we have $\text{Lip}(r,L^p(\Omega))\approx \text{Lip}^*(r,L^p(\Omega))$. 

We have restricted the range of $0<r<1$ for $\text{Lip}(r,C(\Omega))$ and $\text{Lip}(r,L^p(\Omega))$ simply because for this range they are equivalent to the  generalized Lipschitz spaces $\text{Lip}^*(r,\Omega)$, the latter of which are defined for $r>0$. There are other ways to extend the definitions  $\text{Lip}(r,C(\Omega))$ and $\text{Lip}(r,L^p(\Omega))$ for greater values of $r$, but then their relationship to the generalized spaces becomes a delicate matter. See the detailed discussion in \cite{devore1993constapprox}, or the summary in \cite{runst}, for further nuances. We have included all three in this paper since we feel that the definition of $\text{Lip}(r,C(\Omega))$ and $\text{Lip}(r,L^p(\Omega))$ are more convenient for calculations in examples, while  the  generalized Lipschitz spaces $\text{Lip}^*(r,L^p(\Omega))$ are more readily to related to approximation spaces. The latter topic can be found in   \cite{devore1993constapprox}, page 358, Theorem 2.4,  or as as discussed  in the Appendix in Section \ref{sec:approx_splines}.  

For any Banach space $X$ we denote its topological dual $X^*$, and the duality pairing $\langle x^*,x\rangle_{X^*\times X}:=x^*(x)$ for all $x\in X$, $x^*\in X^*$. Let ${B}_1,{B}_2$ be Banach spaces. The dual or transpose operator $L'$ of a  bounded linear operator $L:B_1\rightarrow B_2$ is the unique bounded linear operator $L':B^*_2 \rightarrow B^*_1$ that satisfies $\left < g^*,L f\right >_{B_2^*\times B_2}=\left < L' g^*,f\right>_{B_1^*\times B_1}$ for all $f\in B_1$ and $g^*\in B_2^*$. If $L:H_1\rightarrow H_2$ is a bounded linear operator acting between the Hilbert spaces $H_1,H_2$, the adjoint operator $L^*:H_2 \rightarrow H_1$ is the unique bounded linear operator that satisfies $(Lf,g)_{H_2}=(f,L^*g)_{H_1}$ for all $f\in H_1$ and $g\in H_2$. The Riesz map $R_H:H^*\rightarrow H$ associated with the Hilbert space $H$ is the isometric isomomorphism $R_H:u^*\mapsto u$ defined by 
$$
<h^*,g>_{H^*\times H}:=(R_Hh^*,g)_H:=(h,g)_H
$$
for all $g\in H$. 

For a compact set $\Omega\subset \mathbb{R}^d$, the topological dual space $C^*(\Omega)$ is identified with the finite, regular, countably additive set functions $rca(\Omega)$, which are also known as the regular signed measures. We denote by $\mathbb{M}^{+}(\Omega)$ the positive measures, and by $\mathbb{M}^{+,1}(\Omega)$ the probability measures, contained in $rca(\Omega)$. For any measure $\mu$ on $\Omega$, $\mu^m$ denotes the product measure on $\Omega^m$. The symbol $\mathbb{P}^m_{\{z\}}$ is the  probability distribution of the first $m$ steps of the discrete stochastic process $\{z_i\}_{i\in \mathbb{N}}$. A brief discussion of discrete stochastic processes, and in particular Markov chains,  is given in the Appendix in Section \ref{sec:processes}.

%
%%	definitions
%

\section{Markov Chains, Perron-Frobenius,  and Koopman Operators \cite{klus2016}, \cite{lasota}}  
\label{sec:chains_operators}
 
 In this section we define  the class of dynamical systems,  and their associated Perron-Frobenius and Koopman operators, that are studied in the paper. Overall, the definitions of the dynamical systems and  of  their associated  operators differ widely depending on the reference. The articles { \cite{Mezic2017ddsa,mezic2017kosda}} define the operators for the study of continuous and discrete deterministic flows. Reference {\cite{Nitin2018}} defines  Koopman operators on $L^2_\mu(\Omega)$ for periodic approximations of discrete deterministic evolutions. Other recent studies define the operators for deterministic or stochastic flows in {\cite{klus2016}}, and {\cite{klusTO}} define them as  transfer operators having probability density kernels. The popular text  \cite{lasota} introduces various definitions in the event flows evolve in continuous (Chapter 7) or discrete (Chapter 3) time, and for deterministic (Chapters 3-5) or stochastic (Chapter 10) systems. In fact reference \cite{lasota} further subcategorizes the Perron-Frobenius and Koopman operators into subclasses such as the Foias and Barnesley operators. The latter arises in some examples in this paper when we discuss certain approximations of  evolution supported on fractals. Reference \cite{ergodicnagel} gives an in depth account of the analysis of flows and semiflows based on the Koopman operator $\mathcal{U}:f\mapsto f\circ w$ for some fixed mapping $w$. 
 
 As we will see shortly, the operators associated with {\em all of the  discrete flows} above can be understood in terms of the action on either measures or functions of the transition probability kernel of a Markov chain. \cite{meyn,saperstone} The traditional technique for inducing a deterministic flow on signed or probability measures from a Markov chain \cite{saperstone} has been known for some time, so we  follow this convention  from the outset in the paper. We begin by reviewing a few of the basic definitions for the processes we study in  Section \ref{sec:processes}, then discuss realizations of observations in Section \ref{sec:measurements}, and  subsequently define the classes of operators in Section  \ref{sec:operators}.
 
\subsection{The Class of Discrete Dynamical Systems}
\label{sec:processes}
 In the most straightforward case studied in this paper  a discrete evolution is defined on a configuration space $\Omega\subset \mathbb{R}^d$ by    the  recursion 
\begin{equation}
x_{n+1}=w(x_n) \label{eq:flow}
\end{equation}
with  $w:\Omega\rightarrow \Omega$ a $\mu-$measurable map. Here and below the mapping $w$ will generally be nonlinear. Trajectories or sample paths starting at some initial condition $x_0\in \Omega$ of the dynamical system are just the sequence of iterates $\{x_n\}_{n\in \mathbb{N}_0}:= \{w^n(x_0)\}_{n\in \mathbb{N}_0}$. In many problems, the dynamics governed by Equation \ref{eq:flow} may not seem realistic enough since  actual experiments are subject to noise, or the model might be uncertain, etc.  Common modifications of the above deterministic equation yield  the stochastic recursions such as 
\begin{align}
 x_{n+1}&=w(x_n)+\xi_n, 
 \label{eq:MC_ex1}  \\
 x_{n+1}&=w(x_n, \lambda_n), \label{eq:MC_ex2}
\end{align}
with the sequences $\{\xi_n \}_{n\in \mathbb{N}_0}$ and $\{\lambda_n\}_{n\in \mathbb{N}_0}$  a collection of independent  and identically distributed (IID) random variables taking values in $\Omega$ and  the finite symbol space $\Lambda$, respectively.  In Equation \ref{eq:MC_ex2} above the function $w:\Omega \times \Lambda\rightarrow \Omega$. While dynamical systems governed by Equation \ref{eq:MC_ex1} include many prosaic  physical systems, these equations also define some quite   abstract dynamical systems.   Equation \ref{eq:MC_ex2} is the form of governing equation for stochastic dynamical systems on fractals \cite{barnsley}, for instance. We include an analysis of the approximation of Perron-Frobenius and Koopman operators for this system in Example \ref{ex:meas2}. Of course, many other forms of these stochastic equations are also  possible. 

All three of the above examples are examples of Markov chains, which is the family  of dynamical systems we study in this paper for the representation of state evolution.  A brief account of the theoretical foundations of Markov chains is given in Appendix \ref{sec:processes}. A detailed study of the theory over general state spaces, which we employ in this paper,  is given in \cite{meyn}.  Suppose that $(\Omega,\Sigma(\Omega))$ is a measurable  space with $\Sigma(\Omega)$ a sigma-algebra of subsets of $\Omega$. 
  A Markov chain  is a stochastic process that is defined in terms of a transition probability kernel $\mathbb{P}:\Sigma(\Omega) \times \Omega \rightarrow [0,1]$. The quantity $\mathbb{P}(A,x)$  is the  probability of a transition from the current state $x$ to the measurable set $A\in \Sigma(\Omega)$ in the next step of the discrete stochastic process.  The transition probability kernel for the deterministic  flow in Equation \ref{eq:flow} is given by $\mathbb{P}(A,x)\equiv \delta_{w(x)}(A)$ with $\delta_{w(x)}$ the Dirac measure concentrated at $w(x)\in \Omega$. 
  The   transition probability kernel of the chain in Equation \ref{eq:MC_ex1} is given by $\mathbb{P}(A,x)=\mu(A-w(x))$.   See  \cite{lasota} for a discussion of Equation \ref{eq:MC_ex2}, or  for other examples that underly different types of Perron-Frobenius or Koopman operators. 
  
  \subsection{Realizations of Observations}
  \label{sec:measurements}
In view of the above summary, the  most general class of the dynamical systems studied in this paper are those for which {\em the states $x_n$} evolve according to Markov chains having transition kernels $\mathbb{P}(dy,x)$.    Up until this point we have not concerned ourselves about how we model observations of a particular dynamic system. The stochastic process that represents the observations of the state equation can have quite different statistical properties, depending on the definition  or construction of an experiment.  Because there are several ways to model how measurements of a dynamical system are realized in experiments, we briefly review models of a few common setups. 

\subsection*{Deterministic Input-Output Samples}
In one possible scenario, we assume that $m$ input-output samples 
$$z=\{(x_i,y_i)\}_{i\leq m}\subset \Omega \times \Omega$$ for the simple deterministic system are  generated by fixing collection of initial conditions $\{x_i\}_{i\leq m}\subset \Omega$ and measuring the single step output for each initial condition 
\begin{equation}
y_i:=w(x_i). \label{eq:qq}
\end{equation}
We choose the index $i$ to denote that the samples here are indexed by the initial condition or test case.
 The input-output samples in this scenario are exact {\em single step} observations of a noise-free system. We can then ask how the rates of convergence of approximations of Koopman or Frobenius-Perron operators depend on  the collection of test cases. The goal here might be to determine rates of convergence in terms of number of test cases and coverage of the test cases over $\Omega$. For compact domains $\Omega$ we can construct nested grids of initial conditions and analyze convergence rates as the mesh parameter of the grids approaches zero. 

\subsection*{Independent Input-Output Samples}
In a slight modification of the deterministic  scenario  we assume we have a fixed probability distribution $\mu$ on $\Omega$, the initial conditions $\{x_i\}_{i\leq m}$ are drawn independently according to $\mu$, and the single step outputs $y_i$ are generated exactly according to Equation \ref{eq:qq}. The initial conditions are said to be independent and identically distributed (IID) with respect to the measure $\mu$. The sequence of samples $\{z_i\}_{i\leq m}$ that are generated this way are IID with respect to the probability measure $\nu(dx,dy):=\delta_{w(x)}(dy)\mu(dx)$ for  $z:=(x,y)\in \Omega\times \Omega$ in the deterministic, noise-free case. We can easily modify this case somewhat to allow for noisy observations of the output state. Again, we suppose that the initial states $\{ x_{n} \}_{n\leq m}$ are drawn independently according to the fixed probability distribution $\mu$. The single step output states $y_i$ are assumed to be generated by a Markov chain having transition kernel $\mathbb{P}(dy,x)$.  In this case the single step samples $z:=\{(x_i,y_i)\}_{i\leq m}$ are IID on $\Omega\times \Omega$ with distribution ${\nu}{(dz)}:=\mathbb{P}(dy,x)\mu(dx)$ for    $z=(x,y)\in \Omega$.  It should be noted that this manner of collecting observations underlies many strategies for constructing approximations in   publications on nonlinear regression or statistical learning theory. \cite{devore2006approxmethodsuperlearn, kerkyacharianwarped, cs2002, vapnik,smale2009, smale2007,smale2009Online,gyorfy} 
However, the standing assumption in these approaches is that rates of convergence for the  approximation of a typical function is desired, not approximations of an operator such as $\mathcal{U}$ or $\mathcal{P}$. This is a subtle distinction between nonlinear regression, learning theory, and Koopman theory.  Some approaches  for nonlinear regression,  statistical learning theory, or  empirical process estimation  study  processes that are not IID. It is safe to say, however, that these techiques are not as  widely applicable nor as mature  as the results based on IID samples. 
 
\subsection*{Dependent Input-Output Samples}
In the application of Koopman theory, the assumption that samples are IID  is sometimes made.   However, it is also frequently the case that observations are  measured over multiple time  steps  for a single initial condition $x_0\in \Omega$,  instead of over just one time step.  In other words the input-output responses $\{z_n\}_{n\in \mathbb{N}_0}:=\{(x_n,y_n)\}_{n\in \mathbb{N}_0}$ are collected along the sample path of the Markov chain that starts at $x_0$. This case can arise in ergodic approximations in Koopman theory. \cite{ergodicnagel} We use the index $n$, the same time index as in the recursions above, for the measurements in this case to emphasize that observations are indexed in terms of the time step. In this case the 
samples $\{z_n\}_{n\in \mathbb{N}_0}$ constitute a dependent stochastic process. In the noise-free case we have $\{x_n\}_{n\leq m}:=\{w^{n}(x_0)\}_{n\in \mathbb{N}},$ while  for the stochastic case the observations are along a sample path of the Markov chain having transition probability $\mathbb{P}(dy,x)$.

Of course, it is also possible that hybrid collections of measurements are made that combine aspects of the above realizations of observations. We could choose initial conditions randomly  according to some fixed probability distribution, and then measure the response over time along  each sample path for a certain number of time steps. To the authors' knowledge error rates for such methods have not figured prominently in the literature on Koopman theory.

\subsection{Perron-Frobenius and Koopman Operators}
\label{sec:operators}
Koopman and Perron-Frobenius operators $\mathcal{U}$ and $\mathcal{P}$ are defined in terms of, or associated to, specific dynamical systems.  They have many uses including understanding the stability properties of a flow,  studying  the  convergence and rates of convergence of flows to equilibria or attracting sets,  or constructing predictors of observations for flows. 
In view of the conventions for  Markov chains \cite{meyn,saperstone},  we define the Koopman operator $\mathcal{U}$ and Perron-Frobenius operator $ \mathcal{P}$, respectively, in terms of the transition probability $\mathbb{P}(A,x)$ as 
\begin{align}
(\mathcal{P}\nu)(dy)&:=\int_\Omega \mathbb{P}(dy,x)\nu(dx),  \label{eq:genU}\\
(\mathcal{U}f)(x)&:=\int_\Omega  \mathbb{P}(dy,x)f(y), \label{eq:genP}
\end{align}
for a measure $\nu$ on $\Omega$ and function $f:\Omega \rightarrow \mathbb{R}$. We take these expressions as the most general form of the definitions for $\mathcal{U}$ and $\mathcal{P}$ in this paper. 
We say that a probability measure $\nu$  is invariant for the Markov chain having a transition probability kernel $\mathbb{P}(dy,x)$ whenever 
$$
(\mathcal{P}\nu)(A):=\int_\Omega \mathbb{P}(A,x)\nu(dx)=\nu
$$
for all measurable subsets $A\subseteq \Omega$. This definition of invariance of measures of a Markov chain \cite{meyn} takes a familiar form if the chain happens to be the simple deterministic evolution law in Equation \ref{eq:flow}. In that case the transition kernel is $\mathcal{P}(dy,x):=\delta_{w(x)}(dy)$, and we have 
\begin{align*}
   \nu(A)&= \int_\Omega \mathbb{P}(A,x)\nu(dx)  = \int_\Omega \delta_{w(x)}(A) \nu(dx) \\
    &=\int_\Omega \delta_x(w^{-1}(A)) \nu(dx) = \int_\Omega 1_{w^{-1}(A)}(x)\nu (dx) 
    =\nu(w^{-1}(A))
\end{align*}
for all measurable $A\subseteq \Omega$. Thus, for the deterministic case we say that the measure $\nu$ is invariant with respect to the mapping $w:\Omega \rightarrow \Omega$ provided $\nu(A)\equiv \nu(w^{-1}(A))$ for all measurable sets $A\subseteq \Omega$. This is the definition of invariance common in ergodic systems or operator theory. \cite{ergodicnagel}

Several specialized definitions of the Koopman and Perronn-Frobenius operators can be constructed from this general form, depending on a duality structure. We  summarize some of these below.   

\subsubsection{The Dual Pairing  $C^*(\Omega)\times C(\Omega)$}
The development of a theory for approximation of the Koopman or Perron-Frobenius operators in this paper makes assumptions regarding the  regularity or smoothness of  these operators.  %These are referred to as priors  in the literature of approximation theory, nonlinear regression, or learning theory. \cite{devore2006approxmethodsuperlearn},\cite{kerkyacharian2003entropunivercodinapproxbasesproper}. 
In our case these regularity conditions will be expressed in terms of specific duality structures associated with the Perron-Frobenius and Koopman operators.
One important case studied in this paper regards $\mathcal{U}$ as  a bounded linear operator on the continuous functions $C(\Omega)$ on $\Omega$, and $\mathcal{P}:C^*(\Omega)\rightarrow C^*(\Omega)$.  
 It is often the case  in our analysis that $\Omega$ is a compact subset of $\mathbb{R}^d$, which simplifies some of the duality arguments.  See \cite{meyn} for a discussion of the operators $\mathcal{P}$ and $\mathcal{U}$  when the domain $\Omega$ is not compact. 
When $\Omega$ is compact,  the normed dual $C^*(\Omega)$ is just the family of  regular countably additive set functions, or regular signed measures, denoted $C^*(\Omega)\equiv rca(\Omega)$. \cite{dunford,roubicek} The Koopman operator $\mathcal{U}$ and Perron-Frobenius operator $\mathcal{P}$ are then related by the duality expression  
$$
\left < \mathcal{U} \nu, f\right >_{C^*(\Omega) \times C(\Omega)}= 
\left < \nu , \mathcal{P}f\right >_{C^*(\Omega) \times C(\Omega)}
$$
for all $f\in C(\Omega)$ and $\nu \in C^*(\Omega):=rca(\Omega)$. 
%In this case $\mathcal{U}:rca(\Omega)\rightarrow rca(\Omega)$ and $\mathcal{P}:C(\Omega) \rightarrow C(\Omega)$. 
This identity means that $\mathcal{U}=\mathcal{P}'$, that is, $\mathcal{U}$ is the topological transpose or dual operator of $\mathcal{P}$ relative to the pairing $<\cdot,\cdot>_{C^*(\Omega)\times C(\Omega)}$. When we apply this condition for the discrete dynamical flow, which has the transition probability kernel $\mathbb{P}(dy,x):=\delta_{w(x)}(dy)$, we find that \begin{align}
    (\mathcal{U}f)(x)&=(f\circ w)(x), \label{eq:koop_deter}\\
    (\mathcal{P}\nu)(A)&= \nu(w^{-1}(A)). \label{eq:PF_deter}
\end{align}

 \subsubsection{The Dual Pairing  $L^\infty_\mu(\Omega)\times L^1_\mu(\Omega)$}
We also study transition kernels $\mathbb{P}(dy,x)$ that  are given in terms of a transition probability density function $p:\Omega\times \Omega \rightarrow \mathbb{R}$ as in  
$$
\mathbb{P}(dy,x):=p(y,x)\mu(dy),
$$
for some  probability measure $\mu$ on $\Omega$. If we further suppose that $\nu(dx):=m(x)\mu(dx)$ for some $m\in L^1_\mu(\Omega)$,
we then have 
\begin{align*}
(\mathcal{P}\nu)(dy)&=\int \mathbb{P}(dy,x)\nu(dx)\\
&=\int_\Omega p(y,x) m(x)  \mu(dx) \cdot\mu(dy) 
=   (\hat{\mathcal{P}}m)(y) \mu(dy).
\end{align*}
With suitable restrictions on the density $p$, this last expression leads  to an  alternate definition of the Perron-Frobenius operator $\hat{\mathcal{P}}:L^1_\mu(\Omega)\rightarrow L^1_\mu(\Omega)$ with 
\begin{equation}
( \hat{\mathcal{P}} m )(y) = \int_\Omega p(y,x) m(x) \mu(dx). \label{eq:PF_density}
\end{equation}
 In this setup 
 the Koopman operator $\hat{\mathcal{U}}:L^\infty_\mu(\Omega) \rightarrow L^\infty_\mu(\Omega)$ is defined with respect to  the dual pairing $\left < \cdot,\cdot \right >_{L^\infty_\mu(\Omega)\times L^1_\mu(\Omega)}$, since $(L_\mu^1(\Omega))^*=L^\infty_\mu(\Omega).$
 That is, we define the Koopman operator $\hat{\mathcal{U}}$ from the relation
$$
<\hat{\mathcal{U}}g,f>_{L^\infty_\mu(\Omega) \times L^1_\mu(\Omega)} = (g,\hat{\mathcal{P}}f)_{L^\infty_\mu(\Omega) \times L^1_\mu(\Omega)}
$$
for $g\in L^\infty_\mu(\Omega)$ and $f\in L^1_\mu(\Omega)$.
 \cite{lasota}   
 It follows that the Koopman operator $\hat{\mathcal{U}}$ is then induced by the dual 
 kernel in 
\begin{equation}
(\hat{\mathcal{U}}g)(x):=\int_\Omega p(y,x)g(y)\mu(dy). \label{eq:PF_density1}
\end{equation}

\subsubsection{Adjoint Operators $\mathcal{U}, \mathcal{P}$  on a Hilbert Space}
We note one last definition of these operators that is found frequently in the literature. 
When the measure $\mu$ is finite, that is it satisfies  $\mu(\Omega)<\infty$,  and the set $\Omega$ is compact, we have the embeddings of the primal spaces 
$$
C(\Omega) \subseteq L^\infty_\mu(\Omega) 
\subseteq \cdots \subseteq L^2_\mu(\Omega) \subseteq L^1_\mu(\Omega), 
$$
and of the dual spaces 
$$
L^\infty_\mu(\Omega)\equiv (L^1_\mu(\Omega))^*
\subseteq (L^2_\mu(\Omega))^* \cdots \subseteq C^*(\Omega)\equiv rca(\Omega). 
$$
A familiar duality structure can be extracted from the above by  identifying $L^2_\mu(\Omega)$ with itself via the Riesz mapping,
$$
 C(\Omega)\subset L^2_\mu(\Omega)\approx (L^2(\mu))^* \subset (C(\Omega))^* \equiv rca(\Omega).
$$
This is a specific  example of a Gelfand triple, a mathematical structure we discuss in some detail in our analysis of the approximation of measures
in Section \ref{sec:approx_measures1}.

Not surprisingly, it is quite common  to  encounter  a definition of  $\tilde{\mathcal{U}}$ and  $\tilde{\mathcal{P}}$ as adjoints written in terms of  the inner product
\begin{equation}
(\tilde{\mathcal{P}}f,g)_{L^2_\mu(\Omega)} =
(f,\tilde{\mathcal{U}}g)_{L^2_\mu(\Omega)}, \label{eq:PU_adjoint}
\end{equation}
for all $f,g\in L^2_\mu(\Omega)$. From the definition of the Riesz map   $R_{L^2_\mu(\Omega)}:(L_\mu^2(\Omega))^*\rightarrow L^2_\mu(\Omega)$, this identity means that $\tilde{\mathcal{P}}=R_{L^2_\mu(\Omega)}\hat{\mathcal{P}}$ since 
$$
\left< \hat{\mathcal{P}}f,g\right>_{(L^2_\mu(\Omega))^*\times L^2_\mu(\Omega)} = 
\left < R^{-1}_{L^2_\mu(\Omega)}\tilde{\mathcal{P}}f,g\right >_{(L^2_\mu(\Omega))^*\times L^2_\mu(\Omega)}=(\tilde{\mathcal{P}}f,g)_{L^2_\mu(\Omega)}.
$$

We consider a slight generalization of this setup in some examples in our paper. Above, the Koopman and Perron-Frobenius operators are defined as adjoint  operators in the same Hilbert space. For example, if we consider the deterministic system with $w:\Omega \rightarrow \Omega$  an onto mapping, then this may be a fruitful strategy.   In many of our examples we consider operators induced by a mapping  $w:\Omega\rightarrow \tilde{\Omega}\subseteq \mathbb{R}^d$ and admit the possibility that $\Omega$ and $\tilde{\Omega}:=w(\Omega)$ do not coincide. Then it may be advantageous to define for some measure $\tilde{\mu}$ on $\tilde{\Omega}$ 
\begin{align*}
    \tilde{\mathcal{U}} &:=L^2_\mu (\Omega)\rightarrow L^2_{\tilde{\mu}}(\tilde{\Omega}), \\
    \tilde{\mathcal{P}}&:= L^2_{\tilde{\mu}}(\tilde{\Omega}) \rightarrow L^2_\mu(\Omega),  
\end{align*}
with $\tilde{\mathcal{U}}$ and $\tilde{\mathcal{P}}$ adjoints as in Equation \ref{eq:PU_adjoint}.

In the remainder of this paper, we use the  common notation $\mathcal{P}$ and $\mathcal{U}$ for any of the  definitions of $(\mathcal{P},\mathcal{U})$, $(\hat{\mathcal{P}},\hat{\mathcal{U}})$, or
 $(\tilde{\mathcal{P}},\tilde{\mathcal{U}})$  given above. Whether $\mathcal{P}$ acts on measures or functions, for example, will be  clear from context in each application or example. 
\section{Reproducing Kernel Hilbert Spaces}
\label{sec:rkhs}
In this section we summarize  the theory of reproducing kernel Hilbert spaces (RKHS) $H$ that will enable the formulation of one family of  approximations,  and the determination of  their rates of convergence. We suppose that the evolution 
law is such  that the discrete state remains in the  compact set $\Omega\subseteq \mathbb{R}^d$.  
In fact, 
later in the paper, we assume that we are given field observations $\left \{ x_1,  x_2, \cdots, x_N  \right \} \subseteq \Omega$ that are generated as random samples that are distributed in terms of 
the probability measure $\mu$ on $\Omega$. The  measure $\mu$ describes how the samples are concentrated in $\Omega$. We then  are interested in constructing approximations in Koopman theory that somehow reflect  the structure of the measure $\mu$. This is accomplished in this section by introducing a RKHS $V\subset C(\Omega) \subset  U:=L^2_\mu(\Omega)$ that depends on the measure $\mu$.

The construction  begins with a continuous, symmetric, positive definite  kernel $K: \Omega \times \Omega \rightarrow \mathbb{R}$  that is  assumed to generate a reproducing kernel Hilbert space $(V,(\cdot,\cdot)_V)$  over $\Omega$. \cite{steinwart2012mercertheorgenerdomain}  The reproducing property of the kernel guarantees that 
\begin{equation}
(K_x,f)_V = f(x)
\end{equation}
for all $f\in V$ and $x\in \Omega$ with  the function $K_x(\cdot):=K(x,\cdot)$. Alternatively, it is known that if all the evaluation functionals acting on a Hilbert space $V$ are bounded, then $V$ is a RKHS. This means that for each $x\in \Omega$, there is a constant $c_x$ such that $f(x)\leq c_x \|f\|_V$.  We further assume that the kernel $K$ is sufficiently regular to continuously embed $V$ in $U:=L^2_\mu(\Omega)$.
In other words the linear injection $I_K: V \rightarrow U$ 
\begin{align}
I_K & : f \mapsto I_K f = f 
\end{align}
is bounded, and  we have $\| f\|_{U} \leq \| I_K \|  \|f\|_V$ for all $f \in V$.  This fact can be guaranteed if we know that the kernel $K$ satisfies $\sup_{x\in \Omega} K(x,x) <\infty$ as shown by Smale and Zhou in  \cite{smale2007,smale2009}.  It then also follows that $V$ is separable and compactly embedded in $C(\Omega)$. \cite{rosascoint}

The adjoint operator $I_K^*: U \rightarrow V$ is given by
\begin{align*}
(I_K K_q, g)_{U} &= (K_q, I_K^* g)_V   
 = (I_K^*g)(q) = \int_\Omega K_q(y) g(y) \mu(dy).
\end{align*}
We  define the operator $T_K: U  \rightarrow U$ as $T_K:= I_K I_K^* $, and we see that 
\begin{align*}
T_K g :&= I_K I_K^* g = I_K \underbrace{\int_\Omega K(\cdot, r) g(r) \mu(dr)}_{\in V} 
 = \underbrace{\int_\Omega K(\cdot, r) g(r) \mu(dr)}_{\in U}.
\end{align*}
Analogously, we set $T_\mu:=I_K^*I_K$ so that $T_\mu:V \rightarrow V$.  
 \begin{figure}
 \begin{center}
 \begin{tikzcd}
& U \arrow[rd,"I^*_K"] \arrow[rr,"T_K"]&     & U&  \\
V \arrow[ru,"I_K"] \arrow[rr,"T_\mu"] &                           &  V \arrow[ru,"I_K"]&  &
\end{tikzcd}
 \caption{Commutative diagram defining operators $T_K$ and $T_\mu$ in terms of $I_K, I^*_K$}
 \label{fig:ch1_1}
\end{center}
\end{figure}
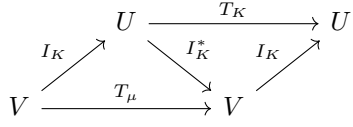
 Since $I^*_K$ is a linear compact operator, both $T_\mu$ and $T_K$ are compact and self-adjoint.  The relationship among the operators $T_K,T_\mu,I_K$, and $I_K^*$ is depicted in Figure \ref{fig:ch1_1}
 
 The operators $T_K$ and $T_\mu$ have convenient representations that are a consequence of spectral theory. The spectral theory for compact, self-adjoint operators 
 is reviewed in Appendix \ref{app:spectral}. More extensive summaries can be found   in \cite{piestch,weidmann1980linearoperathilberspaces}.  The eigenvalues of 
 the operators $T_K$ and $T_\mu$ are identical and are arranged in an extended enumeration, including multiplicities, in nonincreasing order 
 \begin{equation*}
 \lambda_1 \geq \lambda_2 \geq \ldots \geq 0.
 \end{equation*}
 Each eigenspace corresponding to a nonzero eigenvalue is finite 
 dimensional,  and the 
 only possible accumulation point of this infinite sequence is zero. We denote by $\seq{v_i}_{i\in \mathbb{N}}\subseteq V$ and $\seq{u_i}_{i\in \mathbb{N}}\subseteq U$  orthonormal  eigenvectors of $T_\mu$ and $T_K$, respectively, associated with the 
 eigenvalues $\seq{\lambda_i}_{i\in \mathbb{N}}$.
  The spectral theory for compact, self-adjoint operators guarantees that the following expansions are norm-convergent,
 \begin{align}
 T_\mu g & = \sum_{i\in \mathbb{N}} \lambda_i (g,v_i)_V v_i  & &\text{ in $V$} , \label{eq:ev_1} \\
 T_K f & =  \sum_{i\in \mathbb{N}} \lambda_i (f,u_i)_{U} u_i  & & \text{ in $U$} , \label{eq:ev_2}\\
 I^*_K f & = \sum_{i\in \mathbb{N}} \sigma_i (f,u_i)_{U} v_i & & \text{ in $V$}, \label{eq:ev_3}
 \end{align}
 for each $f\in U$ and $g\in V$. By convention these summations are carried out only over the nonzero eigenvalues. The  families $\{u_i\}_{i\in \mathbb{N}}$ and $\{v_i\}_{i\in \mathbb{N}}$ associated with nonzero eigenvalues  are an orthonormal basis for $ N(T_K)^\perp\subset U$ and $N(T_\mu)^\perp\subseteq V$, respectively. \cite{rosascoint} In these equations $\sigma_i:=\sqrt{\lambda_i}$ is the $i^{th}$ singular value of the operator $I^*_K$.  When the eigenvalues are non-increasing, the decompositions in Equations \ref{eq:ev_1},\ref{eq:ev_2}, and \ref{eq:ev_3} are also known as the unique monotonic Schmidt decompositions of the compact operators $T_\mu,T_K,$ and $I_K^*$, respectively. \cite{piestch}

Note that since each $u_i\in U:=L^2_\mu(\Omega)$, it is not defined for all $x\in \Omega$, but only for $ \mu-$a.e.   $x\in \Omega$. On the other hand, $v_i\in V\subset C(\Omega)$ is defined for each $x\in \Omega$.  It is always possible to extend each $u_i$ to a continuous  function $\tilde{u}_i(x):=(T_K u_i)(x)/{\lambda_i}$ for all $x\in \Omega$. 
That is, the function $\tilde{u}_i$ is a continuous representative of the equivalence class $u_i$.  
 This is  the Nystrom extension \cite{rosascoint}, and 
 it is known that $v_i= \sqrt{{\lambda}_i} \tilde{u}_i$. 
 In the following we suppress the extension notation $\tilde{(\cdot)}$, but it must be kept in mind when expressing  $v_i$ in terms of  $u_i$.

We will use several probabilistic error bounds later in this paper that are readily cast in terms of spaces of Hilbert-Schmidt operators, a type of operator of the Schatten class.  The Schatten class of operators $S^p(U)$ on $U$  of order $1\leq p<\infty$ is the Banach space 
$$
S^p(U):=\left \{ T:U \rightarrow U \ | \ \text{ $T$ is compact and } \|T\|_{S^p} <\infty\right \}
$$  
with the norm given in the above definition by 
$$
\|T\|_{S^p}:=\left ( \sum_{i\in \mathbb{N}} \sigma_i^p(T)\right )^{1/p}.
$$
Following convention, we define  $S^\infty(U):=\left\{T\in \mathcal{L}(U)\ |\  \text{$T$ is compact} \right \}$. We then have 
$$
\|T\|_{S^{p}} < \infty \quad \Longrightarrow \quad 
\|T\|_{S^{p+1}}<\infty,
$$
for $1\leq p \leq \infty$, and therefore 
$$
S^1(U)\subset S^2(U)\cdots \subset S^\infty(U) \subset \mathcal{L}(U).
$$ 
The Schatten class $S^p(V)$ is defined similarly. The Hilbert-Schmidt operators are obtained by choosing  $p=2$, while trace class operators correspond to $p=1$. 

Because $V\subseteq U$ is an RKHS, more can be said about the relationship of the series expansions in Equations \ref{eq:ev_1} and \ref{eq:ev_2} by exploiting properties of the Hilbert-Schmidt operators $S^2(V)$.   We know that $g(x)=(K_x,f)_V$ for all $x\in \Omega$ and $f\in V$. In this case it is possible to represent the operator $T_\mu$ in terms of the Bochner integral $T_\mu:= \int K_x\otimes K_x \mu(dx)$ with  the tensor product $(K_x\otimes K_x) g=K_x (K_x,g)_V$. \cite{rosasco} 
For any $g\in V$ we have 
\begin{align*}
\|g\|^2_{U}&= \int g^2(x)\mu(dx) 
= \int (K_x,g)_V^2 \mu(dx)=\int (K_x\otimes K_x g, g)_V \mu(dx) \\
&= \left ( \int K_x \otimes K_x \mu(dx) g,g \right)_V = (T_\mu g,g)_V.
\end{align*}
 This sequence of steps can be used to show that $(g,v_j)_V=(g,u_j)_{U}/\sqrt{\lambda_j}$, from which we conclude 
$$
T_\mu g = \sum_{i\in\mathbb{N}} \lambda_i^{1/2} (g,u_i)_{U}v_i.
$$
We thereby can directly compare the norms  in terms of their action on the basis $\left \{u_i \right \}_{i\in\mathbb{N}}$, 
\begin{align*}
\|T_K f\|_{U}^2&:\sum_{i\in \mathbb{N}} \lambda_i^2 |(f,u_i)_{U}|^2  , \\
\|T_\mu g\|_{V}^2&:\sum_{i\in \mathbb{N}} \lambda_i |(g,u_i)_{U}|^2 ,
\end{align*}
for $f\in U$ and $g\in V$. 
If $T_K$ is infinite dimensional,  $\lambda_j^2\leq \lambda_j$ for all $j$ large enough. It  is evident that $D(T_\mu)\subseteq D(T_K)$. This means that the generalized Fourier coefficients of functions $f$ in the domain $T_\mu$ decay faster than those in the domain of $T_K$. This idea can be formulated systematically by introducing the spectral approximation spaces, discussed next.

\section{Spectral Approximation Spaces $A^{r,2}_\lambda(U)$} 
\label{sec:spectral_spaces}
This section introduces {\em spectral approximation spaces} that are defined in terms of a fixed compact,  self-adjoint  operator $T:U\rightarrow U$. Specifically, the eigenvalues $\lambda_i:=\lambda_i(T)$  and $U$-orthonormalized eigenfunctions $\left \{u_i\right \}_{i\in \mathbb{N}}$ are used to construct $A^{r,2}_\lambda(U)$. Typically, we choose $T:=T_K$ as described in the last section, although other choices are also possible.  
If  we happen to have a RKHS $V\subset U$ that satisfies the assumptions of the last section, we find that $U$ and $V$ are two particular spaces in a scale of spectral approximation spaces $A^{r,2}(\Omega)$.

For a  self-adjoint and  compact operator $T$,  with  non-zero eigenvalues and associated  eigenfunctions $\left \{ \right (\lambda_i,u_i)\}_{i\in \mathbb{N}},$  we define  spectral  approximation spaces $A^{r,2}_\lambda(U)$ for $r\geq 0$ via the formula
\begin{equation}
A^{r,2}_\lambda(U):= A^{r,2}_{\lambda(T)}(U):=
\left \{ f \in U \ \left | \ |f|^2_{A_\lambda^{r,2}(U)} :=\sum_{i\in \mathbb{N}}  (\lambda_i^{-r/2} |(u_i,f)|_U)^2 < \infty \right .
\right \}. \label{eq:spectral_space}
\end{equation}
 Note that $A^{0,2}_\lambda(U)\equiv U$ in this definition. 
Intuitively, these spaces have a simple interpretation: a function $f\in A^{r,2}_\lambda(U)$ provided that  the generalized Fourier coefficients $(u_i,f)_H$ decay at a rate that is controlled by the speed that the inverse $\lambda_i^{-r/2}$ of the eigenvalues grow. The next theorem summarizes some standard properties of the spectral spaces. 
\begin{theorem}
\label{th:spec_sp_approx}
The spectral approximation spaces $A^{r,2}_\lambda(U)$ are nested, 
$$
A^{s,2}_\lambda(U)\subset A^{r,2}_\lambda(U) 
$$
for all $s>r.$
Let $\Pi_n$ be the $U$-orthogonal projection onto the finite dimensional space of approximants 
$$
A_n:=\text{span}
\left \{ u_i\ | \  i \in \mathbb{N}, i\leq n-1
\right \}. 
$$
If $f\in A^{r,2}_\lambda(U)$, we have the error estimate 
\begin{align*}
\|(I-\Pi_n)f\|_U & \lesssim 
\lambda_n^{r/2} \|f\|_{A^{r,2}_\lambda(U)}.
\end{align*}
\end{theorem}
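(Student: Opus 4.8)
The plan is to reduce both claims to elementary manipulations of the generalized Fourier coefficients $c_i := (u_i,f)_U$ of $f$ relative to the orthonormal eigensystem $\{u_i\}_{i\in\mathbb{N}}$, using only two structural facts recorded earlier: that $T$ is compact and self-adjoint, so its nonzero eigenvalues satisfy $\lambda_1 \geq \lambda_2 \geq \cdots > 0$ with $\lambda_i \to 0$, and that the $\{u_i\}_{i\in\mathbb{N}}$ associated with nonzero eigenvalues form an orthonormal basis for $N(T)^\perp$.

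For the nesting I would fix $s > r$ and write $\lambda_i^{-r} = \lambda_i^{-s}\,\lambda_i^{s-r}$. Since $s-r>0$ and every eigenvalue obeys $0 < \lambda_i \leq \lambda_1$, the factor $\lambda_i^{s-r}$ is bounded above by $\lambda_1^{s-r}$ uniformly in $i$. Multiplying by $|c_i|^2$ and summing termwise then gives
\begin{equation*}
|f|^2_{A^{r,2}_\lambda(U)} = \sum_{i\in\mathbb{N}}\lambda_i^{-r}|c_i|^2 \leq \lambda_1^{s-r}\sum_{i\in\mathbb{N}}\lambda_i^{-s}|c_i|^2 = \lambda_1^{s-r}\,|f|^2_{A^{s,2}_\lambda(U)}.
\end{equation*}
Hence finiteness of the $s$-seminorm forces finiteness of the $r$-seminorm, yielding the inclusion $A^{s,2}_\lambda(U) \subseteq A^{r,2}_\lambda(U)$ together with a quantitative seminorm bound.

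For the error estimate I would expand $f = \sum_{i\in\mathbb{N}} c_i u_i$, so that $\Pi_n f = \sum_{i\leq n-1} c_i u_i$ and the tail is $(I-\Pi_n)f = \sum_{i\geq n} c_i u_i$. Parseval's identity for the orthonormal system then gives $\|(I-\Pi_n)f\|_U^2 = \sum_{i\geq n}|c_i|^2$. The crucial step is to reinsert the eigenvalue weights: for each $i \geq n$ monotonicity of the eigenvalues gives $\lambda_i \leq \lambda_n$, hence $\lambda_i^{r} \leq \lambda_n^{r}$ for $r>0$, so that
\begin{equation*}
\sum_{i\geq n}|c_i|^2 = \sum_{i\geq n}\lambda_i^{r}\,\lambda_i^{-r}|c_i|^2 \leq \lambda_n^{r}\sum_{i\geq n}\lambda_i^{-r}|c_i|^2 \leq \lambda_n^{r}\,|f|^2_{A^{r,2}_\lambda(U)}.
\end{equation*}
Taking square roots delivers $\|(I-\Pi_n)f\|_U \leq \lambda_n^{r/2}\,|f|_{A^{r,2}_\lambda(U)}$, which is the asserted estimate, indeed with constant one.

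The single point demanding care — and the step I expect to be the genuine obstacle rather than the telescoping above — is the contribution of $N(T)$. The seminorm sees only the coefficients against nonzero eigenvalues, whereas any component of $f$ lying in $N(T)$ is annihilated by every $\Pi_n$ and therefore persists in $(I-\Pi_n)f$ for all $n$, spoiling the Parseval identity used above. To make the clean estimate valid I would either restrict attention to $f \in N(T)^\perp$ (equivalently, work modulo $N(T)$, so that $\{u_i\}$ is a genuine basis for the ambient subspace), or invoke the RKHS setting $T=T_K$ of the previous section, in which the kernel is trivial whenever $V$ is dense in $U$; the expansion $f = \sum_i c_i u_i$ then converges to $f$ itself and the argument proceeds verbatim.
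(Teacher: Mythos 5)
Your proof is correct and follows essentially the same route as the paper's: the same factorization $\lambda_i^{-r}=\lambda_i^{s-r}\lambda_i^{-s}$ with boundedness of $\lambda_i^{s-r}$ for nestedness, and the same Parseval-plus-monotonicity argument ($\lambda_i^r\le\lambda_n^r$ for $i\ge n$) for the error bound. Your closing caveat about $N(T)$ is a fair observation, but the paper has already disposed of it by convention — the sums run only over nonzero eigenvalues and $\{u_i\}$ is taken as an orthonormal basis of $N(T)^\perp$ — so no further repair is needed.
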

\begin{proof}
Nestedness follows since 
\begin{align*}
    |f|^2_{A^{r,2}_\lambda(U)} &=
    \sum_{i\in \mathbb{N}}\lambda_i^{-r}|(f,u_i)_U|^2 
    \leq 
    \sum_{i\in\mathbb{N}}\lambda_i^{s-r}\lambda_i^{-s}|(f,u_i)_U|^2\\
    &\lesssim  \sum_{i\in \mathbb{N}} \lambda_i^{-s}|(f,u_i)_U|^2=
    |f|^2_{A^{s,2}_\lambda(U)},
\end{align*}
provided that $s>r$. The error in approximation induced by the $U-$orthonormal projection $\Pi_n$  is shown similarly.
\begin{align*}
    \|(I-\Pi_n)f\|^2_U &= 
    \sum_{i\geq n}|(f,u_i)_U|^2
    = \sum_{i\geq n} \lambda_i^r \lambda_i^{-r}|(f,u_i)_U|^2 \\
    &\leq \lambda_n^r \sum_{i\geq n} \lambda_i^{-r} |(f,u_i)_U|^2 \leq \lambda_n^r \|f\|^2_{A^{r,2}_\lambda(U)}.
\end{align*}
\end{proof}

\noindent It is worth noting that the proof of the  error bound above can be easily modified to derive 
$$
\|(I-\Pi_n)f\|_{A^{r,2}_\lambda(U)} \leq \lambda_n^{(s-r)/2} \|f\|_{A^{s,2}_\lambda(U)}
$$
whenever $s>r>0$ and $f\in A^{s,2}_\lambda(U)$.  The bound in the theorem can be understood as the limiting case of the above when $r=0$. 

 We next  see how the  approximation spaces $A_\lambda^{r,2}(U)$ and $A_\lambda^{r,2}(V)$ are related when $V$ is a RKHS, and describe some simple mapping properties of the operators $T_\mu$ and $T_K$ when we choose $T=T_K$ in the definition of $A^{r,2}_\lambda(U)$.  We assume that the general setup discussed in Section \ref{sec:rkhs} holds. 
\begin{theorem}
\label{th:smoothing}
 If $V\subseteq U$ is a RKHS and the imbedding $i_K:V\rightarrow U$ is compact and continuous, it follows that 
 \noindent \begin{itemize}
 \item[1)]
   ${A}^{r+1,2}_\lambda(U)\approx {A}^{r,2}_\lambda(V)$ for $r>0$, and 
   \item[2)]
 the operators $T_K$ and $T_\mu$ are smoothing in the sense that \begin{align*}
T_K&: A^{r,2}_\lambda(U) \rightarrow A^{r+2,2}_\lambda(U),\\ 
T_\mu&:A^{r,2}_\lambda(V) \rightarrow A_\lambda^{r+2,2}(V).
\end{align*}
\end{itemize}
\end{theorem}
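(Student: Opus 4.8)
The plan is to reduce both statements to elementary manipulations of the generalized Fourier coefficients in the eigenbases $\{u_i\}$ and $\{v_i\}$, since the seminorms in Equation \ref{eq:spectral_space} are diagonal with respect to these bases. The only genuine content is tracking how powers of the eigenvalues $\lambda_i$ transform under (i) passing between the $V$- and $U$-inner products and (ii) applying $T_K$ or $T_\mu$. Both effects are already recorded in Equations \ref{eq:ev_1}--\ref{eq:ev_3} and in the coefficient identity $(g,v_j)_V = (g,u_j)_U/\sqrt{\lambda_j}$ derived in Section \ref{sec:rkhs}.

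For part 1, I would start from $g \in V$ and use the coefficient identity to write, for any $r>0$,
\begin{equation*}
|g|^2_{A^{r,2}_\lambda(V)} = \sum_{i\in\mathbb{N}} \lambda_i^{-r}\,|(g,v_i)_V|^2 = \sum_{i\in\mathbb{N}} \lambda_i^{-r}\,\frac{|(g,u_i)_U|^2}{\lambda_i} = |g|^2_{A^{r+1,2}_\lambda(U)},
\end{equation*}
so that the two seminorms coincide on $V$; this immediately yields the inclusion $A^{r,2}_\lambda(V)\hookrightarrow A^{r+1,2}_\lambda(U)$, and the full-norm statement follows because for $r>0$ the seminorm dominates the $U$-norm (as $\lambda_i^{-r}\geq \lambda_1^{-r}$). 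The reverse inclusion needs a short lifting argument: given $f\in A^{r+1,2}_\lambda(U)$, I would set $g:=\sum_i \lambda_i^{-1/2}(f,u_i)_U\,v_i$ and verify $\|g\|_V^2 = \sum_i \lambda_i^{-1}|(f,u_i)_U|^2 < \infty$, which is finite since $A^{r+1,2}_\lambda(U)\subset A^{1,2}_\lambda(U)$ by the nesting in Theorem \ref{th:spec_sp_approx}, so the series converges in $V$. Using the Nystrom relation $v_i = \sqrt{\lambda_i}\,u_i$ in $U$ from Section \ref{sec:rkhs}, its image $I_K g = \sum_i (f,u_i)_U u_i$ recovers $f$, so $g$ is the continuous representative of $f$ and $f\in A^{r,2}_\lambda(V)$.

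For part 2, the argument is a one-line eigenvalue count. From Equation \ref{eq:ev_2} we have $T_K f = \sum_i \lambda_i (f,u_i)_U u_i$, hence $(T_K f,u_j)_U = \lambda_j (f,u_j)_U$, and substitution into the $A^{r+2,2}_\lambda(U)$ seminorm gives
\begin{equation*}
|T_K f|^2_{A^{r+2,2}_\lambda(U)} = \sum_{j\in\mathbb{N}} \lambda_j^{-(r+2)}\lambda_j^2\,|(f,u_j)_U|^2 = \sum_{j\in\mathbb{N}} \lambda_j^{-r}\,|(f,u_j)_U|^2 = |f|^2_{A^{r,2}_\lambda(U)},
\end{equation*}
so $T_K$ maps $A^{r,2}_\lambda(U)$ boundedly, in fact seminorm-isometrically, into $A^{r+2,2}_\lambda(U)$. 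The identical computation with $\{v_i\}$ and Equation \ref{eq:ev_1} replacing $\{u_i\}$ and Equation \ref{eq:ev_2} handles $T_\mu$ on $A^{r,2}_\lambda(V)$.

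I expect the only real obstacle to be the rigor of the identification in part 1: one must be careful that $f\in U$ is an equivalence class while its image in $V$ is a genuine continuous function, that the embedding $I_K$ is injective so that the decompositions range over a full orthonormal basis (the summations being over nonzero eigenvalues only), and that the reconstruction series converges in the $V$-norm rather than merely in $U$. Once these bookkeeping points are settled via the Nystrom extension and the coefficient identity, everything else is a direct diagonal computation.
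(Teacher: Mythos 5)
Your proof is correct and follows essentially the same route as the paper's: both parts reduce to the diagonal seminorm computation, using the coefficient identity $(g,v_j)_V = (g,u_j)_U/\sqrt{\lambda_j}$ for part 1 and the eigen-expansion $T_K f = \sum_i \lambda_i (f,u_i)_U u_i$ for part 2. The only difference is that you spell out the reverse inclusion in part 1 via the lifting $g=\sum_i \lambda_i^{-1/2}(f,u_i)_U v_i$ and the Nystrom identification, a bookkeeping step the paper's proof leaves implicit after computing the seminorm identity.
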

\begin{proof}
The proof of (1) follows directly from the calculation 
\begin{align*}
|f|^2_{A^{r,2}_\lambda(V)}
&= \sum_{i\in \mathbb{N}} \lambda_i^{-r}|(f,v_i)_V|^2, \\
&= \sum_{i\in \mathbb{N}} \lambda_i^{-r}|\lambda_i^{-1/2}(f,u_i)_U|^2, \\ 
&= \sum_{i\in \mathbb{N}} \lambda_i^{-(r+1)}|(f,u_i)_U|^2 = |f|^2_{A_\lambda^{r+1,2}(U)}.
\end{align*}
Conclusion (2) in the above theorem holds  because 
\begin{align*}
    |T_Kf|^2_{A^{r+2,2}_\lambda(U)}&= \sum_{i\in \mathbb{N}} \lambda_{i}^{-(r+2)}|(T_Kf,u_i)_U|^2, \\
    &= \sum_{i\in \mathbb{N}} \lambda_{i}^{-(r+2)}\left |\left (\sum_{m\in \mathbb{N}}
    \lambda_m (f,u_m)_U u_m,u_i\right )_U\right |^2, \\
    &= \sum_{i\in \mathbb{N}} \lambda_i^{-r}|(f,u_i)_U|^2 =  |f|^2_{A^{r,2}_\lambda(U)}.
\end{align*}
The result for $T_\mu$ in (3)  follows similarly using eigenfunction expansions in $V$ in terms of  $\left \{v_i \right \}_{i\in \mathbb{N}}.$
\end{proof}

The mapping properties described above for $T_K$ or $T_\mu$ can also be understood in terms of the operators $\sqrt{T_K}$ and $\sqrt{T_\mu}$. We have 
\begin{align*}
    \sqrt{T_K}&:A^{r,2}_\lambda(U) \rightarrow A^{r+1,2}_\lambda(U) \\
     \sqrt{T_\mu}&:A^{r,2}_\lambda(V) \rightarrow A^{r+1,2}_\lambda(V) \\
\end{align*}
This means that we can interpret the square root operators as the (increasing) shift operator on the scale of spaces $A^{r,2}_\lambda(U)$ and $A^{r,2}_\lambda(V)$. \cite{dahmenmultiscale}
 
\subsection{The Compact, Self-Adjoint  Operators in $\mathbb{A}^{r,2}_\lambda(U)$}
\label{sec:SA_family}
%\begin{framed}
We define a family of admissible operators $\mathbb{A}^{r,2}_\lambda(U)$ that are convenient for studying rates of convergence of approximations in terms of the    spectral spaces   ${A}^{r,2}_\lambda(U)$. 
We define for $r>0$ the   family of self-adjoint, compact  operators   $\mathbb{A}^{r,2}_\lambda(U)$ via  
\begin{align}
    \mathbb{A}^{r,2}_\lambda(U):=
    \left \{\mathcal{P}=\sum_{i\in \mathbb{N}} p_i u_i\otimes u_i \in S^\infty(U) 
     \ \biggl | \ 
     |\mathcal{P}|^2_{A_\lambda^{r,2}(U)}:=
    \sum_{i\in \mathbb{N}} \lambda_i^{-r}|p_i|^2 < \infty
    \right \}
    \label{eq:A_fam_1}
\end{align}
where the  formula for $\mathcal{P}$ above is a Schatten expansion of $\mathcal{P}$ in   terms of the eigenvector basis $\left \{ u_i\right \}_{i\in\mathbb{N}}$ of the operator $T$ used to define $A^{r,2}_\lambda(U)$, and $\left \{ p_k\right \}_{i\in\mathbb{N}}\subset \mathbb{R}$. Admittedly, the family of operators $\mathbb{A}^{r,2}_\lambda(U)$ contains operators that are highly structured. Each  operator $\mathcal{P}\in \mathbb{A}^{r,2}_\lambda(U)$ is self-adjoint. We emphasize, however, that the operator $\mathcal{P}\in \mathbb{A}^{r,2}_\lambda(U)$ is not  diagonal with respect to some arbitrary  orthonormal basis of $U$; it is diagonalized in terms of the basis $\{u_j\}_{j\in \mathbb{N}}$ generated from $T$.  Suppose that $\{a_j\}_{j\in \mathbb{N}}$ is another orthonormal basis for $U$.  Since, as we show below in Theorem \ref{th:spec_embedding} that $\mathbb{A}^{r,2}_\lambda(U)\subset S^2(U)$, the Hilbert-Schmidt operator $\mathcal{P}\in \mathbb{A}^{r,2}_\lambda(U)$ is guaranteed to have  the representation 
$$
\mathcal{P}:=\sum_{i,j\in \mathbb{N}} p_{ij}a_i\otimes a_j
$$
with $p_{ij}=(\mathcal{P}a_i,a_j)_U$ and  $p_{ij}=p_{ji}$.  Of course, the value of the norm in $S^2(U)$ does not depend on the choice of orthonormal basis, so we must have 
$$
\|\mathcal{P}\|_{S^2(U)}^2:=
\sum_{i\in\mathbb{N}} (\mathcal{P}u_i,u_i)^2_U=\sum_{i\in \mathbb{N}} p_i^2 =\sum_{i,j\in \mathbb{N}} p_{ij}^2 = \sum_{i,j\in \mathbb{N}} (\mathcal{P}a_i,a_j)^2_U.
$$

We have introduced  this definition so that proofs of the rates of convergence of the operators $\mathcal{P}$ and $\mathcal{U}$ are particularly simple and illustrative. We will see that this definition can be generalized easily to certain classes of  operators defined in Section \ref{sec:NSA_family} that can contain operators that are not self-adjoint. In fact, essentially all of the error bounds derived in  
 Theorem \ref{th:spectral_approx} for the family defined in Equation \ref{eq:A_fam_1} hold for the more general class of admissible operators introduced in Section \ref{sec:NSA_family} that contains non-self-adjoint operators too.

The family of operators $\mathbb{A}^{r,2}_\lambda(U)$ can  again be understood intuitively like the definition of the spectral spaces $A^{r,2}_\lambda(U)$. A feasible Perron-Frobenius operator $\mathcal{P}\in \mathbb{A}^{r,2}_\lambda(U)$ has a Schatten class representation whose coefficients decay at a rate that is inversely proportional to the rate at which the eigenvalues $\lambda_i(T)$ converge to zero for some fixed compact, self-adjoint operator $T$. In this sense, the fixed operator $T$, by virtue of its eigenstructure, defines rates of convergence in $\mathbb{A}^{r,2}_\lambda(U)$.  When $V\subseteq U$ is a RKHS, the operators $T_K$  or $T_\mu$  that are  induced by a  symmetric kernel $K:\Omega\times \Omega \rightarrow \Omega$ are a  natural choice for the definition of $\mathbb{A}^{r,2}_\lambda(U)$ or $\mathbb{A}^{r,2}_\lambda(V)$, respectively. However, the definition above need not be restricted to this case.  We summarize a few of the easy properties of the operators in  $\mathbb{A}^{r,2}_\lambda(U)$. 
\begin{theorem}
\label{th:spec_embedding}
For each $r>0$ we have 
$$
\mathbb{A}^{r,2}_\lambda(U)\subset S^2(U).
$$
The family of operators $\mathbb{A}^{r,2}_\lambda(U)$ are nested, 
$$
\mathbb{A}^{s,2}_\lambda(U) \subset \mathbb{A}^{r,2}_\lambda(U),
$$
whenever $s>r>0$.
\end{theorem}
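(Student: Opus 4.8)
The plan is to reduce both assertions to the single elementary fact that the eigenvalues are nonincreasing, so $\lambda_i \leq \lambda_1$ for every $i$ and hence any positive power satisfies $\lambda_i^\alpha \leq \lambda_1^\alpha$. The one structural observation the whole argument rests on is that each $\mathcal{P} \in \mathbb{A}^{r,2}_\lambda(U)$ is, by the very definition in Equation \ref{eq:A_fam_1}, diagonalized in the orthonormal eigenbasis $\{u_i\}_{i\in\mathbb{N}}$ of $T$, so that its Hilbert-Schmidt norm is simply $\|\mathcal{P}\|_{S^2(U)}^2 = \sum_{i\in\mathbb{N}} p_i^2$, exactly as recorded in the display following Equation \ref{eq:A_fam_1}. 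Once this is in hand, everything else is a one-line summation estimate.

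For the embedding $\mathbb{A}^{r,2}_\lambda(U) \subset S^2(U)$, I would start from the Hilbert-Schmidt norm and insert a factor $\lambda_i^{r}\lambda_i^{-r}$:
$$
\|\mathcal{P}\|_{S^2(U)}^2 = \sum_{i\in\mathbb{N}} p_i^2 = \sum_{i\in\mathbb{N}} \lambda_i^{r}\,\lambda_i^{-r} p_i^2 \leq \lambda_1^{r} \sum_{i\in\mathbb{N}} \lambda_i^{-r} p_i^2 = \lambda_1^{r}\,|\mathcal{P}|_{A^{r,2}_\lambda(U)}^2,
$$
where the inequality uses $\lambda_i^{r} \leq \lambda_1^{r}$, valid since $r>0$. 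The right-hand side is finite precisely by the defining condition for membership in $\mathbb{A}^{r,2}_\lambda(U)$, so $\mathcal{P}$ is Hilbert-Schmidt.

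The nestedness $\mathbb{A}^{s,2}_\lambda(U) \subset \mathbb{A}^{r,2}_\lambda(U)$ for $s>r>0$ follows from the same template, now splitting $\lambda_i^{-r} = \lambda_i^{s-r}\lambda_i^{-s}$:
$$
|\mathcal{P}|_{A^{r,2}_\lambda(U)}^2 = \sum_{i\in\mathbb{N}} \lambda_i^{s-r}\,\lambda_i^{-s} p_i^2 \leq \lambda_1^{s-r}\,|\mathcal{P}|_{A^{s,2}_\lambda(U)}^2,
$$
since $s-r>0$ gives $\lambda_i^{s-r} \leq \lambda_1^{s-r}$. Thus if $\mathcal{P}\in\mathbb{A}^{s,2}_\lambda(U)$ — so that $\mathcal{P}\in S^\infty(U)$ with finite $A^{s,2}_\lambda(U)$-seminorm — then its $A^{r,2}_\lambda(U)$-seminorm is finite as well, which is exactly membership in $\mathbb{A}^{r,2}_\lambda(U)$. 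This deliberately mirrors the nestedness argument for the function spaces in Theorem \ref{th:spec_sp_approx}: the operator seminorm is the same weighted $\ell^2$ quantity applied to the Schatten coefficients $\{p_i\}$ rather than to generalized Fourier coefficients.

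There is no genuine obstacle here, and the only point requiring care is the very first one, namely the identification $\|\mathcal{P}\|_{S^2(U)}^2 = \sum_i p_i^2$. This is immediate because $\{u_i\otimes u_i\}$ diagonalizes $\mathcal{P}$, so its singular values are exactly $\{|p_i|\}$; I would confirm that it is the basis-independence of the Hilbert-Schmidt norm that licenses reading the $S^2$ norm off the eigenbasis of $T$ rather than off an arbitrary orthonormal basis of $U$. (The zero operator is trivially contained in every $\mathbb{A}^{r,2}_\lambda(U)$, so one may assume $\lambda_1>0$ without loss.)
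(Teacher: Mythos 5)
Your proof is correct and follows essentially the same route as the paper's: both insert the factor $\lambda_i^{r}\lambda_i^{-r}$ (respectively $\lambda_i^{s-r}\lambda_i^{-s}$) and use the monotonicity of the eigenvalues to compare the weighted $\ell^2$ sums of the Schatten coefficients $\{p_i\}$. The only difference is cosmetic: you make the implied constants $\lambda_1^{r}$ and $\lambda_1^{s-r}$ explicit where the paper writes $\lesssim$ (and, in the nestedness step, a slightly too-casual $\leq$), and you spell out the identification $\|\mathcal{P}\|_{S^2(U)}^2=\sum_i p_i^2$, which the paper takes for granted.
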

\begin{proof}
Each $\mathbb{A}^{r,2}_\lambda(U)\subset S^2(U)$ since 
$$
\|\mathcal{P}\|^2_{S^2(U)}:=\sum_{i\in \mathbb{N}} \sigma_i^2(P)= \sum_{i\in\mathbb{N}} p_i^2 \lesssim \sum_{i\in \mathbb{N}} (p_i \lambda_i^{-r/2})^2 = \|\mathcal{P}\|^2_{\mathbb{A}^{r,2}_\lambda(U)}.
$$
The second assertion follows from 
\begin{align*}
    |\mathcal{P}|^2_{A_\lambda^{r,2}(U)}&= \sum_{i\in \mathbb{N}} \lambda_i^{-r}|p_i|^2 = \sum_{i\in \mathbb{N}} \lambda_i^{s-r}\lambda_i^{-s}|p_i|^2,\\
    &\leq \sum_{i\in \mathbb{N}} \lambda_i^{-s}|p_i|^2 = 
    |\mathcal{P}|^2_{A_\lambda^{s,2}(U)},
\end{align*}
as long as $s>r>0$. 
\end{proof}
Note carefully that the larger the approximation index $r>0$, the smaller the space $A^{r,2}_\lambda(U)$. A similar inclusion holds for the operators in $\mathbb{A}^{r,2}_\lambda(U)$. The norm inequality above implies an  imbedding of the scale of operators $\mathbb{A}^{r,2}_\lambda(U)$ that resembles that for the spectral spaces $A^{r,2}_\lambda(U)$ in the sense that  
$$
\cdots \subset \mathbb{A}^{r+1,2}_\lambda(U)\subset  \mathbb{A}^{r,2}_\lambda(U) \subset 
\mathbb{A}^{r-1,2}_\lambda(U) \subset 
\cdots \subset S^2(U)\subset \cdots  \subset S^\infty(U) \subset \mathcal{L}(U).
$$

%\end{framed}

%
%%
%
Having defined the spaces $A^{r,2}_\lambda(U)$ and the family of admissible operators  $\mathbb{A}^{r,2}_\lambda(U)$, 
we begin with a rather straightforward result. Although it is nearly self-evident, it is often a building block  for more complex error bounds derived later.
Specifically, we derive an  approximation rate that holds for the family of operators $\mathbb{A}_{\lambda}^{r,2}(U)$ and the approximation spaces $A^{r,2}_\lambda(U)$.
%
%%  theorem
%
\begin{theorem}
\label{th:spectral_approx}
Suppose that $r>0$ and $\mathcal{P}$ has the monotonic Schmidt decomposition $\mathcal{P}:=\sum_{i\in \mathbb{N}} p_iu_i\otimes u_i\in {S}^\infty$ 
with respect to the $U$-orthonormal basis of eigenfunctions $\left \{u_i \right \}_{i\in \mathbb{N}}$ of the compact, self-adjoint  operator $T$. Define the associated approximation space $A^{r,2}_\lambda(U)$ in terms of the eigenstructure of $T$, and denote by  $\mathcal{P}_n$  the approximation obtained when the Schatten class representation is truncated to   
$
    \mathcal{P}_n:=\sum_{1\leq i\leq n-1} p_i u_i \otimes u_i.
$
If $\mathcal{P}f\in A_{\lambda}^{r,2}(U)$,  we have the error bound  
\begin{equation}
\|
(\mathcal{P}-\mathcal{P}_n)f\|_{U} \lesssim  \lambda_{n}^{r/2} |\mathcal{P}f|_{A^{r,2}_\lambda(U)}.
\label{eq:spec_rate2}
\end{equation}
This bound holds in particular for the  two important cases when 1) $\mathcal{P}\in \mathcal{L}(U)$ and $f\in A^{r,2}_\lambda(U)$ or when 2)  $\mathcal{P}\in \mathbb{A}^{r,2}_\lambda(U)$ and $f\in U$. 
Suppose that the eigevalues are quasigeometric in that there are two constants $c_1,c_2$ with 
$$
1<c_1 \leq \frac{\lambda_{n_{j-1}}}
{\lambda_{n_j}} \leq c_2
$$
for all $j\in \mathbb{N}$ with $\{n_j\}_{j\in \mathbb{N}_0}$ a quasigeometric sequence of integers.
In this case for  $s>r>0$  we have 
\begin{equation}
\left | \mathcal{P}-\mathcal{P}_{n_j}\right |_{\mathbb{A}^{r,2}_\lambda(U)}\leq \lambda^{(s-r)/2}_{n_j}|\mathcal{P}|_{\mathbb{A}^{s,2}_\lambda(U)}. 
\label{eq:spec_rate1}
\end{equation}
\end{theorem}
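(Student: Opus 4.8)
The plan is to reduce both error bounds to the truncation estimates already established for the spectral spaces, exploiting the fact that every operator under consideration is diagonal in the eigenbasis $\{u_i\}_{i\in\mathbb{N}}$ of $T$. The single observation that makes the first bound immediate is that the truncated operator acts as the spectral projection applied to the image of $f$: since $\mathcal{P}f=\sum_{i\in\mathbb{N}}p_i(f,u_i)_U u_i$ and $\mathcal{P}_n f=\sum_{i<n}p_i(f,u_i)_U u_i$, the generalized Fourier coefficients of $\mathcal{P}f$ are exactly $(\mathcal{P}f,u_i)_U=p_i(f,u_i)_U$, so that $\mathcal{P}_n f=\Pi_n(\mathcal{P}f)$ and hence $(\mathcal{P}-\mathcal{P}_n)f=(I-\Pi_n)(\mathcal{P}f)$. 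Applying Theorem \ref{th:spec_sp_approx} to the single function $g:=\mathcal{P}f$ therefore yields $\|(\mathcal{P}-\mathcal{P}_n)f\|_U\lesssim\lambda_n^{r/2}|\mathcal{P}f|_{A^{r,2}_\lambda(U)}$ as soon as $\mathcal{P}f\in A^{r,2}_\lambda(U)$, which is precisely \eqref{eq:spec_rate2}.

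Second I would verify that the two advertised hypotheses each force $\mathcal{P}f\in A^{r,2}_\lambda(U)$, so that \eqref{eq:spec_rate2} applies in both cases. For this I write out the defining seminorm from \eqref{eq:spectral_space}, $|\mathcal{P}f|^2_{A^{r,2}_\lambda(U)}=\sum_{i\in\mathbb{N}}\lambda_i^{-r}|p_i|^2|(f,u_i)_U|^2$, and split the summand two different ways. In Case 1, $\mathcal{P}\in\mathcal{L}(U)$ means the diagonal entries are uniformly bounded, $\sup_i|p_i|=\|\mathcal{P}\|_{\mathcal{L}(U)}$, so factoring out $\sup_i|p_i|^2$ leaves $\|\mathcal{P}\|^2_{\mathcal{L}(U)}\sum_i\lambda_i^{-r}|(f,u_i)_U|^2=\|\mathcal{P}\|^2_{\mathcal{L}(U)}|f|^2_{A^{r,2}_\lambda(U)}$, which is finite because $f\in A^{r,2}_\lambda(U)$. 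In Case 2, I instead bound $|(f,u_i)_U|^2\le\|f\|_U^2$ by Bessel's inequality and factor that out, leaving $\|f\|_U^2\sum_i\lambda_i^{-r}|p_i|^2=\|f\|_U^2|\mathcal{P}|^2_{\mathbb{A}^{r,2}_\lambda(U)}$, which is finite because $\mathcal{P}\in\mathbb{A}^{r,2}_\lambda(U)$ by \eqref{eq:A_fam_1}. In both situations $\mathcal{P}f$ lands in $A^{r,2}_\lambda(U)$ and the first estimate holds.

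Finally I would establish the operator-norm estimate \eqref{eq:spec_rate1} by imitating the nestedness calculation in Theorem \ref{th:spec_embedding}. Writing the tail seminorm directly, $|\mathcal{P}-\mathcal{P}_{n_j}|^2_{\mathbb{A}^{r,2}_\lambda(U)}=\sum_{i\ge n_j}\lambda_i^{-r}|p_i|^2$, I would insert $\lambda_i^{-r}=\lambda_i^{s-r}\lambda_i^{-s}$ and use the monotone ordering of the eigenvalues: for $i\ge n_j$ and $s-r>0$ we have $\lambda_i^{s-r}\le\lambda_{n_j}^{s-r}$. This pulls $\lambda_{n_j}^{s-r}$ out of the sum and leaves $\lambda_{n_j}^{s-r}\sum_{i\ge n_j}\lambda_i^{-s}|p_i|^2\le\lambda_{n_j}^{s-r}|\mathcal{P}|^2_{\mathbb{A}^{s,2}_\lambda(U)}$; taking square roots gives \eqref{eq:spec_rate1}. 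It is worth remarking that this particular inequality uses only the nonincreasing ordering of the $\lambda_i$, not the quasigeometric hypothesis; the latter is what subsequently lets one rewrite the factor $\lambda_{n_j}^{(s-r)/2}$ as $n_j^{-(s-r)}$ and recover the resolution-indexed rate stated in Section \ref{sec:overview}. There is thus no genuine analytic obstacle here — every step is a diagonal computation parallel to the two preceding theorems — and the only thing demanding care is the bookkeeping of which hypothesis ($\mathcal{P}$ bounded in Case 1 versus $\mathcal{P}\in\mathbb{A}^{r,2}_\lambda(U)$ in Case 2) supplies the summability that places $\mathcal{P}f$ in the approximation space.
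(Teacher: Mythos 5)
Your proof is correct, and for the first estimate and the two special cases it is essentially the paper's argument in a lightly repackaged form: the paper expands $\|(\mathcal{P}-\mathcal{P}_n)f\|_U^2=\sum_{i\geq n}p_i^2|(f,u_i)_U|^2$ and inserts $\lambda_i^r\lambda_i^{-r}$ directly, which is the same computation you obtain by observing $\mathcal{P}_nf=\Pi_n(\mathcal{P}f)$ and invoking Theorem \ref{th:spec_sp_approx}; your Case 1/Case 2 summability checks coincide with the paper's. Where you genuinely diverge is the operator bound \eqref{eq:spec_rate1}. The paper proves it by grouping the tail $\sum_{i\geq n}\lambda_i^{-r}|p_i|^2$ into dyadic blocks $k\in[2^j,2^{j+1})$, pulling out $\lambda_{2^{j+1}}^{-r}$ blockwise, and using the quasigeometric hypothesis to compare $\lambda_{2^j}$ with $\lambda_{2^{j+1}}$, finally renaming $\hat{s}:=r+s$; this yields the bound only up to unspecified constants depending on the quasigeometric ratios. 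You instead insert $\lambda_i^{-r}=\lambda_i^{s-r}\lambda_i^{-s}$ termwise and use only the nonincreasing ordering of the eigenvalues to pull out $\lambda_{n_j}^{s-r}$, which is shorter, delivers the inequality with constant exactly $1$ as stated in \eqref{eq:spec_rate1}, and is the same device the paper itself uses in Theorems \ref{th:spec_sp_approx} and \ref{th:spec_embedding}. Your remark that the quasigeometric hypothesis plays no role in \eqref{eq:spec_rate1} itself, but only in converting $\lambda_{n_j}^{(s-r)/2}$ into a rate in $n_j$, is accurate and is a genuine simplification of the paper's presentation.
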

\begin{proof}
First, we know we have   $f=\sum_{i\in \mathbb{N}} (f,u_i)_U u_i$ since
$A^{r,2}_\lambda(U)\subset U$. 
When $\mathcal{P}f\in A^{r,2}_\lambda(U)$, we  compute the error 
\begin{align*}
\left \|(\mathcal{P}-\mathcal{P}_n \right)f \|_{U}^2 &=
\left \| 
\left (\sum_{i\ge n}p_i u_i\otimes u_i \right )\left ( \sum_{k\in \mathbb{N}}(f,u_k)_U u_k \right ) \right \|_U^2,\\
&= \sum_{i\geq n}p_i^2|(f,u_i)_U|^2 
    \leq \sum_{i\geq n} p_i^2 \lambda_i^{r}\lambda_i^{-r} |(f,u_i)_U|^2,\\
    &\leq \lambda_{n}^{r}
    \sum_{i\in \mathbb{N}} \lambda_i^{-r}|p_i(f,u_i)_U|^2 = \lambda_n^r|\mathcal{P}f|^2_{A^{r,2}(U)}.
\end{align*}
Thus Equation \ref{eq:spec_rate2} holds.
When $\mathcal{P}\in \mathcal{L}(U)$ and $f\in A^{r,2}_\lambda(U)$, we have 
\begin{align*}
\|\mathcal{P}f\|^2_{A^{r,2}_\lambda(U)}&= \sum_{i\in \mathbb{N}} \lambda_i^{-r} |p_i(f,u_i)_U|^2 = \sum_{i\in \mathbb{N}} p_i^2\lambda_i^{-r}|(f,u_i)|_U^2\\
&\leq  \|\mathcal{P}\|_{\mathcal{L}(U)}^2 \|f\|_{A^{r,2}_\lambda(U)}^2<\infty,
\end{align*}
which shows that the bound above holds for case (1). 
Now, at the other extreme, if we only know that $f\in U$, but $\mathcal{P}\in \mathbb{A}^{r,2}_\lambda(U)$, we see that 
\begin{align*}
    \|\mathcal{P}f\|^2_{A^{r,2}_\lambda(U)} &
    =\sum_{i\in \mathbb{N}}  (\lambda_i^{-r}p_i^2) |(f,u_i)|^2_U \leq |\mathcal{P}|^2_{\mathbb{A}^{r,2}_\lambda(U)} \|f\|_U^2<\infty, 
\end{align*}
which gives the same rate in case (2). 
We now show that Equation \ref{eq:spec_rate1} is true.  We prove the result in the theorem for $n_j=2^j$ since this case resembles many of the rates derived later in the paper. See \cite{piestch1981approxspaces,piestch1982tensorproducsequenfunctoperat} for the details associated with a general quasigeometric sequence.   The proof for a general quasigeometric sequence follows similarly. We can write
\begin{align*}
    \sum_{i\geq n} \lambda_i^{-r}|p_i|^2 &=
    \sum_{j\ge n}
    \sum_{k=2^j}^{2^{j+1}-1} \lambda_k^{-r}|p_k|^2 
    \leq \sum_{j\geq n} \lambda_{2^{(j+1)}}^{-r}
    \sum_{k=2^j}^{2^{j+1}-1} |p_k|^2 \\
    &\leq \sum_{j\ge n} \lambda_j^s\lambda_{2^{(j+1)}}^{-(r+s)}
    \sum_{k=2^j}^{2^{j+1}-1} |p_k|^2
    \lesssim \lambda_n^s \sum_{j\in \mathbb{N}_0} \lambda_{2^{j+1}}^{-(r+s)}
    \sum_{k=2^j}^{{2^{j+1}-1}} |p_k|^2 \\
    &\lesssim 
     \lambda_n^s \sum_{j\in \mathbb{N}_0} \lambda_{2^{j}}^{-(r+s)}
    \sum_{k=2^j}^{{2^{j+1}-1}} |p_k|^2
    \lesssim 
    \lambda_n^s \sum_{j\in \mathbb{N}_0} 
    \sum_{k=2^j}^{{2^{j+1}-1}}
    \lambda_{k}^{-(r+s)}
    |p_k|^2
    \\
    &=\lambda_n^s |\mathcal{P}|^2_{\mathbb{A}^{
    r+s}_\lambda(U)}
\end{align*}
The error bound now follows by defining $\hat{s}:=r+s$ and rewriting the above as 
$$
|\mathcal{P}-\mathcal{P}_j|^2_{\mathbb{A}^{r,2}_\lambda(U)} 
\leq \lambda_n^{\hat{s}-r}|\mathcal{P}|^2_{\mathbb{A}^{\hat{s}}_\lambda(U)}.
$$

\end{proof}

The next example describes an overall process by which the preceding analysis is applied. Initially, the  operator $T:U\rightarrow U$ is selected, and its eigenvalues and $U-$orthonormal eigenfunctions are used to define the approximation spaces $A^{r,2}_\lambda(U)$. Then, the approximation rates of operators $\mathcal{P}_j$ are  studied,  for example, when  $\mathcal{P}f\in \mathbb{A}_{\lambda}^{r,2}(U)$  or $\mathcal{P}\in \mathbb{A}^{r,2}_\lambda(V)$.  This case considers a linear dynamical system for purposes of illustration, but as is clear from several examples that follow, the same general process is applicable to nonlinear systems. 

\medskip 

\begin{framed}
\begin{example}[Discrete Approximation of the Heat Equation]
\label{ex:heat_eq}
\noindent 
\subsection*{\underline{Defining  $T$, $A^{r,2}_\lambda(U)$, and $\mathbb{A}^{r,2}_\lambda(U)$}}
Let $\mathbb{T}^1\subset\mathbb{R}^2$ be the unit circle, $L^2(\mathbb{T}^1)$ be the periodic square integrable functions over $\mathbb{T}^1$, and let $\tilde{T}$ be the second order differential operator   $\tilde{T}(\cdot):=-d^2(\cdot)/dx^2$. It is straightforward to check that the eigenvalue problem that seeks a nontrivial solution of 
$$
\tilde{T}f=\tilde{\lambda} f
$$
subject to the periodic boundary conditions \begin{align*}
f(0)&=f(2\pi), \\
\frac{df}{dx}(0)&=\frac{df}{dx}(2\pi),
\end{align*}
generates the orthonormal eigenpairs 
$$
\left \{\left (\tilde{\lambda}_m,\psi_m\right ) \right \}_{m\in\mathbb{Z}}=\left \{\left (m^2,\frac{1}{\sqrt{2\pi}}e^{\hat{j} m x}\right ) \right \}_{m\in\mathbb{Z}}.
$$ 
Orthonormality is defined with respect to the inner product 
$ (f,g)_{L^2(\Omega)}:=\int_\Omega f(\xi)\overline{g(\xi)}d\xi$ on the $L^2(\mathbb{T}^1)$ space of complex functions.
\begin{framed}
A quick check. We know that $\frac{d \psi_m}{dx}= \frac{\hat{j}m}{\sqrt{2\pi}} e^{\hat{j} m x}$. We then have 
$$
(\tilde{L}\psi_m)(x)=-\frac{d^2\psi_m}{dx^2}(x)=-\left ( \frac{-m^2}{\sqrt{2\pi}} e^{\hat{j}mx} \right )=m^2 \psi_m(x)=\tilde{\lambda}_m \psi_m(x).
$$
Also, 
\begin{align*}
\psi_m(0)&=\frac{1}{\sqrt{2\pi}}e^0= \frac{1}{\sqrt{2\pi}}e^{\hat{j}2\pi m} =\psi_m(2\pi), \\
\psi_m'(0)&=\frac{\hat{j}m}{{\sqrt{2\pi}}}e^0 = \frac{\hat{j}m}{{\sqrt{2\pi}}}e^{\hat{j}2\pi m}=\psi'_m(2\pi).
\end{align*}
\end{framed}

\noindent 
The operator $\tilde{T}$ is a differential operator that generates a Sturm-Liouville system. The differential operator $\tilde{T}$ can be used define an   associated inverse operator ${T}:=(\tilde{T} |_{N(\tilde{L})^\perp})^{-1}$ that is in fact a linear, self-adjoint,  compact integral operator on  $L^2(\mathbb{T}^1)$. \cite{naylorsell} 
Any function $f\in L^2(\mathbb{T}^1)$ consequently  has the Fourier series   representation
\begin{align}
f=\sum_{k\in \mathbb{Z}} (f,\psi_k)_{L^2(\mathbb{T}^1)} \psi_k. 
\label{eq:complex_fourier}
\end{align}
Although our theory in Section \ref{sec:rkhs} above studies real-valued functions, only a slight reindexing is needed to modify the definitions to make sense for the complex functions $\psi_m$. 
The spectral approximation space generated by this complex orthonormal basis is given by 
{\small 
$$
A^{r,2}_{\lambda}(L^2(\mathbb{T}^1)):=
\left \{ 
f\in L^2(\mathbb{T}^1)  \biggl |  |f|^2_{A^{r,2}_\lambda(L^2(\mathbb{T}^1))}
:= \sum_{m\in \mathbb{Z}-\{0\}} \lambda_m^{-r} |(f,\psi_m)_{L^2(\mathbb{T}^1)}|^2 \leq \infty
\right \}.
$$
}
In this case we have 
$$
|f|^2_{A^{r,2}_\lambda(L^2(\mathbb{T}^1))}= \sum_{m\in \mathbb{Z}-\{0\}} \lambda_m^{-r} |(f,\psi_k)_{L^2(\mathbb{T}^1)}|^2 \approx  \sum_{m\in \mathbb{Z}-\{0\}} m^{2r} 
|(f,\psi_k)_{L^2(\mathbb{T}^1)}|^2
$$
It is known that the rightmost series above is in fact equivalent to the seminorm on the Sobolev space $W^{r,2}(\mathbb{T}^1)$. \cite{devorenonlinear} We also see that the RKHS space $V=A^{1,2}_\lambda(U)\approx W^{1,2}(\mathbb{T}^1)$. 
This choice of the operator $T$, as a compact self-adjoint integral operator, is consistent with the assumptions of Section \ref{sec:rkhs} when $d=1$. The Sobolev Embedding Theorem states that if $m>d/2$, then $W^{m,2}(\mathbb{T}^1) \hookrightarrow C(\mathbb{T})$, and we have   
$$
V\approx W^{1,2}(\mathbb{T}^1)\subset 
C(\mathbb{T}^1) \subset L^2(\mathbb{T}) = U.
$$
\cite{devore1993constapprox}

The eigenfunctions above are elements in the  $L^2(\mathbb{T}^1)$ space of complex functions. We can further study the kernels that induce the operator $L$ in terms of the complex eigenfunctions. However, for the form of the real-valued RKHS spaces presented in Section \ref{sec:rkhs},  it is more convenient to cast the analysis in terms of the  $L^2(\Omega)$ space of real functions. 
The   eigenvectors $u_{k,i}$ of  ${T}$, viewed as an operator on the  $L^2(\Omega)$ space of real functions,  are given by 
\begin{align*}
    u_{k,i}(x):= \left \{ 
    \begin{array}{ccc}
         \frac{1}{\sqrt{2\pi}}&  &i=1,k=0, \\
        \frac{\cos kx}{\sqrt{\pi}} &  &i=1,\ k\geq 1, \\
        \frac{\sin kx}{\sqrt{\pi}} &  &i=2,\ k\geq 1,
    \end{array}
    \right .
\end{align*}
and the corresponding eigenvalues are 
\begin{align*}
    \lambda_{k,i}:= \left \{ 
    \begin{array}{ccc}
         0&  &i=1,k=0,  \\
        1/k^2 &  &i=1,\ k\geq 1, \\
       1/k^2 &  &i=2,\ k\geq 1. 
    \end{array}
    \right .
\end{align*}
Note that $u_{0,2}$ is not defined in this numbering  convention. It is easy to check that the functions $u_{k,i}$ are orthonormal with respect to the real inner product $(f,g)_{L^2(\mathbb{T}^1)}=\int_\Omega f(\xi)g(\xi)d\xi$. 
For any $f$ in the real $L^2(\Omega)$ space, we have
$$
f=\sum_{k\geq 1} \sum_{i\subseteq 1,2)} (f,u_{k,i})_{L^2(\mathbb{T}^1)} u_{k,i},
$$
which yields the same result as in the complex expansion in Equation  \ref{eq:complex_fourier} when the function $f$ is real-valued. We define the real Hilbert  space $U:=L^2(\mathbb{T}^1)$, and in the notation of Sections \ref{sec:rkhs} and \ref{sec:spectral_spaces},  we have 
$$
T_Kf:=Tf=\sum_{k\geq 1} \sum_{i=1,2}\lambda_{k,i} (u_{k,i},f)_{U} u_{k,i},
$$
{\small 
$$
A^{r,2}_{\lambda}(L^2(\mathbb{T}^1)):=
\left \{ 
f\in L^2(\mathbb{T}^1)  \biggl |  |f|^2_{A^{r,2}_\lambda(L^2(\mathbb{T}^1))}
:= \sum_{k\geq 1}\sum_{i=1,2} \lambda_{k,i}^{-r} |(f,\psi_{k,i})_{L^2(\mathbb{T}^1)}|^2 < \infty
\right \},
$$
}
and
$$
\mathbb{A}^{r,2}_\lambda(L^2(\mathbb{T}^1)) = 
\left \{
\mathcal{P}=\sum_{k\geq 1,i=1,2}p_{k,i}u_{k,i}\otimes u_{k,i} \ \biggl | \ \sum_{k\geq 1,i=1,2} (p_{k,i}\lambda_{k,i}^{-r/2})^2 < \infty 
\right \}.
$$
Note that the kernel of the operator $T$ is not included in the definition of $\mathbb{A}_\lambda^{r,2}(L^2(\mathbb{T}^1))$ above. 
\subsection*{\underline{Analysis of  the Operator $\mathcal{U}$, $\mathcal{U}_j$} }
We next illustrate how the approximation spaces  can be used to estimate the Koopman or Perron-Frobenius operators for an example of an  evolution equation.  Consider the model for  the time evolution of the temperature $\tau$ in a heat conduction problem over a ring where the thermal conductivity is normalized to one. The governing equation for the temperature $\tau$ takes the form  
\begin{align*}
\frac{d\tau}{dt}(t,x) & = \frac{\partial^2 \tau}{\partial x^2}(t,x) \quad \quad (t,x)\in [0,\infty)\times [0,2\pi],
\end{align*}
subject to the boundary conditions 
\begin{align*}
    \tau(t,0)&=\tau(t,2\pi), \\
    \frac{\partial \tau}{\partial x}(t,0)&=
     \frac{\partial \tau}{\partial x}(t,2\pi),
\end{align*}
and to the initial condition 
$$
\tau(0,x)=\tau_0(x) \quad \quad x\in [0,2\pi].
$$
It can be shown that the solution (modulo constant functions) of this evolution equation is given by 
$$
\tau(t):=\tau(t,\cdot):=\sum_{k\geq 1,i=1,2} e^{-k^2 t} (\tau_0,u_{k,i})_{L^2(\mathbb{T}^1)} u_{k,i}(\cdot),
$$
or $\tau(t)=S(t)\tau_0$ with $S(t)$ a linear  $C^0$-semigroup of operators. By sampling,  this  continuous flow induces a discrete flow $\mathcal{P}:L^2(\Omega) \rightarrow L^2(\Omega)$ with 
$$
\tau_{i+1}:=\tau(t_{i+1})=S(t_{i+1}-t_i)\tau(t_i)=\mathcal{P}\tau_i, 
$$
with $\left \{\tau_i\right \}_{i\in \mathbb{N}_0}\subset L^2(\Omega)$.
The discrete evolution law is induced by a kernel $p:=p_h(\cdot,\cdot):[0,2\pi] \times [0,2\pi] \rightarrow \mathbb{R}$ that depends parametrically on the step size $h>0$, with 
$$
\tau_{i+1}(x):=
(\mathcal{P}\tau_i)(x):=\int_{\Omega} p(x,y) \tau_i(y)dy 
$$
where  
$$
p(x,y):=\sum_{k\geq 1, i=1,2 }e^{-hk^2 } u_{k,i}(x) u_{k,i}(y).
$$
In other words with $p_{k,i}:=e^{-hk^2}$ we have 
$$
\mathcal{P}:=\sum_{k\geq 1, i=1,2}p_{k,i} u_{k,i} \otimes u_{k,i}, 
$$
which is the form of a kernel in $\mathbb{A}^{r,2}(U)$. 
It turns out that the operator  $\mathcal{P}$   above is very smooth. 
Now consider the sum
\begin{align*}
    \sum_{k\geq 1} \sum_{i=1,2} \lambda_{k,i}^{-r}p_{k,i}^2 & =  \sum_{k\geq 1} \sum_{i=1,2} k^{2r} e^{-2hk^2}.
\end{align*}
The function $x^{2r}e^{-2hx^2}$ is monotonically decreasing for all $x>\sqrt{r/h}$ and approaches zero as $x \rightarrow  \infty$.
This means that 
$$
\sum_{k\geq k_0} \sum_{i=1,2} k^{2r}e^{-2hk^2}
\leq \int_{0}^\infty x^{2r}e^{-2hk^2}dx
$$
for some any integer $k_0\geq \sqrt{r/h}$. 
Since the integral on the right is finite for every positive $r$ and $h$, we  conclude that $\mathcal{P} \in \mathbb{A}^{r,2}_\lambda(U)$ for every $r>0$, and the results of Theorem \ref{th:spectral_approx} hold. 
\end{example}
\end{framed}

\begin{framed}
 \begin{example}[Spaces Adapted to a Specific $\mathcal{P}$]
 \label{ex:p_and_alambda}
 In this example we explore a bit more how the results of Theorem 
 \ref{th:spectral_approx} can be further refined. From the definition in Equation  \ref{eq:A_fam_1} we known that the the operator 
 $$
 \mathcal{P}:=\sum_{i\in \mathbb{N}} p_i v_i \otimes v_i.
 $$
Suppose  the coefficients  $\{p_i\}_{i\in \mathbb{N}}$ are nonincreasing and can only accumulate at zero. It is therefore possible to define the approximation space 
 $$
 A_p^{r,2}(V):= \left \{ f\in V \ \biggl | \  |f|^2_{A^{r,2}_p}(V):=\sum_{i\in \mathbb{N}} |p_i|^{-r} |(f,v_i)_V|^2\right \}.
 $$
 It is immediate that 
 \begin{align*}
     |f|^2_{A^{r,2}_\lambda}&:= \sum_{i\in \mathbb{N}} |\lambda_i|^{-r}|(f,v_i)_V|^2 = 
     \sum_{i\in \mathbb{N}} |\lambda_i|^{-r}|p_i|^2 |p_i|^{-2}|(f,v_i)_V|^2 \lesssim |f|^2_{A_p^{2,2}(V)}, 
 \end{align*}
 and therefore 
 $$
 A_p^{2,2}(V)\subseteq A_\lambda^{r,2}(V).
 $$
 The space $A_p^{r,2}(V)$ can be endowed with the inner product 
 $$
 (f,g)_{A^{r,2}_p(V)}:= \sum_{i\in \mathbb{N}} \left ( p_i^{-r/2}f_i, p_i^{-r/2}g_i\right )_V
 $$
 where $f:=\sum f_i v_i$ and $g=\sum g_i v_i$. It is clear from this definition that $f\in A_p^{r,2}(V)$ if and only if 
 $$
 \|f\|_{A^{r,2}_p(V)}^2 := {(f,f)_{A^{r,2}_p(V)}} < \infty.
 $$
 It is also immediate that the family of functions  $\{v_{p,i}^r\}_{i\in \mathbb{N}}:=\{p_i^{r/2}v_i\}_{i\in \mathbb{N}}$ is an orthonormal basis for $A^{r,2}_p(V)$. 
 
 Now suppose that we have a  stronger condition that relates the kernels $K$ and $p$: we assume the equivalence of the  sequences
 $$
\{ p_i \}_{i\in \mathbb{N}}  \approx \{
\lambda_i^{r/2}
\}_{i\in \mathbb{N}}.
$$
That is, there exist two constants $c_1,c_2>0$
such that
$$
c_1 \lambda^{r/2}_i \leq p_i \leq c_2  \lambda^{r/2}_i
$$
for all $i\in \mathbb{N}$.
With this assumption we see that $A^{2,2}_p(V)\approx A^{r,2}_\lambda(V)$. 
 Since the kernel $K$ is positive, 
 the kernel of $\mathcal{P}$  is positive, symmetric, and continuous, and therefore it is possible to define a RKHS $V_p \subset L^2_\mu(\Omega)$ in terms of $p(x,y)$. Then we know that 
 $$
 \mathcal{P}:=\int_\Omega p_x\otimes p_x \mu(dx)=\sum_{i\in \mathbb{N}} p_i v_i \otimes v_i.
 $$
 \end{example}
 From Theorem 11 of \cite{rkhs} we can compute the reproducing kernel on $A^{r,2}_p(V)\approx A^{r,2}_\lambda(V)$ from the kernel $K$ on $V$. 
\end{framed}

\subsection{Compact and Non-Self-Adjoint Operators in  $\mathbb{A}^{r,2}_{\lambda}(U)$}
\label{sec:NSA_family}
In the last section we presented a definition of feasible operators $\mathbb{A}^{r,2}_{\lambda}(U)$ that facilitated the study of convergence rates relative to the spectral approximation spaces $A^{r,2}_\lambda(U)$. In a typical application, Theorem \ref{th:spec_sp_approx} is used by first fixing some compact self-adjoint operator $T$ whose eigenvalues and eigenfunctions $\left \{ \lambda_i, u_i \}_{i\in \mathbb{N}}\right \}$ are used in the construction of the spectral space $A^{r,2}_\lambda(U)$. The set of admissible operators consists of compact, self-adjoint operators that have Schatten decompositions that decay faster that the inverse of the eigenvalues used to build $A_\lambda^{r,2}(U)$. If we construct a RKHS $V\subset U$, then  the integral operator $T_K$ given by 
\begin{equation}
(T_Kf)(x):=\int_\Omega K(x,y)f(y)\mu(dy)
\label{eq:int_eq_ex}
\end{equation}
is one logical choice for the operator $T$ that defines $A^{r,2}_\lambda(U)$. Here, the kernel $K(x,y)$ symmetric.  It is a requirement that ensures the symmetry of the inner product on the real RKHS $V\subset U$. 

However, it is important to observe  that there is no compelling requirement to restrict attention to Perron-Frobenius, or Koopman operators,   that are induced by a symmetric kernel. It is a simple matter to define deterministic or stochastic discrete evolutions for which the associated operators are non-self-adjoint.  We now discuss how the setting of the last section can be extended to allow for some operators that 
are not self-adjoint.

The theory of integral operators and their mapping properties has been studied extensively over the years, and many approaches exist to study them. Comprehensive accounts can be found in \cite{piestch,piestchhistory}. One choice that is  canonical and serves as an exemplar for other approaches is the case when it is assumed that $p(x,y)$ is a kernel in  $L^2_{\mu\times \mu}(\Omega\times \Omega)$ that induces an operator $\mathcal{P}:L^2_\mu(\Omega) \rightarrow L^2_\mu(\Omega)$. In fact, an operator $\mathcal{P}:L^2_\mu(\Omega)\rightarrow L^2_\mu(\Omega)$ has the form in Equation \ref{eq:int_eq_ex} if and only if it is a Hilbert-Schmidt operator. In this case, for any orthonormal basis $\left \{\psi_i \right \}_{i\in \mathbb{N}}$
of $L^2_\mu(\Omega)$, such an operator $\mathcal{P}$ is induced by the kernel 
$$
p(x,y)=\sum_{m\in \mathbb{N}} \sum_{n\in\mathbb{N}} (\mathcal{P}\psi_m,\psi_n)_U\psi_m(x) \psi_n(y),
$$
and we also know that 
$$
\|\mathcal{P}\|^2_{S^2(U)}=
\sum_{i\in\mathbb{N}} \sigma^2_i := \sum_{i\in \mathbb{N}} \|\mathcal{P}\psi_i\|_U^2 = \sum_{m\in \mathbb{N}} \sum_{n\in \mathbb{N}} |(\mathcal{P}\psi_m,\psi_n)|^2.
$$

The study of kernels of this type is facilitated by exploiting the equivalence of these norm expressions to the norms of certain infinite matrix operators that act on $\ell^2(\mathbb{N})$. We define the infinite matrices  
$[p_{m,n}]:=[(\mathcal{P}\psi_m,\psi_n)]$
    and 
$
[D_{\lambda_m}^{s}]:=\text{diag}(\lambda_m^s)
$
for $s\in \mathbb{R}$. The induced matrix  operator norm $\|p_{m,n} \|_{M}$  is defined in the usual way
$$
\|[p_{m,n}]\|_{M}:= \sup_{
\begin{array}{c}
\{z_n\}\in \ell^2(\mathbb{N})\\ \{z_n\}\not = \{0\}\end{array}
} \frac{\|[p_{m,n}]\{z_n\}\|_{\ell^2(\mathbb{N})}}{\|\{z_n\}\|_{\ell^2(\mathbb{N})}}.
$$
With this notation we have $\|f\|_U=\|\{(f,\psi_m)_U\}\|_{\ell^2(\mathbb{N})}$, and it is straightforward to show that 
$$
\|\mathcal{P}\|_{\mathcal{L}(U)}=\|[p_{m,n}]\|_M.
$$
It also follows that $f\in A^{r,2}_\lambda(U)$ if and only if 
$\|[D_{\lambda_n}^{-r/2}]\{f_n\}\|_{\ell^2(\mathbb{N})}<\infty$. We overload the definition of $D_{\lambda_\bullet}^{s}$ and also interpret it as an operator on functions by associating its representation 
$$
D^{s}_{\lambda_\bullet}f \quad  \sim  \quad [D^{s}_{\lambda_m}] \{ f_m\} 
$$
with $f_m:=(f,\psi_m)_U$  for  $m\in \mathbb{N}$.

We then can generalize the definition of the families of feasible spectral operators and write 
{\scriptsize 
\begin{align}
    \mathbb{A}^{r,2}_\lambda(U):=
    \left \{ \left . 
    P=\sum_{m,n\in\mathbb{N}}p_{m,n}u_{m} \otimes u_{n} \subset S^\infty(U)\  \right | 
    \ 
    |P|^2_{\mathbb{A}^{r,2}_\lambda(U)}:=\sum_{m\in \mathbb{N}} 
    \left ( \sum_{n\in \mathbb{N}}  p_{m,n}\lambda_n^{-r/2}\right )^2
    <\infty
    \right \}.
    \label{eq:A_def_NSA}
\end{align}
}

\noindent The expression above reduces to that in the Definition in Equation \ref{eq:A_fam_1} when the operator $\mathcal{P}$ is diagonalized by the eigenfunctions of the operator $T$ used to define $\mathbb{A}_\lambda^{r,2}(U)$. 
The seminorm $|\mathcal{P}|_{\mathbb{A}^{r,2}_\lambda(U)}$ is easily shown to have the alternative representations below:

\begin{align*}
    |\mathcal{P}|^2_{\mathbb{A}^{r,2}_\lambda(U)} &:= \text{trace}\left ((D^{-r/2}_{\lambda_\bullet} \mathcal{P})^*D^{-r/2}_{\lambda_\bullet} \mathcal{P} \right )=\left (
    D^{-r/2}_{\lambda_\bullet} \mathcal{P}, D^{-r/2}_{\lambda_\bullet} \mathcal{P}
    \right)_{S^2(U)}, \\
    &:= \sum_{\ell\in \mathbb{N}}
    \left (D^{-r/2}_{\lambda_\bullet} \mathcal{P}\psi_\ell, D^{-r/2}_{\lambda_\bullet } \mathcal{P}\psi_\ell \right)_U,\\
    &=\text{trace}\left ( \left [[p_{\ell,i}][D_{\lambda_i}^{-r/2}]\right]^T\left [[p_{\ell,i}][D_{\lambda_i}^{-r/2}]\right] \right )\\
    &=  \text{trace} \left(  [p_{i,\ell}][p_{\ell,i}] \left [D_{\lambda_i}^{-r/2}\right ]^2 \right)\\
    &=  \text{trace} \left ( \left [[p_{\ell,i}][D_{\lambda_i}^{-r/2}]\right ]^2 \right)\\
    &= \sum_{\ell \in \mathbb{N}} 
    \left ( \sum_{i}  p_{\ell,i}\lambda_i^{-r/2}\right )^2.
\end{align*}
Other equivalent forms of the last line follow from the identities $\text{trace}(A^*B)=\text{trace}(B^*A)$ for any operators $A,B\in S^2(U)$.

\noindent We have the following approximation bounds in terms of this updated   definition of $\mathbb{A}^{r,2}_\lambda(U)$. 
\begin{theorem}
\label{th:spectral_approx_ns}
The results of Theorem \ref{th:spectral_approx} hold with the definition of $\mathbb{A}^{r,2}_\lambda(U)$ in Equation \ref{eq:A_def_NSA}. 
\end{theorem}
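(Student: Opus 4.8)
The plan is to show that nothing in the proof of Theorem \ref{th:spectral_approx} used self-adjointness beyond the appearance of the Schatten coefficients $p_i$ as scalars; for the general operator $\mathcal{P}=\sum_{m,n}p_{m,n}\,u_m\otimes u_n$ of \eqref{eq:A_def_NSA} one simply replaces each squared coefficient by the squared row norm $P_m:=\sum_{n\in\mathbb{N}}|p_{m,n}|^2$. Throughout I read the seminorm through its trace representation, $|\mathcal{P}|^2_{\mathbb{A}^{r,2}_\lambda(U)}=\operatorname{trace}\big((D^{-r/2}_{\lambda_\bullet}\mathcal{P})^*D^{-r/2}_{\lambda_\bullet}\mathcal{P}\big)=\sum_{m\in\mathbb{N}}\lambda_m^{-r}P_m$, which weights each row of the matrix $[p_{m,n}]$ by the eigenvalue attached to its output index. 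The truncation is taken to be $\mathcal{P}_n:=\Pi_n\mathcal{P}$, i.e. the output is projected onto $A_n=\operatorname{span}\{u_i:i\le n-1\}$; this coincides with the Schatten truncation of Theorem \ref{th:spectral_approx} when $\mathcal{P}$ is diagonal, and it keeps $\mathcal{P}_n$ of finite rank.

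With this choice the main estimate \eqref{eq:spec_rate2} is immediate and needs no structural hypothesis on $\mathcal{P}$: since $(\mathcal{P}-\mathcal{P}_n)f=(I-\Pi_n)(\mathcal{P}f)$, the projection error bound already proved in Theorem \ref{th:spec_sp_approx}, applied to $g:=\mathcal{P}f\in A^{r,2}_\lambda(U)$, yields $\|(\mathcal{P}-\mathcal{P}_n)f\|_U\lesssim\lambda_n^{r/2}|\mathcal{P}f|_{A^{r,2}_\lambda(U)}$ at once. It then remains only to re-establish the two sufficient conditions guaranteeing $\mathcal{P}f\in A^{r,2}_\lambda(U)$. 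For case (2), where $\mathcal{P}\in\mathbb{A}^{r,2}_\lambda(U)$ and $f\in U$, I would write $(\mathcal{P}f,u_m)_U=\sum_n p_{m,n}(f,u_n)_U$ and apply Cauchy--Schwarz exactly once per row,
\[
|(\mathcal{P}f,u_m)_U|^2\le\Big(\sum_n|p_{m,n}|^2\Big)\Big(\sum_n|(f,u_n)_U|^2\Big)=P_m\,\|f\|_U^2,
\]
so that $|\mathcal{P}f|^2_{A^{r,2}_\lambda(U)}=\sum_m\lambda_m^{-r}|(\mathcal{P}f,u_m)_U|^2\le\|f\|_U^2\sum_m\lambda_m^{-r}P_m=|\mathcal{P}|^2_{\mathbb{A}^{r,2}_\lambda(U)}\|f\|_U^2$, which is the required membership with the right bound.

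Case (1) is the one place requiring care. For a diagonal operator, boundedness on $U$ forces boundedness on every $A^{r,2}_\lambda(U)$, because the symbol commutes with the weighting $D^{-r/2}_{\lambda_\bullet}$; a non-self-adjoint $\mathcal{P}$ need not map $A^{r,2}_\lambda(U)$ into itself, so the literal reading ``$\mathcal{P}\in\mathcal{L}(U)$ and $f\in A^{r,2}_\lambda(U)$'' no longer guarantees $\mathcal{P}f\in A^{r,2}_\lambda(U)$. I would therefore phrase case (1) as the hypothesis that $\mathcal{P}$ is bounded on $A^{r,2}_\lambda(U)$, under which $\mathcal{P}f\in A^{r,2}_\lambda(U)$ with $|\mathcal{P}f|_{A^{r,2}_\lambda(U)}\le\|\mathcal{P}\|\,|f|_{A^{r,2}_\lambda(U)}$ is trivial, and note that this refinement is invisible in the self-adjoint setting.

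Finally, the quasigeometric estimate \eqref{eq:spec_rate1} transfers verbatim. Writing $\mathcal{P}-\mathcal{P}_n=(I-\Pi_n)\mathcal{P}$, its coefficient matrix retains only the rows with output index $m\ge n$, so $|\mathcal{P}-\mathcal{P}_n|^2_{\mathbb{A}^{r,2}_\lambda(U)}=\sum_{m\ge n}\lambda_m^{-r}P_m$ while $|\mathcal{P}|^2_{\mathbb{A}^{s,2}_\lambda(U)}=\sum_m\lambda_m^{-s}P_m$. These are exactly the two scalar sums appearing in the proof of \eqref{eq:spec_rate1}, with $|p_m|^2$ replaced throughout by $P_m$; hence the same blocking of rows into the quasigeometric groups $2^j\le m<2^{j+1}$, the same use of $1<c_1\le\lambda_{n_{j-1}}/\lambda_{n_j}\le c_2$, and the same telescoping give $|\mathcal{P}-\mathcal{P}_{n_j}|_{\mathbb{A}^{r,2}_\lambda(U)}\le\lambda_{n_j}^{(s-r)/2}|\mathcal{P}|_{\mathbb{A}^{s,2}_\lambda(U)}$ for $s>r>0$. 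The only genuine obstacle is bookkeeping: one must consistently aggregate the double-indexed coefficients $p_{m,n}$ into the single row quantity $P_m$ indexed by the output eigenvalue and apply Cauchy--Schwarz once per row, after which every step collapses to the scalar computations already carried out for the self-adjoint family.
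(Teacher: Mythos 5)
Your proposal is sound and, for case (2) and the quasigeometric estimate, lands on essentially the same computations as the paper, but the organization is genuinely different and the difference matters for case (1). The paper never passes through the statement ``$\mathcal{P}f\in A^{r,2}_\lambda(U)$'': it writes the truncation error as $\sum_{m\geq j}\bigl(\sum_{n\geq j}p_{m,n}f_n\bigr)^2$ and extracts the factor $\lambda_j^{r}$ from whichever index carries the hypothesis --- from the input index $n$ (valid because only $n\geq j$ survives the truncation) when $f\in A^{r,2}_\lambda(U)$, leaving $\|[p_{m,n}]\|_{M}^2\,\|[D_{\lambda_n}^{-r/2}]\{f_n\}\|_{\ell^2}^2\leq\|\mathcal{P}\|_{\mathcal{L}(U)}^2|f|^2_{A^{r,2}_\lambda(U)}$, and from the output index $m$ when instead $\mathcal{P}\in\mathbb{A}^{r,2}_\lambda(U)$ is assumed. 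Because the weight $\lambda_n^{-r/2}$ is attached to $f_n$ \emph{before} any operator norm is taken, case (1) survives under the literal hypothesis ``$\mathcal{P}\in\mathcal{L}(U)$ and $f\in A^{r,2}_\lambda(U)$'' with right-hand side $\lambda_j^{r/2}\|\mathcal{P}\|_{\mathcal{L}(U)}|f|_{A^{r,2}_\lambda(U)}$, and no mapping property of $\mathcal{P}$ on $A^{r,2}_\lambda(U)$ is needed. Your choice $\mathcal{P}_n:=\Pi_n\mathcal{P}$ forces everything through $(I-\Pi_n)\mathcal{P}f$ and hence through membership of $\mathcal{P}f$ in $A^{r,2}_\lambda(U)$, which --- as you correctly diagnose --- is not implied by $\mathcal{P}\in\mathcal{L}(U)$ in the non-diagonal setting; your repaired hypothesis for case (1) is honest but proves a different statement than the one the paper's arrangement recovers. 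Your case (2), via one application of Cauchy--Schwarz per row, is equivalent to the paper's second display, and your transfer of the quasigeometric bound in Equation \ref{eq:spec_rate1} by replacing $|p_m|^2$ with the row norms $P_m$ actually goes beyond the paper, whose proof of Theorem \ref{th:spectral_approx_ns} silently omits that part. One caution: you adopt the trace reading $\sum_m\lambda_m^{-r}P_m$ of the seminorm, whereas the displayed formula in Equation \ref{eq:A_def_NSA} squares the full weighted inner sum with the weight on the input index; the paper's list of ``equivalent'' representations does not all coincide, so you should state explicitly which one you are using, since your case (2) inequality depends on the weight sitting on the output index.
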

\begin{proof}
If we define $f_n:=(f,u_n)_U$ for each $n\in \mathbb{N}$, we  can bound the error by writing 
\begin{align*}
    \|(\mathcal{P}-\mathcal{P}_j)f\|_U^2&= \sum_{m\geq j} \left (\sum_{n\geq j} p_{m,n}f_n \right )^2 =\sum_{m\geq j} \left (\sum_{n\geq j} \lambda_n^{r/2}\lambda_n^{-r/2}p_{m,n}f_n \right )^2, \\
    &\leq \lambda_j^{r} \sum_{m\geq j} \left (\sum_{n\geq j} p_{m,n} \lambda_n^{-r/2}f_n \right )^2 \leq \lambda_j^r\left \| [p_{m,n}][D_{\lambda_n}^{-r/2}]\{f_n\} \right \|_{\ell^2(\mathbb{N})}^2, \\
    &\leq \lambda^r_j \left \| [p_{m,n}] \right \|_{M}^2 \|[D_{\lambda_n}^{-r/2}] \{f_n\}\|^2_{\ell^{2}(\mathbb{N})}\leq \lambda_j^r \|\mathcal{P}\|_{\mathcal{L}(U)}^2 |f|^2_{A^{r,2}_\lambda(U)},
\end{align*}
when $f\in A^{r,2}_\lambda(U)$. It is clear from  above that when $f\in U$ only, we get  
\begin{align*}
 \|(\mathcal{P}-\mathcal{P}_j)f\|_U^2 &\leq      \lambda^r_j
 \|[D_{\lambda_m}^{-r/2}][p_{m,n}]\{f_n\}\|^2_{\ell^2} \\
 &
 \leq      \lambda^r_j
 \|[D_{\lambda_m}^{-r/2}][p_{m,n}]\|^2_M\|\{f_n\}\|^2_{\ell^2} 
 \\
 &\leq \lambda_j^r |\mathcal{P}|_{\mathbb{A}^{r,2}_\lambda(U)}^2 |f|^2_{U}
\end{align*}
provided that 
 $\mathcal{P}\in \mathbb{A}^{r,2}_\lambda(U)$. We also see that 
 $$
 \|(\mathcal{P}-\mathcal{P}_j)f\|^2_{U}
 \leq \lambda_j^r|\mathcal{P}f|^2_{\mathbb{A}^{r,2}(U)}
 $$
 when $\mathcal{P}f\in A^{r,2}_\lambda(U)$.
\end{proof}

\section{The Approximation Spaces $A^{r,q}(U)$}
\label{sec:approx_spaces_Arq}

In this section we introduce the more general linear approximation spaces $A^{r,q}(U)$ for rates   $r>0$  and $1\leq q\leq \infty$.  
These  will be important to characterize the regularity of a wider variety of   Perron-Frobenius  or Koopman operators  and to determine the rates of convergence of the approximations. 

Spectral approximations feature prominently in many of the recent papers that construct approximations in Koopman theory.   There are several reasons for introducing the  spaces $A^{r,q}(U)$,  although they do not seem to have been used extensively, or at all, in the study of Koopman theory for  dynamical systems.
We have chosen to present spectral approximation spaces $A^{r,2}_\lambda(U)$ first as a means of building insight about   the more  abstract  spaces $A^{r,q}(U)$. We will see that in some important cases the spectral approximation spaces are special cases  of the  spaces $A^{r,q}(U)$. 
Essentially,  the equivalences result from  making assumptions on the rates of convergence of the eigenvalues $\left \{\lambda_k\right \}_{k\in \mathbb{N}}$ to zero in the operator expansions in Equations \ref{eq:ev_1}, \ref{eq:ev_2}, and \ref{eq:ev_3}.   So approximation spaces provide a reasonable framework to cast approximations in terms of eigenvalues and eigenfunctions of $T_K$ or $T_\mu$, provided their eigenvalues converge   to zero at a compatible rate. However, they are much more general and their construction is not given in terms of the eigenstructure of some fixed self-adjoint, compact operator $T$.

One other important reason for the introduction  of the scale $A^{r,q}(U)$ is simply pragmatics:  approximation spaces can be   defined in terms of  a specific choice of   bases used for realizing approximations, although it is also possible to define them in a coordinate free manner just using projections. \cite{dahmenmultiscale}   While the axiomatic foundations of the theory can be abstract, applications of the  theory can therefore be a direct source of  realizable algorithms. As we have summarized in the introduction, and  discuss more fully below, this approach can be used to deduce rates of convergence for a wide variety  of bases.  These bases include  trigonometric polynomials, algebraic polynomials, piecewise algebraic  polynomials, splines, and wavelets.  This means that in the event that the calculation of eigenfunctions is infeasible, it is still possible to construct estimates and study  rates of convergence of approximations.

There is  yet  another technical reason for pursuing an  approximation framework that is not spectral in nature. 
We have emphasized that the definition of the spectral spaces are essentially tied to a fixed operator $T$, and the approximation rates are stated  in terms of  eigenvalues of the kernel of a RKHS.
 In fact,   the spectral spaces $A_{\lambda}^{r,2}(U)$ defined in this way are  an example of what is called a ``native space'' associated with a RKHS. It is a rule of thumb that error rates are easily derived for functions in the native space generated by a particular kernel, but it is often not easy to describe how such estimates can be applied to other more common spaces.  Naturally, approximation theorists seek to understand how these approximation rates in the native space relate to approximation rates in general, ``standard'' spaces such as Lipschitz,  Sobolev,  or   Besov spaces. A number of such relationships  have been derived, see  \cite{wendland}. Researchers who study variants of  approximations from RKHS have also referred to this problem as ``escaping the native space.''  \cite{narcowich05}.   As a rule, error estimates are more valuable when they apply to a broad family of spaces, not just to ones that are specifically tied to a problem, or operator $T$ at hand. 

One other important feature of the approximation spaces $A^{r,q}(U)$ is that there is a rich and systematic theory  that establishes the equivalence approximation spaces to   interpolation  between more common spaces. We will use a few of the fruits of this analysis in our paper, but the interested reader should consult standard references  for the details. 
A discussion of the  theory for quite general  approximation spaces can be found in classical references such as \cite{piestch1981approxspaces}, \cite{piestch},  \cite{devore1993constapprox},\cite{sharpley}. 
 See Section \ref{sec:approx_spaces} for a very brief overview of the general theory.    We will introduce the spaces $A^{r,q}(U)$ here with $U=L^2_\mu(\Omega)$, but the definitions of the approximation spaces $A^{r,q}(V)$ are entirely analogous.

Let $\left \{A_j\right \}_{j\in \mathbb{N}_0}$   be a collection of approximant  subspaces of a Banach space $U$ that satisfy the properties summarized in the Appendix \ref{sec:approx_spaces}. These assumptions include the fact that the approximant spaces are nested and that their closed  linear span  is dense in $U$.    The approximation error $E_{n}(f,U)$ of $f\in U$ over $A_n$ is given by
$$
E_n(f,U):=\inf_{a\in A_n} \|f-a\|_U, 
$$
and the   approximation space $A^{r,q}(U)$  for $r>0$ is the Banach space
$$
A^{r,q}(U):=\left \{ f\in U \ \biggl | \ \|f\|_{A^{r,q}(U)} < \infty\right \}
$$
with 
\begin{align*}
    \|g\|_{A^{r,q}(U)}:=\left \{
    \begin{array}{ccc}
      \left (
        \sum_{n=1}^\infty 
           \left [
               n^rE_{n-1}(f,U)
           \right ]^q
         \frac{1}{n}  
         \right )^{1/q} & \quad & 1\leq q <\infty\\
       \sup_{n\geq 1}\left [ 
            n^{r} E_{n-1}(f,U) 
           \right ] & & q=\infty.
     \end{array}
    \right .
\end{align*}
There is another useful way to express the norm on  $A^{r,2}(U)$ when $U$ is a Hilbert space. 
Denote  by $\Pi_j:U \rightarrow A_j$ the $U$-orthogonal projection of $U$ onto $A_j$ for each $j\in \mathbb{N}_0$.
We let $Q_j:=\Pi_{j}-\Pi_{{j-1}}$ for $j\in \mathbb{N}_0$ and $\Pi_{-1}=0$ and set  
\begin{align*}
    |f|_{A^{r,q}(U)}:=\left \{
    \begin{array}{ccc}
      \left (
        \sum_{j=0}^\infty 
            [2^{rj} \|Q_jf\|_U]^q \right)^{1/q}
          & \quad & 1\leq q <\infty,\\
       \sup_{j\geq 0}\left [ 
            2^{jr} E_{2^j}(f,U) 
           \right ] & & q=\infty.
     \end{array}
    \right .
\end{align*}
In this definition $|f|_{A^{r,q}}(U)$ is only a seminorm: the norm is defined as  $\|g\|_{A^{r,q}(U)}:=\|g\|_U + |g|_{A^{r,q}(U)}$.
For $q=2$ and $U$ a Hilbert space,  the space  $A^{r,2}(U)$ is a Hilbert space with  the inner product given by 
$$
(f,g)_{A^{r,2}(U)}:=(f,g)_{U} +  \sum_{j\in \mathbb{N}}
2^{2jr}(Q_jf,Q_jg)_{U} = \sum_{j\in \mathbb{N}_0} 
2^{2jr}(Q_jf,Q_jg)_{U}
$$
The following theorem is analogous to Theorem \ref{th:spec_sp_approx} for the spectral approximation spaces. 
\begin{theorem}
\label{th:approx_simple_U}
The approximation spaces are nested,  
$$
{A}^{s,2}(U) \subset A^{r,2}(U)
$$
for $s>r>0$. Let $\Pi_j$ be the $U-$orthogonal projection onto the $A_j$ in the definition of $A^{r,2}(U):=A^{r,2}(U,\{A_j\}_{j\in \mathbb{N}_0})$.
If $f\in A^{r,2}(U)$, we have the error estimate
$$
\|(I-\Pi_j)f\|_{U} \leq 2^{-rj}|f|_{A^{r,2}(U)}.
$$
\end{theorem}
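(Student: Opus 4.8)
The plan is to work entirely with the orthogonal ``detail'' decomposition $f = \sum_{j\ge 0} Q_j f$ furnished by the projections $Q_j := \Pi_j - \Pi_{j-1}$, since for $q=2$ and $U$ a Hilbert space the relevant seminorm is precisely $|f|_{A^{r,2}(U)}^2 = \sum_{j\ge 0} 2^{2rj}\|Q_j f\|_U^2$. The first thing I would record is that the $Q_j$ are themselves mutually orthogonal projections. Because the $A_j$ are nested, the orthogonal projections satisfy $\Pi_a \Pi_b = \Pi_{\min(a,b)}$; from this one checks directly that $Q_j^2 = Q_j$, that $Q_j$ is self-adjoint, and that $Q_j Q_k = 0$ for $j\ne k$. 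Consequently the ranges of the $Q_j$ are mutually orthogonal, and since the closed span of $\{A_j\}_{j\in\mathbb{N}_0}$ is dense in $U$ we obtain the Pythagorean identity $\|g\|_U^2 = \sum_{j\ge 0}\|Q_j g\|_U^2$ for every $g\in U$. This single identity is the engine for both claims.

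For the nestedness, suppose $f\in A^{s,2}(U)$ with $s>r>0$. Then I would compare the two seminorms term by term: for each $j\ge 0$ one has $2^{2rj} = 2^{2sj}\,2^{-2(s-r)j}\le 2^{2sj}$, so that $\sum_{j} 2^{2rj}\|Q_j f\|_U^2 \le \sum_{j} 2^{2sj}\|Q_j f\|_U^2 = |f|_{A^{s,2}(U)}^2 < \infty$. Together with $\|f\|_U<\infty$ this gives $f\in A^{r,2}(U)$, hence $A^{s,2}(U)\subset A^{r,2}(U)$.

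For the error estimate, the key observation is that $\Pi_j = \sum_{k=0}^{j} Q_k$, so $(I-\Pi_j)f = \sum_{k>j} Q_k f$, and by the Pythagorean identity $\|(I-\Pi_j)f\|_U^2 = \sum_{k>j}\|Q_k f\|_U^2$. I would then insert the factor $2^{-2rk}\cdot 2^{2rk}$ and bound the tail: since $k>j$ forces $2^{-2rk}\le 2^{-2r(j+1)}\le 2^{-2rj}$ (using $r>0$), it follows that $\sum_{k>j}\|Q_k f\|_U^2 \le 2^{-2rj}\sum_{k>j} 2^{2rk}\|Q_k f\|_U^2 \le 2^{-2rj}|f|_{A^{r,2}(U)}^2$. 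Taking square roots yields the asserted bound $\|(I-\Pi_j)f\|_U \le 2^{-rj}|f|_{A^{r,2}(U)}$.

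There is no deep obstacle here; the argument mirrors the spectral proof of Theorem \ref{th:spec_sp_approx} almost verbatim, with the dyadic weights $2^{rj}$ playing the role of the eigenvalue powers $\lambda_n^{-r/2}$. The only point deserving care, and the step I would state explicitly rather than take for granted, is the orthogonality and projection structure of the $Q_j$, since it is exactly what converts the weighted $\ell^2$ sum defining the seminorm into a genuine orthogonal sum to which the Pythagorean identity applies. Once that structural fact is in hand, each of the two conclusions is essentially a one-line estimate.
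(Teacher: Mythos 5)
Your proof is correct and follows essentially the same route as the paper's: the telescoping decomposition $f=\sum_j Q_j f$, orthogonality of the $Q_j f$, and the insertion of the factor $2^{2rk}2^{-2rk}$ to bound the tail, with the nestedness handled by the term-by-term weight comparison the paper dismisses as trivial. The only difference is that you spell out the projection/orthogonality structure of the $Q_j$ and are slightly more careful with the tail index ($k>j$ rather than the paper's $k\ge j$), which is a harmless refinement rather than a new idea.
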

\begin{proof}
The proof of this theorem resembles the spirit of the proof in Theorem \ref{th:spec_sp_approx} when we identify  $\lambda_j^{1/2}\sim 2^{-j}$.
Nestedness as stated in this theorem is trivial to establish, and in fact there are other nestedness conditions for the general case as summarized in the full theory. \cite{devore1993constapprox}.
 Since $f\in U$,  the telescoping series 
$$
f=\sum_{j\in \mathbb{N}_0} Q_j f
$$
converges in $U$. Each  $Q_jf$ is perpendicular to all $Q_jf$ for $j\not=i$ by definition, and we have the orthogonal sum 
\begin{align*}
    \|f-\Pi_jf\|_U^2&:= \sum_{i\geq j} \|Q_if\|^2_U \leq \sum_{i\geq j}2^{2ri}2^{-2ri} \|Q_if\|^2_U\\
    &\leq 2^{-2rj} \sum_{i\geq j}2^{2ri} \|Q_if\|^2_U \leq 2^{-2rj}|f|_{A^{r,2}(U)}^2.
\end{align*}
\end{proof}

\noindent This bound, just like in our analysis of spectral approximation spaces in Theorem \ref{th:spec_sp_approx}, can be alternatively written as 
$$
\|(I-\Pi_j)f\|_{A^{r,2}(U)}\leq 2^{-(s-r)j}\|f\|_{A^{s,2}(U)}
$$
for $s>r>0$ when $f\in A^{s,2}(U)$.

\subsection{The Multiscale Structure, Wavelets, and  Multiwavelets}
\label{sec:wavelets}

Earlier we reviewed how $L^2_\mu(\Omega)$-orthonormal bases can be constructed from the eigenfunctions of linear, compact, self-adjoint operators. In that construction $U:=L^2_\mu(\Omega)$ is separable, and so is   the  reproducing kernel Hilbert space (RKHS) $V$ over a domain $\Omega\subseteq \mathbb{R}^d$. The bases $\{u_i\}_{i\in \mathbb{N}}$ and $\{v_i\}_{i\in \mathbb{N}}$ are natural choices for the spectral approximation spaces $A^{r,2}_\lambda(U)$ and $A^{r,2}_\lambda(V)$, respectively.  These bases are generally  supported globally on $\Omega$ and have no multiscale structure.

In this section we describe some particular bases that, unlike the bases of eigenfunctions, exhibit a multiscale structure. That is, the bases are defined by introducing a family of nested grids over which the basis functions are defined. 
Here, the bases are selected to be well-known examples of orthonormal wavelets or multiwavelets for the separable Hilbert space  $U$.  
We begin with a rather general overview  of the structure of multilevel decompositions induced by such multilevel bases in Section  \ref{sec:multiscale_structure}. We then discuss  examples  that are applicable to the spaces $U:=L^2(\Omega)$.  These bases enable  development of the  theory of  approximation spaces $A^{r,2}(L^2(\Omega))$ in a transparent fashion. Such spaces will be applicable  to Koopman theory if we know that the measure of interest has the form $\mu(dx):=m(x)dx$ for an integrable function that satisfies $c_1 \leq m(x) \leq c_2$ almost everywhere in $\Omega$ for two positive constants $c_1,c_2$.  We finally discuss approximation spaces defined in terms of warped wavelet and warped multiwavelet bases, which enable the treatment of a wider class of  measures $\mu$.

\subsection{An Overview of The Multiscale Structure}
\label{sec:multiscale_structure}

This section summarizes the structure and indexing of multiscale bases, especially those that arise from wavelets and multiwavelets. The notation is fairly standard in the study of multigrid methods  or multiresolution analyses.  Our discussion is necessarily brief, see \cite{strang, daubechies88, devorenonlinear,dahmenmultiscale} for more examples of this common notation. 

Suppose  that    we are given  an orthonormal wavelet  basis  $\{\psi_{j,\bm{k}}\ | \ j\in \mathbb{N}, \bm{k}\in \Gamma_j^\psi\} \subset U$ with   $\Gamma_{j}^\psi$  the family of admissible  indices for each fixed $j$. Roughly speaking, the  integer  $j$ denotes the mesh resolution level and $\bm{k}\in \Gamma_j^\psi$ ranges over all the functions that are defined on that mesh level.  We mostly only consider tensor product (multi)wavelets for $\Omega\subseteq \mathbb{R}^d$ when  $d>1$ to keep our discussion elementary. Example \ref{ex:haar_multi_tri} provides one easy example of a domain that is not a tensor product and is included to demonstrate the numbering and indices for such a multiwavelet basis.
Such domains, ones that are  tilings of a few master subdomains, are one class over which Haar multiwavelets are often straightforward to construct. This class could be important to approximations in Koopman theory since many examples study dynamics over partitions of some domain. 
Multiscale structures over more general domains can be quite complicated, see \cite{dahmenman1,dahmenman2} for examples. We take the  mesh on level $j$ to be finite union of some collection of dyadic cubes $\square_{j,\bm{m}}\subseteq \Omega \subseteq \mathbb{R}^d$ that have the form 
$$
\square_{j,\bm{m}}:=\left \{ x\in \mathbb{R}^d\ \biggl | \ 2^{-j}m\leq x_i \leq 2^{-j}(m+1), 1\leq i\leq d \right \}
$$
for $\bm{m}\in \mathbb{Z}^d$.  The  wavelet spaces 
consist of all the wavelet functions $\psi_{j,\bm{k}}$ that are defined on mesh level $j$,
\begin{equation}
\label{eq:wave_basis}
W_j:=\text{span}\left \{ \psi_{j,\bm{k}}\ | \  \bm{k} \in \Gamma_j^\psi  \right \}.
\end{equation}
As explained in more detail in the Examples \ref{ex:ON_wave}  and \ref{ex:ON_multi}, the indexing of the functions $\psi_{j,\bm{k}}$ in  the spaces $W_j$ uses an overloaded definition that accounts for a variety of possibilities. This notation inherently is designed to reflect as multiscale structure in the analysis that follows. 
In most of our  examples  these bases will initially be defined over $\mathbb{R}^d$, and subsequently the  set of basis functions is modified  so that that they are a basis for functions  over  a compact set $\Omega$. Adapting a given set of wavelets defined over $\mathbb{R}^d$ to a general set $\Omega$ is a delicate, difficult, and lengthy process in general. \cite{Dahmen1995MultiscaleDO,dahmenman1,dahmenman2} 
All of our  examples are carried out  when $\Omega$ is  $[0,1]^d$ or the $d-$dimensional torus $\mathbb{T}^d$. Modifications of a global basis on  $\mathbb{R}^d$  to life on the torus $\mathbb{T}^d$  are particularly  simple since it is only necessary to periodize a finite set of functions on each grid resolution level. Restriction of Haar bases to $[0,1]^d$ is trivial. The modification of the multiscale orthonormal bases of \cite{dgh,dgh18} to $[0,1]^d$ for Neumann or Dirichlet boundary conditions  is also relatively easy and discussed in these papers.  With these considerations in mind, we  suppress the bold font style in the remainder of this section  for entries $\bm{k}\in \Gamma^\psi_j$ and just use $k$ to denote  in the set of admissible functions on mesh level $j$.

 We define the  approximant spaces $A_j:=\cup_{0\leq m\leq j-1}W_m$ in terms of the wavelet spaces $W_j$, and they    satisfy $A_{j+1}=A_j \oplus W_j$ for each $j\in \mathbb{N}_0$. \cite{daubechies88,strang} The  spaces $A_j$ are  also often referred to  as the space of scaling functions on level $j$ in the literature on multiresolution analysis. 
 We denote by $\Pi_j$ and $Q_j$ the orthogonal projections on $A_j$ and $W_j$, respectively. Common wavelet bases  $\left \{ \psi_{j,k}\right \}_{j\in \mathbb{N},k\in \Gamma_j^\psi}$ are constructed so that  any $f\in U$ can be written in the  so-called {\em multiscale expansion} given by the   $U$-convergent  series 
\begin{equation}
\label{eq:Wspace}
f=\sum_{j\in \mathbb{N}_0} \sum_{k\in \Gamma^\psi_j} (f,\psi_{j,k})_U \psi_{j,k}. 
\end{equation}
The outer summation over $j$ runs over the grids each having uniform mesh parameter $2^{-j}$, and the inner summation spans all the possible functions on a particular grid.  
In many cases,  the summation above in $j$ is expressed over   all $0\leq j_0\leq j\in \mathbb{N}_0$, where $j_0$ denotes the resolution level of  the coarsest grid in the summation. 
Note that here  we have followed the usual convention,  discussed more carefully in Section \ref{sec:wavelets},  of subsuming the scaling functions in the coarsest  wavelets on level $j=0$ or $j=j_0$ in this multiscale representation. \cite{dahmenmultiscale,devore2006approxmethodsuperlearn}. 

We next discuss how multiscale bases of this type can be realized. 

\subsection{$L^2(\Omega)-$Orthonormal Wavelets and Multiwavelets}
\label{sec:on_wave_mwave}
Among the tens of thousands of wavelet and multiwavelet papers that have been published over the past three decades, we have elected to express the theory in our paper using only orthonormal wavelets and multiwavelets. As mentioned in the introduction, this assumption is hardly necessary, but it keeps the treatment intuitive. 
There  is a well-documented collection of $L^2(\Omega)-$orthonormal wavelet systems that can be used to realize the multilevel  setup in the last section  \cite{daubechies88,dgh,dgh18} when the  measure $\mu$ is just Lebesgue measure. Much has been said about orthonormal wavelets and multiwavelets in the literature for this case, as well as their generalizations that yield the  biorthogonal wavelet families. To review them specifically and in detail here would distract from our primary goal, the determination of convergence rates of approximations of operators in Koopman theory. 
For those who have not seen these constructions before, we give a  detailed summary of the Daubechies compactly supported orthonormal wavelets and some compactly supported orthonormal multiwavelets  in Examples  \ref{ex:ON_wave} and \ref{ex:ON_multi}, respectively, in the Appendix in Section \ref{sec:app_A}.   

Here we discuss  just  two concrete examples of the low approximation order orthonormal wavelets and multiwavelets. These are of course the Haar wavelets and a  generalization of them that results in  some simple multiwavelets on a triangular domain. We include these since they give clear definitions of the scaling functions $\phi_{j,k}$, wavelets $\psi_{j,k}$, as well as the indexing sets $\Gamma_j^\phi$ and  $\Gamma_j^\psi$ in specific  cases. The multiscale decomposition is easy to understand in these canonical examples. 

\medskip

\begin{framed}
\begin{example}[Haar Wavelets for $d=1$]
\label{ex:haar_basis}
A discussion of the Haar wavelet system appears in many references 
 on wavelets as a beginning example. 
 Despite the simplicity of this basis choice, it should be of interest to researchers that study Koopman theory since so many examples deal with approximations by piecewise constants over partitions of the domain. 
In one dimension the Haar scaling function $\phi:\mathbb{R}\rightarrow \mathbb{R}$ is a constant function, and the Haar wavelet $\psi:\mathbb{R}\rightarrow \mathbb{R}$  is a  piecewise constant function,  over the domain $\Omega:=[0,1)$. They are written as 
\begin{align*}
    \phi(x)&:=\left \{ \begin{array}{ccc}
    1 & \quad & x \in [0,1)\\
    0 &  & \text{otherwise}
    \end{array}
    \right . , \\
    \psi(x)&:= \phi(2x)-\phi(2x-1).
\end{align*}
We define a family of  nested dyadic grids on $\Omega$ formed from dyadic cubes $\square_{j,k}$ 
having sidelength $2^{-j}$ that are  given by 
$$
\square_{j,k}:=\left \{x \in \Omega \ | \  2^{-j}k \leq x < 2^{-j}(k+1)\right \}
$$
for $k=0,\ldots,2^{j}-1$. The grid that consists of a union of such cubes is said to have resolution level $j$. 
\begin{center}
	\includegraphics[width=.7\textwidth]{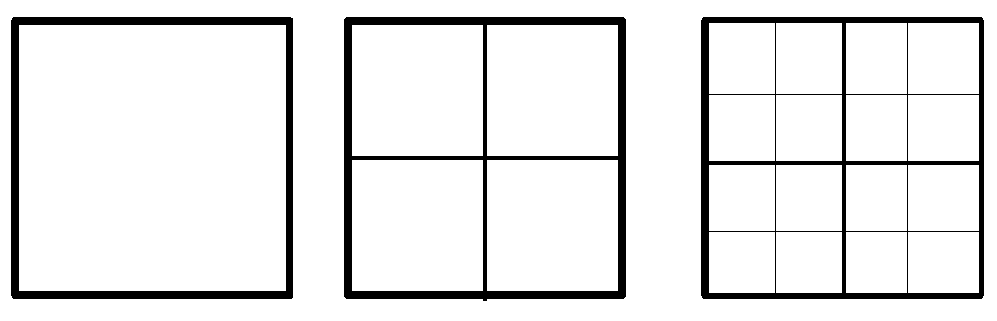}
	\captionof{figure}{Nested Grids and Dyadic Cubes $\square_{j,k} \in \mathbb{R}^2$}
	\label{fig:dyadic_cubes}
\end{center}

Based on this enumeration we define the index sets  $\Gamma_j^\phi:=\Gamma_j^\psi:=\{k: 0\leq k \leq 2^{j}-1\}$,  
and  the admissible scaling functions  $\phi_{j,k}:=2^{j/2}\phi(2^jx-k)$ for $k\in \Gamma_{j}^\phi$ and wavelets $\psi_{j,k}:=2^{j/2}\psi(2^jx-k)$  for $k\in \Gamma_{j}^\psi$.  It  can be shown that $\{\phi_{j,k}\}_{k\in \Gamma_j^\phi}$ are $L^2(\Omega)-$orthonormal for each fixed grid having resolution level $j$. Also, the family $\{\psi_{j,k}\}_{j\in \mathbb{N}_0,k\in \Gamma_j^\psi}$ are $L^2(\Omega)$-orthonormal. 
In a typical approximation problem in $\Omega\subset \mathbb{R}^1$, we set 
\begin{align*}
    A_j&:=\text{span}\{ \phi_{j,k} \ | \ k\in \Gamma_j^\phi \}=\text{span} \\
    &= \{ 1_{\square_{j,k}} \ | \ k\in \Gamma_{j}^\phi \}\\
    &=\text{span}\{
    \psi_{i,k} \ | \  i\leq j, \  k\in \Gamma_{i}^\psi \},
\end{align*}
so that $n_j:=\#(A_j)=2^j$. As noted  above, we can alternatively expand the family of wavelets to subsume the scaling functions in the coarsest scale. One way to do this is by setting 
$
\psi_{-1,0}:=\phi
$
and redefining the index set for the wavelets as 
$$
\Gamma^\psi_{j}:=\left \{ 
\begin{array}{ccc}
\{0, \ldots, 2^j-1\} & \quad & j\geq 0 \\
0 & & j=-1.
\end{array}
\right .
$$
Then we also can write 
$$
A_j:=\text{span}\left \{ 
\psi_{i,k} \ | \ i\leq j, \  k\in \Gamma_i^\psi \right \}.
$$
Of course we still have $n_j=2^j$. Any $f\in L^2(\Omega)$ has the multiscale decomposition
$$
f= \sum_{j\in \mathbb{N}_{-1}} \sum_{k\in \Gamma_{j}^\psi} (f,\psi_{j,k})_{L^2(\Omega)} \psi_{j,k},
$$
and its 
orthogonal projection $f_j\in A_j$ can be written 
$$
f_j:= \Pi_j f:=\sum_{k\in \Gamma_j^\phi} (f,\phi_{j,k})_{L^2(\Omega)} \phi_{j,k} = \sum_{i\leq j} \sum_{k\in \Gamma_i^\psi} (f,\psi_{i,k})_{L^2(\Omega)} \psi_{i,k}.
$$
This construction is based on the assumption that $d=1$, but the extension to $\Omega=[0,1)^d$ via tensor products of the wavelets is routine. An explicit discussion of the process for defining tensor products  is given in Examples \ref{ex:ON_wave} and \ref{ex:ON_multi} in the Appendix.  From Theorem \ref{th:approx_besov} in the Appendix, we know that the approximation space generated by the Haar wavelets $A^{r,\infty}(L^p(\Omega);\{A_j\}_{j\in \mathbb{N}_0})$ is equivalent to the generalized Lipschitz space $\text{Lip}^*(\alpha,L^p(\Omega))$ for all $0<\alpha <r-1+1/p$. 
This means that we have 
$$
\|(I-\Pi_j)f\|_{L^2(\Omega)} \lesssim 2^{-\alpha j} |f|_{\text{Lip}^*(\alpha,L^2(\Omega))}
$$
for all functions $f \in \text{Lip}^*(r,L^2(\Omega))$ over the range $0<\alpha<1/2$, for example. 
When $p=\infty$ and $f\in UC(\Omega)$, we also have 
$$
\|(I-\Pi_j)f\|_{L^\infty(\Omega)} \lesssim 2^{-\alpha j} |f|_{\text{Lip}(\alpha,UC(\Omega))}
$$
over the same range.

As a final observation,  if one wants to generate a partition of the domain $\Omega:=[0,1]^d$, the argument above proceeds the same but the definition of the last scaling function on each dyadic level in each coordinate direction is modified. The wavelets on each dyadic level are correspondingly modified and a multiscale decomposition over  $\Omega:=[0,1]^d$ is defined. 
\end{example}
\end{framed}

\begin{framed}
\begin{example}[Haar Multiwavelets over Triangles]
\label{ex:haar_multi_tri}
In this example we consider a slight generalization of the Haar wavelets to define a system of  multiwavelets over a different domain, one that is not a product of compact sets. 
It is an example of a domain that is a self-similar tiling of itself. This contruction can be carried out form many similar self-similar tilings, ones that seem to arise often in Koopman theory.  The point of this example is to illustrate the form of the numbering schemes and indices for similar multiwavelet systems.  The numbering here differs  only slightly from the preceding case, but illustrates the situation when there are two indices $(i,\bm{k})$, the multiscaling function or multiwavelet  number $i$ and the translation $\bm{k}$.

\includegraphics[width=.9\textwidth]{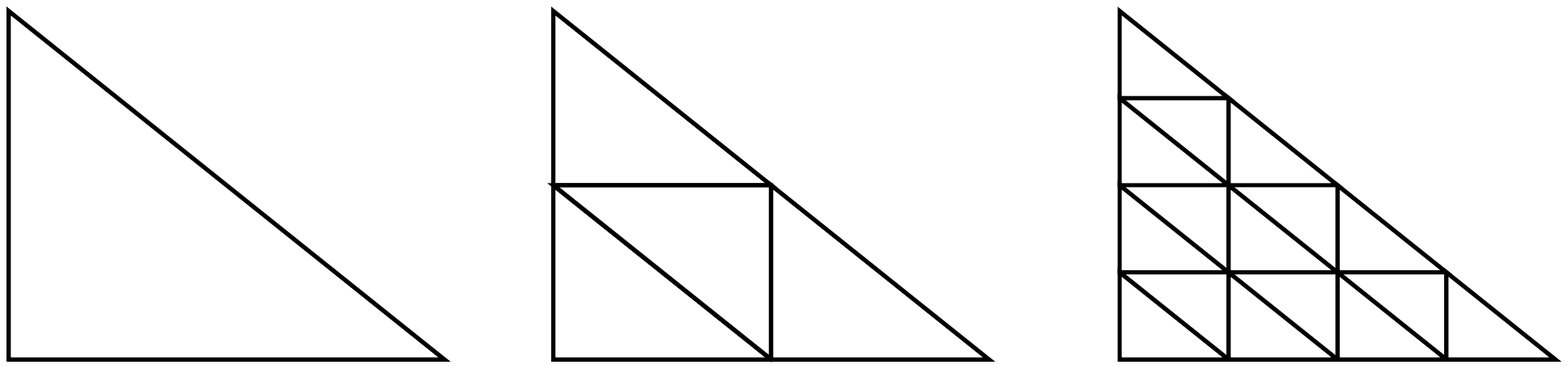}
\captionof{figure}{Triangular Domain and Nested Grids}
\label{fig:triangle_grid}

{\centering 
\begin{tabular}{ccc}
\includegraphics[width=.45\textwidth]{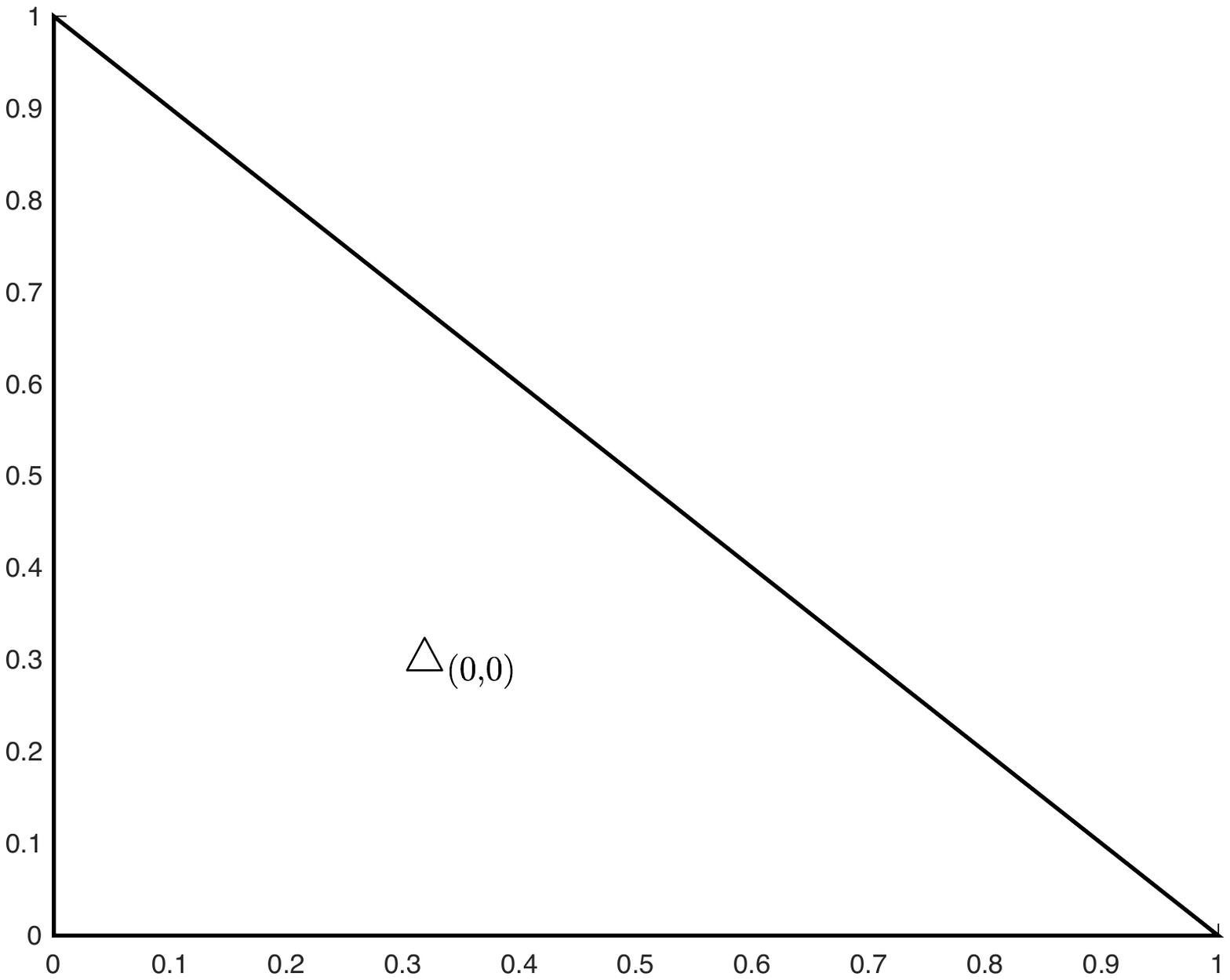}& 
\includegraphics[width=.45\textwidth]{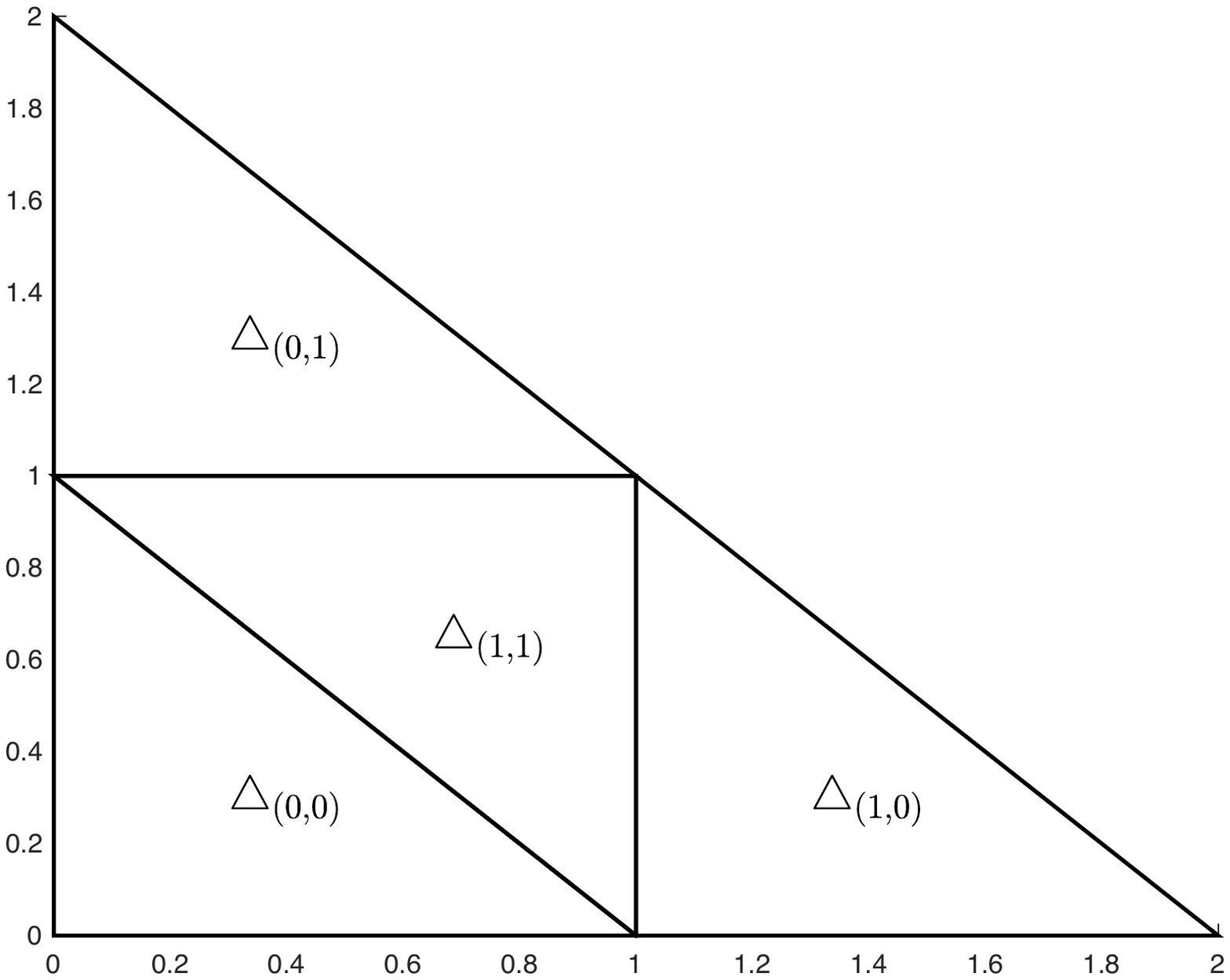}
\end{tabular}
\captionof{figure}{Labelling of Triangles on Grid Level $j=0$}
\label{fig:level_0}
}

We define the triangles $\triangle:=\triangle_{0,0}$ and $\triangle:=\triangle_{1,1}$ on grid level $j=0$  as 
\begin{align*}
\triangle:=\triangle_{0,0}&:=\{(x,y) \ | \ y\geq 0,\  x\geq 0,\  y+x< 1\}, \\ 
\triangle_{1,1}&:=\{(x,y) \ | \ y<1,\  x< 1,\ y+x\geq  1\},  
\end{align*}
and set the translations $\triangle_{1,0}$ and $\triangle_{0,1}$ of $\triangle_{0,0}$ as depicted in Figure \ref{fig:level_0}. The vertical and horizontal side  of each triangle $\triangle_{0,0},\triangle_{1,0},\triangle_{0,1}$  on grid level $j=0$ is $1$, while $\triangle_{1,1}$ is equal to $1$ over its hypotenuse.  By recursion, the  grid of level $j$ is obtained by the uniform subdivision illustrated in Figure \ref{fig:triangle_grid} where each cell is a scaled and translated copy of $\triangle_{0,0}$ or $\triangle_{1,1}$. A triangle $\triangle_{j,\bm{k}}$ on grid level $j$ is defined as 
$$
\triangle_{j,\bm{k}}:=\left \{ x\in \mathbb{R}^2 \ | \ 2^jx-\bm{k} \in \triangle_{0,0} \text{ or } \triangle_{1,1}\  \right \}
$$
for some  $\bm{k}\in \mathbb{Z}^2$. 
We define the index set $\Gamma_{j}^\triangle$ for grid level $j$ to be the indices of the triangles $\triangle_{j,\bm{k}}$ that meet $\triangle_{0,0}$. In view of the complimentrary boundary conditions of $\triangle_{0,0}$ and $\triangle_{1,1}$, the triangles $\{\triangle_{j,k}\}_{k\in \Gamma_{j}^\triangle}$ form a partition of $\triangle_{0,0}$. 
We set the multiscaling function 
$$
\bm{\phi}(x):=\begin{Bmatrix} \phi_1(x) \\ \phi_2(x)\\ \phi_3(x) \\ \phi_4(x) \end{Bmatrix}
:=\begin{Bmatrix}
1_{\triangle_{0,0}}(x) \\
1_{\triangle_{1,0}}(x) \\
1_{\triangle_{0,1}}(x) \\
1_{\triangle_{1,1}}(x) 
\end{Bmatrix}.
$$
The $L^2(\Omega)-$orthonormal scaling functions on grid level $j$ are given by 
$$
\phi_{j,(i,\bm{k})}(x):=\frac{1}{|\triangle_{j,\bm{k}}|^{1/2}} \phi_i(2^j x -\bm{k})
$$
for $i\in \{1,2,3,4\}$ and  some $k\in \mathbb{Z}^2$. For each triangle $\triangle_{j,\ell}\subset \triangle_{0,0}$ there is a unique function $\phi_{j,(i,k)}$ that is supported on $\triangle_{j,\ell}$. We define the collection of admissible indices $\Gamma_j^\phi$ for the multiscaling functions as 
$$
\Gamma_j^\phi:=\left \{ (i,\bm{k}) \ | \ 
i\in \{1,\ldots,4\}, \ (i,\bm{k})\text{ corresponds to } \triangle_{j,\bm{\ell}} \text{ for some } \ell\in \Gamma^\triangle_j \right \},
$$
so that the approximant space on level $j$ is then 
$$
A_j:=\text{span}\left \{ 
\phi_{j,(i,\bm{k})} \ | \ j\in \mathbb{N}_0, \ 
(i,\bm{k})\in \Gamma_j^\phi
\right \},
$$
We have $n_j:=\#(A_j)=2^{dj}$ with $d=2$. 
There are a number of ways to define wavelets for this system.  We  select the multiwavelet $\bm{\psi}:=\{\psi_1,\psi_2,\psi_3\}$ to be the  functions 
\begin{align*}
    \psi_1(x)&:=(\phi_1(x)-\phi_4(x))/\sqrt{2}, \\ 
    \psi_2(x)&:=(\phi_2(x)-\phi_3(x))/\sqrt{2},\\
    \psi_3(x)&:=[(\phi_1(x) + \phi_4(x))-(\phi_2(x)+\phi_3(x))]/2, 
\end{align*}
and then the scaled and translated  wavelets are 
$$
\psi_{j,(i,\bm{k})}:=\frac{1}{|\triangle_{j,\bm{k}}|^{1/2}} \psi_i(2^jx-\bm{k})
$$
for some $\bm{k}\in \mathbb{Z}^2.$ We define the index set of admissible wavelets on level $j$ to be 
$\Gamma_j^\psi$. 
The $L^2(\triangle)-$orthonormal projection of any $f\in L^2(\Omega)$ onto $A_j$ is given by the single scale expansion 
$$
\Pi_j f:= \sum_{(i,\bm{k})\in \Gamma_j^\phi} (f,\phi_{j,(i,\bm{k})})_{L^2(\Omega)} \phi_{j,(i,\bm{k})},
$$
and it has the multiscale representation
$$
f=
(f,\phi_{0,(1,0)})_{L^2(\Omega)} \phi_{0,(1,0)}
+ 
\sum_{j\in \mathbb{N}_0} \sum_{(i,\bm{k})\in \Gamma_{j}^\psi} (f,\psi_{j,(i,\bm{k})})_{L^2(\Omega)} \psi_{j,(i,\bm{k})}.
$$
As above, or as in Examples \ref{ex:ON_wave} and \ref{ex:ON_multi}, the above multiscale expression can be simplified somewhat by subsuming the scaling function into the collection of wavelets on the coarsest level. We set 
$ \psi_{-1,0}:=\phi_{0,(1,0)}$ and modify the index set $\Gamma_{j}^\psi$ accordingly to get 
$$
f=
\sum_{j\in \mathbb{N}_{-1}} \sum_{(i,\bm{k})\in \Gamma_{j}^\psi} (f,\psi_{j,(i,\bm{k})})_{L^2(\Omega)} \psi_{j,(i,\bm{k})}.
$$
The approximation of functions $f\in C(\Omega)\subset L^2(\triangle)$ can be constructed in terms of the family of operators $\tilde{\Pi}_j:C(\Omega) \rightarrow A_j$ with 
$$
(\tilde{\Pi})_jf(x):= \sum_{\ell \in \Gamma_j^\triangle} f(\xi_{j,\ell} ) 1_{\triangle_{j,\ell}}(x), 
$$
with $\{ \xi_{j,\ell}\}_{\ell \in \Gamma_j^\triangle}$ the centroids of the triangles $\triangle_{j,\ell}$.  Following essentially the same steps as in Example \ref{ex:two_cases}, we find that 
$$
\| (I-\tilde{\Pi}_j)f\|_{C(\Omega)} \lesssim 2^{-rj} |f|_{\text{Lip}(r,C(\Omega))}.
$$
This situation is a particular example of the more general bound 
$$
\|(I-\tilde{\Pi}_j)f\|_{L^p(\Omega)} \lesssim  2^{-rj}|f|_{\text{Lip}(r,L^p(\Omega))}
$$
in Equation 6.16 of \cite{devorenonlinear}. 

As in the last section, the analysis above can be carried out ove the compact set $\Omega=\overline{\triangle_{0,0}}$ by a simple modification of all scaling functions and multiwavelets that meet the hypotenuse of $\triangle_{0,0}$. 
\end{example}
\end{framed}

\subsubsection*{Using $L^2(\Omega)$ Wavelets for Approximations in $L^2_\mu(\Omega)$}
The last two examples show that there are a large family of multiscale orthonormal wavelets and multiwavelets that generate a basis for $L^2(\Omega):=L^2_\mu(\Omega)$ with $\mu$ simple Lebesgue measure. 
If  $\mu$ is not Lebesgue measure, warped wavelets  \cite{kerkyacharianwarped} based on modifications $L^2(\Omega)$ orthonormal wavelets can be shown to be $L^2_\mu(\Omega)$-orthonormal for a reasonable class of measures $\mu$.  A discussion of warped wavelets is presented in Section \ref{sec:warp_wave_multi}.
However, the approximation of functions in $L^p_\mu(\Omega)$ for certain types of measures can also be constructed directly with the bases on $L^2(\Omega)$.  Suppose $\Omega\subset \mathbb{R}^d$ is compact,  
$
\mu(dx):=m(x)dx,
\label{eq:meas_con_1}
$
and that there  are two  constants  $c_1,c_2$ such that 
$
0<c_1 \leq m(x) \leq c_2 <\infty 
\label{eq:meas_con_2}
$
for $x \text{ a.e. in }\Omega$. It is immediate that 
$$
\|f\|_{L^2(\Omega)} \approx \|f\|_{L^2_\mu(\Omega)},
$$
that is, the two norms are equivalent. 
The sets $L^2(\Omega)$ and $L^2_\mu(\Omega)$ contain the same collection of functions,  and the topology on the two spaces is the same. Any $f\in L^2_\mu(\Omega)$ can therefore be approximated in terms of the basis  $\left \{ \psi_{j,k}\ | \ j_0\leq j \in \mathbb{N}_0, k\in \Gamma_j^\psi \right \}$ for $L^2(\Omega)$, and the  approximation spaces $A^{r,q}(L^2(\Omega))$ can be used to approximate  functions in $L^2_\mu(\Omega)$.  In fact, we have 
$$
A^{r,2}(L^2(\Omega),\{A_j\}_{j\in \mathbb{N}_0})
\equiv
A^{r,2}(L^2_\mu(\Omega),\{A_j\}_{j\in \mathbb{N}_0})
$$
with $A_j:=\text{span}\{ \phi_{j,k} \ | \ k\in \Gamma_{j}^\phi \}$, and the norms on these spaces are equivalent. 
%
%
%%	warped wavelets
%

\subsection{Warped Wavelets and Multiwavelets}
\label{sec:warp_wave_multi}
In this section we discuss  the method of warped wavelets \cite{kerkyacharianwarped} and multiwavelets. This approach  will be useful for cases when the Koopman or Perron-Frobenius operators are defined over a domain of interest $\tilde{\Omega}$ that is related to a master domain  $\Omega$ by a change of variables. In some cases we can think of $\Omega$ as the original domain over which initial conditions are defined, and $\tilde{\Omega}$ as the image of the initial domain under a change of variables. While we are careful to distinguish the initial and image domains in the theory discussed here, the approach is of course applicable to the common examples when the change of variables maps the initial domain onto itself. In both scenarios the measures $\mu$ and $\tilde{\mu}$ are  defined over $\Omega$ and $\tilde{\Omega}$, respectively. 
\begin{figure}[h!]
    \centering
    \includegraphics[scale=0.4]{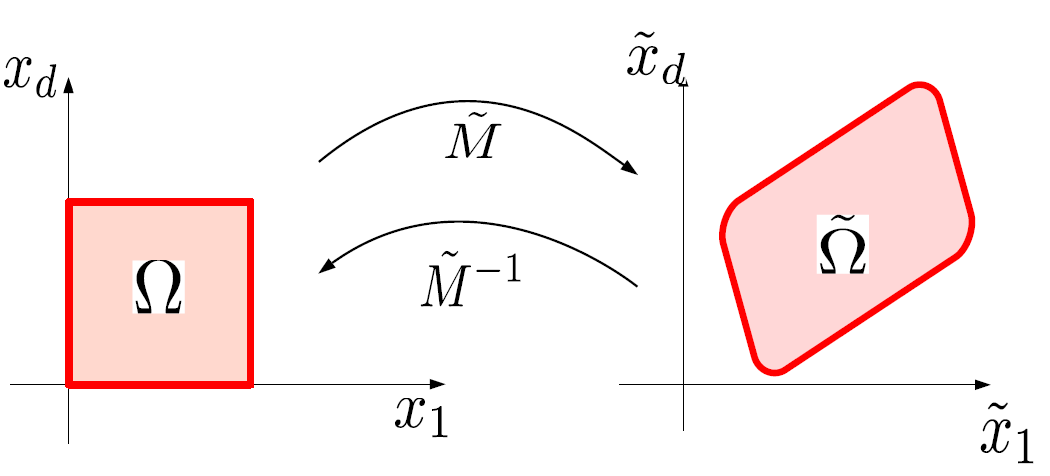}
    \caption{The mapping of $\tilde{\Omega}$ to $\Omega$}
    \label{fig:domain_map}
\end{figure}
\vspace{-2.5mm}
Figure  \ref{fig:domain_map} depicts a domain of interest $\tilde{\Omega} \subset\mathbb{R}^d$ that is the image of the master domain $\Omega:=[0,1]^d$ under a suitably smooth change of coordinates $\Omega=\tilde{M}(\tilde{\Omega}).$
We write $x:=\tilde{M}(\tilde{x})$ for each $x\in\Omega$ and  $\tilde{x}\in \tilde{\Omega}$, and  the determinant of the Jacobian matrix  is given by 
$$
m(\tilde{x}):= \left | \frac{\partial x}{\partial \tilde{x}}\right | :=\left | \frac{\partial \tilde{M}}{\partial \tilde{x}}\right | .
$$
Let $\left \{\psi_{j,k}\ | \ j\in \mathbb{N}_0, k\in \Gamma_j^\psi \right \}$ be any of the orthonormal bases of $L^2(\Omega)$ constructed above from wavelets or multiwavelets over the master domain $\Omega$. The bases discussed in Examples \ref{ex:ON_wave} and \ref{ex:ON_multi} are just two possibilities that can be employed to construct the warped wavelets here. We define  an associated collection of bases over the domain $\tilde{\Omega}$ 
from the identity
$$
\tilde{\psi}_{j,k}(\tilde{x}):=
\psi_{j,k}(\tilde{M}(\tilde{x}))
$$
for $j\in \mathbb{N}_0$ and $k\in \Gamma_j^\psi$. The family $\{\tilde{\psi}_{i,k}\}_{i\in \mathbb{N}_0,k\in \Gamma_{i}^\psi}$ are the warped wavelets generated by $\{ \psi_{j,k} \}$. We have the integration formula that results from the change of variables, 
\begin{align*}
\delta_{(j,k),(\ell,m)}&=
\int_\Omega \psi_{j,k}(x) \psi_{\ell,m}(x) dx
=\int_{\tilde{\Omega}} {{\psi}}_{j,k}(\tilde{M}(\tilde{x}))
{{\psi}}_{\ell,m}(\tilde{M}(\tilde{x}))
\left | \frac{\partial x}{\partial \tilde{x}} \right |d\tilde{x}
\\
&= \int_{\tilde{\Omega}} {\tilde{\psi}}_{j,k}(\tilde{x})
{\tilde{\psi}}_{\ell,m}(\tilde{x})m(\tilde{x})d\tilde{x}
:=
\left( {\tilde{\psi}}_{j,k},
{\tilde{\psi}}_{\ell,m} \right )_{L^2_{\tilde{\mu}}(\tilde{\Omega})} .\\
\end{align*}
The orthonormality of the basis $\left \{\psi_{j,k}\ | \ j\in \mathbb{N}_0, k\in \Gamma_j^\psi \right \}$ in the $L^2(\Omega)$ inner product on $\Omega$  implies that the basis 
$\left \{\tilde{\psi}_{j,k}\ | \ j\in \mathbb{N}_0, k\in \Gamma_j^\psi \right \}$ is orthonormal in the usual $\tilde{\mu}-$weighted inner product  
$$
(\tilde{f},\tilde{g})_{L^2_{\tilde{\mu}}(\tilde{\Omega})}
:= \int_{\tilde{\Omega}} \tilde{f}(\tilde{x}) \tilde{g}(\tilde{x})\tilde{\mu}(d\tilde{x})
$$
over $\tilde{\Omega}$.
We finally {\em define} the Hilbert space $\tilde{U}$ as the completion of the set of orthonormal functions $\left \{\tilde{\psi}_{j,k}\ | \ j\in \mathbb{N}_0, k\in \Gamma_j^\psi \right \}$ defined over $\tilde{\Omega}$ in the above $\tilde{\mu}-$ weighted  inner product.
 The approximation spaces ${A}^{r,q}(\tilde{U})$ are then defined in the usual manner, according the definitions   in Section \ref{sec:approx_spaces_Arq} or Appendix \ref{sec:approx_spaces}. When the measure $\tilde{\mu}$ is known,  the approximation spaces  can be used to build and measure rates of  approximations of the Koopman and Perron-Frobenius operators for functions over $\tilde\Omega$. The error rates are identical to that in Theorem \ref{th:approx_simple_U} with $U$ replaced by $\tilde{U}$.  Examples in Section \ref{sec:approx_mu_u_p_k}  illustrate that such warped  bases are also important in deriving data-dependent approximations from samples.

Note carefully that the discussion above refers to (at least) three different spaces of square integrable functions. We have the space $L^2(\Omega)$ that is the the usual space of real-valued, Lebesgue square-integrable functions over $\Omega$. We also refer to the usual $\tilde{\mu}$-weighted space of square-integrable functions $L^2_{\tilde{\mu}}(\tilde{\Omega})$ and its inner product over $\tilde{\Omega}$.  Finally, we define $\tilde{U}$ as  the Hilbert space of functions defined over $\tilde{\Omega}$ as  the completion of the finite linear span of warped wavelets in the $\tilde{\mu}-$weighted inner product on $L^2_{\tilde{\mu}}(\tilde{\Omega})$. Since  each of the wavelets $\tilde{\psi}_{j,k}$ is contained in $L^2_{\tilde{\mu}}(\tilde{\Omega})$, we know that 
$$
\tilde{U}\subseteq L^2_{\tilde{\mu}}(\tilde{\Omega}),
$$
In general the study of the properties of weighted approximation spaces can be delicate and the reader should see \cite{kerkyacharianwarped} for a presentation of the theory, or  consult \cite{binevI,binevII} to see pragmatic adaptive estimators that rely on these spaces.

We discuss in Examples \ref{ex:warp1}, \ref{ex:warp2}, \ref{ex:warp3} the use of warped wavelets in the approximation of Koopman and Perron-Frobenius operators. 

%
%%
%
%{\color{red}
% \begin{example}[Orthonormal Warped Wavelets and Multiwavelets] 
%\label{ex:ON_warp}
%\end{example}
%}
%%	
%

\subsection{Approximation of  $\mathcal{P}, \mathcal{U}$ over ${A}^{r,2}(U)$}

Recall that the approximation space  $A^{r,2}(U)$ of order $r\geq 0$ is defined in terms of the seminorm   $|\cdot|_{A^{r,2}(U)}$. When the basis $\{\psi_{j,k}\ | \ j\in \mathbb{N}_0,k\in \Gamma_j^\psi\}$ is orthonormal, it can be expressed  as 
\begin{align}
\label{eq:A_S}
A^{r,2}(U)&:=
\left \{
f\in U \ \left |  \ |f|^2_{A^{r,2}(U)}:=
\sum_{j\in \mathbb{N}_0} 2^{2rj} \| Q_jf\|^2_{U}
\right . \right \},\\
&=\left \{
f\in U \ \left |  \ |f|^2_{A^{r,2}(U)}:=
\sum_{j\in \mathbb{N}_0} 2^{2rj} \sum_{k\in \Gamma_j^\psi} |(f,\psi_{j,k})_U|^2
\right . \right \}.
\end{align}
Here $Q_j$ the orthogonal projection operator from $U$ onto $W_j$, and $Q_jf$ is simply the sum of the generalized Fourier coefficients for $k\in \Gamma_{j,k}^\psi$ of the function $f$. We now  construct approximations of either Koopman or Perron-Frobenius operators using these spaces. In this section we will state the  primary results in terms of the Perron-Frobenius operator $\mathcal{P}$, but approximations of the Koopman operator then follow by duality.  The discussion that follows is structured like our presentation of the spectral
Corresponding to the family of {\em self-adjoint} operators in  $\mathbb{A}^{r,2}_\lambda(U)$, we  define the family of admissible  self-adjoint Perron-Frobenius operators $\mathbb{A}^{r,2}(U)$ to be 
\begin{equation}
\mathbb{A}^{r,2}(U):= 
\left \{ \left . 
\mathcal{P}:=\sum_{j\in \mathbb{N}_0,\\ k\in \Gamma_j^\psi }p_{j,k}\psi_{j,k}\otimes \psi_{j,k} \in S^\infty(U) \ 
\right | \ 
|\mathcal{P}|_{\mathbb{A}^{r,2}(U)}<\infty
\right \}
\label{eq:Ar2_SA}
\end{equation}
with the seminorm in the above expression given by 
$$
|\mathcal{P}|^2_{\mathbb{A}^{r,2}(U)}:=\sum_{j\in \mathbb{N}_0}
2^{2jr}\sum_{k\in \Gamma_j^\psi}
|p_{j,k}|^2.
$$
These operators $\mathbb{A}^{r,2}(U)$ have the same intuitive interpretation as the spectral spaces $\mathbb{A}^{r,2}_\lambda(U)$. The higher the rate $r>0$, the faster the coefficients $\{p_{j,k}\}$ must converge to zero. Membership in $\mathbb{A}^{r,2}(U)$ is equivalent to the requirement that the sequence $\{2^{rj}p_{j,k}| j\in \mathbb{N}_0, k\in \Gamma_j^\psi \}$ is in $\ell^2$. 
We follow the same general procedure used in the study of the spectral spaces  $\mathbb{A}^{r,2}(U)$. We initially define $\mathbb{A}^{r,2}(U)$ to consist only of certain self-adjoint operators. This makes  the derivation of the error bounds in Theorem \ref{th:th1} proceed easily. Then, in Section \ref{sec:Ar2_NSA_family} and Theorem \ref{th:Ar2_NSA_a}, we show that the same error bounds that are derived for the self-adjoint case apply when $\mathbb{A}^{r,2}(U)$ is defined as in Equation \ref{eq:Ar2_SA}.

The following theorem, that gives the fundamental error estimates for the  operators when the priors are expressed in terms of the spaces $A^{r,2}(U)$ or the self-adjoint operators $\mathbb{A}^{r,2}(U)$, is the analog of the results for the spectral spaces described  in Theorem \ref{th:spec_sp_approx}. 
 \begin{theorem}
\label{th:th1}
Suppose that  $\left \{\psi_{j,k}\right \}_{j\in \mathbb{N}_0, k\in \Gamma_j}$ is any orthonormal basis for $U$ and 
$\mathcal{P}$ has the representation
$$
\mathcal{P}:=\sum_{j\in \mathbb{N}_0} \sum_{k\in \Gamma_j} p_{j,k}\psi_{j,k}\otimes \psi_{j,k}\in S^{\infty}(U).
$$
When we define  $\mathcal{P}_j:=\sum_{0\leq m<j} \sum_{k\in \Gamma_m^\psi} p_{m,k}\psi_{m,k} \otimes \psi_{m,k}$,
 we have 
$$
\|(\mathcal{P}-\mathcal{P}_j)f\|_{U} \lesssim  2^{-rj}|\mathcal{P}f|_{{A}^{r,2}(U)}
$$
whenever $\mathcal{P}f\in A^{r,2}(U)$.
Whenever we have $\mathcal{P}\in \mathbb{A}^{r,2}(U)$ and  $s>r$, we also have 
\begin{align*}
    |\mathcal{P}-\mathcal{P}_j|_{A^{r,2}(U)}\lesssim 2^{-j(s-r)}|\mathcal{P}|_{A^{r,2}(U)}.
\end{align*}
\end{theorem}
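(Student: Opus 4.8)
The plan is to reduce both assertions to the single-operator results already in hand, exploiting the fact that the displayed representation of $\mathcal{P}$ is diagonal with respect to the orthonormal multiscale basis $\{\psi_{j,k}\}$, exactly as the spectral estimates of Theorem \ref{th:spectral_approx} were reduced to Theorem \ref{th:spec_sp_approx}.

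First I would treat the case $\mathcal{P}f\in A^{r,2}(U)$. The decisive observation is that the operator truncation commutes with the multiscale projection: since each $\psi_{m,k}$ lies in the detail space $W_m$ and $\Pi_j=\sum_{m<j}Q_m$ is the $U$-orthogonal projection onto $A_j=\bigoplus_{m<j}W_m$, applying $\mathcal{P}$ and retaining only the scales $m<j$ coincides with first applying $\mathcal{P}$ and then projecting onto $A_j$. Concretely, $\Pi_j\psi_{m,k}=\psi_{m,k}$ for $m<j$ and $\Pi_j\psi_{m,k}=0$ for $m\ge j$, so $\mathcal{P}_jf=\Pi_j(\mathcal{P}f)$ and hence $(\mathcal{P}-\mathcal{P}_j)f=(I-\Pi_j)(\mathcal{P}f)$. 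Setting $g:=\mathcal{P}f\in A^{r,2}(U)$ and invoking Theorem \ref{th:approx_simple_U} immediately yields $\|(\mathcal{P}-\mathcal{P}_j)f\|_U=\|(I-\Pi_j)g\|_U\lesssim 2^{-rj}|g|_{A^{r,2}(U)}$, which is the claimed bound. This step is the operator counterpart of passing from Theorem \ref{th:spec_sp_approx} to the $\mathcal{P}f\in A^{r,2}_\lambda(U)$ case of Theorem \ref{th:spectral_approx}, with $\lambda_j^{r/2}$ replaced by $2^{-rj}$.

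For the second assertion I would argue directly at the level of the coefficients. From the seminorm in the definition \eqref{eq:Ar2_SA}, the difference $\mathcal{P}-\mathcal{P}_j$ retains exactly the coefficients $p_{m,k}$ with $m\ge j$, so that $|\mathcal{P}-\mathcal{P}_j|^2_{\mathbb{A}^{r,2}(U)}=\sum_{m\ge j}2^{2mr}\sum_{k\in\Gamma_m^\psi}|p_{m,k}|^2$. Inserting and removing the factor $2^{2ms}$ and using that $s>r$ forces $2^{-2m(s-r)}\le 2^{-2j(s-r)}$ for every $m\ge j$, I obtain $|\mathcal{P}-\mathcal{P}_j|^2_{\mathbb{A}^{r,2}(U)}\le 2^{-2j(s-r)}\sum_{m\ge j}2^{2ms}\sum_k|p_{m,k}|^2\le 2^{-2j(s-r)}|\mathcal{P}|^2_{\mathbb{A}^{s,2}(U)}$; taking square roots gives the stated rate. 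I note that the right-hand seminorm should carry the index $s$, that is $|\mathcal{P}|_{\mathbb{A}^{s,2}(U)}$, matching the analogous bound \eqref{eq:spec_rate1} and the remark following Theorem \ref{th:approx_simple_U} (the bare $2^{-2j(s-r)}$ decay has no content otherwise), and the operator seminorm is the one defined in \eqref{eq:Ar2_SA} rather than the space seminorm $A^{r,2}(U)$ written in the statement.

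Honestly, I do not expect a genuine obstacle here: once the commutation identity $\mathcal{P}_jf=\Pi_j\mathcal{P}f$ is isolated, both parts are bookkeeping with the orthonormality of $\{\psi_{j,k}\}$ and the geometric decay of the dyadic weights. The only points requiring care are verifying that this commutation relies on the diagonal structure of the given representation, so the argument as written covers precisely the self-adjoint family \eqref{eq:Ar2_SA} with the non-self-adjoint case postponed to Section \ref{sec:Ar2_NSA_family} and Theorem \ref{th:Ar2_NSA_a}, and correcting the index on the final seminorm.
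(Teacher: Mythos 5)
Your proof is correct and follows essentially the same route as the paper's: the paper carries out the first bound by expanding $\|(\mathcal{P}-\mathcal{P}_j)f\|_U^2$ directly over the scales $\ell\ge j$ and inserting the dyadic weights, which is exactly the computation you delegate to Theorem \ref{th:approx_simple_U} via the commutation identity $\mathcal{P}_jf=\Pi_j\mathcal{P}f$, and its second bound is the identical coefficient manipulation you give. You are also right that the statement contains a typo which the paper's own proof confirms: the final seminorm should be $|\mathcal{P}|_{\mathbb{A}^{s,2}(U)}$, not $|\mathcal{P}|_{A^{r,2}(U)}$.
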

\begin{proof}
The proof of these results follows, with slight modifications, the analysis for the spectral approximation spaces in Theorem \ref{th:spec_sp_approx}. 
We express the error as 
\begin{align*}
    \|(\mathcal{P}-\mathcal{P}_j)f\|_U^2 &= \sum_{\ell\ge j} \sum_{k\in \Gamma_\ell^\psi} |p_{\ell,k}^2| |(f,\psi_{\ell,m})_U|^2 \\
    &\leq \sum_{\ell\ge j} \sum_{k\in \Gamma_\ell^\psi} |p_{\ell,k}^2|2^{2\ell r}2^{-2\ell r} |(f,\psi_{\ell,m})_U|^2 \\
    &\leq 2^{-2jr} \sum_{\ell\in \mathbb{N}_0} \sum_{k\in \Gamma_\ell^\psi} 2^{2\ell r} |p_{\ell,m} (f,\psi_{\ell,m})_U|^2\leq 2^{-2jr}|\mathcal{P}f|_{A^{r,2}(U)}^2.
\end{align*}
We also have 
\begin{align*}
|\mathcal{P}-\mathcal{P}_j|^2_{\mathbb{A}^{r,2}(U)} & =\sum_{\ell\geq j} 2^{2\ell r} \sum_{k\in \Gamma_\ell^\psi}|p_{\ell,k}|^2 \leq \sum_{\ell\geq j} 2^{2\ell r} 2^{-2\ell s} 2^{2\ell s} \sum_{k\in \Gamma_\ell^\psi}|p_{\ell,k}|^2\\
&\leq 2^{-2j(s-r)} \sum_{\ell\in \mathbb{N}_0} 2^{2\ell s} \sum_{k\in \Gamma_\ell^\psi} |p_{\ell,k}|^2 = 2^{-2j(s-r)} |\mathcal{P}|_{\mathbb{A}^{s,2}}^2.
\end{align*}
\end{proof}

The error bounds for the spectral approximation spaces in Theorem \ref{th:spec_sp_approx} should be carefully compared to those in the above theorem. It is important to note in this comparison that the rates in the spectral approximation spaces are referenced to the eigenvalues of a fixed self-adjoint operator, but those in the above theorem are independent of such operator  dependence.

Also, before discussing some applications of this theorem, we discuss the relationship between the mesh resolution level $j$, the projector $\Pi_j:U\rightarrow A_j$, and the dimension $n_j:=\# A_j$ of the finite dimensional approximation spaces $A_j$. 
In one spatial dimension, on a compact domain such as $\Omega=[0,1]$ or $\Omega=\mathbb{T}^1$ for instance, we frequently have 
$$
n_j:=\#A_j \approx 2^j.
$$
This order of dimension holds for any of the Daubechies wavelets, Coiflets, or orthonormal multiwavelets in one dimension. 
Now suppose we estimate a function $f:\Omega \rightarrow \mathbb{R}^d$ for a compact set  $\Omega\subset \mathbb{R}^d$.   In our examples that follow that use tensor products we have  
$$
n_j:= \#A_j\approx d\cdot 2^{dj} \approx 2^{dj},
$$
where the rightmost equivalence reflects only the asymptotics in the mesh resolution level $j$. This means that  an  alternative form of the error bounds can be written as 
\begin{align*}
    \|(I-\Pi_j)f\|_U & \lesssim n_j^{-r/d}|f|_{A^{r,2}}, \\
    \|(\mathcal{P}-\mathcal{P}_j)f\|_U & \lesssim n_j^{-r/d} |\mathcal{P}f|_{A^{r,2}(U)},
\end{align*}
with the multiplying constant a function of $d$, like  $Cd^{-r/d}$ with $C$ independent of $d,j$.

\begin{framed}
\begin{example}[Haar Wavelet Approximation of $\mathcal{P}:L^2(\Omega)\rightarrow L^2(\Omega)$]
\label{ex:ex1}
In this first example we study a problem when  the Perron-Frobenius operator is induced by a kernel $p$ as in Equation \ref{eq:simpleP}. The  problem is motivated by examples  in Sections 2 and 3 of the publication  \cite{Nitin2018} that has appeared in 2018. This paper studies  a class of discrete dynamical systems and approximations of the Koopman operator in terms of permutation operators on measurable partitions. As a fundamental step, the approach employs estimates expressed in terms of finite dimensional spaces of characteristic functions over the partitions. Here we study a  related problem to see  how rates of convergence can be derived in this setting. 

We assume  the domain  $\Omega=[0,1]^d$ and  that the measure $\mu(dx):=m(x)dx$ for a some function $m$.  We further suppose that there exists a pair of constants $c_1,c_2>0$ such that $c_1 \leq m(x) \leq c_2$ for all $x\in \Omega$. We know then that $m\in L^\infty(\Omega)\subset L^2(\Omega)$
since the domain is compact and that  $L^2_\mu(\Omega) \approx L^2(\Omega)$ in this case. We express the domain $\Omega$ as the union of dyadic cubes ${\square_{j,k}}\subset \mathbb{R}^d$ following the definition described in Example \ref{ex:haar_basis} for $d=1$. 
We extend the definition of the Haar scaling functions for $d=1$ given in Example \ref{ex:haar_basis} to $d>1$ using the strategy described in Examples \ref{ex:ON_wave} or \ref{ex:ON_multi} for tensor products, and  define the  Haar scaling functions $\phi:\mathbb{R}^d \rightarrow \mathbb{R}$. From these we define the  scaled and translated Haar scaling functions  $\phi_{j,k}:=2^{jd/2}\phi(2^jx-k)$  and their associated tensor product wavelets $\psi^e_{j,k}:=2^{jd/2}\psi^e(2^jx-k)$ for $e\in \{0,1\}^d$.   These are used to construct the  orthonormal basis  for $A_j\subset U$.  Finally we  construct the approximation spaces $A^{r,2}(L^2(\Omega))$ in Equation \ref{eq:A_S} in terms of the vector-valued,  tensor product basis $\{\psi^e_{i,k}\ | \ e\in \{0,1\}^d, \ i\leq j,  k\in \Gamma_{j}^\psi \}$. With a bit of bookkeeping, we find that the construction of the tensor product scaling functions and wavelets yields $n_j:=\#(A_j)\approx O(2^{dj})$. When the  Perron-Frobenius operator $\mathcal{P}\in \mathbb{A}^{r,2}(L^2(\Omega))$, we are guaranteed from Theorem \ref{th:th1} that 
$\|(\mathcal{P}-\mathcal{P}_j)f\|_U\approx O(n_j^{-r/d})$. 

In addition, in this example problem, we have $\mathcal{P}^*=\mathcal{P}=\mathcal{U}$.  When a  kernel $p(x,y)$ induces $\mathcal{P}$,  then $\mathcal{P}^*$ is induced by the kernel that results when $x,y$ are interchanged. We have $p(x,y)=\sum_{j,k}p_{j,k} \psi_{j,k}(x)\psi_{j,k}(y)$ in this problem, which is clearly symmetric in $x,y$.  Therefore, 
$$
\|(\mathcal{P}-\mathcal{P}_j)f\|_{U} = 
\|(\mathcal{P}-\mathcal{P}_j)^*f\|_{U} 
=
\|(\mathcal{U}-\mathcal{U}_j)f\|_{U}\lesssim n_j^{-r/d}\|\mathcal{P}\|_{\mathbb{A}^{r,2}}\|f\|_{U}
$$
when we observe that  $\mathcal{U}_j:=\mathcal{P}_j^*=\mathcal{P}_j$. Later in this paper we consider the case when the kernel $p:\Omega \times \Omega \rightarrow \mathbb{R}$ may not be symmetric, when it has a representation of the form 
$$
p(x,y):=\sum_{j,\ell \in \mathbb{N}_0} 
\sum_{k\in \Gamma_{j}^\psi,\\ s\in \Gamma_\ell^\psi}
p_{(j,k),(\ell,s)} \psi_{j,k}(x)\psi_{\ell,s}(y).
$$
In this highly structured example then, as in later examples, the determination of a convergence rate of approximations of  one operator implies a rate of convergence for its dual or adjoint. 

At this point we have a description of convergence that relies on the fact that the operator $\mathcal{U}f$ or $\mathcal{P}f$ lies in the  abstract approximation space $A^{r,2}(U)$. It is an important issue to understand or interpret this condition in terms of well-known or standard spaces of functions. Here we use one special case of the much more general analysis  summarized in \cite{devorenonlinear}. The error $E_j(f,U)$ of best approximation from $A_j$  introduced in 
Section \ref{sec:approx_spaces_Arq}  for  $U:=L^2(\Omega)$  takes the form  
$$
E_j(f,L^2(\Omega)):=\inf_{a\in A_j} \|f-a\|_{L^2(\Omega)}:=\|\Pi_{j+1} f\|_{L^2(\Omega)}.
$$
It is known   that for linear approximation methods over the piecewise constant functions on a uniform grid we have 
$$
 f\in \text{Lip}(r,L^2(\Omega)) \quad \quad \Longrightarrow \quad \quad E_j(f,L^2(\Omega))\approx O(2^{-jr}) \approx O(n_j^{-r/d}) 
$$
for the range $0<r<1/2$.  \cite{devore1993constapprox,devorenonlinear} {In fact, we have that $\text{Lip}(r,L^2(\Omega))$ is equivalent to the {\em linear} approximation space $A^{r,\infty}(L^2(\Omega))$ for this range of the rate $r$ for nice domains.} 
We also know that the approximation space $A^{r,2}(L^2(\Omega))$ is equivalent to the Sobolev space $W^{r}(L^2(\Omega))$ over the same range. Both of these results can be deduced from Theorem \ref{th:approx_besov} in the Appendix in Section \ref{sec:approx_splines}.  This restriction in the range of the rate $r$ is tied to our choice to use the Haar basis, which is discontinuous. If some other, smoother wavelet system is used, this range would increase up to a value determined by the smoothness of the basis and the domain boundary.  Such a case is presented in Example \ref{ex:ex2}. 

This analysis has an important implication for those who may feel that the approximation space $A^{r,2}(L^2(\Omega))$ is, perhaps,  too abstract.
Suppose that we  develop an alternative algorithm that generates a different solution $\tilde{\mathcal{P}}_jf\in A_j$ for estimating the function $\mathcal{P}f$. In other words the solution $\tilde{P}f$ is still constructed in terms  of the span of the characteristic functions over the dyadic cubes, but it is different from our approximation.  We know that the best we can ever do by such a linear method for functions $\mathcal{P}f\in \text{Lip}(r,L^2(\Omega))$ is to achieve $O(2^{-rj})$ when $0<r<1/2$. 

We can make this observation more clear perhaps by thinking about Galerkin and generalized (or Petrov-)Galerkin  methods of approximation.  First, it is easy to see that the choice of approximation in Theorem \ref{th:th1} above 
$$
\mathcal{P}_j:=\sum_{0\leq i\leq j}  \sum_{k\in \Gamma_i^\psi} p_{i,k} \psi_{i,k}\otimes \psi_{i,k}
$$
is nothing more than the solution $\mathcal{P}_jf:=a$ of the Galerkin equations where we seek an $a\in A_j$ such that 
$$
(\mathcal{P}f-a,g)_U=0
$$
for all $g\in A_j$.  In the generalized Galerkin approximation, we seek a solution  
 $\tilde{P}_jf:=a$ of the equations 
$$
(\mathcal{P}f-a,g)_U=0
$$
for all $g\in \tilde{A}_j$. Here the set of test functions  $\tilde{A}_j$ is not necessarily the same as the approximation space   $A_j$, and these methods will 
in general yield different solutions $\mathcal{P}_jf\not = \tilde{\mathcal{P}}_jf$, both contained in  $A_j$. The comments above apply then to such solutions obtained by the generalized Galerkin approximations. 
See \cite{klus2016} Section 3 for a discussion of the generalized Galerkin method.

\end{example}
\end{framed}
\begin{framed}
\begin{example}[Example Rate of Convergence of EDMD]
\label{ex:ex2}
The EDMD method is a popular method for solving several approximation problems related to Koopman operators. Theorem 3 in the recent paper by Korda and Mezic \cite{Korda2017Convergence} shows that 
$$
\lim_{n\rightarrow \infty} \|(\mathcal{U}-{\Pi}_n \mathcal{U} {\Pi}_n)f \|_{L^2_\mu(\Omega)} =0.
$$
for ${\Pi}_n$ the orthogonal projection onto the $n-$dimensional space of approximations $A_n:=\text{span}\{e_i \ | \ i\leq n \}$ with $\{e_i\}_{i\leq n}$ some collection of basis functions.  Following convention in  approximation theory, we denote by  $\Pi_j$  the orthogonal projection onto the first $n_j$ basis functions, $n_j:=\#(A_j)$.
With the choice of priors in this paper, Theorem \ref{th:th1} gives a rate of convergence  of $\|(\mathcal{U}-\mathcal{U}_j\|_{L^2(\Omega)} \approx O(2^{-rj})$ when $\mathcal{U}f$ belongs to ${A}^{r,2}(U)$. For example, if $n_j\approx 2^{dj}$, as is common when using tensor product bases over a domain $\Omega\subset \mathbb{R}^d$, then we have the rate of convergence $O(n_j^{-r/d})$. 
This rate of convergence will hold for a range of $r$ that depends on the smoothness of the  particular choice of wavelet or multiwavelet basis. 

Here we can be specific when we choose the Daubechies orthonormal wavelets discussed in detail in Example \ref{ex:ON_wave} in the appendix.  Define 
$$
A_j:=\left \{\phi_{j,k}\ | \ {k\in \Gamma^\phi_j}\right \} =
 \left \{
\psi_{i,k}\ | \  0\leq i \leq j , \  k\in \Gamma^\phi_j
\right \}
$$
in terms of the Daubechies scaling function and wavelet $\phi:={}^N\phi$ and $\psi:={}^N\psi$ of order $N$. The Daubechies wavelet system is discussed in detail in Example \ref{ex:ON_wave}. 
From Theorem \ref{th:th1}  we conclude that the error in approximation $\|(\mathcal{U}-\mathcal{U}_j)f\|_{L^2(\Omega)} \approx O(2^{-rj})$ when 
$$
\mathcal{U}f\in {A}^{r,2}(L^2(\Omega)):=A^{r,2}\left (L^2(\Omega); \{A_j\}_{j\in \mathbb{N}_0} \right ).
$$ 
At this point, the convergence rate is achieved in a space $A^{r,2}(L^2(\Omega)$ that is defined in terms of the Daubechies wavelets, and it may not be satisfying that  this rate is not (yet) expressed in terms of  common or well-known spaces. 
But more can be said that connects this rather abstract approximation space to other function spaces. 
The translates of the Daubechies scaling function ${}^N\phi$   of order $N$ reproduce polynomials of order $N$, that is, degree $N-1$.  This implies that the dyadic spline spaces $\mathcal{S}_{r}(\Delta_j)$ \ref{sec:approx_splines}  of order $r=N$ having dyadic knots $\Delta_j\subset \Omega$ are contained in the approximant spaces $A_j:=\text{span}\{\phi_{j,k} \ | \ k\in \Gamma_j^\phi\}$ for the Daubechies wavelet system of order $N$.  It is easily shown that the approximation space defined in terms of linear approximation by splines $\mathcal{S}_r(\Delta_j)$ is contained in the approximation space generated by the Daubechies wavelet system of order $N$, 
$$
A^{r,2}(L^2(\Omega);\{\mathcal{S}_r(\Delta_j)\}_{j\in \mathbb{N}_0}) \subset A^{r,2}(L^2(\Omega);\{ A_j\}_{j\in \mathbb{N}_0}).
$$
We conclude that for any $\mathcal{U}f\in W^{r}(L^2(\Omega))$ with $0<r<N-3/2$, we obtain a rate of convergence that is $O(2^{-rj}):=O(2^{-Nj})$ in $L^2(\Omega)$.  
Theorem \ref{th:approx_besov} shows that $A^{r,2}(L^2(\Omega);\{\mathcal{S}_r(\Delta_j)\}_{j\in \mathbb{N}_0})$ is equivalent to the Sobolev spaces $W^r(L^2(\Omega))$ for $0<r <s=N-3/2$.  Since $C^{r}(\Omega)\subset W^r(L^2(\Omega))$, we conclude this that this approximation rate holds for common $r-$times continuously differentiable functions over this range of $r$. 
\end{example}

\end{framed}

\begin{framed}
\begin{example}[Haar Multiwavelet Approximation of $\mathcal{P}:L^2(\triangle)
\rightarrow L^2(\triangle)$]
 Example \ref{ex:ex1} is motivated by work in \cite{Nitin2018} that constructs  approximations of discrete dynamical systems in terms of permutation  operators defined over measurable partitions of $\Omega$. The error  analysis in Example \ref{ex:ex1} is carried out when $\Omega$ is a product domain $[0,1]^d$,  the partition is realized by dyadic cubes $\square_{j,k}$, and tensor products of classical Haar wavelets and scaling functions are used as bases for approximations.  In this example we suppose that we are interested in approximations of the Koopman or Perron-Frobenius operators for permutation operators on measurable partitions over the triangle $\triangle:=\triangle_{0,0}$ introduced in Example \ref{ex:haar_multi_tri}. We suppose that the measure $\mu(dx_1,dx_2):=m(x_1,x_2)dx_1dx_2$ for some $m:\triangle\subset \mathbb{R}^2 \rightarrow \mathbb{R}$, and as in Example \ref{ex:haar_multi_tri} there are constants $c_1,c_2>0$ with 
 $$
 c_1 \leq m(x_1,x_2)\leq c_2
 $$
 for all $(x_1,x_2)\in \triangle$. We then define the multiscaling functions $\phi_{j,(i,\bm{k})}$,  multiwavelets $\psi_{j,(i,\bm{k})}$, and index sets $\Gamma_j^\phi$, $\Gamma_j^\psi$ as described in Example \ref{ex:haar_multi_tri}. We then apply Theorem \ref{th:th1} with the choice $U:=L^2(\triangle)$ and find that 
$$
\| \mathcal{P}-\mathcal{P}_j \|_{\mathbb{A}^{r,2}(L^2(\triangle))} \lesssim 2^{-j(s-r)} |\mathcal{P}|_{\mathbb{A}^{s,2}(L^2(\triangle)}
$$
for $s>r\geq 0$. 
 For this case $n_j=\#(A_j)=O(2^{dj})$ with $d=2$, and we can alternatively express this bound in the form 
$$
| \mathcal{P}-\mathcal{P}_j |_{\mathbb{A}^{r,2}(L^2(\triangle))} \lesssim n_j^{-(s-r)/d} |\mathcal{P}|_{\mathbb{A}^{s,2}(L^2(\triangle)}. 
$$
This bound is of the same form as that derived in Example \ref{ex:ex1} for the domain $\Omega=[0,1]^d$ when we choose $r=0$ above. 
\end{example}
\end{framed}
\begin{framed}
\begin{example}[Warped Wavelet Approximation of $\tilde{\mathcal{P}}:L^2_{\tilde{\mu}}(\tilde{\Omega}) \rightarrow L^2_{\tilde{\mu}}(\tilde{\Omega})$ ]
\label{ex:warp1}
Before we summarize our next category of  results, we return to Example \ref{ex:ex1} and discuss how the approach of the last example can be modified for another class of systems.  We assume that we must approximate the Perron-Frobenius operator $\tilde{\mathcal{P}}$ where for each $\tilde{y}\in \tilde{\Omega}$ 
$$
(\tilde{\mathcal{P}}f)(\tilde{y}):=\int_{\tilde{\Omega}}\tilde{p}(\tilde{y},\tilde{x})\tilde{f}(\tilde{x})\tilde{m}(\tilde{x})d\tilde{x}
$$  
with $\tilde{m}\in L^1(\tilde{\Omega})$, $\tilde{\mu}(d\tilde{x})=\tilde{m}(\tilde{x})d\tilde{x}$, and the domain $\tilde{\Omega}$ is  related to a master domain such as $\Omega:=[0,1]^d$ by the change of variables $\Omega=\tilde{M}(\tilde{\Omega})$ and  $x=\tilde{M}(\tilde{x})$, $|\tilde{M}'(\tilde{x})|=\tilde{m}(\tilde{x})$ for $\tilde{x}\in \tilde{\Omega}$ and $x\in \Omega$.
We need to construct an orthonormal basis $\tilde{\psi}_{j,k}$ over $\tilde{\Omega}$ to carry out an error analysis of $\tilde{\mathcal{P}}$ in the spirit of Theorem \ref{th:th1}. 
% Define the nonlinear change of variables $x=M(\eta)$ with   $M'(\eta):=m(\eta)$. 
Let $\psi_{j,k}(x)$ be any nice orthonormal wavelet or multiwavelet basis for the Lebesgue space  $L^2(\Omega):=L^2([0,1]^d)$. Then $\tilde{\psi}_{j,k}(\tilde{x}):=\psi_{j,k}(\tilde{M}(\tilde{x}))$ is easily seen to be a family of $L^2_{{\tilde{\mu}}}$-orthonormal wavelets over $\tilde{\Omega}$. We define the space $\tilde{U}$ to be the completion in the norm $L^2_{\tilde{\mu}}(\tilde{\Omega})$ of the warped wavelets.  Now we   {\em define the approximation spaces $A^{r,2}(\tilde{U})$} in terms of the warped wavelets $\tilde{\psi}_{j,k}$ following the philosophy of \cite{kerkyacharianwarped}. Note that here  there is no guarantee  that we have an equivalence  $L^2(\Omega) \approx \tilde{U}$. In fact, we do not even know if the usual $\tilde{\mu}-$Lebesgue space $L^2_{\tilde{\mu}}(\tilde{\Omega})$  over $\tilde{\Omega}$ is the same as $\tilde{U}$, although we do know $\tilde{U}\subseteq L^2_{\tilde{\mu}}(\tilde{\Omega})$.  Since  in this section we assume that the domain $\tilde{\Omega}$ and measure  $\tilde{m}$ are known, this approach can be used to construct approximations as 
$$
\tilde{\mathcal{P}}_j:=\sum_{i<j} \sum_{k\in \Gamma_i^\psi} \tilde{p}_{i,k} \tilde{\psi}_{i,k} \otimes \tilde{\psi}_{i,k},
$$
and error rates will have the form $O(2^{-rj})$ {\em in $A^{r,2}(\tilde{U})$. } It  must be kept in mind that in this case the relation of the approximation spaces to the conventional Lebesgue space $L^2(\tilde{\Omega})$, Sobolev spaces $W^{s}(L^2(\tilde{\Omega}))$, or Lipschitz space $\text{Lip}(r,L^2(\tilde{\Omega}))$ may not be easy or even feasible to establish. It does, however, suggest a path to feasible algorithms with demonstrable rates of convergence. This strategy is studied more  closely  in Example \ref{ex:warp1}, so we leave the details to the reader.  The next example gives the details of the use of warped wavelets in a slightly different context. 
\end{example}
\end{framed}

\begin{framed}
\begin{example}[Warped Wavelet Approximation $\mathcal{U}:\tilde{U}\rightarrow U:=L^2(\Omega)$]
\label{ex:warp2}
The last few examples illustrated cases where the Perron-Frobenius or Koopman operator is induced by a kernel and has a representation in terms  of orthonormal basis functions.  A basic assumption in these examples is that $\mathcal{P}$ is an integral operator induced by the kernel $p:\Omega\times \Omega \rightarrow \mathbb{R}$.  However,  for some problems a specific form for the operator suggests itself, and it may not have this convenient form.   The current example shows the benefit of employing the approximation space framework above even if the form of the operator is not exactly  that in Theorem  \ref{th:th1}.  Let $\Omega:=[0,1]^d$ and suppose that $w:\Omega \rightarrow \Omega$. We consider the discrete iteration 
$$
x_{n+1}=w(x_n).
$$
As noted in the introduction, this deterministic system can always be interpreted as a Markov chain. It has the transition probability kernel $\mathbb{P}(A,x):=\delta_{w(x)}(A)$ for any measurable set $S\subseteq \Omega$, and  
it follows that 
$$
(\mathcal{U}f)(x):=f(w(x)).
$$
We set the  change of variables $\tilde{x}:=w(x)$ and  denote  $\tilde{\Omega}:=w(\Omega)$. 
We define  the mapping $\tilde{M}$ as in Example \ref{ex:warp1} in terms of the transition mapping $\tilde{M}(\tilde{x}):=w^{-1}(\tilde{x})$, define the measure $\tilde{\mu}(d\tilde{x}):=\tilde{m}(\tilde{x})dx$ with $\tilde{m}(\tilde{x}):=|\partial x/\partial \tilde{x}|$. From the identity 
$$
(\mathcal{U}f,g)_{L^2(\Omega)} = (f,\mathcal{P}g))_{L^2_{\tilde{\mu}}(\tilde{\Omega})}, 
$$
we also find that 
$$ (
\mathcal{P}g)(\tilde{x}) = g(w^{-1}(\tilde{x}))
$$
for all $\tilde{x}:=w(x)\in \tilde{\Omega}.$

 Let $\psi_{j,k}$ be an orthonormal basis for $U:=L^2(\Omega)$, and define the warped wavelets $\tilde{\psi}_{j,k}(\tilde{x}):=\psi_{j,k}(w^{-1}(\tilde{x}))$ on $\tilde{\Omega}$.  We have the integration rule 
\begin{align*}
\int_\Omega \psi_{j,k}(x)\psi_{\ell,m}(x)dx
&= \int_{\tilde{\Omega}} \psi_{j,k}(w^{-1}(\tilde{x})) \psi_{\ell,m}(w^{-1}(\tilde{x})) \left | 
\frac{\partial x}{\partial\tilde{x}}
\right |d\tilde{x} \\
&=\int_{\tilde{\Omega}} \tilde{\psi}_{j,k}(\tilde{x}) \tilde{\psi}_{\ell,m}(\tilde{x})  \tilde{\mu}(d\tilde{x})
\end{align*}
for $\tilde{x}\in \tilde{\Omega}:=w(\Omega)$. As in the last example we define $\tilde{U}$ as the closed finite span of the set of warped wavelets in $L^2_{\tilde{\mu}}(\tilde{\Omega})$.  
Suppose that $f\circ w \in L^2(\Omega)$. Then we have 
\begin{align}
\mathcal{U}f:=f\circ w &= \sum_{j\in \mathbb{N}_0} \sum_{k\in \Gamma^\psi_j} (f\circ w, \psi_{j,k})_{L^2(\Omega)} \psi_{j,k}, \notag \\ 
& =
\sum_{j\in \mathbb{N}_0}
\sum_{k\in \Gamma^\psi_j}
\left (f, \tilde{\psi}_{j,k}\right )_{\tilde{U}} \psi_{j,k}. \label{eq:dec_primal}
\end{align}
This representation implies that 
$$
\mathcal{U}f:=f\circ w \in A^{r,2}(U)
$$ 
if and only if $ f \in A^{r,2}(\tilde{U})$. This equivalence depends on the mapping $w$ that is an intrinsic part of the definition of the approximation space $A^{r,2}(\tilde{U})$. 
Select the approximation $\mathcal{U}_j$ to be 
$$
\mathcal{U}_jf:=\sum_{ i< j}
\sum_{k\in \Gamma^\psi_i}
\left (f, \tilde{\psi}_{i,k}\right )_{\tilde{U}} \psi_{i,k}.
$$
If $f\in A^{r,2}(\tilde{U})$, then we have 
$$
\| (\mathcal{U}- \mathcal{U}_j)f\|_U \lesssim 2^{-jr}\|f\|_{A^{r,2}(\tilde{U})}, 
$$
which is the same rate of convergence as that in Theorem \ref{th:th1}. 
On the other hand, by duality, we have the representation   
\begin{align}
\mathcal{P}g:=g\circ (w^{-1})&= 
\sum_{j\in \mathbb{N}_0}
\sum_{k\in \Gamma^\psi_j}
\left (g, {\psi}_{j,k}\right )_{U} \tilde{\psi}_{j,k}. \label{eq:dec_dual}
\end{align} 
We define the estimate $\mathcal{P}_j$ by truncating this expression, just as in the definition of $\mathcal{U}_j$. 
In this case we see that $g\in A^{r,2}(L^2(\Omega))$ if and only if $\mathcal{P}g\in A^{r,2}(\tilde{U})$. 
Carefully note that dual structure in Equations \ref{eq:dec_primal} and \ref{eq:dec_dual}. 
In summary, we have 
\begin{align*}
    \mathcal{U}&:\tilde{U}\rightarrow U ,\\
     \mathcal{P}&: U\rightarrow \tilde{U}.
\end{align*}
For $g\in A^{r,2}(U)$, we have 
$$
\| (\mathcal{P}-\mathcal{P}_j)g\|_{\tilde{U}} \lesssim 2^{-rj}|\mathcal{P}g|_{A^{r,2}(\tilde{U})}, 
$$
and for $f\in A^{r,2}(\tilde{U})$  it follows that 
$$
\| (\mathcal{U}-\mathcal{U}_j)f\|_{U}  \lesssim 2^{-rj}|\mathcal{U}f|_{A^{r,2}(U)}. 
$$
We return to this Example in \ref{ex:meas1} when we discuss the approximation of measures where the role of this dual structure again emerges prominently. 
\end{example}
\end{framed}

\subsection{Compact, Non-Self-Adjoint   Operators and $\mathbb{A}^{r,2}(U)$}
\label{sec:Ar2_NSA_family}
Just as in our analysis of the spectral spaces, our initial definition of $\mathbb{A}^{r,2}(U)$ in Equation \ref{eq:Ar2_SA} includes only self-adjoint operators. We fully expect that it will be necessary to consider discrete evolutions that are characterized by non-self-adjoint Perron-Frobenius and Koopman operators. 
Here we assume that $\mathcal{P}:L^2_\mu(\Omega)\rightarrow L^2_\mu(\Omega)$, and it is induced by the kernel 
\begin{align*}
    p(x,y)&:= 
    \mathop{\sum}_{{\tiny 
\begin{array}{l}
j\in \mathbb{N}_0, \\ k\in \Gamma_j^\psi
\end{array}
}} 
    \mathop{\sum}_{{\tiny
\begin{array}{l}
\ell \in \mathbb{N}_0, \\ m\in \Gamma_\ell^\psi
\end{array}
}}
p_{(j,k),(\ell,m)}
    \psi_{j,k}(x) \psi_{\ell,m}(y)
\end{align*}
with $ p_{(j,k),(\ell,m)}:=\left ( \mathcal{P}\psi_{j,k}, \psi_{\ell,m} \right)_{U}$.
We therefore have 
$
\|\mathcal{P}\|^2_{\mathcal{L}(U)}= \|p\|^2_{L^2_{\mu\times \mu}(\Omega\times \Omega)}
=\sum_{(j,k)}
\sum_{(\ell,m)} p^2_{(j,k),(\ell,m)}.$ We define the seminorm 
$$
|\mathcal{P}|^2_{\mathbb{A}^{r,2}(U)}:= \sum_{j\in \mathbb{N}_0} \sum_{k\in \Gamma_j^\psi} \left (
\sum_{\ell \in \mathbb{N}_0}  2^{r\ell} \sum_{m\in \Gamma_j^\psi}
p_{(j,k),(\ell,m)}
\right )^2, 
$$
and the family of possibly non-self-adjoint feasible operators 
\begin{equation}
\mathbb{A}^{r,2}(U):=
\left \{
\mathcal{P}:= \sum_{(j,k),(\ell,m)} p_{(j,k),(\ell,m)} \psi_{j,k}\otimes \psi_{\ell,m} \in S^\infty(U) \ \biggl | \ |p|_{\mathbb{A}^{r,2}(U)}<\infty
\right \}.
\label{eq:Ar2_fam_2}
\end{equation}

Note that the seminorm above reduces to that in Equation \ref{eq:Ar2_SA}. Also, we have several equivalent expressions for the seminorm $|\cdot|_{\mathbb{A}^{r,2}(U)}$. We define the infinite dimensional matrix
$
    [p_{(j,k),(\ell,m)}]:= [(\mathcal{P}\psi_{j,k},\psi_{\ell,m})_U] 
$
and introduce the operator $D^s_{2^{-2\bullet}}:=\sum_{j\in \mathbb{N}_0} \sum_{k\in \Gamma^\psi_j}2^{-2sj}\psi_{j,k}\otimes \psi_{j,k}$ and its associated diagonal matrix representation $[2^{sj}]$. The signs selected in the definition $D^s_{2^{-2\bullet}}$ are chosen so as to be analogous to the definition of the operator  $D^s_{\lambda_\bullet}$ on the spectral spaces. 
We now can write 
\begin{align}
    |\mathcal{P}|^2_{\mathbb{A}^{r,2}(U)}&:= \left (D^{-r/2}_{2^{-2\bullet}}\mathcal{P},D^{-r/2}_{2^{-2\bullet}}\mathcal{P}\right )_{S^2(U)},\notag \\
    &= \sum_{j\in \mathbb{N}_0} \sum_{k\in \Gamma_j^\psi}\left (D^{-r/2}_{2^{-2\bullet}}\mathcal{P}\psi_{j,k},D^{-r/2}_{2^{-2\bullet}}\mathcal{P}\psi_{j,k}\right )_U. \label{eq:P_norm_equiv}
\end{align}
\begin{proof}
We briefly show that this expression is equivalent to the above.
{\scriptsize
\begin{align*}
D^{-r/2}_{2^{-2\bullet}}&\mathcal{P} \psi_{j,k} \\
&=\left( \sum_{
\begin{array}{c}
\ell \in \mathbb{N}_0\\
m\in \Gamma_{\ell}^\psi
\end{array}
} 2^{r\ell}\psi_{\ell,m}\otimes \psi_{\ell,m} \right )
\left (
\sum_{\begin{array}{c}s\in \mathbb{N}_0\\
t\in \Gamma_{s}^\psi
\end{array}}
\sum_{
\begin{array}{c}
u\in \mathbb{N}_0\\
v\in \Gamma_{u}^\psi
\end{array}
}p_{(s,t),(u,v)} \psi_{s,t}\otimes \psi_{u,v} \right )\psi_{j,k} \\
&=
\left( \sum_{
\begin{array}{c}
\ell \in \mathbb{N}_0\\
m\in \Gamma_{\ell}^\psi
\end{array}
} 2^{r\ell}\psi_{\ell,m}\otimes \psi_{\ell,m} \right )
\left (
\sum_{
\begin{array}{c}
u\in \mathbb{N}_0\\
v\in \Gamma_{u}^\psi
\end{array}
}p_{(j,k),(u,v)}  \psi_{u,v} \right )\\
&=
\sum_{
\begin{array}{c}
\ell\in \mathbb{N}_0\\
m\in \Gamma_{\ell}^\psi
\end{array}
}2^{r\ell}p_{(j,k),(l,m)}  \psi_{l,m}
= \sum_{\ell \in \mathbb{N}_0} 2^{r\ell} \sum_{m\in\Gamma_\ell^\psi} p_{(j,k),(\ell,m)}\psi_{\ell,m}.
\end{align*}
}

\noindent The definition of $|\cdot|_{\mathbb{A}^{r,2}(U)}$ is then equivalent to the square of the norm of this vector and the equality in Equation \ref{eq:P_norm_equiv} holds.

\end{proof}
\begin{theorem}
\label{th:Ar2_NSA_a}
Let $\mathcal{P}:L^2_\mu(\Omega) \rightarrow L^2_\mu(\Omega)$ be given by 
$$
\mathcal{P}:=
\mathop{\sum}_{{\tiny 
\begin{array}{l}
i\in \mathbb{N}_0, \\ k\in \Gamma_i^\psi
\end{array}
}}
\mathop{\sum}_{{\tiny 
\begin{array}{l}
\ell \in \mathbb{N}_0, \\ m\in \Gamma_\ell^\psi
\end{array}
}} p_{(i,k),(\ell,m)} \psi_{i,k}\otimes \psi_{\ell,m}
$$
with $p_{(i,k),(\ell,m)}:=(\mathcal{P}\psi_{i,k},\psi_{\ell,m})_U$ and define the approximation $\mathcal{P}_j$ as  
$$
\mathcal{P}_j:=
\mathop{\sum}_{{\tiny 
\begin{array}{l}
i<j, \\ k\in \Gamma_i^\psi
\end{array}
}}
\mathop{\sum}_{{\tiny 
\begin{array}{l}
\ell<j , \\ m\in \Gamma_\ell^\psi
\end{array}
}} p_{(i,k),(\ell,m)} \psi_{i,k}\otimes \psi_{\ell,m}
$$
Then the error bounds  of Theorem \ref{th:th1} hold with   the definition of $\mathbb{A}^{r,2}_\lambda(U)$ in Equation \ref{eq:Ar2_fam_2}. 
\end{theorem}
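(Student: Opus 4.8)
The plan is to prove this exactly as Theorem~\ref{th:spectral_approx_ns} was proved for the spectral family, transporting that argument to the wavelet setting by the substitution $\lambda_\bullet \rightsquigarrow 2^{-2\bullet}$ and by replacing the single spectral index with the multi-index $(j,k)$ ranging over a level $j$ and a translation $k\in\Gamma_j^\psi$. First I would record the sequence-space dictionary that makes $\mathbb{A}^{r,2}(U)$ behave formally like $\mathbb{A}^{r,2}_\lambda(U)$: writing $f_{j,k}:=(f,\psi_{j,k})_U$, Parseval gives $\|f\|_U=\|\{f_{j,k}\}\|_{\ell^2}$; the definition of the seminorm gives $|f|_{A^{r,2}(U)}=\|D^{-r/2}_{2^{-2\bullet}}f\|_U$; since every $\mathcal{P}$ here is Hilbert--Schmidt it is represented by its matrix $[p_{(j,k),(\ell,m)}]$, so that $\|\mathcal{P}\|_{\mathcal{L}(U)}=\|[p_{(j,k),(\ell,m)}]\|_M$; and finally $|\mathcal{P}|_{\mathbb{A}^{r,2}(U)}=\|D^{-r/2}_{2^{-2\bullet}}\mathcal{P}\|_{S^2(U)}$, which is precisely the equivalent form already verified in~\eqref{eq:P_norm_equiv}. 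With these four identities in hand, every quantity appearing in Theorem~\ref{th:th1} is the literal wavelet analogue of a quantity in Theorem~\ref{th:spectral_approx_ns}.

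Next I would carry out the weighting estimate. Observing that $\mathcal{P}_j=\Pi_j\mathcal{P}\Pi_j$, where $\Pi_j$ is the orthogonal projection onto the levels $<j$, I would split
\[
\mathcal{P}-\mathcal{P}_j=(I-\Pi_j)\mathcal{P}+\Pi_j\mathcal{P}(I-\Pi_j),
\]
and expand each piece in the orthonormal basis. The single algebraic device is to insert $2^{r\ell}2^{-r\ell}$ against each input coefficient $f_{\ell,m}$ and to use $2^{-r\ell}\le 2^{-rj}$ on the surviving input levels $\ell\ge j$, which lets me factor a uniform $2^{-rj}$ out of the sum; what remains is the matrix $[p_{(j,k),(\ell,m)}]$ acting on the weighted sequence $D^{-r/2}_{2^{-2\bullet}}f$, bounded by the induced matrix norm. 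This reproduces both stated cases: when $f\in A^{r,2}(U)$ one keeps $\|[p_{(j,k),(\ell,m)}]\|_M=\|\mathcal{P}\|_{\mathcal{L}(U)}$ outside and recognizes $|f|_{A^{r,2}(U)}$, while when only $f\in U$ but $\mathcal{P}\in\mathbb{A}^{r,2}(U)$ one instead moves the weight onto the matrix to produce $|\mathcal{P}|_{\mathbb{A}^{r,2}(U)}\|f\|_U$. The bound under the hypothesis $\mathcal{P}f\in A^{r,2}(U)$ comes from the output part $(I-\Pi_j)\mathcal{P}f$ alone, for which Theorem~\ref{th:approx_simple_U} gives $\|(I-\Pi_j)\mathcal{P}f\|_U\le 2^{-rj}|\mathcal{P}f|_{A^{r,2}(U)}$.

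For the operator-seminorm estimate $|\mathcal{P}-\mathcal{P}_j|_{A^{r,2}(U)}\lesssim 2^{-j(s-r)}|\mathcal{P}|_{\mathbb{A}^{s,2}(U)}$ I would argue directly on the matrix of $\mathcal{P}-\mathcal{P}_j$, whose entries vanish unless at least one of the two indices sits at a level $\ge j$. Weighting by $2^{r\ell}$ and borrowing $2^{(r-s)\ell}\le 2^{(r-s)j}$ off the surviving levels factors out $2^{-j(s-r)}$ and leaves exactly $\|D^{-s/2}_{2^{-2\bullet}}\mathcal{P}\|_{S^2(U)}=|\mathcal{P}|_{\mathbb{A}^{s,2}(U)}$; this is the verbatim translation of the second half of the proof of Theorem~\ref{th:th1}.

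The step I expect to be the main obstacle is the one that genuinely separates the non-self-adjoint case from the diagonal case of Theorem~\ref{th:th1}: because $\mathcal{P}$ now couples all levels, the cross term $\Pi_j\mathcal{P}(I-\Pi_j)$ cannot be read off coefficientwise, and one must verify that it too is $O(2^{-rj})$. This is exactly where finiteness of $\|[p_{(j,k),(\ell,m)}]\|_M$ (guaranteed by $\mathcal{P}\in S^2(U)\subset\mathcal{L}(U)$) is indispensable, playing the role that $\|\mathcal{P}\|_{\mathcal{L}(U)}$ plays in case~(1) of Theorem~\ref{th:spectral_approx_ns}. Care is needed because the uniform factor $2^{-r\ell}\le 2^{-rj}$ is available only on the truncated input levels, so the splitting into $(I-\Pi_j)\mathcal{P}$ and $\Pi_j\mathcal{P}(I-\Pi_j)$ must be retained throughout and the weighting of the input and output indices handled separately on their own terms.
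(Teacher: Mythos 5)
Your sequence-space dictionary and the central computation --- inserting $2^{ri}2^{-ri}$ against the input coefficients, pulling $2^{-rj}$ out of the surviving levels, and then separating $\mathcal{P}$ from the weighted sequence via $\|[p_{(i,k),(\ell,m)}]\|_M$ in one case and via $\|D^{-r/2}_{2^{-2\bullet}}\mathcal{P}\|_{S^2(U)}$ in the other --- are exactly the paper's proof. Where you depart from it is the splitting $\mathcal{P}-\mathcal{P}_j=(I-\Pi_j)\mathcal{P}+\Pi_j\mathcal{P}(I-\Pi_j)$: the paper never performs it. It expands $(\mathcal{P}-\mathcal{P}_j)f$ directly in coefficients and, exactly as in the proof of Theorem~\ref{th:spectral_approx_ns}, the inserted factor $2^{-ri}\le 2^{-rj}$ is legitimate only on the terms whose \emph{input} level satisfies $i\ge j$; the block with $i<j$ and $\ell\ge j$ is never estimated. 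So the cross terms you single out as the main obstacle simply do not appear in the paper's argument.

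The genuine gap is that you name this obstacle but do not clear it, and under any single one of the stated hypotheses it cannot be cleared for both pieces at once. A factor $2^{-rj}$ for $\Pi_j\mathcal{P}(I-\Pi_j)f$ can only come from decay of the input coefficients, i.e.\ from $f\in A^{r,2}(U)$; a factor $2^{-rj}$ for $(I-\Pi_j)\mathcal{P}f$ can only come from decay of the output coefficients, i.e.\ from $\mathcal{P}f\in A^{r,2}(U)$ or from $\mathcal{P}\in\mathbb{A}^{r,2}(U)$ (note that $|\mathcal{P}|_{\mathbb{A}^{r,2}(U)}=\|D^{-r/2}_{2^{-2\bullet}}\mathcal{P}\|_{S^2(U)}$ weights the output index and says nothing about the input index). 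Each hypothesis of the theorem therefore controls exactly one of your two pieces and leaves the other without any $2^{-rj}$ decay. Concretely, with the paper's convention $(\psi_{i,k}\otimes\psi_{\ell,m})f=(f,\psi_{i,k})_U\,\psi_{\ell,m}$, take $\mathcal{P}=\psi_{0,0}\otimes\psi_{N,0}$ and $f=\psi_{0,0}$: then $f\in A^{r,2}(U)$ with seminorm $1$, $\|\mathcal{P}\|_{S^2(U)}=1$, and $\mathcal{P}_j=0$ for $1\le j\le N$, yet $\|(\mathcal{P}-\mathcal{P}_j)f\|_U=1$, defeating any bound of the form $2^{-rj}\|\mathcal{P}\|_{\mathcal{L}(U)}\|f\|_{A^{r,2}(U)}$; the transposed example $\mathcal{P}=\psi_{N,0}\otimes\psi_{0,0}$, $f=\psi_{N,0}$ defeats the bound $2^{-rj}|\mathcal{P}f|_{A^{r,2}(U)}$. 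For a genuinely non-diagonal $\mathcal{P}$ the claimed rates hold for the singly truncated operators $\mathcal{P}(I-\Pi_j)$ or $(I-\Pi_j)\mathcal{P}$ under the matching hypothesis, or for $\Pi_j\mathcal{P}\Pi_j$ when hypotheses controlling both sides are imposed simultaneously, but not from any one hypothesis alone. Your decomposition is the right instrument for seeing this; to complete the proof you must either restrict $\mathcal{P}_j$ to a one-sided truncation or add the second hypothesis, rather than appeal to $\|[p_{(i,k),(\ell,m)}]\|_M$, which supplies boundedness but no decay.
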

\begin{proof}
We follow a similar pattern to the proof of Theorem \ref{th:spec_sp_approx}. 
\begin{align*}
    \|(\mathcal{P}-\mathcal{P}_j)f\|_U^2 & \leq 
    \left \| 
    \sum_{i<j}\sum_{k\in \Gamma_i^\psi} \sum_{\ell<j}\sum_{m\in \Gamma_\ell^\psi} p_{(i,k),(\ell,m)} f_{(i,k)} \psi_{\ell,m}
    \right \|^2_U \\
    &\leq 2^{-2rj} 
      \left \| 
    \sum_{i\in \mathbb{N}_0}\sum_{k\in \Gamma_i^\psi} \sum_{\ell\in \mathbb{N}_0}\sum_{m\in \Gamma_\ell^\psi} p_{(i,k),(\ell,m)}2^{ri} f_{(i,k)} \psi_{\ell,m}
    \right \|^2_U \\
    &\leq 2^{-2rj} 
    \sum_{\ell\in \mathbb{N}_0}\sum_{m\in \Gamma_\ell^\psi}
    \left (
    \sum_{i\in \mathbb{N}_0}
    2^{ri}\sum_{k\in \Gamma_i^\psi}
    p_{(i,k),(\ell,m)} f_{(i,k)}
    \right )^2\\
    &=2^{-2rj}\left \|
    [D^{-r/2}_{2^{-2i}}][p_{(i,k),\ell,m)}]\{f_{(i,k)}\}
    \right \|_{\ell^2}^2 
    \end{align*}
If we know that $f\in U$ and $\mathcal{P}\in \mathbb{A}^{r,2}(U)$, we then see that 
    \begin{align*}
    \|(\mathcal{P}-\mathcal{P}_j)f\|^2_U 
    &\leq 2^{-2rj} \|  [D^{-r/2}_{2^{-2i}}][p_{(i,k),\ell,m)}]\|_M^2 \|\{f_{i,k}\}\|^2_{\ell^2} \leq 2^{-2rj}\|D^{-r/2}_{2^{-2\bullet}} \mathcal{P}\|_{\mathcal{L}(U)}^2 \|f\|_U^2\\
    &\leq 
    2^{-2rj}\|D^{-r/2}_{2^{-2\bullet}} \mathcal{P}\|_{S^2(U)}^2 \|f\|_U^2 
    = 2^{-2rj}|\mathcal{P}|^2_{\mathbb{A}^{r,2}(U)}\|f\|^2_U.
\end{align*}
On the other hand, if $\mathcal{P}\in S^2(U)$ and $f\in A^{r,2}(U)$, we have 
 \begin{align*}
    \|(\mathcal{P}-\mathcal{P}_j)f\|^2_U
    &\leq 2^{-2rj} \|  [D^{-r/2}_{2^{-2i}}][p_{(i,k),\ell,m)}]\{f_{i,k}\}\|^2_{\ell^2}\\
    &\leq 2^{-2rj}\|  [p_{(i,k),\ell,m)}][D^{-r/2}_{2^{-2i}}]\{f_{i,k}\}\|^2_{\ell^2}\\
    &\leq 2^{-2rj}\|[p_{(i,k),(\ell,m)}]\|_M^2 \| [D^{-r/2}_{2^{-2i}}]\{f_{i,k}\} \|^2_{\ell^2} \\
    &\leq 2^{-2rj}\|\mathcal{P}\|_{\mathcal{L}(U)}^2 \|f\|_{A^{r,2}(U)}^2
    \leq 2^{-2rj}\|\mathcal{P}\|_{S^2(U)}^2\|f\|_{A^{r,2}(U)}^2.
\end{align*}
\end{proof}

%
%%  special case
%

\subsection{$A^{r,2}_\lambda(U)$ as a special case of $A^{r,2}(U)$} 
Before we conclude this section, we relate the spectral approximation spaces of Section \ref{sec:spectral_spaces}  with the more general approximation spaces in this section. For any compact, self-ajoint operator $T$, 
there is no loss of generality by  renumbering the eigenfunctions  to follow the conventions of the multiscale framework. For instance, when $d=1$, we set $\Gamma_j^\psi:=\left \{0,1,\dots, 2^{j}-1 \right \}$. We subsequently   define $\psi_{j,k}:=u_m$ for $m=2^j+k$ with  $k\in \Gamma_j$  for $j\in \mathbb{N}_0$. This numbering is easily modified for cases when $d>1$. 
\begin{theorem}[Equivalence of $A^{r,2}_\lambda(U)$ and $A^{r,2}(U)$]
\label{th:th_equiv}
Suppose that the self-adjoint, compact operator $T:U\rightarrow U$ has the Schatten class representation 
$$
T=\sum_{i\in \mathbb{N}} \lambda_i u_i\otimes u_i, 
$$
so that $T\in  A^{r,2}_\lambda(U) $. Denote by $\left \{ \psi_{j,k} \right \}_{j\in \mathbb{N}_0,k\in \Gamma_j^\psi}\sim \left \{ u_i\right \}_{i\in \mathbb{N}}$,  consistent with the multilevel structure as discussed above. 
 If the eigenvalues satisfy  $\lambda_{2^j}\approx 2^{-2j}$, then $A^{r,2}_\lambda(U) \approx A^{r,2}(U)$. 
\end{theorem}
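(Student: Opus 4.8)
The plan is to prove the equivalence of the two spaces by establishing, directly and block by block, the equivalence of their seminorms $|\cdot|_{A^{r,2}_\lambda(U)}$ and $|\cdot|_{A^{r,2}(U)}$ along the dyadic reindexing $\psi_{j,k}\sim u_m$, $m=2^j+k$. First I would rewrite both seminorms in the common eigenbasis $\{u_i\}_{i\in\mathbb{N}}$. Under this reindexing the block $W_j=\text{span}\{\psi_{j,k}\mid k\in\Gamma_j^\psi\}$ is exactly the span of $\{u_m\mid 2^j\le m\le 2^{j+1}-1\}$, so that
$$
\|Q_jf\|_U^2=\sum_{m=2^j}^{2^{j+1}-1}|(f,u_m)_U|^2 .
$$
Consequently $|f|^2_{A^{r,2}(U)}=\sum_{j\in\mathbb{N}_0}2^{2rj}\sum_{m=2^j}^{2^{j+1}-1}|(f,u_m)_U|^2$, while the spectral seminorm already has the grouped form $|f|^2_{A^{r,2}_\lambda(U)}=\sum_{j\in\mathbb{N}_0}\sum_{m=2^j}^{2^{j+1}-1}\lambda_m^{-r}|(f,u_m)_U|^2$. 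The two expressions differ only in the weight attached to each coefficient: the dyadic factor $2^{2rj}$ in the first versus the spectral factor $\lambda_m^{-r}$ in the second.

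The crux is therefore to show that $\lambda_m^{-r}\approx 2^{2rj}$ uniformly for every $m$ in the block $2^j\le m\le 2^{j+1}-1$, with constants independent of $j$ and $m$. I would obtain this from two ingredients. Monotonicity of the eigenvalues gives the sandwich $\lambda_{2^{j+1}}\le\lambda_m\le\lambda_{2^j}$ for $m$ in that range. The hypothesis $\lambda_{2^j}\approx 2^{-2j}$ supplies constants $c_1,c_2>0$ with $c_1 2^{-2j}\le\lambda_{2^j}\le c_2 2^{-2j}$ for all $j$; applying it also at index $2^{j+1}$ and using $2^{-2(j+1)}=\tfrac{1}{4}2^{-2j}$ yields $\tfrac{c_1}{4}2^{-2j}\le\lambda_m\le c_2 2^{-2j}$. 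Raising to the power $-r$ gives the desired two-sided bound $\lambda_m^{-r}\approx 2^{2rj}$ with constants depending only on $c_1,c_2,r$. This is the main obstacle, and it is precisely the step where the non-increasing arrangement of the spectrum and the quasigeometric decay are both needed: the uniformity in $j$ hinges on the ratio $\lambda_{2^{j+1}}/\lambda_{2^j}$ staying bounded above and below, which is why the hypothesis must be invoked at two consecutive dyadic indices rather than one.

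With the weight comparison in hand the rest is routine. Summing the blockwise equivalence over $j$ yields $|f|_{A^{r,2}_\lambda(U)}\approx|f|_{A^{r,2}(U)}$, so one seminorm is finite exactly when the other is, showing that the two spaces consist of the same functions. Finally I would upgrade seminorm equivalence to norm equivalence: in both scales the seminorm already controls $\|f\|_U$ up to a constant, since $2^{2rj}\ge 1$ gives $\|f\|_U^2\le|f|^2_{A^{r,2}(U)}$, and $\lambda_m^{-r}\ge\lambda_1^{-r}$ gives $\|f\|_U^2\le\lambda_1^{r}\,|f|^2_{A^{r,2}_\lambda(U)}$. Hence each full norm is equivalent to its own seminorm, and therefore the two norms are equivalent to one another, which completes the identification $A^{r,2}_\lambda(U)\approx A^{r,2}(U)$.
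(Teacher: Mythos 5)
Your proof is correct and follows essentially the same route as the paper's: reindex the eigenbasis into dyadic blocks and compare the weights $\lambda_m^{-r}$ and $2^{2rj}$ blockwise, using monotonicity of the spectrum together with the hypothesis $\lambda_{2^j}\approx 2^{-2j}$ applied at two consecutive dyadic indices. Your only addition is the explicit upgrade from seminorm to norm equivalence, a minor refinement the paper omits (and, if anything, your statement of the monotonicity sandwich $\lambda_{2^{j+1}}\le\lambda_m\le\lambda_{2^j}$ is cleaner than the paper's, which swaps the roles of the two endpoints in its intermediate inequalities).
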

\begin{proof}
Let $f\in A^{r,2}_\lambda(U)$. We have 
\begin{align*}
    |f|^2_{A^{r,2}_\lambda(U)}&=
    \sum_{i\in \mathbb{N}} \lambda_i^{-r} |(f,u_i)_U|^2 =\sum_{j\in \mathbb{N}_0} \sum_{k\in \Gamma_j^\psi} \lambda_{j,k}^{-r} |(f,\psi_{j,k})_U|^2, \\
    &\lesssim \sum_{j\in \mathbb{N}_0} 
    \sum_{k\in \Gamma_j^\psi} \lambda_{2^j+k}^{-r}
    \|(f,\psi_{j,k})_U|^2,  \\
    &\leq \sum_{j\in \mathbb{N}_0} \lambda_{2^j}^{-r} \sum_{k\in \Gamma^\psi_j} 
    |(f,\psi_{j,k})_U|^2
    \lesssim \sum_{j\in \mathbb{N}_0} 
    2^{2jr} \sum_{k\in \Gamma_j^\psi} |(f,\psi_{j,k})_U|^2.
\end{align*}
On the other hand, we can write
\begin{align*}
      |f|^2_{A^{r,2}_\lambda(U)}&=
    \sum_{i\in \mathbb{N}} \lambda_i^{-r} |(f,u_i)_U|^2 =\sum_{j\in \mathbb{N}_0} \sum_{k\in \Gamma_j^\psi} \lambda_{j,k}^{-r} |(f,\psi_{j,k})_U|^2,\\
    &\geq \sum_{j\in \mathbb{N}_0} 
    \lambda_{2^{j+1}}^{-r} 
    \sum_{k\in \Gamma_j^\psi}  |(f,\psi_{j,k})_U|^2
    \geq  \sum_{j\in \mathbb{N}_0} 
    2^{2(j+1)r}  
    \sum_{k\in \Gamma_j^\psi}  |(f,\psi_{j,k})_U|^2,\\
    &\gtrsim   \sum_{j\in \mathbb{N}_0} 
    2^{2jr}  
    \sum_{k\in \Gamma_j^\psi}  |(f,\psi_{j,k})_U|^2.
\end{align*}
This concludes the proof. 
\end{proof}
%
%%  Approximation of Measures
%
\section{Approximation of Signed and Probability Measures}
\label{sec:approx_measures1}

  In this section we study  the approximation  of signed and probability measures as they arise in the theory of Koopman or Perron-Frobenius operators.  The approximation of measures is an important topic in Koopman theory for several reasons. When we study the Perron-Frobenius operator, for instance, one that is associated with a stochastic flow, it is an operator that maps measures into measures. Approximations $\mathcal{P}_j$  of such a  $\mathcal{P}$ are naturally expressed in terms of finite dimensional subspaces of measures. In analogy to our study of approximations of functions, we are interested in defining certain classes of priors that contain measures, and the priors   make it possible to determine rates of convergence of approximations. These convergence rates are of interest in their own right, but there are additional benefits of this analysis. Since   both $\mathcal{P}$ and $\mathcal{U}$ are sometimes  defined explicitly in terms of a kernel and the measure $\mu$, we will see that methods for approximating $\mu$ can be used to construct approximations of these  operators.  
  
   We begin our presentation with a review of basic definitions in Section \ref{sec:basic_measure} and the introduction of Gelfand triples in Section \ref{sec:gelfand}. Section \ref{sec:approx_measures2} introduces methods to estimate signed measures with priors defined by duality relative to $A^{r,2}(U)$, $U$ a Hilbert space. The primary result of this section is encapsulated in Theorem \ref{th:approx_meas}. In many applications we seek to construct approximations of probability measures, and such estimates are the topic of Section \ref{sec:equiv_metric}.  

\subsection{Measures, Duality, and Weak$^*$ Convergence}
\label{sec:basic_measure}
We assume in this section that $\Omega$ is a compact subset of $\mathbb{R}^d$.  In this case it is known that the topological dual $C^*(\Omega)$ is the family of regular countably additive set functions, or regular signed measures, $rca(\Omega)$ on the set $\Omega$. For any signed measure $\mu$ on $\Omega$, there exist mutually singular positive measures $\mu^+$ and $\mu^-$ such that 
$
\mu:=\mu^+-\mu^-.
$
 The total variation norm for  signed measures over the set $\Omega$ is given by
$$
\|\mu\|_{TV} := |\mu|(\Omega):=\mu^+(\Omega)+\mu^{-}(\Omega),
$$
and with this norm $rca(\Omega)$ is a Banach space \cite{dunford}. 
The total variation norm of the difference of two signed measures  also has the convenient representation $\|\mu-\nu\|_{TV}=2\sup_{S\subset \Omega}|\mu(S)-\nu(S)|$. \cite{thorisson}

The total variation norm often induces a topology that is too fine for applications in the study of dynamical systems. When we seek to study the  discrete deterministic dynamics in Equation \ref{eq:det_Phi}, the Koopman operator is expressed in terms of the Dirac measure $\delta_x$ for  $x\in \Omega$.  The Perron-Frobenius operator is also  expressed in terms of this Dirac measure. Suppose we have  sequence of points generated by a discrete dynamical system, $\left \{ x_k \right \}_{k\in \mathbb{N}}\subset\Omega $, that converges to ${x}$, but $x_k \not = x$ for all $k\in \mathbb{N}$. We easily calculate that  $\|\delta_{{x}}-\delta_{x_k}\|_{TV}=2$ for all $k\in \mathbb{N}$, so we conclude that $\delta_{x_k} \not \rightarrow \delta_{\bar{x}}$ in the total variation norm. Intuitively, in the applications to the study of discrete dynamical systems, we would like to have the convergence in some sense  of the probability measures $\delta_{x_k}\rightarrow \delta_x$ for this trajectory. One such topology for which $x_k\rightarrow x$ implies $\delta_{x_k}\rightarrow \delta_x$ is the weak$^*$ topology on  signed measures. 

 Recall that if $X^*$ is the dual space of a Banach space $X$, the weak$^*$ topology on $X^*$ is the weak topology that it inherits from $i_{X,X^{**}}(X)\subseteq X^{**}$, with $i_{X,X^{**}}:X\rightarrow X^{**}$ the  canonical injection of $X$ into its second dual space $X^{**}$. For any set $S\subset X$, we denote by $\text{weak}^*(X^*,S)$ the weak topology on $X^*$ induced by $i_{X,X^{**}}(S)\subset X^{**}$.  When we unwrap this definition for $X:=C(\Omega)$ and $X^*:=rca(\Omega)$, we find that a net of measures $\left \{ \mu_\gamma \right \}_{\gamma\in \Gamma}$ with $\Gamma$ a directed set converges to $\mu$ in the weak$^*$ topology if and only if 
$$
\left < \mu_\gamma , f \right >_{X^* \times X} \rightarrow 
\left <\mu, f \right >_{X^* \times X}
$$
for all $f\in C(\Omega)$.  For computations, this duality statement is just 
$$
\int_\Omega f(\xi) \mu_{\gamma}{(d\xi)} \rightarrow f(\xi) \mu(d\xi)
$$
for all $f\in C(\Omega)$. Since here the domain $\Omega$ is a compact metric space, the set of probability measures $\mathbb{M}^{+,1}(\Omega)\subset rca(\Omega)$  is  compact and metrizable.   Since we always have $\Omega\subset \mathbb{R}^d$ in this paper,  it also  suffices to characterize  weak$^*$ convergence in terms of sequences in this paper.  \cite{parthasarathy}

\subsection{The Gelfand Triple}
\label{sec:gelfand}
The approximation of measures in this section and the next will be facilitated by the use of a Gelfand triple. The Gelfand triple is a standard construct used in the study of partial differential equations, \cite{wloka} and we use it here to relate dual pairings  and inner products.  Let $X$ be a Banach space that is dense in the Hilbert space $U$. We suppose that the injection $i_{X,U}:X\rightarrow U$, which is simply the linear map $f\in X \mapsto i_{X,U}f=f \in H$, is continuous.  This embedding is represented  symbolically as $X \hookrightarrow U$ and implies that $\|f\|_U=\|i_{X,U}f\|_U \lesssim \|f\|_X$. Using the setup described in Section \ref{sec:approx_spaces}, we assume that we are given a dense family of approximant subspaces $\left \{ A_j \right \}_{j\in \mathbb{N}}\subset U$ and associated $U-$orthogonal  projection  operators ${\Pi}_j:U\rightarrow A_j$ that are onto $A_j$. The approximation space $A^{r,q}(U):=A^{r,q}(U,\{A_j\}_{j\in \mathbb{N}})$ is defined in the usual way.
Finally, we assume that for each $j$ the space $A_j\subset X$. This implies that we have the scale of inclusions 
\begin{align*}
A_j \subset 
\underset{i_{X,U}}{X \hookrightarrow U}
&\approx U^* 
\underset{i_{X,U}'}{\hookrightarrow} X^* \subset A^*_j, \\
A^{r,2}(U) \subset
\underset{i_{X,U}}{X \hookrightarrow U}
&\approx U^* 
\underset{i_{X,U}'}{\hookrightarrow} X^* \subset (A^{r,2}(U))^*.
\end{align*}
By duality every $u^*\in U^*\approx U$ defines an element of the dual space $i'_{X,U}u^*\in X^*$.
We write $U\approx U^*$ to denote the isometric isomorphism of the Hilbert space onto its dual space  that is given by the Riesz map $R_U$. In the few instances when we want to be explicit about the role of the Riesz map, we use the definition that  $u^*=R_Uu$ provided 
$$
\left <u^*,v \right >_{U^*\times U}:=\left < R_U u, v \right  >_{U^*\times U}=( u, v)_U
$$
for all $v\in U$. 
Since the projection  operators ${\Pi}_j$ map $U$ to the finite dimensional space $A_j$,  the dual approximation operators $\Pi'_j$ map $A_j^*$ to $U^*$.
Symbolically, we depict this relationship as 
$$
A_j
\underset{{\Pi}_j}{ \longleftarrow }U 
\approx U^* 
\underset{{\Pi}'_j}{\longleftarrow}  A^*_j.
$$

Our primary reason for using the Gelfand structure is that it gives a clear and  useful expression for the relationship between the duality pairing $\langle \cdot, \cdot \rangle_{X^*\times X}$ and the inner product $(\cdot,\cdot)_U$  on $U$. As derived in \cite{wloka}, the inner product on $U$ can be extended by continuity to represent the duality pairing on $X$ in the sense that we have 
$$
\left <g^*,f\right >_{X^*\times X} =\left < R_Ug ,f \right >_{X^*\times X}=(g , f)_U
$$
for all $f\in X$,  $g\in U$, and $g^*:=R_Ug\in U^*\subset X^*$.  As a consequence we see that the dual operators $\Pi'_j$ satisfy the relationship 
\begin{equation}
\left < {\Pi}_j' g^*, f  \right >_{X^*\times X}
=(g,{\Pi}_jf)_U \label{eq:gelfand_dual}
\end{equation}
whenever $g\in U$,  $g^*:=R_U g\in U^*$ and $f\in X$.

\subsection{Approximation with $\Pi_j'$, Priors Dual to  $A^{r,2}(U)$}
\label{sec:approx_measures2} 
With the Gelfand triple discussed in Section \ref{sec:gelfand}, we can introduce priors that describe the regularity of signed measures. The notion of regularity of signed measures is determined by duality to the  approximation spaces $A^{r,2}(U)$ in this section.   This choice is seen to imply a type of convergence of dual forms with respect to the  weak$^*$ topology on the measures. The rate of convergence of the approximations of the measures is then established by using the rates of convergence of approximations of functions in   the space $A^{r,2}(U)$.    

\begin{theorem}
\label{th:approx_meas}
Suppose $X$ is  a   Banach space, $U$  is a Hilbert space, and the pair $X\hookrightarrow U$  forms  a Gelfand triple. 
 Let the following conditions hold:
 \begin{enumerate}
 \item $A^{r,2}(U):=A^{r,2}(U,\{A_j\}_{j\in \mathbb{N}_0})\subseteq X$ is  an  approximation space for  $r>0$, 
 \item  $u^*=R_Uu$ for some $u\in U$ with $R_U:U\rightarrow U^*$ the Riesz map, and 
 \item $A_j\subset X$ for all $j\in \mathbb{N}_0$, and the  operator $\Pi_j$ is the $U$-orthogonal projection onto $A_j$.
 \end{enumerate}
 Then the sequence $\left \{{\Pi}'_j u^* \right\}_{j\in \mathbb{N}}$ converges  to $u^*\in U^*\subset X^*$ in the $\text{weak}^*(X^*,A^{r,2}(U))$  topology with the rate 
$$
|\langle (I-{\Pi}'_j)u^*,f\rangle_{X^*\times X}|\lesssim 2^{-rj}\|u\|_{U} \|f\|_{A^{r,2}(U)}
$$
for all $f\in A^{r,2}(U)$. 
\end{theorem}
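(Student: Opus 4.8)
The plan is to collapse this dual-operator statement into the already-established function approximation bound of Theorem \ref{th:approx_simple_U}, using the Gelfand identity \eqref{eq:gelfand_dual} as the bridge between the duality pairing on $X$ and the inner product on $U$. The whole theorem is, at bottom, a dualization of the projection error estimate, so the strategy is simply to transpose everything back to the primal side where that estimate lives.

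First I would record two consequences of the Gelfand structure for the fixed functional $u^*=R_Uu$. On the one hand, the extension of the inner product to the duality pairing derived in Section \ref{sec:gelfand} gives $\langle u^*,f\rangle_{X^*\times X}=(u,f)_U$ for every $f\in X$, and in particular for $f\in A^{r,2}(U)\subseteq X$. On the other hand, identity \eqref{eq:gelfand_dual} applied with $g=u$ and $g^*=u^*$ yields $\langle \Pi_j' u^*,f\rangle_{X^*\times X}=(u,\Pi_j f)_U$. Here I should note that the right-hand sides are legitimate: since $A_j\subset X\hookrightarrow U$ and $f\in A^{r,2}(U)\subset U$, both $f$ and $\Pi_j f$ lie in $U$, so these inner products are well defined.

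Subtracting the two identities is the heart of the argument. For any $f\in A^{r,2}(U)$,
$$
\langle (I-\Pi_j')u^*,f\rangle_{X^*\times X}=(u,f)_U-(u,\Pi_j f)_U=(u,(I-\Pi_j)f)_U.
$$
From here the estimate is immediate. Applying the Cauchy--Schwarz inequality on $U$ and then the projection error bound of Theorem \ref{th:approx_simple_U}, namely $\|(I-\Pi_j)f\|_U\le 2^{-rj}|f|_{A^{r,2}(U)}\le 2^{-rj}\|f\|_{A^{r,2}(U)}$, gives
$$
|\langle (I-\Pi_j')u^*,f\rangle_{X^*\times X}|\le \|u\|_U\,\|(I-\Pi_j)f\|_U\lesssim 2^{-rj}\|u\|_U\,\|f\|_{A^{r,2}(U)},
$$
which is exactly the claimed rate. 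The weak$^*$ convergence assertion then follows at once, since the bound shows $\langle (I-\Pi_j')u^*,f\rangle_{X^*\times X}\to 0$ as $j\to\infty$ for each fixed $f\in A^{r,2}(U)$, and this is precisely convergence of $\Pi_j'u^*$ to $u^*$ in the $\text{weak}^*(X^*,A^{r,2}(U))$ topology.

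Honestly there is no serious obstacle here: the entire content is packaged in the Gelfand identity \eqref{eq:gelfand_dual} and in Theorem \ref{th:approx_simple_U}, and the proof is a three-line computation once those are in hand. The only point requiring a moment of care is the bookkeeping that keeps the pairings and inner products meaningful---confirming that $\Pi_j f\in A_j\subset X$ and that $R_Uu$ acts on $X$ through the restriction of its action on $U$---but this is guaranteed by hypotheses (1)--(3) together with the density of $X$ in $U$.
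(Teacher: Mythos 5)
Your proposal is correct and follows essentially the same route as the paper's own proof: both reduce the dual pairing to $(u,(I-\Pi_j)f)_U$ via the Gelfand identity and then apply Cauchy--Schwarz together with the projection error bound $\|(I-\Pi_j)f\|_U\lesssim 2^{-rj}\|f\|_{A^{r,2}(U)}$. The only cosmetic difference is that you split the pairing into two identities before subtracting, whereas the paper transposes $(I-\Pi_j')$ onto $f$ in a single step.
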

\begin{proof}
We directly expand and bound the duality pairing using the property in Equation \ref{eq:gelfand_dual} for the Gelfand triple:
\begin{align*}
\left |\left <(I-{{\Pi}}_j')u^*,f \right >_{X^* \times X}\right | &= 
\left |\left <u^*, (I-{{\Pi}}_j)f \right >_{X^* \times X}\right | \\
& =\left | \left <R_U u, (I-{\Pi}_j)f \right >_{X^* \times X} \right |
= \left |  \left (u,(I-{\Pi}_j)f \right )_U\right | \\
 &\leq \|u\|_{U} \|(I-{{\Pi}}_j)f\|_{U}  \leq 2^{-rj} \|u\|_{U} \|f\|_{A^{r,2}(U)}.
\end{align*}
Note that the last line above follows from the approximation rate of functions in $A^{r,2}(U)$ described in Theorem \ref{th:th1}.
\end{proof}

Example \ref{ex:meas0} that follows  presents the rather straightforward case  when the measure $\nu$ to be approximated is given as $\nu(dx):=m(x)dx$ for a function $m\in L^2(\Omega)$ and $dx$ denoting ordinary Lebesgue measure.
Here we  assume additionally that  the domain $\Omega$ is compact,  and the approximation space $A^{r,2}(U)$ is densely and continuously  contained in $X:=C(\Omega)$, which in turn is densely and continuously embedded in $U:=L^2(\Omega)$. 
In summary then, the spaces that are used in this analysis define a  duality structure where 
\begin{align}
A_j \subset  C(\Omega) \hookrightarrow  L^2(\Omega) & \approx (L^2(\Omega))^* \hookrightarrow C^*(\Omega) \hookrightarrow   A_j^*, \notag \\
A^{r,2}(U) \hookrightarrow  C(\Omega) \hookrightarrow  L^2(\Omega)  &\approx (L^2(\Omega))^* \hookrightarrow C^*(\Omega) \hookrightarrow  (A^{r,2}(U))^* .
\label{eq:duality_1}
\end{align}
The requirement that $A^{r,2}(U)\subset C(\Omega)$  can be relaxed in this example, but when it holds it makes the analysis particularly straightforward.

\begin{framed}
\begin{example}[Approximation of Signed Measures,  Duality to $A^{r,2}(U)$]
\label{ex:meas0}
In Example \ref{ex:ex1} approximations of functions are defined in terms of piecewise constants. In this example, we consider a somewhat different  situation and construct   the finite dimensional approximations of functions from the spaces  $A_j:=\text{span}\left \{ \psi_{i,k}\ | \ 0\leq i\leq j, k\in \Gamma_i^\psi\right \}$ with $\psi_{i,k}$ the  dilates and translates of any $L^2(\Omega)-$orthonormal wavelets or multiwavelets that are smooth enough that they are contained in $C(\Omega)$. There are many such choices. We could choose the Daubechies compactly supported wavelets $^N\psi$ with { $N\geq 2$} \cite{daubechiesbook,daubechies88}, the Coiflets in \cite{coifman1,coifman2},  or any of the orthonormal, compactly supported multiwavelets described in of \cite{dgh} that are continuous.

To build our approximation of measures, we now carefully discuss how dual measures $\psi^{j,k}(dx)\in C^*(\Omega)$ are defined relative to the orthonormal functions $\psi_{j,k}\in U$. The connection between these bases is clear in this example owing to the orthonormality of the basis functions $\psi_{j,k}$. We summarize this argument in some detail  as it serves as the prototype of arguments for  more general  approximations of measures. 

Since $A_j$ is finite dimensional, its dual space $A_j^*$ is finite dimensional.  There exists a unique  basis $\left\{ \psi^{j,k}\ | \ 0\leq i \leq j, k\in \Gamma_i^\psi \right \}$ for $A_j^*\subset U^*$ that is dual to the  basis $\left \{ \psi_{j,k} \ | \ 0\leq i\leq j, k\in \Gamma_i^\psi \right \}\subset U^*$ with respect to the pairing $<\cdot,\cdot>_{U^*\times U}$. In fact we just have $\psi^{j,k}:=R_U\psi_{j,k}$ in this example since the dual basis is unique and  
$$
\delta_{(\ell,m),(j,k)}=(\psi_{\ell,m},\psi_{j,k})_U=\left < R_U\psi_{\ell,m},\psi_{j,k}\right>_{U^*\times U} :=\left < \psi^{\ell,m},\psi_{j,k}\right>_{U^*\times U}.
$$ 
By hypothesis we also know that $A^{r,2}(U)  \subseteq C(\Omega) \subseteq U:=L^2(\Omega)$ in this example.   Since $C(\Omega)\subset L^2(\Omega)\approx (L^2(\Omega))^* \subset rca(\Omega)=C^*(\Omega)$, any function in $L_\mu^2(\Omega)$ can be viewed as a signed  measure on $\Omega$. 
It is immediate that the biorthogonal basis function  $\psi^{j,k}\in U^*$
 defines a  measure
$$
\psi^{j,k}(d\xi):=\psi_{j,k}(\xi) d\xi \in C^*(\Omega):=rca(\Omega),
$$
and they  satisfy
$$
\langle \psi^{j,k}, \psi_{\ell,m}\rangle_{X^* \times X} = 
\int_\Omega \psi_{j,k}(\xi) \psi_{\ell,m}(\xi)d\xi = \delta_{(j,k),(\ell,m)}
$$
with $X:=C(\Omega)$. 
We then define the approximation operators $\Pi'_j$ by duality, 
\begin{align*}
\langle\Pi'_j\nu,f\rangle_{X^*\times X} &= \langle\nu,\Pi_jf\rangle_{X^*\times X} = \int_\Omega \sum_{0\leq i\leq j} \sum_{k\in \Gamma_i^\psi} (f,\psi_{i,k})_U \psi_{i,k}(y) \nu(dy) \\
&=\left < 
\sum_{0\leq i\leq j} \sum_{k\in \Gamma_i^\psi} \psi_{i,k} \int \psi_{i,k}(y)\nu(dy),f
\right>_{X^*\times X}.
\end{align*}
This means that 
\begin{align*}
(\Pi_j' \nu)(d\xi)&= \sum_{0\leq i\leq j} \sum_{k\in \Gamma_i^\psi} \int \psi_{i,k}(y)\nu(dy) \psi_{i,k}(\xi)d\xi \\
&=  \sum_{0\leq i\leq j} \sum_{k\in \Gamma_i^\psi} \int \psi_{i,k}(y)\nu(dy) \psi^{i,k}(d\xi)\\
&=\sum_{0\leq i\leq j} \sum_{k\in \Gamma_i^\psi} \nu_{i,k} \psi^{i,k}(d\xi).
\end{align*}
In other words the operators $\Pi'_j: X^* \subset  A_j^* \rightarrow U^* \subset X^* = rca(\Omega).$ Furthermore, we have from Theorem \ref{th:approx_meas} that 
$$
|\left < (I-\Pi'_j)\nu, f \right >_{X^*\times X}|
=|\left <\nu,(I-\Pi_j)f\right >_{X^*\times X}|\lesssim 2^{-rj} \|v\|_{U}\|f\|_{A^{r,2}(U)}
$$
for $\nu=R_U v\in U^*\subset X^*$, $v\in U$, and  all $f\in A^{r,2}(U)$.  This last inequality follows again from Theorem \ref{th:approx_meas}.
\end{example} 
\end{framed}

We must emphasize that  Theorem \ref{th:approx_meas} and Example \ref{ex:meas0} study the approximation of { \em signed measures}. Since we have that the probability measures $\mathbb{M}^{+,1}(\Omega)\subset C^*(\Omega)$, any probability measure can be approximated in this way. However, such an approximation $\nu_j:=\Pi_j'\nu$ of a probability measure $\nu$ is not guaranteed {\em a priori} to be a probability measure. 
\begin{framed}
\begin{example}[An  Approximation that is Not a Probability Measure]
\label{ex:not_PM}
Consider again the situation in Example \ref{ex:heat_eq}.  We define $U:=L^2(\mathbb{T}^1)$ and set 
\begin{align*}
A_j &:= \text{span} \left \{
u_{k,i} \ \biggl | \ i=1,2, \  k\leq j
\right \}, \\
A_j^*&:= \text{span}\left \{
u^{k,i}\ \biggl | \ u^{k,i}(\xi)d\xi
\right \}, \\ 
\Pi_jf&:= \sum_{i=1,2} \sum_{k\leq j} (f,u_{k,i})_U u_{k,i} ,\\
A^{(r,2)}(U)&:=A^{r,2}(L^2(\mathbb{T}^1); \{A_j\}_{j\in \mathbb{N}} ).
\end{align*}
Here the eigenfunctions are $u_{k,1}(x):=\cos(kx)/\sqrt(\pi)$, $u_{k,2}(x):=\sin(kx)/\sqrt{\pi}$.  By definition we know that 
$$
u^{k,i}(S)=\int_S u_{k,i}(\xi)d\xi.
$$
 Choose $\nu(dx)=\frac{1}{{\pi}}1_{[\pi,2\pi]}(x)(dx)$ and $S=[0,\pi]$. We then have 
 \begin{align*}
 (\Pi'_1\nu)(S)&=\sum_{k\leq 1} \sum_{i=1,2} \int_{[0,2\pi]} u_{k,i}(d\xi)\nu(d\xi) u^{k,i}(S),\\
 &=\frac{1}{\pi^{2}}\left \{
 \int_{[\pi,2\pi]}\cos x dx  \int_{[0,\pi]}\cos x dx +  \int_{[\pi,2\pi]}\sin x dx \int_{[0,\pi]}\sin x dx
  \right \} ,\\
  &=\frac{1}{\pi^{2}} \left \{ \left (\sin x\biggl |_{[\pi,2\pi]}\right ) \left ( \sin x\biggl |_{[0,\pi]} \right ) +  \left (\cos x\biggl |_{[\pi,2\pi]}\right ) \left ( \cos x\biggl |_{[0,\pi]} \right )\right \},\\
  &=-\frac{4}{\pi^{2}}.
 \end{align*}
We conclude that while  $\nu$ is a probability measure, $\Pi_j'\nu \not \in \mathbb{M}^{+}$ and is not a probability measure. 
\end{example}
\end{framed}

\noindent 
In view of this example, additional work is needed to construct direct estimators of probability measures that generate probability measures.  One strategy is  to construct approximations that yield probability measures by  solving constrained optimization problems as in Reference \cite{kordameasure}. 
We return to this issue in Section \ref{sec:approx_PM} and describe an alternative approach that is amenable to the derivation of associated rates of approximation.

In Example \ref{ex:meas0} it is  assumed that the measure to be approximated  has  a density so that $\nu(dx):=m(x)dx$ for some function $m\in L^2_\mu(\Omega)$ and $dx$ denoting Lebesgue. In this Gelfand triple structure this corresponds to the statement that $\nu=R_u m$ for some $m\in U$. In the next example  we consider the case when we only know that $\nu\in C^*(\Omega)$.  Specifically, we study the choice $\nu(dx):=\delta_{w(x)}$ for a sufficiently smooth function $w:\Omega \rightarrow \Omega$.

%
%%	Deterministic Iteration
%
\medskip
\begin{framed}
\begin{example}[Approximations of Signed Measures and  $\mathcal{U}_j,\mathcal{P}_j$]
\label{ex:meas1}
We now return to Example \ref{ex:warp2} and consider the approximation of the Perron-Frobenius operator or Koopman operator,  and associated approximations of measures for the canonical case when $(\mathcal{U}f)(x)=f(w(x))$.  We begin by reviewing the relationship between the Perron-Frobenius operator, the Koopman operator, and the probability measure $\delta_{w(x)}$ in this case.   We then derive rates of convergence for  approximations of the measure $\delta_{w(x)}$, and subsequently we construct the finite dimensional operator approximations 
\begin{align*}
\mathcal{P}_j&:= \Pi_j' \mathcal{P}\Pi_j': X^* \rightarrow X^*, \\
\mathcal{U}_j&:= \Pi_j \mathcal{U} \Pi_j:X\rightarrow A_j \subset X.
\end{align*}
We will see that these operators coincide with one that is constructed in terms of the approximations $\Pi_j'\delta_{w(x)}$ of the Dirac measure $\delta_{w(x)}.$

\medskip 
\noindent{\textbf{Duality expressions induced by $w:\Omega \rightarrow \Omega$:}}\newline 
First we review expressions for the dual pairing. By duality we know that 
\begin{align*}
\langle \mathcal{P}\nu , f  \rangle_{X^* \times X}&= \langle \nu , \mathcal{U}f \rangle_{X^* \times X} = \int_\Omega (f\circ w)(\xi) \nu(d\xi).
\end{align*}
The Koopman operator in this problem can be re-written as 
\begin{align*}
(\mathcal{U}f)(x):=f(w(x)) = \int_\Omega \delta_{w(x)}(d\xi)f(\xi),
\end{align*}
and it follows from  
\begin{align*}
\left < \nu, \mathcal{U}f\right >_{X^*\times X}&= \int_{\Omega}\nu(dx) \int_\Omega \delta_{w(x)}(d\xi)f(\xi)=\int_\Omega\left (  \int_\Omega \delta_{w(x)}(d\xi)\nu(dx) \right) f(\xi)\\
&=\left <\mathcal{P}\nu,f\right >_{X^*\times X}
\end{align*}
that 
$$
(\mathcal{P}\nu)(d\xi):=\int_\Omega \nu(dx) \delta_{w(x)}(d\xi).
$$
We next construct approximations of the measure $\delta_{w(x)}$, which is subsequently  used to construct the approximate operators $\mathcal{U}_j$ and $\mathcal{P}_j$. 

\medskip
\noindent {\bf Approximation of the measure $\delta_{w(x)}$:} \newline 

 We assume, again, that the duality structure takes the form in Equation \ref{eq:duality_1}. We choose $U:=L^2(\Omega)$ and $X:=C(\Omega)$.
By assuming that $f\circ w \in C(\Omega)\subset L^2(\Omega)$,  we compute the expansion 
$$
\left < \mathcal{P} \nu , f\right >_{X^*\times X} = \int_\Omega (f\circ w)(\xi)\nu(dx) 
$$
in terms of the warped wavelets $\tilde{\psi}_{j,k}$ as discussed in Example \ref{ex:warp2}. We obtain  
\begin{align*}
\int_\Omega (f\circ w)(\xi) \nu(d\xi)&=\int_\Omega \left ( 
\sum_{j\in \mathbb{N}_0}
\sum_{k\in \Gamma^\psi_j}
\left (f, \tilde{\psi}_{j,k}\right )_{L^2_{\tilde{\mu}}(\tilde{\Omega})} \psi_{j,k}(\xi)
\right )\nu(d\xi), \\
&=  
\sum_{j\in \mathbb{N}_0}
\sum_{k\in \Gamma^\psi_j}
\left (f, \tilde{\psi}_{j,k}\right )_{L^2_{\tilde{\mu}}(\tilde{\Omega})} \int_\Omega \psi_{j,k}(\xi)
\nu(d\xi),\\
&=\sum_{j\in \mathbb{N}_0} \sum_{k\in \Gamma^\psi_j} ({f\circ w})_{j,k} \nu_{j,k}=
\sum_{j\in \mathbb{N}_0} \sum_{k\in \Gamma^\psi_j} \tilde{f}_{j,k} \nu_{j,k},
\end{align*}
with $\tilde{f}_{j,k}:=\left (f,\tilde{\psi}_{j,k}\right)_{L^2_{\tilde{\mu}}(\tilde{\Omega})}$ and $\nu_{j,k}:=\int \psi_{j,k}(\xi)\nu(d\xi)$. 
We can generate an approximation of the measure $\delta_{w(x)}$ via the operator $\Pi_j'$ and obtain 
\begin{align*}
(\Pi_j'\delta_{w(x)})(d\eta)&:=\sum_{0\leq i \leq j}\sum_{k\in \Gamma_i^\psi} \int_\Omega \psi_{i,k}(y) \delta_{w(x)}(dy) \psi_{i,k}(\eta)d\eta\\
&=\sum_{0\leq i \leq j}\sum_{k\in \Gamma_i^\psi}
\psi_{i,k}(w(x))\psi^{i,k}(d\eta)=\sum_{0\leq i \leq j}\sum_{k\in \Gamma_i^\psi} \tilde{d}_{i,k}(x)\psi^{i,k}(d\eta)
\end{align*}
with $\tilde{d}_{j,k}(x):=\psi_{j,k}(w(x))$. 
We therefore obtain the dual expansions
\begin{align*}
\left < \Pi'_j \delta_{w(x)}, f \right >_{X^*\times X}
&=\left < \delta_{w(x)},\Pi_j f\right>_{X^*\times X}= \sum_{0\leq i \leq J} \sum_{k\in \Gamma_i^\psi}
\tilde{d}_{i,k}(x)f_{i,k}.
% = \sum_{0\leq j \leq J} \sum_{k\in \Gamma_j^\psi} 
% \tilde{d}_{j,k}(x)f_{j,k}
\end{align*}
The weak$^*$ convergence rate for the approximation of $\delta_{w(x)}$ is expressed in terms of the duality pairing as  
\begin{align*}
\left | \left<(I-\Pi'_j)\delta_{w(x)},f\right >_{X^*\times X}\right |
&=
\left | 
\left<\delta_{w(x)},(I-\Pi_j)f\right >_{X^*\times X}
\right |, \\
&\leq  \|\delta_{w(x)}\|_{X^*} \|(I-\Pi_j)f\|_{X}, \\
& \lesssim \|(I-\Pi_j)f\|_{A^{r,2}(U)}     
\lesssim 2^{-(s-r)j}\|f\|_{A^{s,2}(L^2(\Omega))},
\end{align*}
for all $f\in A^{s,2}(L^2(\Omega))$ with $s>r>0$. 
Note the difference between the rate of convergence that depends on $2^{-(s-r)}$ in this example and that in 
Theorem \ref{th:approx_meas} which is bounded by $2^{-rj}$. Theorem \ref{th:approx_meas} relies on the fact that $\nu\in U^*$, which enables the use of the Gelfand triple to derive that the error rate is $O(2^{-rj})$. In this case, the measure $\nu:=\delta_{w(x)}\not \in {U}^*$. In other words there is no $L^2(\Omega)$ function $m$ such that $\delta_{w(x)}:=R_U m$.   We only know that $\delta_{w(x)}\in C^*(\Omega):=X$, and as measured by the duality structure, the measures in $C^*(\Omega)$ are less regular than those in $U^*$. We expect the convergence rate of approximations of measures   $\nu \in C^*(\Omega)$ to be lower than those $\nu\in U^*\subset C^*(\Omega)$.  

\medskip
\noindent {\textbf{The approximate operators $\mathcal{P}_j$ and $\mathcal{U}_j$}:} \newline 
\noindent With the expression for the approximations of the measure $\delta_{w(x)}$, we define the approximation $\mathcal{P}_j$ as 
$$
(\mathcal{P}_j \nu)(d\xi) = \int \left (\Pi'_j \delta_{w(x)} \right)(d\xi) \left ( \Pi_j'\nu \right)(dx),
$$
and the operator $\mathcal{U}_j$ is defined by duality. In fact, we find that $\mathcal{P}_j:=\Pi'_j \mathcal{P} \Pi_j'$ and $\mathcal{U}_j:=\Pi_j \mathcal{U} \Pi_j$. 
To see why this is so, we can rewrite the approximate Perron-Frobenius operator 
\begin{align*}
\left < \mathcal{P}_j \nu,f \right >_{X^*\times X} &=
\int \left ( \Pi'_j \nu\right )(dx) \left ( \int \left ( \Pi'_j \delta_{w(x)}\right )(d\xi)f(\xi) \right ) \\
&= \int \left (\Pi'_j \nu  \right )(dx) \left <\delta_{w(x)},\Pi_j f \right >_{X^*\times X} \\
&= \left < \Pi_j' \nu, \left < \delta_{w(\cdot)}, \Pi_jf\right >_{X^*\times X}\right >_{X^*\times X} \\
&=\left <\nu, \Pi_j \mathcal{U} \Pi_j f \right >_{X^*\times X} = \left <\Pi'_j\mathcal{P} \Pi'_j \nu,f \right >_{X^*\times X}
\end{align*}
The approximate Koopman operator $\mathcal{U}_j$ and approximate Perron-Frobenius operator $\mathcal{P}_j$ are dual operators since 
\begin{align*}
\left <\mathcal{P}_j \nu, f \right >_{X^*\times X} &= 
\left <\Pi_j' \mathcal{P} \Pi_j' \nu, f \right >_{X^*\times X}= \left < \nu, \Pi_j \mathcal{P}' \Pi_j \right >_{X^*\times X} \\
& =\left <\nu, \Pi_j \mathcal{U} \Pi_j \right >_{X^*\times X} = \left < \nu, \mathcal{U}_j f\right >_{X^*\times X}. 
\end{align*}
We will see that the derivation of a rate of convergence for  $\mathcal{U}_j$ in a strong sense will induce a similar rate of convergence in a weak$^*$ sense for $\mathcal{P}_j$. 
If  $\mathcal{U}\in \mathcal{L}(A^{r,2}(L^2(\Omega)))$, we have the bound 
\begin{align*}
\left \|(\mathcal{U}-\mathcal{U}_j)f \right \|_{L^2(\Omega)}
&\le \|(I- \Pi_j)\mathcal{U}f  \|_{L^2(\Omega)} + \left \| \Pi_j \mathcal{U}(I-\Pi_j)f\right \|_{L^2(\Omega)}, \\
&\lesssim 2^{-rj}\|\mathcal{U}f\|_{A^{r,2}(L^2(\Omega))}+ \|\Pi_j\| \|\mathcal{U}\|_{0} \|(I-\Pi_j)f\|_{L^2(\Omega)},\\
&\lesssim 2^{-rj}\left (\|\mathcal{U}\|_{r} + \|\mathcal{U}\|_0 \right )\|f\|_{A^{r,2}(L^2(\Omega))}, 
\end{align*}
for any $f\in A^{r,2}(L^2(\Omega))$. 
We obtain the  weak$^*$ rate of convergence 
\begin{align*}
\left |\left < (\mathcal{P}-\mathcal{P}_j)\nu,f \right >_{X^*\times X} \right | &= \left |\left < \nu, (\mathcal{U}-\mathcal{U}_j)f \right >_{X^*\times X} \right | \\
& \lesssim  2^{-rj} \|\nu\|_{X^*} \|f\|_{A^{r,2}(L^2(\Omega))}
\end{align*}
for all $f\in A^{r,2}(L^2(\Omega))$ and $\nu \in X^*$. 
\end{example}
\end{framed}
\begin{framed}
\begin{example}[Approximation of $\delta_{w_\alpha(x)}$ with $w_\alpha(x):=x^\alpha$]
Here we return to the problem studied in Example \ref{ex:warp3}, but now approximate the measure $\delta_{w_\alpha(x)}$.  
We choose the sequence of finite dimensional approximation spaces as 
\begin{align*}
A_j&:=\text{span}
\left \{ 1_{\square_{j,k}}
\ \biggl | \ k\in  \Gamma_{j}^\phi \right \}=\text{span}\left \{ \phi_{j,k} \ \biggl | \ k\in \Gamma_j^\phi \right
\},\\
&=
\text{span}\left\{
\psi_{i,k}\ \biggl | \ 0\leq i \leq j-1, k\in \Gamma_{i}^\psi\right \},
\end{align*}
with $\phi_{j,k}$ and $\psi_{j,k}$ the Haar scaling functions  and wavelets for $d=1$ introduced in Example \ref{ex:haar_basis}.  
We then have 
\begin{align*}
(\Pi'_j \delta_{x^\alpha})(d\xi) &= \sum_{0\leq i\leq j-1} \sum_{k\in \Gamma_i^\psi} \int_\Omega \psi_{j,k}(\eta) \delta_{x^\alpha}
(d\eta) \psi^{j,k}(d\xi), \\
&= \sum_{0\leq j-1} \sum_{k\in \Gamma_{i^\psi}}
\psi_{j,k}(x^\alpha) \psi_{j,k}(\xi)d\xi.
\end{align*}
The action on any $f\in A^{r,2}(L^2(\Omega))$ is given by 
\begin{align*}
\left < \Pi'_j \delta_{x^\alpha},f \right >_{C^*(\Omega)\times C(\Omega)} & = 
\left <  \delta_{x^\alpha}, \Pi_jf \right >_{C^*(\Omega)\times C(\Omega)}\\
&=
\sum_{0\leq i<j} \sum_{k\in \Gamma_i^\psi}
\psi_{j,k}(x^\alpha)
\int_{\Omega} \psi_{j,k}(\xi)f(\xi)d\xi \\
&= 
\sum_{0\leq i<j} \sum_{k\in \Gamma_i^\psi}
\psi_{j,k}(x^\alpha)
(f,\psi_{j,k})_{L^2(\Omega)}. 
\end{align*}
We then compute the error bound 
\begin{align*}
\left | \left <(I-\Pi'_j)\delta_{x^\alpha},f \right >_{C^*(\Omega)\times C(\Omega)}\right |&= \left |
\left <\delta_{x^\alpha},(I-\Pi_j)f \right >_{C^*(\Omega)\times C(\Omega)}\right |\\
&\leq 2^{-(s-r)j} 
 \|f\|_{A^{r,2}(L^2(
\Omega))}.
\end{align*}

As a second related example, suppose that $m(x)=x^\alpha$ and $\mu(dx):=m(x)dx$. In this case we have 
\begin{align}
(\Pi_J'\mu)(d\xi)&= \sum_{0\leq i<j} \sum_{k\in \Gamma_i^\psi} \left (  x^\alpha, \psi_{j,k}  \right )_{L^2(\Omega)} \  \psi_{j,k}(\xi)d\xi,
\label{eq:x_alpha_psi}
\end{align}
and the action on a function $f$ is given by 
\begin{align*}
\left | \left <(I-\Pi'_j)\mu,f \right >_{C^*(\Omega)\times C(\Omega)}\right |
= \left |
\left <\mu,(I-\Pi_j)f \right >_{C^*(\Omega)\times C(\Omega)}\right |
\\
\leq n_j^{-r}
\| \mu \|_{TV} \|f\|_{A^{r,2}(L^2(\Omega))}
\end{align*}
for all $f\in A^{r,2}(L^2(\Omega))$.

\end{example}
\end{framed}

%
%%  approx with priors in a Banach space
%

\subsection{Approximation with Dual Operators  $\tilde{\Pi}_j'$, Priors in $A^{r,q}(X)$}
\label{sec:equiv_metric}

Theorem \ref{th:approx_meas} characterizes rates   of convergence of approximations $\Pi_j \nu$ of a signed measure $\nu$ in terms of duality statements relative to  priors  in $A^{r,2}(U)$. In  Theorem \ref{th:dual_conv} we generalize this analysis and study dual convergence relative to priors in the  space $A^{r,q}(X)$ with  $X$ a Banach space.
Also, Theorem \ref{th:dual_conv} can be used to guarantee rates of convergence of a some approximations of probability measures in the bounded Lipschitz metric. 
For many applications, it is required that approximation of a probability measure  in fact generates a probability measure, not just a signed or positive measure. 
When $\nu$ is a probability measure, Theorem \ref{th:dual_conv}   is used to develop a general strategy  to define approximations $\tilde{\Pi}_j\nu$ that are  themselves a  probability measure.  

\subsubsection{Approximations of Signed Measures}
\label{sec:approx_SM}
There are a large number of expressions for a metric  on the probability measures  $\mathbb{M}^{+,1}(\Omega)$.   These include the Wasserstein $W^p$,  Kolmolgorov,  Levy, Kantorovich, Ky Fan, and bounded Lipschitz metrics, among others. \cite{dudley,rachev}
Some of these metrize the  induced  weak$^*$ topology that $\mathbb{M}^{+,1}(\Omega)$ inherits as a subset of the signed measures   $C^*(\Omega)$. In this  section, we show that Theorem \ref{th:approx_meas} implies convergence in 
the bounded Lipschitz metric $d_{BL}$. This   is a popular metric  that induces the  weak$^*$ topology on $\mathbb{M}^{+,1}(\Omega)$.   We assume throughout this section that the domain $\Omega$ is compact. 
The metric $d_{BL}$ is defined in terms of the bounded  Lipschitz functions by the expression 
$$
d_{BL}(\mu,\nu)=\sup_{\|f\|_{BL}\leq 1} |\left <\mu-\nu,f \right >_{X^*\times X}|
$$
with again $X:=C(\Omega).$ 
In references that discuss probability measures, the bounded Lipschitz norm is usually  designated $\|f\|_{BL}$, and it is synonymous with our notation $\|f\|_{BL} \approx  \|f\|_{\text{Lip}(1,C(\Omega))}:=\|f\|_{C(\Omega)}+|f|_{\text{Lip}(1,C(\Omega))}$. We use the  notation $\|f\|_{BL}$  in this section to adhere to the more common notation in measure and probability theory. \cite{dudley} The generalization of the   Theorem \ref{th:approx_meas},  one that employs duality with respect to an arbitrary approximation space  $A^{r,q}(X)$ for a Banach space $X$,  is obtained by introducing   a family of near-best approximation operators $\{\tilde{\Pi}_j\}_{j\in \mathbb{N}}$.
\begin{theorem}[Dual Convergence of Measures]
\label{th:dual_conv}
Let $X$ be a Banach space,  suppose that a  family of    subspaces $\{A_j\}_{j\in \mathbb{N}}\subset X$  define the approximation space  $A^{r,q}(X)$   with $r>0$ and $1\leq q\leq \infty$ as in Section \ref{sec:approx_spaces}, and set  $n_j:=\#A_j$. Suppose that  $\{\tilde{\Pi}_j\}_{j\in \mathbb{N}}$ be a family of uniformly bounded linear projection  operators  $\tilde{\Pi}_j:X\rightarrow A_j$ that are onto $A_j$ for each $j\in \mathbb{N}$.  When $\{n_j\}_{j\in \mathbb{N}}$  is a quasigeometric series, we  then   have 
\begin{equation}
\|(I-\tilde{\Pi}_{n_j})f\|_{X}\lesssim n_j^{-r} \|f\|_{A^{r,q}(X)}
\label{eq:dc_1}
\end{equation}
and 
\begin{equation}
\left |\left <(I-\tilde{\Pi}'_{n_j})\nu , f \right >_{X^*\times X}\right |\lesssim n_j^{-r} \|\nu\|_{X^*} \|f\|_{A^{r,q}(X)}
\label{eq:dc_2}
\end{equation}
for all $j\in \mathbb{N}$,  $\nu \in X^*$, and $f\in A^{r,q}(X)$.    
\end{theorem}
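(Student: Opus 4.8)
The plan is to prove the primal estimate \eqref{eq:dc_1} first and then obtain the dual estimate \eqref{eq:dc_2} by a short transposition argument, so that essentially all the work is concentrated in \eqref{eq:dc_1}. Throughout I write $E_{A_j}(f,X):=\inf_{a\in A_j}\|f-a\|_X$ for the error of best approximation from $A_j$, and I abbreviate the operator with range $A_j$ by $\tilde\Pi_{n_j}$, consistent with the displayed statement.

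First I would establish a Lebesgue-type (near-best) bound. Since $\tilde\Pi_{n_j}$ is a linear projection onto $A_j$, it fixes every element of $A_j$; hence for any $a\in A_j$ we have $(I-\tilde\Pi_{n_j})f=(I-\tilde\Pi_{n_j})(f-a)$, and therefore
\[
\|(I-\tilde\Pi_{n_j})f\|_X\le\bigl(1+\|\tilde\Pi_{n_j}\|\bigr)\,\|f-a\|_X .
\]
Taking the infimum over $a\in A_j$ and using the uniform bound $\sup_{j}\|\tilde\Pi_{n_j}\|\le C$ yields $\|(I-\tilde\Pi_{n_j})f\|_X\le(1+C)\,E_{A_j}(f,X)$. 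This reduces \eqref{eq:dc_1} to the purely approximation-theoretic claim that membership in $A^{r,q}(X)$ forces $E_{A_j}(f,X)\lesssim n_j^{-r}\|f\|_{A^{r,q}(X)}$.

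The remaining step, and the crux of the argument, is to convert the norm of $A^{r,q}(X)$ of Section~\ref{sec:approx_spaces_Arq} (built from $E_{n-1}(f,X)$ summed over \emph{all} integers $n$) into a bound on the block error $E_{A_j}(f,X)$. Because the approximant dimensions jump from $n_j$ to $n_{j+1}$ with no intermediate subspaces, $E_n(f,X)$ is constant on each block $n_j\le n<n_{j+1}$ and is nonincreasing in $n$ overall. Grouping the defining series into these blocks and invoking the quasigeometric property $n_{j+1}/n_j\in[c_1,c_2]$ (so that $\sum_{n_j\le n<n_{j+1}} n^{rq-1}\approx n_j^{\,rq}$) gives the equivalence $\|f\|_{A^{r,q}(X)}^q\approx\sum_{j}\bigl[n_j^{\,r}E_{A_j}(f,X)\bigr]^q$, with the $q=\infty$ case handled by the analogous supremum estimate. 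From this, $E_{A_j}(f,X)\lesssim n_j^{-r}\|f\|_{A^{r,q}(X)}$ is immediate, and combined with the Lebesgue bound it establishes \eqref{eq:dc_1}. I expect this condensation/block-summation estimate to be the main obstacle, since it is precisely the point where the quasigeometric hypothesis on $\{n_j\}$ is indispensable; without it the level index and the dimension $n_j$ are not interchangeable in the rate.

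Finally, for \eqref{eq:dc_2} I would use the definition of the transpose operator from Section~\ref{sec:notation}: since transposition is linear and maps $\tilde\Pi_{n_j}$ to $\tilde\Pi'_{n_j}$, we have $(I-\tilde\Pi'_{n_j})=(I-\tilde\Pi_{n_j})'$, so that
\[
\bigl\langle(I-\tilde\Pi'_{n_j})\nu,f\bigr\rangle_{X^*\times X}=\bigl\langle\nu,(I-\tilde\Pi_{n_j})f\bigr\rangle_{X^*\times X}.
\]
Bounding the right-hand side by $\|\nu\|_{X^*}\,\|(I-\tilde\Pi_{n_j})f\|_X$ and inserting \eqref{eq:dc_1} gives the claimed $\lesssim n_j^{-r}\|\nu\|_{X^*}\|f\|_{A^{r,q}(X)}$. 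This last part is routine and mirrors the duality step already used in the proof of Theorem~\ref{th:approx_meas}.
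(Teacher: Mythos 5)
Your proposal is correct and follows essentially the same route as the paper: a Lebesgue-type near-best bound from the uniform boundedness of the projections, the rate $E_{A_j}(f,X)\lesssim n_j^{-r}\|f\|_{A^{r,q}(X)}$ extracted from the quasigeometric structure, and a one-line transposition for the dual estimate. The only difference is that where the paper cites Pietsch's representation lemma for the equivalence $\|f\|_{A^{r,q}(X)}^q\approx\sum_j\bigl[n_j^{\,r}E_{A_j}(f,X)\bigr]^q$, you sketch its standard block-summation proof directly, which is a self-contained version of the same step rather than a different argument.
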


\begin{proof}
The approximation error bound in the Banach space $X$ in Equation \ref{eq:dc_1} above is well known and can be found in many places including \cite{piestch1981approxspaces,piestch1982tensorproducsequenfunctoperat,devorenonlinear}. For completeness we derive the result here in our proof of Equation \ref{eq:dc_2}. 
To begin, we can bound the duality pairing 
\begin{align*}
\left |\left < (I - \tilde{\Pi}_j')\nu,f\right>_{X^*\times X}\right |&\lesssim \left |\left < \nu, (I - \tilde{\Pi}_j)f\right>_{X^*\times X}\right | \\
& \leq \|v\|_{X^*} \|(I-\tilde{\Pi}_j)f\|_{X}.
\end{align*}
When $f\in A^{r,q}(X)$, we have 
\begin{align*}
\|f-\tilde{\Pi}_jf\|_X&=\|f-\tilde{\Pi}_j a + \tilde{\Pi}_j a - \tilde{\Pi}_j f \| \leq \|f-a\|_X + \| \tilde{\Pi}_j\| \|f-a\| \\
&\leq (1+C)\|f-a\|_X
\end{align*}
for all $a\in A_j$, 
and we can conclude that $\|f-\tilde{\Pi}_jf\|_X \lesssim E_{j}(f,X)$. 
The  representation Lemma on page 336 of   \cite{piestch1982tensorproducsequenfunctoperat}  states that we have the equivalent norms 
$
A^{r,q}(X) \approx \left \| \{n_j^r a_{n_j}(\cdot,X) \}\right \|_{\ell^q}
$
with $a_j(f,X):=E_{j-1}(f,X)$ the $j^{th}$ approximation number of the function $f$ in the Banach space $X$. This means that 
$$
\sum_{j\in \mathbb{N}_0} (n_j^r a_{n_j}(f,X) )^q \approx \|f\|_{A^{r,q}(X)}^q,
$$
and therefore $a_{n_j}(f,X) \lesssim n_{j}^{-r}\|f\|_{A^{r,q}(X)}.$
Since the approximant spaces are nested,  we see that 
$$
E_{n_j}(f,X)\leq E_{n_j-1}(f,X):= a_{n_j}(f,X) \lesssim  n_j^{-r} \|f\|_{A^{r,q}(X)},
$$
and the Theorem holds.   
\end{proof}
In summary then, Theorem \ref{th:dual_conv} establishes that the sequence $\{ \tilde{\Pi}'_j\nu\}_{j\in \mathbb{N}_0}$ converges in $\text{weak}^*(X^*,A^{r,q}(X))$, and the weak${^*}$ rate is $O(n_j^{r})$. 

For the Hilbert space $U$, the norm of the error in the orthogonal projection  $\|(I-\Pi_{n_j})f\|_U$ is estimated  as in Theorem \ref{th:th1}, while in a  Banach space $X$ the approximation error $\|(I-\tilde{\Pi}_{n_j})f\|_X$ is studied in Theorem  \ref{th:dual_conv}.  
Before proceeding with our discussion of approximation of measures, we illustrate the efficacy of the approximation error bound for functions given in  Equation \ref{eq:dc_1} in an example application.   

\medskip 

\begin{framed}
\begin{example}[Near Best Approximations in $A^{r,q}(L^p(\Omega))$ and $A^{r,q}(C(\Omega))$]
\label{ex:two_cases}
In this example, we examine a case where  the first conclusion in Example \ref{th:dual_conv} in Equation \ref{eq:dc_1} holds for  approximant spaces $\{A_j\}_{j\in \mathbb{N}}$ in $A^{r,q}(L^p(\Omega))$. 
 We define the approximant 
 spaces 
 $$
A_j:=\text{span}\left \{1_{\square_{j,k}} \ | \ k\in \Gamma_{j}^\phi \right \},  
$$
which are the same as those built from classical Haar scaling functions and wavelets in Example \ref{ex:haar_basis}. 
 When we want to use the approximant spaces $\{A_j\}_{j\in \mathbb{N}}$  to construct approximations in  $A^{r,q}(L^p(\Omega))$, we define the family of approximation operators 
$$
(\tilde{\Pi}_jf)(x):= \sum_{k\leq n_j} \frac{1}{\int_\Omega 1_{\square_{j,k}}(\xi)d\xi}\int_\Omega f(\xi) 1_{\square_{j,k}}(\xi) d\xi \cdot 1_{\square{j,k}}(x),
$$
so that $\tilde{\Pi}_jf|_{\square_{j,k}}$ is the average of $f$ over  $\square_{j,k}$. Clearly each $\tilde{\Pi}_j$ is a linear projection onto $A_j$. From Equation 3.12 in \cite{devorenonlinear} we have 
$$
\|(I- \tilde{\Pi}_j)f\|_{L^p(\Omega)} \lesssim 
n_j^{-r}|f|_{\text{Lip}(r,L^p(\Omega))}
$$
for $1\leq p\leq \infty$. 
{ This result again follows directly from  Theorem \ref{th:dual_conv} and  Theorem \ref{th:approx_besov} using the fact that for $0<r<1/2$ we have 
$$
A^{r,\infty}(L^p(\Omega))=\text{Lip}(r,L^p(\Omega)).
$$
}
%\begin{framed}

Before we close this example, we carry out an  error analysis using a different family of approximation operators $\tilde{\Pi}_j$. 
Denote the center of each $\square_{j,k}$  by $\xi_{j,k}$. We introduce the family of associated projection operators 
$$
(\tilde{\Pi}_jf)(x) \sum_{k\leq n_j} f(\xi_{j,k}) 1_{\square_{j,k}}(x).
$$
Each  $\tilde{\Pi}_j:C(\Omega)
\rightarrow A_j$ is onto $A_j$. 
The family of approximation operators $\{\tilde{\Pi}_j\}_{j\in \mathbb{N}}$ is uniformly bounded
as maps from $C(\Omega)\rightarrow L^\infty(\Omega)$ 
 since  we have  
\begin{align*}
\|\tilde{\Pi}_j f\|_{L^\infty(\Omega)} =\sup_{ x \text{ a.e. } \in \Omega} \left | \sum_{k\leq n^j} f(\xi_{j,k}) 1_{\square_{j,k}}(x)  \right |\leq \|f\|_{C(\Omega)}.
\end{align*}
From first principles we can conclude  that 
\begin{align}
\|(I-\tilde{\Pi}_j)f\|_{L^\infty(\Omega)} & = \sup_{x \text{ a.e. } \in \Omega} \left |f(x)- \sum_{k\leq n^j} f(\xi_{j,k}) 1_{\square_{j,k}}(x)\right |  \notag \\
&\leq \max_{k\leq n_j} \sup_{x \text{a.e. } \in \square_{j,k}}\left|f(x)-f(\xi_{j,k})\right |\notag \\
&\leq \left ( 
\frac{1}{2} (2^{-j})\right)^{\alpha} \|f\|_{\text{Lip}(r,C(\Omega))} \lesssim 2^{-jr} \|f\|_{\text{Lip}(r,C(\Omega))}. \label{eq:err_UB}
\end{align}
This  final inequality yields the same rate of approximation $O(2^{-rj})\approx O(n_j^{-r})$  for a continuous function in $f\in C(\Omega)$. 
%\end{framed}

\end{example}
\end{framed}

\begin{framed}
\begin{example}[Example of $(\mathcal{U}f)(x)
:=(f\circ w_\alpha)(x)=x^\alpha$ ]
\label{ex:warp3}
In this example we give straightforward, concrete case of  Examples \ref{ex:warp1} and \ref{ex:warp2}.
We study the case that we are given a deterministic dynamics on the set $[0,1]$ that is generated by the recursion
$$
x_{n+1}=w_\alpha(x_n):=x_n^\alpha
$$
over $[0,1]$ for a fixed $0<\alpha\leq 1$.  This example is selected since it provides a good study of how the rate of approximation of the Koopman operator depends on its action on approximation spaces.  We know that the Koopman operator satisfies 
$$
(\mathcal{U}f)(x)=f(w_\alpha(x))=f(x^\alpha) = \int_{\Omega}\delta_{w_\alpha(x)}(d\xi) f(\xi) = \int_\Omega \delta_{x^\alpha} (d\xi) f(\xi).
$$
The family of functions $w_\alpha$ 
for $0<\alpha\leq 1$ is shown in Figure \ref{fig:x_alpha}.
Intuitively it seems natural to say that the function   $w_\alpha$  exhibits a singularity at $x=0$. 

\medskip
\begin{center}
\includegraphics[width=.5\textwidth]{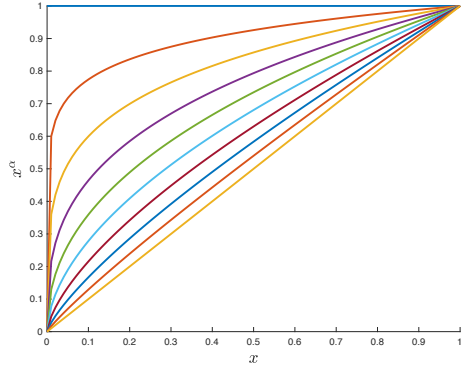}
\captionof{figure}{Plots of $x^\alpha$ for $0<\alpha\leq 1$.}
\end{center}
\label{fig:x_alpha}

\noindent 
We expect that the approximation of the Koopman operator will depend on the severity of the singularity. We show that the definition of priors in terms of approximation spaces enables a rigorous and specific description of this dependence. 

\subsection*{Smoothness in $\text{Lip}(s,C(\Omega))$}
Initially, we might choose to model the dynamics by choosing a Lipschitz space $Lip(r,C(\Omega))$, since this  smoothness space is one of the simpler to understand.  Even though this choice is not of much practical use for studying this deterministic recursion, it is illustrative of how smoothness of the operator $\mathcal{U}$, in terms of its action on approximation spaces, plays a critical role in building approximations.  The derivative $dw/dx$ is uniformly bounded on every subset of the form $[\epsilon,1]:=\Omega_\epsilon$ for $0<\epsilon<1$, but unbounded at $x=0$. We see that $f\not \in \text{Lip}(1,C(\Omega))$.  However,  $w_\alpha \in \text{Lip}(1,C(\Omega_\epsilon))$ for each $\epsilon\in (0,1)$.  Now,  for any $f\in \text{Lip}(r,C(\Omega))$ with $0<r\leq 1$, we have 
\begin{align*}
    |(\mathcal{U}f)(x) - (\mathcal{U}f)(y)|&= 
   | (f\circ w_\alpha)(x) - (f\circ w)(y)|, \\
   &\leq |f|_{\text{Lip}(s,C(\Omega))}
   |w_\alpha(x)-w_\alpha(y)|^r, \\
   &\leq |f|_{\text{Lip}(s,C(\Omega))}
   \left (|w_\alpha|_{\text{Lip}(1,C(\Omega_\epsilon))}|x-y|\right )^r, \\
   & \lesssim |x-y|^r
\end{align*}
for all $x,y\in \Omega_\epsilon$. 
This inequality shows that $
\mathcal{U}f\in {\text{Lip}(r,C(\Omega_\epsilon))}$ for any $0<\epsilon<1$ and $0<r\leq 1$. The operator $\mathcal{U}$ maps between the spaces 
$$
\mathcal{U}:\text{Lip}(r,C(\Omega)) \rightarrow \text{Lip}(r,C(\Omega_\epsilon)).
$$
 As discussed in \cite{devorenonlinear}, it is known that $Lip(r,C(\Omega_\epsilon))$ for $0<r<1$ is the {\em linear } the interpolation space between the spaces $C(\Omega_\epsilon)$ and $\text{Lip}(1,C(\Omega_\epsilon))$. In Example \ref{ex:two_cases} we have shown from first principles that $f\in \text{Lip}(r,C(\Omega_\epsilon))$ implies that the operators $\tilde{\Pi}_j$ onto the piecewise constants defined by point evaluation yield errors like $\|(I-\tilde{\Pi}_j)f\|_{C(\Omega_\epsilon)} \approx O(2^{-rj})$. Following essentially the same steps as in Example \ref{ex:two_cases} for the difference $(\mathcal{U}-\mathcal{U}_j)f$ yields an error bound $O(2^{-rj})$.  Reference \cite{devorenonlinear} shows that in fact a uniformly continuous  function is approximated via a linear method from the family of piecewise constants with a rate $n_j^{-r}$  if and only if   $f\in \text{Lip}(r,L^\infty(\Omega_\epsilon))$. Since in this example $\Omega_\epsilon$ is compact, the result applies here,  of course,  with $n_j\approx 2^j$. 
We also note that the choice of the space $\text{Lip}(r,C(\Omega))$ does prove useful for  approximations of some related dynamical systems, including stochastic evolutions induced by  Markov chains as  discussed in Example \ref{ex:meas2}. 

\subsection*{Smoothness in $\text{Lip}(s,L^p(\Omega))$}
The analysis above omits the origin since $w$ exhibits a singularity there, so it can not be used to analyze discrete evolutions over $[0,1]$. We could further expand our analysis above using the smoothness space $\text{Lip}(r,C(\Omega_\epsilon))$, but it can be fruitful to  
demonstrate another metric for smoothness in this case. 
We now study this  case using the Lipschitz space $\text{Lip}(r,L^p(\Omega))$ defined in terms of the $L^p$-integrated Lipschitz inequalities.  In particular, from \cite{devorenonlinear} page 66 we know that 
$w_\alpha\in \text{Lip}(\alpha,L^\infty(\Omega))$, but for no higher index.  In this sense, membership in the space $\text{Lip}(\alpha,L^\infty(\Omega))$ describes the strength of the singularity at the origin of $w_\alpha(x):=x^\alpha$. 

Now suppose the $f$ is any function in $\text{Lip}(s,C(\Omega))$ for $0<s\leq 1$. We see that  \begin{align*}
    \|(\mathcal{U}f)(\cdot+h)-\mathcal{U}f\|_{L^p(\Omega)}^p &= \int 
    \left |(f\circ w)(x+h)-f\circ w(x) \right |^p dx ,\\
    &\leq |f|^p_{\text{Lip}(s,C(\Omega))}\int \left |w(x+h)-w(x) \right|^{sp}dx, \\
    &\leq |f|^p_{\text{Lip}(s,C(\Omega))}\left (  |w|_{\text{Lip}(\alpha,L^{sp}(\Omega))} \cdot h^\alpha \right )^{sp}, \\
    & \leq |f|^p_{\text{Lip}(s,L^p(\Omega))}
    |w|^{sp}_{\text{Lip}(\alpha,L^{sp}(\Omega))} \cdot h^{\alpha s p}.
\end{align*}
This series of inequalities demonstrates that  
$$
\mathcal{U}:\text{Lip}(s,C(\Omega)) \rightarrow \text{Lip}(\alpha s, L^p(\Omega)).
$$
Again we can construct estimates and derive rates of convergence based on the fact that $\text{Lip}(\alpha s,L^p(\Omega))$ is equivalent to certain linear approximation spaces. 

Having chosen a measure of smoothness for the study of the dynamical system and Koopman operator, we must choose a basis. Any of the orthonormal wavelets or multiwavelets, such as those described in Examples  \ref{ex:ON_wave} or \ref{ex:ON_multi} could be used here.  But for purposes of illustration we  choose, yet again,  the Haar scaling functions $\phi_{j,k}$ and wavelets $\psi_{j,k}$ for $j\in \mathbb{N}_0$ and $k\in \Gamma_j^{\phi}$ and $ \Gamma_{j}^{\psi}$, respectively, introduced in Example \ref{ex:haar_basis}. 
We define the spaces of approximants to be the  collections of wavelets 
\begin{align*}
A_j:=\text{span}\{\psi_{j,k}\} \quad \text{and} \quad \tilde{A}_j:=\text{span}\{\tilde{\psi}_{j,k}\}, 
\end{align*}
as well as the associated  approximation spaces 
\begin{align*}
    A^{r,p}(U)&:= A^{r,p}(U;\{A_j\}_{j\in \mathbb{N}_0}), \\
    A^{r,p}(\tilde{U})&:= A^{r,p}(\tilde{U};\{\tilde{A}_j\}_{j\in \mathbb{N}_0}),
\end{align*}
with $\tilde{\psi}_{j,k}$ the warped wavelet generated from $\psi_{j,k}$ in terms of the mapping $w_\alpha$. 
We know that if $f\in \text{Lip}(s,C(\Omega))$, then  $\mathcal{U}f\in \text{Lip}(\alpha s ,L^p(\Omega))$. But this space is equivalent to the approximation space $A^{r,\infty}(L^p(\Omega))$ with $r=\alpha s$  that is obtained by interpolating between the spaces $L^p(\Omega)$ and $\text{Lip}(1,L^p(\Omega))$.

We know from page 131 of \cite{devorenonlinear} or \cite{devore1993constapprox} that approximations by piecewise constants converge to the function $f$ with rate  $O(n_j^{-r})$ for $0<r<1/2$ if  and only if $f\in \text{Lip}(s,L^2(\Omega))$.  Alternatively, this specific approximation rate follows from Theorem \ref{th:approx_besov} upon recognizing that the span of the Haar scaling functions in $A_j$ coincides with the Schoenberg spline space $\mathcal{S}_1(\triangle_j)$ of order $1$, and that $\text{Lip}(r,L^2(\Omega)) \approx \text{Lip}^*(r,L^2(\Omega))$ for $0<r<1$. 

\end{example}
\end{framed}

The following corollary is a simple consequence of Theorem \ref{th:dual_conv}. 
\begin{corollary}
Let the hypotheses of Theorem \ref{th:dual_conv} hold with a family of approximant spaces $\{A_j\}_{j\in\mathbb{N}_0}$ that satisfy 
$$
\|f\|_{A^{1,\infty}(C(\Omega))} \lesssim  \|f\|_{\text{Lip}(1,C(\Omega)) }.
$$
Then we have 
$$
d_{BL}(\Pi_j'\nu,\nu)\lesssim n_j^{-1} 
$$
for all $j\in \mathbb{N}$ and 
$\nu \in \mathbb{M}^{+,1}(\Omega)$.
\end{corollary}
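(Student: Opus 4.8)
The plan is to reduce the statement to a single application of Theorem \ref{th:dual_conv} at the parameters $r=1$, $q=\infty$, and $X:=C(\Omega)$. First I would unwind the definition of the bounded Lipschitz metric. Since $\Pi_j'\nu-\nu=-(I-\Pi_j')\nu$, the definition of $d_{BL}$ gives
$$
d_{BL}(\Pi_j'\nu,\nu)=\sup_{\|f\|_{BL}\leq 1}\left|\left<(I-\Pi_j')\nu,f\right>_{X^*\times X}\right|,
$$
with $X:=C(\Omega)$. The task is therefore to bound the single duality pairing $|\langle(I-\Pi_j')\nu,f\rangle_{X^*\times X}|$ uniformly over all test functions $f$ with $\|f\|_{BL}\leq 1$, and then to take the supremum.

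Next I would invoke the conclusion in Equation \ref{eq:dc_2} of Theorem \ref{th:dual_conv}, whose hypotheses are assumed to hold (in particular $\{n_j\}$ is quasigeometric and the $\tilde{\Pi}_j$, whose duals are written $\Pi_j'$ here, are uniformly bounded projections onto $A_j$). With $r=1$ and $q=\infty$ this reads
$$
\left|\left<(I-\Pi_j')\nu,f\right>_{X^*\times X}\right|\lesssim n_j^{-1}\,\|\nu\|_{X^*}\,\|f\|_{A^{1,\infty}(C(\Omega))}
$$
for every $\nu\in X^*$ and $f\in A^{1,\infty}(C(\Omega))$. I would then dispatch the two remaining factors on the right. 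For the measure factor, since $\nu\in\mathbb{M}^{+,1}(\Omega)$ is a probability measure we have $\|\nu\|_{X^*}=\|\nu\|_{TV}=\nu(\Omega)=1$. For the function factor, I would chain the standing hypothesis of the corollary, $\|f\|_{A^{1,\infty}(C(\Omega))}\lesssim\|f\|_{\text{Lip}(1,C(\Omega))}$, with the norm equivalence $\|f\|_{BL}\approx\|f\|_{\text{Lip}(1,C(\Omega))}$ recorded in Section \ref{sec:approx_SM}, to obtain $\|f\|_{A^{1,\infty}(C(\Omega))}\lesssim\|f\|_{BL}\leq 1$. Substituting both estimates yields $|\langle(I-\Pi_j')\nu,f\rangle_{X^*\times X}|\lesssim n_j^{-1}$ with a constant independent of the chosen $f$, so taking the supremum over $\|f\|_{BL}\leq 1$ gives $d_{BL}(\Pi_j'\nu,\nu)\lesssim n_j^{-1}$, which is the claim.

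The conceptual crux, rather than any calculation, is the admissibility step: the supremum defining $d_{BL}$ ranges over the unit ball of the bounded Lipschitz functions, whereas Theorem \ref{th:dual_conv} only controls the duality pairing against functions that actually lie in the prior space $A^{1,\infty}(C(\Omega))$. The whole point of the hypothesis $\|f\|_{A^{1,\infty}(C(\Omega))}\lesssim\|f\|_{\text{Lip}(1,C(\Omega))}$ is precisely to certify that every $d_{BL}$ test function is such a prior, with its approximation-space norm controlled by its Lipschitz norm; this is what matches the $r=1$ rate of the approximant family to the order-one regularity of the $d_{BL}$ test functions and produces the clean $n_j^{-1}$ decay. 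I would therefore present that embedding as the main content and treat the remaining substitutions as bookkeeping.
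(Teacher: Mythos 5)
Your proposal is correct and follows essentially the same route as the paper's own proof: both apply Theorem \ref{th:dual_conv} at $r=1$, $q=\infty$, $X=C(\Omega)$, use $\|\nu\|_{X^*}=1$ for a probability measure, pass from the $A^{1,\infty}(C(\Omega))$ norm to the Lipschitz norm via the stated hypothesis, and take the supremum over the $d_{BL}$ test class. The only cosmetic difference is that the paper re-derives the duality bound inline rather than citing Equation \ref{eq:dc_2} directly.
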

\begin{proof}
The corollary is a direct consequence of the Theorem choosing $r=1,q=\infty$.
We have for $X:=C(\Omega)$ the bounds 
\begin{align*}
\left < (I-\tilde{\Pi}'_j)\nu,f\right >_{X^*\times X}&= \left < \nu,(I-\tilde{\Pi}_j)f\right >_{X^*\times X} \\
&\leq \|\nu\|_{X^*} \|(I-\tilde{\Pi}_j)f\|_X  \lesssim n_j^{-1}\|f\|_{A^{1,\infty}(X)} \\
&\leq n_j^{-1}
\|f\|_{\text{Lip}(1,C(\Omega))}.
\end{align*}
We now take the supremum of both sides of this inequality over the set of $f$ such that $\|f\|_{\text{Lip}(1,C(\Omega))}=1$, and the theorem follows. 
\end{proof}

\subsubsection{Approximation of Probability Measures} 
\label{sec:approx_PM}
As emphasized above, by definition we  know that $\Pi_j': A_j^*  \rightarrow C^*(\Omega)$, but we are not guaranteed that it maps from  $\mathbb{M}^{+,1}(\Omega)\subset A_j^*$ into $\mathbb{M}^{+,1}(\Omega)$. In many papers and example problems, it is an important requirement to be able to define approximations of probability measures that are in fact probability measures. This problem has been tackled in a more general sense of duality for (generalized) Young's  measures in \cite{roubicek,matache}.
A Young's measure $\nu$ is also known as parameterized probability measure, that is, it is a probability measure $\nu_t$ on $\Omega$ for each $t\in \mathcal{T}$  of some index set $\mathcal{T}$.  We might think of $\mathcal{T}$ as a set of possible times, and $\nu_t$ as a probability measure in space at time $t\in \mathcal{T}$. The approach  in these references  differs from the strategy here in two important   ways. First, they make no explicit use of approximation spaces, although the approximation error hypotheses they rely on  can often be inferred from the definition of an approximation space.  Secondly, the analysis of error in these references makes systematic use of a duality of a more general  nature: the duality between a  (generalized)  Young's measure and certain Cartheodory kernels.  In the simplest of the cases treated in \cite{roubicek},  this duality is given by 
$(L^1(\mathcal{T}, C(\Omega)))^*:=L^\infty_w(\mathcal{T},rca(\Omega)))$. Here $L^1(\mathcal{T},C(\Omega))$ is the space of integrable Banach space-valued functions  taking values in $C(\Omega)$, and $L^\infty_w(\mathcal{T},rca(S))$ is the collection of weakly measurable functions $t\mapsto \nu_t\in rca(S)$.  We will not pursue this line of thought in this paper, which employs a simpler duality structure.

%\begin{framed}

\subsection{ Approximations of Probability Measures, Duality in $L^\infty_\mu(\Omega) \times L^1_\mu(\Omega)$}
We motivate the approach in this section by considering  again the simple deterministic recursion  
$$
x_{n+1}=w(x_n).
$$

\medskip

\begin{framed}
\begin{example}
From our earlier discussions, the  recursion above defines the sample path of a Markov chain with transition probability kernel $\mathbb{P}(dy,x):=\delta_{w(x)}(dy)$, which means that the Koopman operator is just 
$$
(\mathcal{U}g)(x):=\int_\Omega \delta_{w(x)}(dy)g(y) = g(w(x)).
$$
Now we suppose that $\mu$ is an probability measure, $\mu\in \mathbb{M}^{+,1}$. We can then define the Perron-Frobenius operator by the duality condition
$$
<g,\mathcal{P}f>_{L^\infty_\mu(\Omega)\times L^1_\mu(\Omega)} = <\mathcal{U}g,f>_{L^\infty_\mu(\Omega)\times L^1_\mu(\Omega)} = \int_\Omega g(w(x))f(x)\mu(dx)
$$
for any $g\in L^\infty_\mu(\Omega)$ and $f\in L^1_\mu(\Omega)$. When we choose $g(x)=1_A(x)$
we obtain an alternate  representation in Remark 3.2.2 from \cite{lasota}  with 
\begin{equation}
\int_A (\mathcal{P}f)(x) \mu(dx) =
\int_{w^{-1}(A)} f(x) \mu(dx). \label{eq:alt_P}
\end{equation}
As pointed out in Section 3.2 of \cite{lasota}, a useful alternative representation of $\mathcal{P}$ can be derived {\it when the measure $\mu$ is Lebesgue measure} on the real line. 
In this strategy we  can use the expression above to derive the equation  
$$
\mathcal{P}f(x)=f(w^{-1}(x)) \frac{d}{dx}(w^{-1}(x))
$$
following  \cite{lasota} on page 43, and subsequently build approximations of this expression.  Such approximations could be fashioned, for example, by constructing finite dimensional estimates of $f\circ w^{-1}$ and just multiplying the result by $d(w^{-1})/dx$, which is assumed known.  The approximations of $f\circ(w^{-1})$ would follow quite similarly to our approach for approximating $\mathcal{U}f:= f\circ w$ in Example \ref{ex:warp3}. 

However, if the measure $\mu$ is not Lebesgue measure the process above is of no immediate help.
An alternative could be to use the identity in Equation \ref{eq:alt_P}, substituting an approximation $\mu_j$ for $\mu$.  In this context, since $\mu$ is assumed to be a probability measure, it seems vital to derive estimates of $\mu$ that are themselves probability measures. We discuss this problem in the remainder of this section. 
\end{example}
\end{framed}

 Denote the bases for the approximant spaces as 
$$
A_j:=\text{span}\left \{ a_{j,k} \ | \ k\leq n_j \right \} \subset L^1_\mu(\Omega).
$$
We have the duality structure  
$$
A_j \subset L^1_\mu(\Omega) \quad \text{and} \quad (L^1_\mu(\Omega))^*=  L^\infty_\mu(\Omega) \subset A_j^* 
$$
with $A_j^*$ 
the topological dual of  $A_j$. That is, $A_j^*$ is the topological dual space of   $A_j$ when $A_j$ is endowed with its inherited  $L^1_\mu(\Omega)$ norm. There is a unique dual  basis $\{a^{j,k}\}_{k\leq n_j}\subset L^\infty_\mu(\Omega)$ for the dual space $A^*_j$ 
and we have 
$$
A_j^*:=\text{span}\left \{ a^{j,k}\ | \ k\leq n_j\right \}.
$$
We have the representations 
\begin{align*}
f&=\sum_{i\leq n_j} \left <a^{j,k},f \right >_{L^\infty_\mu(\Omega) \times L^1_\mu(\Omega)} a_{j,k}:=\sum_{k\leq n_j} a^{j,k}(f) a_{j,k},  \\
h&=\sum_{i\leq n_j} \left <h,a_{j,k} \right >_{L^\infty_\mu(\Omega) \times L^1_\mu(\Omega)} a^{j,k}:=\sum_{k\leq n_j} a_{j,k}(h) a^{j,k} , 
\end{align*}
for each $f\in A_j$ and $h\in A_j^*.$ 
Define the approximation 
operators  $\tilde{\Pi}_j:L^1_\mu(\Omega)\rightarrow A_j$ by the expression 
$$
\left ( \tilde{\Pi}_j f\right)(x)
=\sum_{k\leq n_j} 
\frac{a^{j,k}(f)}{
\|a^{j,k}\|_{L^1_{\mu}(\Omega)}}  a_{j,k}(x)
$$
for any $f\in A_j$. It can be directly shown \cite{matache} that the dual operators  $\tilde{\Pi}'_j:A_j'\rightarrow (L^1(\Omega))^*:=L^\infty_\mu
(\Omega)$ are given by 
$$
\left (\tilde{\Pi}_j'h\right )(x):=\sum_{k\leq n_j}  \frac{a_{j,k}(h)}{\|a^{j,k}\|_{L^1_\mu(\Omega})} a^{j,k}(x).
$$
Now, suppose that $a_{j,k},a^{j,k}$ are positive functions.  Then 
it is clear that $\tilde{\Pi}_jh\geq 0$ for all  $h\geq 0$, and $\tilde{\Pi}_j'$ is a positive operator. Moreover, we can calculate directly that 
$$
\|\tilde{\Pi}_j'h \|_{L^1_{\mu}(\Omega)}= \sum_{k\leq n_j}
 \frac{a_{j,k}(h) \|a^{j,k}\|_{
L^1_\mu(\Omega)}}{\|a^{j,k}\|_{L^1_\mu(\Omega)}}
= \sum_{k\leq j} a_{j,k}(h).
$$
The $L^1_\mu-$norm of $h$ is computed to be 
\begin{align*}
\|h\|_{L^1_\mu(\Omega)}= \int_\Omega \left |
\sum_{k\leq n_j} a_{j,k}(h)a^{j,k}(\xi)\right |\mu(d\xi) = \sum_{k\leq n_j}  a_{j,k}(h) \|a^{j,k}\|_{L^1_\mu(\Omega)}.
\end{align*}
Suppose that the dual basis is normalized so that $\|a^{j,k}\|_{L^1_\mu(\Omega)}=1$.  
We see then  that if $\|h\|_{L^1_\mu}=1$, so that $h$ is the density of a probability measure, then $\|\Pi_j'h\|_{L^1(\Omega)}=\|h\|_{L^1(\Omega)}=1$. The function $\Pi_j'h$ is consequently  the density of a probability measure. 

\begin{framed}
\begin{example}[Approximation of Probability Measures by Piecewise Constants]
We define the bases $\{a_{j,k}\}_{j\in \mathbb{N}}$ and approximant spaces 
$$
A_j:=\text{span}\left \{a_{j,k}\right \}_{k\leq n_j}:= \text{span}\left \{1_{\square_{j,k}} \right \}_ { k\in \Gamma_{j}^\phi }. 
$$
as in Example \ref{ex:ex1}. The dual basis is 
readily computed to be 
\begin{align*}
a^{j,k}(x)&:=\frac{1}{\mu(\square_{j,k})} 1_{\square_{j,k}}.
\end{align*}
In fact, we have 
$$
\|a^{j,k}\|_{L^1_\mu(\Omega)}=1.
$$
The operator $\tilde{\Pi}_j'$ defined in terms of these bases maps probability density functions into probability density functions. 
For purposes of illustration, suppose that $\mu$ is normalized Lebesgue $\mu(dx):=dx/2\pi$.  In this case the definition of $\tilde{\Pi}_j$ and $\tilde{\Pi}'_j$ coincide with their definitions in \ref{ex:two_cases}. We can conclude that 
$$
\|(I-\tilde{\Pi}_j')f\|_{L^p(\Omega)} \lesssim n_j^{-r}|f|_{ \text{Lip}^*(r,L^p(\Omega))} 
$$
for $0<r<1/2$ and $1\leq p\leq \infty$. 

We can then define the approximations of the Koopman operators 
$$
\left ( \tilde{\mathcal{U}}_jf \right )(x) =
\left (\tilde{\Pi}_j(f\circ w) \right )(x), 
$$
and calculate 
\begin{align*}
\| (\mathcal{U} - \tilde{\mathcal{U}}_j)f\|_{L^p(\Omega)} &=
\| (I-\tilde{\Pi}_j)(f\circ w)\|_{L^p(\Omega)} \\
& \lesssim n_j^{-r} \|f\circ w\|_{A^{r,\infty}(L^p(\Omega))}\\
& \approx n_j^{-r} \|f\circ w\|_{\text{Lip}^*(r,L^p(\Omega))}
\end{align*}
over this range of $r$ with $1\leq p\leq \infty$. If  we do not know the probabability measure $\mu$ is Lebesgue measure,   we obtain only 
\begin{align*}
\| (\mathcal{U} - \tilde{\mathcal{U}}_j)f\|_{L^p_\mu(\Omega)} &=
\| (I-\tilde{\Pi}_j)(f\circ w)\|_{L^p_\mu(\Omega)} \\
& \lesssim n_j^{-r} \|f\circ w\|_{A^{r,\infty}(L^p_\mu(\Omega))},
\end{align*}
since  Theorem \ref{th:approx_besov} applies only for the case that $\mu$ is Lebesgue measure. It can not be applied directly to establish the equivalence of the Lipschitz space $\text{Lip}(r,L^p_\mu(\Omega))$ to the approximation space $A^{r,\infty}(L^p_\mu(\Omega))$, for instance.  
\end{example}
\end{framed}

%\end{framed}
%
%
%%	IFS
%

In this last example we apply the results of this section to study the rate of approximation of the Koopman operator for a non-trivial Markov chain, one that corresponds to an iterated function system (IFS). 
\begin{framed}
\begin{example}
\label{ex:meas2}
In this example we describe  a well-known  semidynamical system that is amenable to the estimation of measures as introduced in this section. We briefly review the notion of iterated function systems, the semidynamical systems associated with them, and the construction of fractals. \cite{lasota,barnsley} We let $\Omega:=[0,1]^d$, and suppose we are given a finite family of Lipschitz maps  $w_\lambda\in\text{Lip}(1,C(\Omega))$ and $|w_\Lambda|_{\text{Lip}(1,C(\Omega))}=L_\lambda<1$.  Associated with this family we define the set valued map 
$$
W(S):=\cup_{\lambda\in \Lambda} w_\lambda(S).
$$
It is known that the iteration 
\begin{align}
S_{n+1}=W(S_n) \label{eq:det_IFS}
\end{align}
is a semidynamical system on the family $\mathcal{H}$ consisting of  compact subsets of $\Omega$ when it is endowed with the Hausdorff metric $d_\mathcal{H}$. In fact, since each mapping $w_\lambda$ is a contraction, it follows that $W:\mathcal{H} \rightarrow \mathcal{H}$ is a contraction. Using Banach's fixed point theorem, we know that there is a unique solution to the fixed point equation $W(S^*)=S^*$, and it  is given by 
$$
S^*:=\lim_{n\rightarrow \infty}W^{\circ n}(S_0) \in \mathcal{H}
$$
for any initial compact set $S_0\subseteq \Omega$. This theorem has been used as a constructive way to define a variety of fractals \cite{barnsley}.  

There is a second probabalistic interpretation of an iterated function system.  Intuitively, it proceeds as follows.  Let $n=0$. Given some initial condition $x_0\in \Omega$, we roll a dice that has $\#(\Lambda)$ sides that are weighted with probabilities $p_\lambda$,  $\sum_{\lambda\in\Lambda} p_\lambda=1$. If the result is $\lambda_n\in \Lambda$, we set 
\begin{align}
x_{n+1}=w_{\lambda_n}(x_{n}) \label{eq:pr_IFS1}
\end{align}
and repeat this process. A typical trajectory then has the form 
\begin{align}
\ldots w_{\lambda_n}\circ w_{\lambda_{n-1}} \circ \cdots \circ w_{\lambda_0}(x_0). \label{eq:pr_IFS2}
\end{align}
The relationship between the attractor of the deterministic attractor in Equation \ref{eq:det_IFS}, the stochastic system in Equation \ref{eq:pr_IFS1}, and the trajectories  in \ref{eq:pr_IFS2} has been studied in great detail. It provides a model for understanding the interplay of symbolic dynamics, the support of discrete dynamics, and dynamics of probability measures. \cite{barnsley}
We have included this  example  since its associated Frobenius-Perron and Koopman operators are key to understanding the asymptotic behavior of  this dynamical system.     

We can describe the stochastic recursion in Equation \ref{eq:pr_IFS1} in terms of the transition probability kernel
$$
\mathbb{P}(A,x)=\sum_{\lambda \in \Lambda} p_\lambda \delta_{w_\lambda(x)}(A).
$$
that gives the probability that the   next step of the chain is in $A$ given that the current state is $x$. The Koopman and Perron-Frobenius operators are calculated directly from the definitions in Equations \ref{eq:genU} and \ref{eq:genP} to obtain 
\begin{align*}
    (\mathcal{U}f)(x)&= \sum_{\lambda \in \Lambda}p_\lambda f(w_\lambda(x)), \\
    (\mathcal{P}\nu)(A)&= \sum_{\lambda\in \Lambda} p_\lambda \nu(w^{-1}_\lambda(A))
\end{align*}
for all measurable $A$ and $x\in \Omega$. 
It is easy to see that the Perron-Frobenius operator $\mathcal{P}$ above maps a probabability measure $\nu$ into a probability measure since 
$$
(\mathcal{P}\nu)(\Omega)= \sum_{\lambda\in \Lambda}p_\lambda \nu(w_\lambda^{-1}(\Omega))= \sum_{\lambda \in \Lambda} p_\lambda \cdot 1 = 1.
$$

An intuitive explanation of what the Perron-Frobenius operator represents is useful here. Suppose that we are given a probability measure $\mu_0$ that is interpreted as a  distribution of initial conditions over the configuration space for the stochastic evolution, and we want to understand the probability  that a set $A\subseteq \Omega$ is visited during the first step of the recursion. This probability is given by a measure  $\mu_1$ computed from 
$$
\mu_1(A)= \int_\Omega \mathbb{P}(A,x)\mu_0(dx)=\mathcal{P}\mu_0.
$$
This process can be repeated for any finite number of steps $n$, and $\mu_n(A)=(\mathcal{P}^n\mu_0)(A)$  describes the probability that the $n^{th}$ step of the recursion lands in $A$ when the initial conditions are distributed according to $\mu_0$. We see that if $\mathcal{P}^n\mu_0$ converges to some probability measure $\mu^*$, then $\mu^*$ describes where the  iterations eventually concentrate. 

This argument can be made precise by endowing the set of probability measures $\mathbb{M}^{1,+}(\Omega)\subset rca(\Omega)$ with a metric. The most popular choice is the bounded Lipschitz metric $d_{BL}$ introduced in Section \ref{sec:approx_SM}. It is known that this metric induces  the weak* topology that $\mathbb{M}^{+,1}(\Omega)$ inherits from $C^*(\Omega)=rca(\Omega)$.  The metric space $\mathbb{M}^{+,1}(\Omega)$ is in fact a closed, compact subset of $rca(\Omega)$. \cite{parthasarathy} 
With this choice, $(\mathcal{P},(\mathbb{M}^{+,1}(\Omega),d_{BL}))$ is a  ($\text{weak}*-$) continuous semigroup in discrete time.  Generally, the $\text{weak}^*$  continuity of the semigroup is equivalent to the Feller property, which holds if  $\mathcal{U}:C(\Omega) \rightarrow C(\Omega)$. In the present case,  since the functions  $w_\lambda$ are Lipschitz continuous,  the Feller property is immediate.  

The primary reason for the choice of the bounded Lipschitz metric in the study of the dynamical system 
$(\mathcal{P},\mathbb{M}^{+,1}(\Omega))$ is that simple criteria can be developed that ensure that the Perron-Frobenius operator is a contraction. We have 
\begin{align}
    \left < \mathcal{P}(\mu-\nu),f\right>_{X^*\times X}
  & =  \left < \mu-\nu,\mathcal{U}f\right>_{X^*\times X}, \notag \\
  &= \sum_{\lambda\in \Lambda} p_\lambda \int_\Omega f(w_\lambda(x))(\mu(dx)-\nu(dx)), \notag \\
  &= \sum_{\lambda\in \Lambda} p_\lambda  L_\lambda \int_\Omega \frac{f(w_\lambda(x))}{L_\lambda}(\mu(dx)-\nu(dx)) . \label{eq:IFS_con}
\end{align}
But if $|f|_{BL}\leq 1$, then the function $|f(w_\lambda)/L_\lambda|_{BL}<1$ since  
$$
|f(w_\lambda(x))-f(w_\lambda(y))|\leq |f|_{BL}|w_\lambda|_{BL}\leq L_\lambda.
$$
When we take the supremum of both sides of  the inequality in Equation \ref{eq:IFS_con} over  $\|f\|_{\text{Lip}(1,C(\Omega))}\leq 1$, we conclude that 
$$
d_{BL}(\mathcal{P}\mu,\mathcal{P}\nu)\leq L_\Lambda d_{BL}(\mu,\nu)
$$
with $L_\Lambda:=\sum_{\lambda\in \Lambda} p_\lambda L_\lambda.$  Whenever $L_\Lambda<1$, we have a unique solution $\mu^*$ of the fixed point Equation
$$
\mu^*=\mathcal{P} \mu^*,  
$$
that is given by 
$$
\mu^*=\lim_{k\rightarrow \infty} \mathcal{P}^k\mu_0
$$
for any initial probability
measure $\mu_0$. 
This result is shown in \cite{lasota} Proposition 12.8.1. See \cite{szarek} for generalizations that allow for place dependent probabilities $p_\lambda=p_\lambda(x)$. 
In fact, we know that 
$$
d_{BL}(\mathcal{P}^k\mu_0,\mu^*)\leq L_\Lambda^kd_{BL}(\mu_0,\mu^*). 
$$
By definition, the measure  $\mu^*$  that solves the above fixed point equation is the invariant measure  of the discrete dynamical system $(\mathcal{P},(\mathbb{M}^{+,1}(\Omega),d_{BL}))$. 

We consider the  specific example depicted in Figure \ref{fig:ifs_triangle}. 
\begin{center}
    \includegraphics[width=.7\textwidth]{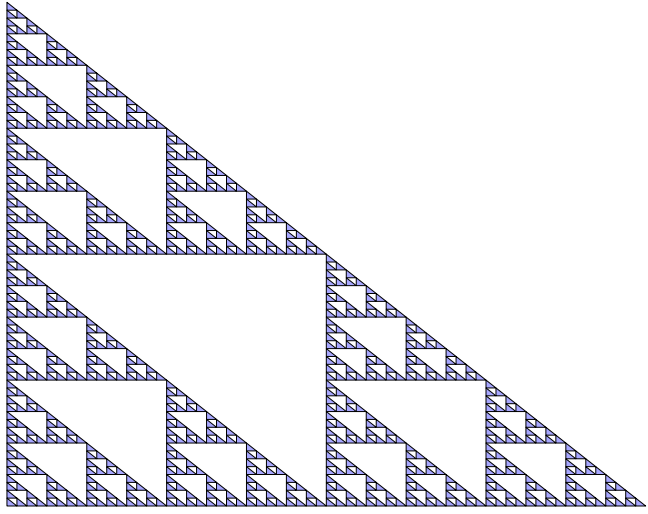}
    \captionof{figure}{Attractor of Deterministic IFS $\approx$  Support of Invariant Measure}
    \label{fig:ifs_triangle}
\end{center}
This iterated function system is defined in terms of  three Lipschitz mappings $w_\lambda$ for $\lambda=1,2,3$ defined as 
\begin{align*}
    w_1\left (\begin{Bmatrix}
    x_1 \\ x_2
\end{Bmatrix}\right )&:=\frac{1}{2}\begin{Bmatrix}
    x_1 \\ x_2
\end{Bmatrix}, \\
    w_2\left (\begin{Bmatrix}
    x_1 \\ x_2
\end{Bmatrix}\right )&:=\frac{1}{2} \left (  \begin{Bmatrix}
    x_1 \\ x_2
\end{Bmatrix} +\begin{Bmatrix}
    1\\0
    \end{Bmatrix} \right ),\\
    w_3\left (\begin{Bmatrix}
    x_1 \\ x_2
\end{Bmatrix}\right)&:=\frac{1}{2}\left (\begin{Bmatrix}
    x_1 \\ x_2
\end{Bmatrix} +  \begin{Bmatrix}
    0\\ 1\end{Bmatrix}
    \right ).
\end{align*}
\end{example}
This is an example of a ``just touching'' IFS, one known as a Sierpinski gasket. We will use the multiwavelet basis defined in Example \ref{ex:haar_multi_tri} for the construction of approximations of the Koopman operator for this IFS. From the definition of the mappings $w_\lambda$ it is evident that 
\begin{align*}
    w_1&: \triangle \rightarrow \triangle_{1,(0,0)}:=\tilde{\triangle}_1, \\
     w_2&: \triangle \rightarrow \triangle_{1,(0,1)}:=\tilde{\triangle}_2,  \\
      w_3&: \triangle \rightarrow \triangle_{1,(
      1,0)}:=\tilde{\triangle}_3, 
\end{align*}
with $\triangle_{1,(0,0)}, \triangle_{1,(1,0)}, \triangle_{1,(0,1)}$ defined in Example \ref{ex:haar_multi_tri}.
We define 
\begin{align*}
    \tilde{\triangle}&:=
    \bigcup_{i\in \Lambda} \tilde{\triangle}_i, \\
    L^2_{\tilde{\mu}}(\tilde{\triangle})&:=
    \underset{i \in \Lambda}{\oplus}L^2_{\tilde{\mu}_i}(\tilde{\triangle}_i),
\end{align*}
with $\Lambda=\{1,2,3\}$,  $\tilde{x}_i=w_i(x)$, $\tilde{\triangle}_i=w_i(\triangle)$, $\tilde{M}_i(\tilde{x}_i)=w_i^{-1}(\tilde{x}_i)$, $\tilde{m}(\tilde{x}_i)=|\partial \tilde{M}_i/\partial \tilde{x}_i|$, and $\tilde{\mu}_i(d\tilde{x}_i):=\tilde{m}_i(\tilde{x}_i)d\tilde{x}_i$. 
For any $f$ such that $f\circ w_\lambda\in L^2(\triangle)$ for all $\lambda\in \Lambda$, we have the expansion 
\begin{align*}
    \mathcal{U}f&=\sum_{\lambda\in \Lambda}
    p_\lambda 
    \sum_{i\in \mathbb{N}_0} 
    \sum_{\bm{k}\in \Gamma_{i}^\psi }
    \left (
    f\circ w_\lambda, \psi_{j,\bm{k}}
    \right )_{L^2(\triangle)} \psi_{j,\bm{k}}, \\
    &=\sum_{\lambda\in \Lambda}
    p_\lambda 
    \sum_{i\in \mathbb{N}_0} 
    \sum_{\bm{k}\in \Gamma_{i}^\psi }
    \left (
    f, \tilde{\psi}_{\lambda,({i,\bm{k}})}
    \right )_{L^2_{\tilde{\mu}_\lambda}(\tilde{\triangle}_\lambda)} \psi_{i,\bm{k}},   
\end{align*}
and we define the approximation $\mathcal{U}_j$ via 
$$
\mathcal{U}_jf=\sum_{\lambda\in \Lambda}
    p_\lambda 
    \sum_{i\leq j} 
    \sum_{\bm{k}\in \Gamma_{i}^\psi }
    \left (
    f, \tilde{\psi}_{\lambda,({i,\bm{k}})}
    \right )_{L^2_{\tilde{\mu}_\lambda}(\tilde{\triangle}_{\lambda})} \psi_{i,\bm{k}}.
$$ 
In this equation $\tilde{\psi}_{\lambda,(j,\bm{k})}$ are  the warped wavelets over $\tilde{\triangle}_\lambda$ induced by the mapping $w_\lambda$. For any $f\in A^{r,2}(L^2_{\tilde{\mu}}(\tilde{\triangle}))$, we have the error estimate 
$$
\|(\mathcal{U}-\mathcal{U}_j)f\|_{L^2(\triangle)} \lesssim 2^{-rj}|\mathcal{U}f|_{A^{r,2}(L^2(\triangle))}.
$$
By duality, we define the approximation $\mathcal{P}_j$ of the Perron-Frobenius operator $\mathcal{P}$ as
$$
(\mathcal{P}_j\nu)(A):=\sum_{\lambda \in \Lambda} p_\lambda
\sum_{i\leq j} \sum_{\bm{k}\in \Gamma_i^\psi} \int_\Omega \psi_{i,\bm{k}} (x)\nu(dx) \int_A \tilde{\psi}_{\lambda,(i,\bm{k})}(\tilde{x}_\lambda)\tilde{\mu}_\lambda(d\tilde{x}_\lambda).
$$

\end{framed}

\section{Approximations when $\mu$ is unknown and $p$ is known}
\label{sec:approx_mu_u_p_k}
This section studies a case of intermediate complexity, one that helps bridge the gap between the situation when all the problem data is known as in Section \ref{sec:mu_p_known} and cases when the only available information is a family of samples of the input-output states of the  underlying dynamical system in Section \ref{sec:mu_p_unknown}.

We begin the analysis in this section assuming that single step output measurements are generated from a collection of initial states that are chosen randomly according to a probability distribution $\mu$ on $\Omega$. The  IID  assumption is simpler to study and leads to expressions that illustrate the tradeoff between the deterministic and probabalistic contributions to the error.  We subsequently explain  how the argument based on IID initial states  can be modified to obtain error estimates for  dependent samples  generated along the sample path of a Markov chain. 

\subsection{Approximation from IID Initial States}
\label{sec:IID_states}
We let $z:=\{x_1,\ldots,x_m\}\subset \Omega^m$ be a collection of $m$ independent and identically distributed (IID) samples that are distributed according to the measure $\mu$ on $\Omega$. 
With the breadth of technical tools covered in Sections \ref{sec:rkhs}, \ref{sec:spectral_spaces}, and  \ref{sec:approx_spaces_Arq} there are many specific routes to address this problem. 
For purposes of illustration, the next theorem describes a  simple case among all the variants of the problem facing us in this section. Suppose  that the domain  $\Omega \subset \mathbb{R}$ is compact with $|\Omega|=\mu(\Omega)$ and   that  $K$ is a symmetric, positive definite, and bounded kernel on $\Omega\times \Omega$.  
We follow the construction in  Section \ref{sec:rkhs} and  suppose that the spectral approximation space  $A^{r,2}_\lambda(U) \subset C(\Omega) \subset L^2_\mu(\Omega):=U$ defined in terms of the eigenvalues and eigenfunctions $\{(\lambda_i,u_i)\}_{i\in \mathbb{N}}$ of the operator $T_K:U\rightarrow U$. It is always possible find such a spectral  approximation space $A^{r,2}_\lambda(U)$ that is contained in the continuous functions. We can, for instance,  choose $r=1$ since the RKHS space  $A^{1,2}_\lambda(U)\approx V\subset C(\Omega)$ by construction. Any larger $r\geq 1$ will likewise work. 

The Perron-Frobenius operator $\mathcal{P}$ and its approximation 
 $\mathcal{P}_j$, as well as their respective kernels  are   defined as in Theorem \ref{th:spec_sp_approx}, 
    \begin{align}
         \mathcal{P}&:=\sum_{i\in \mathbb{N}} p_i u_i \otimes u_i, \quad \quad 
         p(x,y)=\sum_{i\in \mathbb{N}} p_i u_i(x) u_i(y)
         \label{eq:z0}\\ 
         \mathcal{P}_j&:=\sum_{i\leq j} p_i u_i  \otimes  u_i, \quad \quad 
         p_j(x,y):=\sum_{i\leq j}p_i u_i(x)_i(y).
         \label{eq:z1}
    \end{align}
We define  the sample dependent  operators  $\mathcal{P}_z$ and $\mathcal{P}_{j,z}$  as 
\begin{align}
(\mathcal{P}_zf)(x)&:=\frac{|\Omega|}{m}\sum_{i=1}^m p(x,x_i) f(x_i), \\
(\mathcal{P}_{j,z}f)(x)&:=\frac{|\Omega|}{m}\sum_{i=1}^m p_j(x,x_i) f(x_i), 
\label{eq:sample_ops}
\end{align}
respectively, for any $f\in A^{r,2}_\lambda(U)$. In this last expression 
\begin{equation}
p_j(\cdot,x_i):=\Pi_j(p(\cdot,x_i)) = \sum_{k\leq j} \int_\Omega p(\xi,x_i)u_k(\xi)\mu(d\xi) \ u_k.  \label{eq:z4}
\end{equation}
For these operators to make sense, it must be true that pointwise evaluation of the kernels and function $f$  at $x_i\in \Omega$ makes sense. From the  requirement   that   $A_j\subset A^{r,2}_\lambda(U) \hookrightarrow C(\Omega)\subset L^2_\mu(\Omega)$, all of the required  evaluation operators acting on $f$ make sense.   We further assume that $\bar{p}:\sup_{(x,y)\in \Omega\times \Omega}|p(x,y)| < \infty$ so that pointwise  evaluations of the kernel $p$  makes sense.

We next study an approximation problem when $p$ is known, the value $\mu(\Omega)$ is known,  but the measure $\mu$ is unknown. (The assumption that $\mu(\Omega)$ is known is not essential.) Specifically, we suppose that the spectral expansions in Equations \ref{eq:z0} through \ref{eq:sample_ops} are known, and we are given a closed form expression for the function $f$. However, since the measure $\mu$ is unknown, the Fourier coefficients $(f,u_i)_U$ are not computable. Therefore, we  cannot compute $\mathcal{P}f$ nor $\mathcal{P}_jf$ using Equations \ref{eq:z0} and \ref{eq:z1}. We seek an error analysis of how well the sample generated approximant $\mathcal{P}_{j,z}f$, which is computable, estimates $\mathcal{P}f$. We will see that this analysis is a precursor to the results presented in Section \ref{sec:mu_p_unknown}.
\begin{theorem}
\label{th:th3}
Suppose that the assumptions described above and the definitions in Equations \ref{eq:z1} through \ref{eq:z4} hold.
Let $z:=(x_1,\ldots,x_m)\subset \Omega$ be IID random samples drawn according the probability measure $\mu$ on $\Omega$.  
Then we have the error estimate
\begin{equation}
\|\mathcal{P}f-\mathcal{P}_{j,z}f\|_{U} \lesssim  \left ( \lambda_j^{r/2} + \epsilon \right )\|f\|_{A^{r,2}(U)} 
\label{eq:error_fix}
\end{equation}
for all samples $z\not \in\mathcal{B}(\epsilon,j):= \mathcal{B}(\epsilon, j;
\mathcal{P})$, and the  set of bad samples 
\begin{equation}
\label{eq:Beps}
\mathcal{B}(\epsilon,j) := \left \{ z\in \Omega^m \ \left | \  \| \mathcal{P}_{j}-\mathcal{P}_{j,z}\|_{S^2(U)} > \epsilon \right . \right \} 
\end{equation}
has probability 
\begin{equation}
Pr(\mathcal{B}(\epsilon,j)) <  AC(\epsilon,j):=
\left \{
\begin{array}{ccc}
1 & \text{if} & \epsilon \leq  \alpha \ \sqrt{j/m}\\
2e^{-\beta m \epsilon^2/j} & \text{if} & \epsilon > \alpha\  \sqrt{j/m}
\end{array}
\right.
\label{eq:ACfun}
\end{equation}
with the constants $\alpha:= 4\sqrt{\text{ln}2} \overline{p}$ and $\beta:=1/16\overline{p}^2$. The function $$AC(\epsilon,j):=AC(\epsilon,j;\mathcal{P})$$ is the accuracy confidence function for the fixed  operator $\mathcal{P}$.
\end{theorem}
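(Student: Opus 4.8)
The plan is to split the total error into a deterministic \emph{approximation} (bias) contribution and a stochastic \emph{sample} (variance) contribution by inserting the intermediate operator $\mathcal{P}_j$ and applying the triangle inequality,
$$\|\mathcal{P}f - \mathcal{P}_{j,z}f\|_U \leq \|(\mathcal{P}-\mathcal{P}_j)f\|_U + \|(\mathcal{P}_j - \mathcal{P}_{j,z})f\|_U.$$
The first term is purely deterministic and is controlled by the spectral rate already in hand: since $K$ is symmetric and bounded we have $\mathcal{P}\in\mathcal{L}(U)$, and $f\in A^{r,2}_\lambda(U)$, so case (1) of Theorem \ref{th:spectral_approx} gives $\|(\mathcal{P}-\mathcal{P}_j)f\|_U \lesssim \lambda_j^{r/2}\|f\|_{A^{r,2}_\lambda(U)}$. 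This is the bias half of the bound asserted in Equation \ref{eq:error_fix}.

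For the variance term I would use the elementary operator-norm inequality $\|\cdot\|_{\mathcal{L}(U)}\leq\|\cdot\|_{S^2(U)}$ together with $\|f\|_U\leq\|f\|_{A^{r,2}_\lambda(U)}$ to write $\|(\mathcal{P}_j-\mathcal{P}_{j,z})f\|_U \leq \|\mathcal{P}_j-\mathcal{P}_{j,z}\|_{S^2(U)}\,\|f\|_{A^{r,2}_\lambda(U)}$. By the very definition of the bad set $\mathcal{B}(\epsilon,j)$ in Equation \ref{eq:Beps}, for every sample $z\notin\mathcal{B}(\epsilon,j)$ this is at most $\epsilon\|f\|_{A^{r,2}_\lambda(U)}$. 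Combining the two halves yields Equation \ref{eq:error_fix}, so the remaining substantive task is to estimate $\Pr(\mathcal{B}(\epsilon,j))$.

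The heart of the proof is the probabilistic bound on $\|\mathcal{P}_j-\mathcal{P}_{j,z}\|_{S^2(U)}$. The key observation is that $\mathcal{P}_{j,z}$ is a Monte Carlo estimator of $\mathcal{P}_j$: since $(\mathcal{P}_j f)(x)=\int_\Omega p_j(x,y)f(y)\mu(dy)$, replacing $\mu$ by the empirical measure generated by the IID draws $x_1,\dots,x_m$ produces exactly $\mathcal{P}_{j,z}$. Concretely I would write $\mathcal{P}_{j,z}=\frac{1}{m}\sum_{i=1}^m\Theta(x_i)$ as an average of IID, centered random operators $\Theta(x)f:=|\Omega|\,p_j(\cdot,x)\,f(x)$ with $\mathbb{E}_\mu[\Theta(x)]=\mathcal{P}_j$, so that the $S^2(U)$-deviation is governed by how far the empirical Gram quantities $(u_k,u_\ell)_z:=\frac{|\Omega|}{m}\sum_i u_k(x_i)u_\ell(x_i)$ drift from their means $(u_k,u_\ell)_U=\delta_{k\ell}$. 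A uniform bound $\|\Theta(x)-\mathcal{P}_j\|_{S^2(U)}\lesssim \overline{p}\,\sqrt{j}$ then follows from $\|p_j(\cdot,x)\|_U^2=\sum_{k\leq j}p_k^2\,u_k(x)^2$ and the kernel bound $\sup_x K(x,x)<\infty$, the factor $\sqrt{j}$ coming from summing the $j$ retained modes. Feeding this into a Hoeffding/Pinelis concentration inequality for Hilbert-space-valued averages gives $\Pr(\|\mathcal{P}_j-\mathcal{P}_{j,z}\|_{S^2(U)}>\epsilon)\leq 2e^{-m\epsilon^2/(c\,\overline{p}^2 j)}$; tracking the universal constant so that $c=16$ produces $\beta=1/(16\overline{p}^2)$, and observing that this exponential exceeds $1$ precisely when $\epsilon\leq\alpha\sqrt{j/m}$ with $\alpha^2=(\ln 2)/\beta$ accounts for the trivial branch $AC=1$ of Equation \ref{eq:ACfun}.

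I expect the main obstacle to be exactly the representation of $\mathcal{P}_{j,z}$ as an average of bona fide Hilbert–Schmidt summands together with the accompanying uniform norm bound carrying the sharp $\sqrt{j}$ dependence: the point-evaluation functional is unbounded on $U=L^2_\mu(\Omega)$, so the Hilbert–Schmidt structure of the centered summands must be extracted carefully through the reproducing kernel Hilbert space $V$ and the boundedness of the eigenfunctions (this is where the embedding $A^{r,2}_\lambda(U)\hookrightarrow C(\Omega)$ and $\sup_x K(x,x)<\infty$ are essential), and the constants must be propagated exactly through the vector-valued concentration inequality to recover the stated $\alpha$ and $\beta$. By comparison, the bias/variance split and the two elementary estimates are routine.
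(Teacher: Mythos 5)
Your proposal is correct and follows the same skeleton as the paper's proof: the triangle inequality through $\mathcal{P}_j$, the deterministic bound $\lambda_j^{r/2}$ from the spectral approximation theorem, the variance term controlled by $\|\mathcal{P}_j-\mathcal{P}_{j,z}\|_{S^2(U)}\le\epsilon$ off the bad set by definition, and a Hilbert--Schmidt concentration inequality to bound $\Pr(\mathcal{B}(\epsilon,j))$. The one place you diverge is the execution of the concentration step. The paper first establishes the factorization $\mathcal{P}_{j,z}=\Pi_j\mathcal{P}_z=\mathcal{P}_z\Pi_j$, writes $\|\mathcal{P}_j-\mathcal{P}_{j,z}\|^2_{S^2(U)}=\|\Pi_j(\mathcal{P}-\mathcal{P}_z)\|^2_{S^2(U)}\le 2j\,\|\mathcal{P}-\mathcal{P}_z\|^2_{S^2(U)}$ to isolate the $j$-dependence in the truncation, and then cites Propositions 9 and 10 of the Rosasco--Belkin--De Vito integral-operator paper both for the Hilbert--Schmidt well-definedness of $\mathcal{P}_z$ (your ``point evaluation is unbounded on $L^2_\mu$'' obstacle, resolved exactly as you anticipate through the RKHS $V$ and $\sup_xK(x,x)<\infty$) and for the bound $\|\mathcal{P}-\mathcal{P}_z\|_{S^2(U)}\lesssim\bar p^2\delta/m$ holding with probability at least $1-2e^{-\delta}$; the change of variables $\epsilon^2=16\bar p^2 j\delta/m$ and the critical value $\delta_{cr}=\ln 2$ then produce exactly your $\alpha$ and $\beta$. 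Your alternative --- applying a vector-valued Hoeffding/Pinelis inequality directly to the IID truncated rank-one summands, with the $\sqrt{j}$ coming from the uniform bound $\bigl(\sum_{\ell<j}u_\ell(x)^2\bigr)^{1/2}$ on the evaluation part --- is a self-contained route to the same estimate and makes the source of the $j/m$ ratio more transparent, at the cost of having to verify the uniform boundedness of the eigenfunctions and track the universal constants yourself rather than importing them; either way the stated form of $AC(\epsilon,j)$ is recovered.
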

It is important to note that the  error estimate above in Equation \ref{eq:error_fix} depends on the fixed function $f$ and the operator $\mathcal{P}$. 
Before proving this theorem, we state a corollary that is perhaps the most succinct representation of the bias-variance tradeoff that can arise in the approximation of Koopman or Frobenius-Perron operators from samples.

\begin{corollary}
\label{th:cor1}
Under the assumptions of Theorem \ref{th:th3}, we have the error bound in expectation over the samples $z=\{x_1,\ldots,x_m\}$ given by 
$$
\mathbb{E}_{\mathbb{P}^m}\left ( \|\mathcal{P}f - \mathcal{P}_{j,z}f\|_U^2 \right ) \lesssim (\lambda_j^r + \frac{j}{m})\|f\|^2_{A^{r,2}_\lambda(U)}
$$
with $\mathbb{P}:=\mu/\mu(\Omega)$. The constant in the upper bound above depends on $\bar{p}$, and hence on the fixed operator $\mathcal{P}.$

\end{corollary}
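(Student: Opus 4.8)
The plan is to split the total error into a deterministic approximation (bias) term and a stochastic sampling (variance) term, bound each separately, and then convert the probabilistic tail bound of Theorem \ref{th:th3} into a bound on the expected squared error via the layer-cake representation of the expectation. First I would write, using the triangle inequality together with the elementary bound $(a+b)^2 \le 2a^2 + 2b^2$,
\begin{align*}
\|\mathcal{P}f - \mathcal{P}_{j,z}f\|_U^2 \le 2\|\mathcal{P}f - \mathcal{P}_jf\|_U^2 + 2\|\mathcal{P}_jf - \mathcal{P}_{j,z}f\|_U^2.
\end{align*}
The first term is deterministic and is controlled directly by Theorem \ref{th:spectral_approx}: since $\mathcal{P}f \in A^{r,2}_\lambda(U)$, we have $\|\mathcal{P}f - \mathcal{P}_jf\|_U \lesssim \lambda_j^{r/2}\|f\|_{A^{r,2}_\lambda(U)}$, and squaring produces the $\lambda_j^r \|f\|^2_{A^{r,2}_\lambda(U)}$ contribution. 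This term survives the expectation unchanged because it does not depend on the samples $z$.

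For the variance term I would bound the action of the operator by its Hilbert-Schmidt norm,
\begin{align*}
\|\mathcal{P}_jf - \mathcal{P}_{j,z}f\|_U \le \|\mathcal{P}_j - \mathcal{P}_{j,z}\|_{\mathcal{L}(U)}\,\|f\|_U \le \|\mathcal{P}_j - \mathcal{P}_{j,z}\|_{S^2(U)}\,\|f\|_U,
\end{align*}
using that the operator norm is dominated by the $S^2(U)$ norm. Taking expectations then reduces the problem to estimating $\mathbb{E}_{\mathbb{P}^m}\big(\|\mathcal{P}_j - \mathcal{P}_{j,z}\|^2_{S^2(U)}\big)$, after which I would invoke $\|f\|_U \le \|f\|_{A^{r,2}_\lambda(U)}$ (valid since $A^{0,2}_\lambda(U)\equiv U$ and the scale of spaces is nested) to absorb the remaining factor into the stated seminorm.

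The crux is converting the accuracy confidence function into this expectation. Setting $X := \|\mathcal{P}_j - \mathcal{P}_{j,z}\|^2_{S^2(U)}$ and applying the tail-integral identity $\mathbb{E}(X) = \int_0^\infty \Pr(X > t)\,dt$, I would substitute $\epsilon = \sqrt{t}$ into the bound $\Pr(\|\mathcal{P}_j - \mathcal{P}_{j,z}\|_{S^2(U)} > \epsilon) < AC(\epsilon,j)$ supplied by Theorem \ref{th:th3} and split the integral at $t = \alpha^2 j/m$, the threshold at which $AC$ switches from the trivial value $1$ to the exponential bound $2e^{-\beta m \epsilon^2/j}$. This gives
\begin{align*}
\mathbb{E}(X) \le \int_0^{\alpha^2 j/m} 1\,dt + \int_{\alpha^2 j/m}^\infty 2e^{-\beta m t/j}\,dt = \frac{\alpha^2 j}{m} + \frac{2j}{\beta m}\,e^{-\beta \alpha^2},
\end{align*}
so that both contributions are $O(j/m)$ with constants depending only on $\bar{p}$ through $\alpha$ and $\beta$. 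Combining this with the bias estimate yields the claimed bound.

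The main obstacle I anticipate is handling the piecewise definition of $AC(\epsilon,j)$ cleanly inside the layer-cake integral: one must check that the trivial regime contributes only $O(j/m)$ rather than a fixed constant, which works precisely because the width of that regime is exactly $\alpha^2 j/m$, and that the exponential tail integrates to the same order. The remaining steps, namely the triangle-inequality split, the Hilbert-Schmidt domination of the operator norm, and the passage from $\|f\|_U$ to $\|f\|_{A^{r,2}_\lambda(U)}$, are routine once Theorems \ref{th:spectral_approx} and \ref{th:th3} are available.
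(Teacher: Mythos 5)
Your proposal is correct and follows essentially the same route the paper intends: the authors omit the proof of Corollary \ref{th:cor1} but state that it "follows from integration of the accuracy confidence function," pointing to the layer-cake computation in Example \ref{ex:b_v}, which is exactly the bias--variance split plus tail-integral argument you carry out (splitting the integral at $t=\alpha^2 j/m$ and integrating the exponential tail to get $O(j/m)$). The only cosmetic caveat is that $\|f\|_U\le\|f\|_{A^{r,2}_\lambda(U)}$ should be read with the full norm $\|f\|_U+|f|_{A^{r,2}_\lambda(U)}$ rather than the seminorm, which is how the paper uses it.
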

The proof of this corollary follows from integration of the accuracy confidence function introduced in Theorem \ref{th:th3}. The integration strategy is essentially identical to the integration carried out in Example \ref{ex:b_v}, so we omit the proof of the corollary here. 
\begin{proof}
By Proposition 9 of \cite{rosascoint}, $\mathcal{P}_{j,z}$ is a Hilbert-Schmidt operator on $U$.
For any $f\in U$, we know that 
\begin{align*}
    \|(\mathcal{P}-\mathcal{P}_{j,z})f\|_U &\leq \|(\mathcal{P}-\mathcal{P}_j)f\|_U + \|(\mathcal{P}_j - \mathcal{P}_{j,z})f\|_U ,\\
    &\lesssim \left (\lambda_j^{r/2}|\mathcal{P}|_{\mathbb{A}^{r,2}_\lambda(U)} + \|(\mathcal{P}_j-\mathcal{P}_{j,z}\|_{S^2(U)} \right )
    \|f\|_U,
\end{align*}
 where the last line follows from Theorem \ref{th:spec_sp_approx} and the fact that  $\|\cdot\|_{\mathcal{L}(U)}\leq \|\cdot\|_{S^2(U)}$. Note that we have $\mathcal{P}_{j,z}=\Pi_j \mathcal{P}_z =\mathcal{P}_z\Pi_j$ since 
 \begin{align*}
     (\mathcal{P}_{j,z}f)(x)&:= \frac{|\Omega|}{m}\sum_{1\leq i\leq m} p_j(x,x_i)f(x_i), \\
     &=\frac{|\Omega|}{m} \sum_{1\leq i\leq m} \sum_{1\leq k<j} p_ku_k(x) u_k(x_i) f(x_i),\\
     &=\sum_{1\leq k < j} \left ( \frac{|\Omega|}{m}\sum_{1\leq i\leq m} p_k u_k(x_i)f(x_i) \right )
     u_k(x)\\
    & =(\Pi_j\mathcal{P}_zf)(x)=(\mathcal{P}_z\Pi_jf)(x),
 \end{align*}
 for each $x\in \Omega$.
 From the expansion $\mathcal{P}=\sum_{i\in \mathbb{N}} p_k u_k \otimes u_k$ we have 
  $$
 \|\mathcal{P}_j
 -\mathcal{P}_{j,z}\|_{S^2(U)}^2 =  \|\Pi_j(\mathcal{P}-\mathcal{P}_z)\|^2_{S^2(U)}\leq 2j\ \|\mathcal{P}-\mathcal{P}_z\|^2_{S^2(U)}. 
 $$
 From Proposition 10 of \cite{rosascoint} we known that $\|\mathcal{P}-\mathcal{P}_z\|_{S^2(U)}\leq 8\bar{p}^2\delta/m$, 
 and thus, 
 $$
 \|\mathcal{P}_j
 -\mathcal{P}_{j,z}\|_{S^2(U)}^2 \leq 16\bar{p}^2 \frac{j}{m} \delta 
 $$
 for all samples $z\in \Omega^m$ in a set having  $\mu^m-$probability in excess of $1-2e^{-\delta}.$ 
 
 We next show that this probabalistic error estimate can be expressed in terms of the accuracy confidence function $AC(\epsilon,j)$ in the theorem. Define the set of bad samples as in Equation \ref{eq:Beps}, and denote its complement $\mathcal{G}$, the set of good samples. We have 
 $$
 \text{Prob}(\mathcal{B})=
 1-Prob(\mathcal{G}) < 1-(1-2e^{-\delta})=2e^{-\delta}.
 $$
Using the bound $\text{Prob}(\mathcal{B})\leq \min \{1,2e^{-\delta}\},$ we have 
 $$
 \text{Prob}(\mathcal{B})\leq   
 \left \{ \begin{array}{ccc}
 1 &  & \delta \leq \delta_{cr} \\
 2e^{-\delta} &  & \delta> \delta_{cr}
 \end{array}
 \right .
 $$
 with $\delta_{cr}:=\ln{2}.$
 Define $\epsilon^2:=16\bar{p}^2 \frac{j}{m}\delta $. A change of variables shows that 
Equation \ref{eq:ACfun} holds. 
\end{proof}
Note carefully that the  accuracy confidence function $A(\epsilon,j)$ here  is for a fixed choice of the function $f$ and the operator $\mathcal{P}$. In principle, it resembles Theorem A  of  Cucker and Smale in \cite{cs2002}, in contrast to the uniform estimates for {\em $f$ in a compact convex subset of $C(\Omega)$} in  Theorem $\text{C}^*$ in \cite{cs2002} and to those finer uniform estimates in   \cite{devore2006approxmethodsuperlearn}, \cite{kerkyacharian2003entropunivercodinapproxbasesproper}, \cite{temlyakov}. 
This bound has a complicated appearance, but for the problems in which either the measure $\mu$ is unknown, it is a convenient way to express the error. It is one of the ways that error in probability esimates are expressed in nonlinear regression, statistics, approximation theory, and statistical learning theory. \cite{devore2006approxmethodsuperlearn,devito2014learnsetsseparkernel,kerkyacharian2003entropunivercodinapproxbasesproper,temlyakov} 
\begin{figure}[h!]
\centering
\includegraphics[width=.7\textwidth]{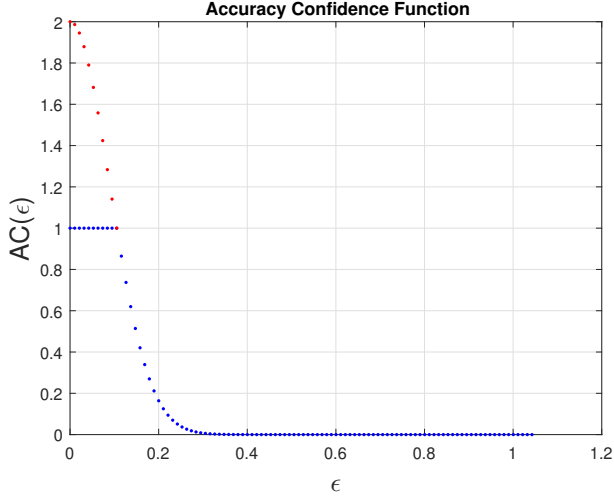}
\caption{Typical plot of accuracy confidence function $AC(\epsilon,f,\mathcal{P})$}
\label{fig:fig2}
\end{figure}
\begin{figure}[h!]
\centering 
\includegraphics[width=.7\textwidth]{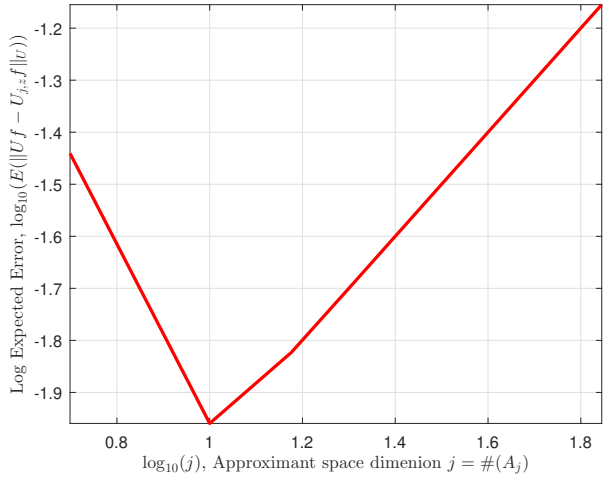}
\caption{Plot of the form of the accuracy confidence function $AC(\epsilon,f,\mathcal{P})$}
\label{fig:fig3}
\end{figure}

To develop an intuitive understanding of this bound, it must be kept in mind that the discrete, sample-based approximation $\mathcal{P}_z$ depends on the random samples $z$. If you are have a particularly bad day, it is possible that the specific estimator $\mathcal{P}_z$  generated by that sample $z$ has a large error. However, the estimate in Theorem \ref{th:th3} guarantees that for a resolution level $j$ the size of   set of bad samples $\mathcal{B}(\epsilon,j)$ decreases exponentially with an increase in the number of samples $m$. Its  size decays like $e^{-\beta m \epsilon^2/j}$.   A plot of a typical accuracy confidence function is shown in Figure \ref{fig:fig2}.  The bound in Theorem \ref{th:th3} is comprised of the approximation space error $O(\lambda_j^{-r})$ and the sample error $O(\epsilon)$. These are also referred to as the bias and probabalistic error, respectively.   Suppose now that the number of samples $m$ is fixed,  we are interested in a sample accuracy $\epsilon=\lambda_j^{r/2}$  that matches the size of the approximation space error,    and we vary the resolution level $j$.  The accuracy confidence function $AC(\lambda_j^{r/2},j)$ is $O(e^{-\beta m\lambda_j^{r}})$ as $j$ increases. The size of the set of bad samples therefore grows as $j$ increases. The plot of the general form of the error in this situation is shown in Figure \ref{fig:fig3} for a case when the number of samples $m$ is fixed and the dimension of the approximation space varies.   The portion of the plot exhibits increasing error as the variance term dominates. The form of this plot is analogous to those that are used to discuss  fundamental results from statistical learning theory. \cite{vapnik} 
\begin{framed}
\begin{example}
\label{ex:b_v}
In this example we study a specific case in which the tradeoff between the approximation error and  sample errors is readily established.  We end up with a worst case upper bound on performance that has the same structure as in Theorem \ref{th:th3}. But since the example is highly structured, the error bound can also be derived from first principles. The approach in this example, while specific, also provides clues  on how to attack the case when the samples $z=\{x_1,\ldots, x_m\}$ are the dependent  observations of the Markov chain having a transition probability kernel $\mathbb{P}(dy,x):=p(y,x)\mu(dy).$

We now return the discrete dynamical system discussed in Example \ref{ex:heat_eq}. Suppose that the  measure $\mu$ is just Lebesgue measure on $\mathbb{T}^1$, but we do not have this information available to us  to use in building approximations. The dynamical system in Example \ref{ex:heat_eq}  gives rise to the operators 
\begin{align*}
    (\mathcal{U}f)(x)&:= \int_\Omega p(y,x)f(y)\mu(dy)= \sum_{k\in \mathbb{N}} \sum_{i=1,2} p_{k,i}(f,u_{k,i})_U u_{k,i}, 
     \end{align*}
    \begin{align*}
    (\mathcal{U}_jf)(x)&:= \sum_{k\leq j} \sum_{i=1,2} (f,u_{k,i})_U u_{k,i}(x), 
    \end{align*}
    \begin{align*}
    (\mathcal{U}_zf)(x)&:=\int_\Omega p(y,x)f(y)\left (\frac{|\Omega|}{m} \sum_{l\leq m} \delta_{x_\ell}(dy) \right ),
\\
 &= \sum_{k\in\mathbb{N}}\sum_{i=1,2} p_{k,i} \left (
 \frac{|\Omega|}{m} \sum_{\ell=1}^m u_{k,i}(x_\ell)f(x_\ell)
 \right)u_{k,i}(x), 
 \end{align*}
 \begin{align*}
 (\mathcal{U}_{j,z})(x)&:= \sum_{k\leq j}\sum_{i=1,2} p_{k,i} \left (
 \frac{|\Omega|}{m} \sum_{\ell=1}^m u_{k,i}(x_\ell)f(x_\ell)
 \right)u_{k,i}(x),
\end{align*}
with $u_{k,i}$ the eigenfunctions defined in Example \ref{ex:heat_eq}. 
We can bound the error using the triangle inequality 
\begin{align*}
    \|\mathcal{U}f-\mathcal{U}_{j,z}f\|_U&\leq 
    \|\mathcal{U}f-\mathcal{U}_j\|_U + \|\mathcal{U}_jf-\mathcal{U}_{j,z}f\|_U \\
    &\lesssim \lambda_j^{r/2} + \|\mathcal{U}_jf-\mathcal{U}_{j,z}\|_U.
\end{align*}
We focus specifically on the sample error and see that 
\begin{align}
     \|\mathcal{U}_jf-\mathcal{U}_{j,z}f\|^2_U&= \sum_{k\leq j}\sum_{i=1,2} p_{k,i}^2 \left (
     \frac{|\Omega|}{m} \sum_{\ell\leq m} u_{k,i}(x_\ell)f(x_\ell) - (f,u_{k,i})_U\right )^2, \notag \\
     &\lesssim \sum_{k\leq j}\sum_{i=1,2}  \left (
     \frac{1}{m} \sum_{\ell\leq m} u_{k,i}(x_\ell)f(x_\ell) - \mathbb{E}_{\mathbb{P}}(u_{k,i}f)\right )^2. \label{eq:zz}
\end{align}
with the probability measure $\mathbb{P}(dx):=\mu(dx)/|\Omega|=dx/\mu(\Omega)$. Again, we emphasize that the constant in the  upper bound in Equation \ref{eq:zz} depends on $p_{1,1}:=\max_{k,i}(p_{k,i})$, so this estimate is again for a fixed operator $\mathcal{P}$. 

The next critical step employs a concentration of measure result in the form of Hoeffding's  inequality.  Suppose that   
$|g(x)-\mathbb{E}_\mathbb{P}(g)|<M$ for $x \text{ a.e. } \in \Omega$. Hoeffding's inequality states that  
$$
\mathbb{P}^m\left ( \left |\frac{1}{m}\sum_{\ell\leq m} g(x_\ell)-\mathbb{E}_{\mathbb{P}}(g)\right |\geq \epsilon  \right ) \leq 2e^{-m\epsilon^2/2M^2}.
$$
We want to use this inequality to bound each terms in the parentheses in Equation \ref{eq:zz}, that is, for $g:=u_{k,i}f$. Since $f\in A^{r,2}_\lambda(U)\subset C(\Omega)$ and $\Omega$ is assumed compact, we know that there is a constant $\tilde{M}_f$ such that $|f|\leq  \tilde{M}_f$. It follows that we have the uniform bound $|u_{k,i}f|\leq M:=M_f:= \tilde{M}_f/\sqrt{\pi}$ for all $k,i$. The dependence of our analysis on the constant $M:=M_f$ is important to note, a topic we discuss on completion of the proof. We conclude that 
\begin{align*}
\mathbb{P}^m\left ( \left |\frac{1}{m}\sum_{\ell\leq m} u_{k,i}(x_\ell)f(x_\ell)-\mathbb{E}_{\mathbb{P}}(u_{k,i}f)\right |\geq \epsilon  \right ) &\leq \min( 1,2e^{-m\epsilon^2/2M^2})
\end{align*}
for all $k\in \mathbb{N},i=1,2$. As in our discussion of Theorem \ref{th:th3}, we define the accuracy confidence function 
$$
AC(\epsilon):=\left \{
\begin{array}{ccl}
1 & \quad &\epsilon \leq \alpha {\sqrt{1/m}} \\
2e^{-m\epsilon^2/2M^2} &  & \epsilon> \alpha \sqrt{1/m}
\end{array}
\right .
$$
with $\alpha= \sqrt{2\text{ln}2}\ M$.  

We will now show the utility of the probabalistic bound in terms of the accuracy confidence function  $AC(\epsilon)$ for the construction of upper bound  on the expectation of the error over $m$ samples. The expected error is given by 
\begin{align*}
    \mathbb{E}_{\mathbb{P}^m}
    &\left ( \left |\frac{1}{m}\sum_{\ell\leq m} g(x_\ell)-\mathbb{E}_{\mathbb{P}}(g)\right |^2\right )   \hspace*{2.0in} \\
    \hspace*{.25in}
    &=\int_0^\infty 
    \mathbb{P}^m\left ( \left |\frac{1}{m}\sum_{\ell\leq m} g(x_\ell)-\mathbb{E}_{\mathbb{P}}(g)\right |^2\geq \epsilon  \right ) d\epsilon, \\
    &=\int_0^\infty 
    \mathbb{P}^m\left ( \left |\frac{1}{m}\sum_{\ell\leq m} g(x_\ell)-\mathbb{E}_{\mathbb{P}}(g)\right |\geq \sqrt{\epsilon}  \right ) d\epsilon, \\
    &\leq \int_0^\infty AC(\sqrt{\epsilon})d\epsilon.
\end{align*}
Now let $\epsilon=\eta^2$ so that $d\epsilon=2\eta d\eta$ and 
\begin{align*}
    \int_0^\infty AC(\sqrt{\epsilon})d\epsilon& = 2 \int_0^\infty \eta AC(\eta) d\eta, \\
    & \leq 2\left ( \int_0^{\alpha\sqrt{1/m}} \eta d\eta + 2 \int_{\alpha\sqrt{1/m}}^\infty \eta  e^{-m\eta^2/2M^2} d\eta \right ) \lesssim \frac{1}{m}
\end{align*}
for a constant  that depends only on the constant  $M:=M_f$. Combining this result with Equation \ref{eq:zz}, we finally obtain 
\begin{equation}
\mathbb{E}_{\mathbb{P}^m}\left ( \|\mathcal{U}f - \mathcal{U}_{j,z}f\|_U^2 \right ) \lesssim \lambda_j^r + \frac{j}{m}. \label{eq:compare_gyorfy}
\end{equation}
This is the same form as upper bound in the conclusions of Theorem \ref{th:th3} and Corollary \ref{th:cor1}. 
\end{example}
\end{framed}
As emphasized in the proof, the coefficient in the upper bound above depends on the constant $M:=M_f$ and $p_{1,1}$, so the error estimate holds for the fixed function $f$ and operator $\mathcal{U}$.  This should be contrasted to the results derived in \cite{cs2002,devore2006approxmethodsuperlearn, kerkyacharianwarped} that give uniform estimates over $f$ in  a compact, convex subset of $C(\Omega)$, a much stronger result. 
References \cite{binevI} and \cite{gyorfy} construct estimates of functions in   nonlinear regression, not estimates of Perron-Frobenius operators, that have a form  analogous to  that in  Equation \ref{eq:compare_gyorfy}. These references allow for  bases that need not be orthogonal functions, and the variance term is of the order $O(j\log j/m)$ in the more general case. 
\subsection{Approximations from  Observations along a Sample Path}
\label{sec:dep_samples}

In this section we answer how the analysis of the last section can be viewed as the foundation for the situation in which the observations $\{x_\ell\}_{\ell\leq m}\subset\Omega$ are collected along the sample path of a Markov chain. We will use the analysis in Example 
\ref{ex:b_v} to guide us. 
When we study the argument in Example \ref{ex:b_v} carefully, we see that the only place that the IID assumption is required is in the hypotheses of Hoeffding's inequality. Since the samples $\{x_\ell\}_{\ell\leq m}$ along the sample path of the Markov chain  are generally dependent, we cannot use Hoeffding's inequality here. 
Hoeffding's inequality states what is known as a concentration of measure formula. It turns out that there is a fairly large collection of stochastic processes, including some dependent ones, that satisfy a similar concentration of measure result. 

A discrete stochastic process $\{x_i\}_{i\in \mathbb{Z}}$ is said to be $k-$dependent if for all $\tau\in \mathbb{Z}$ the joint stochastic variables $\{x_i\}_{i\leq \tau}$ are independent of the joint stochastic variables $\{x_i\}_{i\geq \tau + k + 1}$. This relation can be illustrated  by organizing the blocks as in 
\begin{equation*}
    \underbrace{\left ( \cdots , x_{-1},x_0,x_{1}, \cdots x_\tau\right )}_{
    \text{$
    \begin{array}{c}
    \text{ first block, } \\
    \text{ independent of last block }
    \end{array}
    $
    }
    }
    \ \ \   \biggl |
    \underbrace{\left (x_{\tau+1}, \cdots\cdots  ,  x_{\tau+k}\right )}_{
    \text{$
    \begin{array}{c} \text{dependent  variables} \\ \text{coupled to first, last blocks}
    \end{array}
    $}
    }
    \biggl | 
 \underbrace{\left (  x_{\tau+k+1},\cdots\cdots 
 \right )}_{
 \text{$
 \begin{array}{c}
 \text{last block,} \\
 \text{independent of first block}
 \end{array}
 $}
 }.
\end{equation*}
As is well-known, for a Markov chain the state $x_{\tau+1}$ is only coupled to $x_\tau$, and $x_{\tau+2}$ is only coupled to $x_{\tau+1}$. A Markov chain is therefore always $k-$dependent with $k=1$. It turns out that some $k-$dependent stochastic processes satisfy a concentration inequality quite similar in form to the Hoeffding inequality. We say that a $k-$dependent stochastic process satisfies an {\em $e(m)$-effective concentration inequality}  for a function $f$ if it is true that 
\begin{equation}
\mathbb{P}_{\{x\}}^m \left \{ \left | 
\frac{1}{m}\sum_{\ell\leq m} f(x_\ell) - \mathbb{E}(f(x_1(\cdot))) 
\right |>\epsilon 
\right \} \leq 2e^{-e(m)\epsilon^2/2\sigma^2(f(x_1(\cdot)))}
\label{eq:eff_mix}
\end{equation}
where $m$ is again the number of samples, now along the sample path,  and the function $m\mapsto e(m)$ is defined to be the effective number of samples. One class that satisfies a concentration inequality of this form  is the collection of strongly mixing Markov chains whose $\alpha$-mixing coefficients decay at an exponential rate. \cite{masry}. 
\begin{theorem}
\label{th:mixing}
Let the hypothesis of Theorem \ref{th:th3} and Example \ref{ex:b_v} hold, with the exception that the samples $\{x_\ell\}_{\ell\leq m}$ are collected along the sample path of a Markov chain, and $U=L^2_{\tilde{\mu}}(\Omega)$ with $\tilde{\mu}$ the probability distribution of $x_1$.  Suppose that the  Markov chain is strongly mixing in that there are three constants $a,b,c>0$ such that the  $\alpha$ -mixing coefficients $\alpha(i)$   decay exponentially as 
$$
\alpha(i)\leq ae^{-bi^{c}}
$$
for $i\geq 1$. 
Then  the Markov chain satisfies the $e(m)$-effective concentration inequality in 
Equation \ref{eq:eff_mix} 
for the effective number of samples $$
e(m):= \left  \lfloor m \left \lceil \left \{\frac{8m}{b} \right \}^{1/(c+1} \right \rceil^{-1} \right \rfloor, 
$$
and we have the error bound
$$
\mathbb{E}_{\mathbb{P}^{\{x\}}}\left ( \|\mathcal{U}f - \mathcal{U}_{j,z}f\|_U^2 \right ) \lesssim \lambda_j^r + \frac{j}{e(m)}.
$$
Here $\mathbb{P}^{\{x\}}$ is the probability distribution on $\Omega^m$ of the process $\{x_1, \ldots, x_m\}.$
\end{theorem}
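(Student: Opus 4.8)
The plan is to recognize that the argument given in Example \ref{ex:b_v} is already distribution-free except at a single point: the IID hypothesis is invoked only to apply Hoeffding's inequality to the empirical fluctuations. Every other step — the triangle-inequality split of the error, the reduction of the sample error to a finite sum of squared empirical averages, and the integration of the accuracy confidence function — makes no use of independence. I would therefore begin by transcribing the decomposition from that example verbatim,
$$\|\mathcal{U}f - \mathcal{U}_{j,z}f\|_U \leq \|\mathcal{U}f - \mathcal{U}_jf\|_U + \|\mathcal{U}_jf - \mathcal{U}_{j,z}f\|_U \lesssim \lambda_j^{r/2} + \|\mathcal{U}_jf - \mathcal{U}_{j,z}f\|_U,$$
where the bias term is controlled purely by the spectral approximation bound of Theorem \ref{th:spec_sp_approx} and involves no probability. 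The sample error then reduces, exactly as in Equation \eqref{eq:zz}, to a sum over $k\leq j$ and $i\in\{1,2\}$ of the squared fluctuations $\left(\frac{1}{m}\sum_{\ell\leq m} u_{k,i}(x_\ell)f(x_\ell) - \mathbb{E}_{\tilde{\mu}}(u_{k,i}f)\right)^2$, with each integrand $u_{k,i}f$ uniformly bounded by $M_f$ via the same $C(\Omega)$-embedding used in Example \ref{ex:b_v}.

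The crux is to replace Hoeffding's bound by the effective concentration inequality \eqref{eq:eff_mix}, and this is where the Markov and mixing structure must genuinely be used. I would invoke the standing hypothesis that the chain is $\alpha$-mixing with $\alpha(i)\leq a e^{-b i^c}$ and establish \eqref{eq:eff_mix} for the stated $e(m)$. The key device is the \emph{method of blocks}: partition the path $\{x_1,\dots,x_m\}$ into consecutive blocks of length $\ell := \lceil\{8m/b\}^{1/(c+1)}\rceil$, giving roughly $m/\ell$ blocks, and use a Berbee-type coupling (as in \cite{masry}) to replace the dependent blocks by an independent copy at a cost bounded by the mixing defect $\alpha(\ell)\leq a e^{-b\ell^c}$. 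A Hoeffding bound is then applied to the $e(m)=\lfloor m/\ell\rfloor$ effectively independent blocks, while the coupling error is absorbed because the block length is chosen large enough that the accumulated mixing defect is negligible against the exponential concentration. The precise algebraic form of $e(m)$ is exactly the outcome of optimizing this block length: the scaling $\ell\sim (m/b)^{1/(c+1)}$ balances the number of blocks against the decay rate of $\alpha(\ell)$, which is why both $b$ and $c$ enter through the exponent $1/(c+1)$.

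Once \eqref{eq:eff_mix} is available, the remainder of the proof is again a mechanical copy of Example \ref{ex:b_v}: define the accuracy confidence function with $m$ replaced by $e(m)$ in the exponent, express the expected squared fluctuation as an integral of this function, and integrate to obtain a contribution of order $1/e(m)$. Summing the $O(j)$ such terms yields the variance term $j/e(m)$, and combining with the bias term gives
$$\mathbb{E}_{\mathbb{P}^{\{x\}}}\left(\|\mathcal{U}f - \mathcal{U}_{j,z}f\|_U^2\right) \lesssim \lambda_j^r + \frac{j}{e(m)}.$$
I expect the main obstacle to be the rigorous justification of the blocking argument and the bookkeeping that produces the formula for $e(m)$, since this is the only place where the dependence of the samples is actually confronted; the cited mixing literature supplies the concentration inequality, so the real work is verifying that the exponential $\alpha$-mixing hypothesis meets its hypotheses and that the chosen block length reproduces the claimed effective sample count. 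Everything downstream of the concentration inequality requires only transcription of the IID proof with $m$ replaced by $e(m)$.
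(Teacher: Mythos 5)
Your proposal follows exactly the route the paper takes: the published proof consists of the single observation that the argument of Example \ref{ex:b_v} carries over verbatim once Hoeffding's inequality is replaced by the $e(m)$-effective concentration inequality, which is delegated entirely to the citation of \cite{masry}. Your additional sketch of the blocking/coupling argument behind that inequality is more detail than the paper supplies, but it is the same proof.
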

\begin{proof}
The proof of this inequality follows exactly the same steps as that in the proof of Example \ref{ex:b_v}, but now Hoeffding's inequality is replaced with the $e(m)$-effective concentration inequality that is guaranteed to hold as discussed in \cite{masry}.
\end{proof}

\subsection{Case with $\mu$ and $p$ unknown}
\label{sec:mu_p_unknown}
As discussed in the introduction, the popularity of Koopman theory for the study of dynamical systems is due primarily to its utility for systems that are to some degree  uncertain or unknown. In this last section we study the case when the primary evidence about a dynamical system under study is a set of samples of its input-output behavior. We will show how, in some cases, it is possible to relate  the theory of  data-driven algorithms to other popular techniques that synthesize aspects of the theory presented earlier, statistical learning theory, and empirical risk minimization.   We begin our discussion in the next example which  presents the Extended Dynamic Mode Decomposition (EDMD) algorithm. 
\begin{framed}
\begin{example}
\label{ex:edmd}
Of the various techniques to approximate the Koopman operator,   the Dynamic Mode Decomposition (DMD) or the Extended Dynamic Mode Decomposition (EDMD) methods have seen widespread use.  These methods are  a popular approach to building approximations of Koopman, and  through duality, Perron-Frobenius, operators. They have  been studied extensively in applications to the study of evolution of observables of discrete or continuous flows in \cite{Giannikis2017,klus2016,klusdmd,klusTO}.

In this example we discuss in some detail the EDMD algorithm for estimating the Koopman operator associated with  a prototypical discrete dynamical system that evolves on a compact set of $\mathbb{R}$. Observation functions on the configuration space of the discrete flow are assumed to be scalar-valued.  
The general form of the EDMD algorithm applies to  much more  general flows, see references \cite{klusdmd,mezic2017kosda} for a discussion. These assumptions are not critical to  the analysis that follows, but certainly simplify notation. For instance, we can use the same bases for the representations of function $w:\Omega\rightarrow \Omega$ and for the observables $f$ in our analysis below.  In the  general theory these may be different bases. 

To be specific,  we are given the  discrete flows
$$
x_{n+1}=y_n:=w(x_n) \in \Omega \subset \mathbb{R}
$$
on the compact set $\Omega$. 
Suppose we have a measurement process that provides (possible noisy) estimates of the input-output response  $z:=\left \{(x_i,y_i)\right \}_{i=1}^m\subset \Omega \times \Omega:=Z$. 
We let $\mathcal{F}$ denote the family of  observables, or observable functions,  on  the configurations of the dynamical system.   The family $\mathcal{F}$ consists of real-valued functions of a real variable in this example.  For a fixed observable $f\in \mathcal{F}$,  we therefore have 
$$
\tilde{y}_n:=f(x_{n+1}):=f(w(x_n))=(\mathcal{U}f)(x_n) \in \Omega\subset \mathbb{R}.
$$

 We briefly summarize the EDMD  algorithm for the estimation of a Koopman operator in the form it is described in \cite{Korda2017Convergence}. Given the measurements of system input-output response   $\left \{ (x_i,y_i) \right \}_{i=1}^n$,  we form the empirical arrays 
\begin{align*}
\mathbb{X}&:=[x_1,\ldots, x_m] \in \mathbb{R}^{d\times m}, \\
\mathbb{Y}&:=[y_1,\ldots, y_m] \in \mathbb{R}^{d\times m}, 
\end{align*}
for $d=1$. We  choose a family of functions  $A_n:=\left \{ \phi_i \right \}_{i=1}^n$ that will be used as the basis for constructing approximations, $A_n:=\text{span}\left \{ \phi_i \right \}_{i=1}^n$. Each $\phi_i:\Omega \rightarrow \mathbb{R}.$ We define the data matrices
\begin{align*}
\Phi(\mathbb{X})&:=\left [ \phi(x_1), \ldots, \phi(x_m) \right]\in \mathbb{R}^{n\times m}, \\
\Phi(\mathbb{Y})&:=\left [ \phi(y_1), \ldots, \phi(y_m) \right] \in \mathbb{R}^{n\times m}
\end{align*}
with $ \phi := [\phi_1, \ldots , \phi_n]'$
and construct the matrix 
$$
M_{n,m}:=\Phi(\mathbb{Y})\Phi(\mathbb{X})^{\dagger}\in \mathbb{R}^{n\times n}
$$
with $\Phi(\mathbb{X})^{\dagger}:=
\Phi(\mathbb{X})'(\Phi(\mathbb{X})\Phi(\mathbb{X})')^{\dagger}\in \mathbb{R}^{m\times n}$ the right  pseudoinverse of $\Phi(\mathbb{X})$. 
Note carefully the dependence of the  matrix $M_{n,m}$ on the dimension $n$ of the   approximation space $A_n$ and the number of samples $m$. 
As summarized in \cite{Korda2017Convergence,klusdmd}, the final approximation $\mathcal{U}^{edmd}_{n,m}:A_n\rightarrow A_n$ is given by
$$
\left ( \mathcal{U}_{n,m}^{edmd}f \right )(x)
:= \phi'(x) M_{m,n}' c
$$
for any $f=\sum_{i\leq n}\phi_i c_i\in A_n\subset \mathcal{F}$ and $c\in \mathbb{R}^n$. 

Recent references \cite{Korda2017Convergence,klusdmd} derive various quite general results regarding the convergence of the finite dimensional operators $\mathcal{U}_{n,m}^{edmd}$ to the Koopman operator $\mathcal{U}$. For example, \cite{Korda2017Convergence} shows that 
$$
\lim_{m\rightarrow \infty} \|(\mathcal{U}^{edmd}_{n,m}-\mathcal{U}_n)f\|_{A_n}=0
$$
for any $f\in A_n:=\mathcal{F}$ and any norm on the finite dimensional space $A_n$.  We are interested in relating this analysis to methods of distribution-free learning theory and the derivation of stronger convergence rates in a very specific case. This is the subject of the next few  theorems.
\end{example}
\end{framed}

We now derive an alternative form for $\mathcal{U}^{edmd}_{n,m}f$ that holds when the approximant  space  consists of piecewise constants over a uniform dyadic grid. In the example above the subscript $n$  on $A_n$ is equal to dimension of $A_n$, since this is  convention that is prevalent in the Koopman theory literature such as  \cite{Korda2017Convergence,klusdmd}. On the other hand, the convention in approximation space theory is that the subscript $j$ on  $A_j$  is the resolution level  of the grid used to define $A_j$. In our convention we denote  by $n_j:=\#(A_j)$  the dimension of $A_j$. When we make comparisons of the two approximants, we define $\mathcal{U}^{edmd}_{j,m}:=\mathcal{U}^{edmd}_{n_j,m}$ from Example \ref{ex:edmd} to use equivalent indices for both approaches.  
\begin{theorem}
\label{eq:error_EDMD}
Let $U:=L^2(\Omega)$ and  the approximation spaces $A_j$ be given as 
\begin{align}
A_j&:=\text{span}\left \{ \psi_{i,k} \ | \ 0\leq i \leq j, k\in \Gamma_i^\psi \right \}
= 
\text{span}\left \{ \phi_{j,k} \ | \ k\in \Gamma_j^\phi \right \}, \notag \\
& = \text{span}\left \{ 1_{\square_{j,k}} \ | \   k\in \Gamma_j^\phi \right \}, \label{eq:pc_approx}
\end{align}
 the span of piecewise constants on a grid of level $j$. Here 
 $\phi_{j,k}$ and $\psi_{j,k}$  are the Haar scaling functions and wavelets, respectively,  defined on resolution level $j$. The basis functions $1_{\square_{j,k}}$ are the characteristic functions of the cells $\square_{j,k}$ that define the dyadic grid. They are defined so that their supports define a partition of the domain $\Omega$. 
Then the EDMD approximation $\mathcal{U}_{j,m}^{edmd}:=\mathcal{U}^{edmd}_{n_j,m}$  of $\mathcal{U}$ 
can be written  
\begin{align}
(\mathcal{U}^{edmd}_{j,m}f)(x)
&:=  \sum_{k\in \Gamma^\phi_j}p_{j,k}
1_{{\square}_{j,k} }(x)
\end{align}
for each $f:=\sum_{i\leq n}c_i 1_{\square_{j,i}}$ with 
$$
p_{jk}:=
\left \{ 
\begin{array}{ccc}
\frac{ 
\sum_{\ell\leq m} \sum_{i\leq n} 1_{\square_{j,k}}(x_\ell)   1_{\square{j,i}}(y_\ell) c_i
}  
{ \sum_{s\leq m}1_{{\square}_{j,k} }(x_s)}
 &  & \{x_s\}_{s\leq m}\cap \square_{j,k}\not = \emptyset, \\
 &  & \\ 
0 & & \{x_s\}_{s\leq m}\cap \square_{j,k} = \emptyset .
\end{array} 
\right .
$$
\end{theorem}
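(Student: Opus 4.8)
The plan is to evaluate the EDMD matrices of Example \ref{ex:edmd} explicitly, exploiting the fact that the basis $\{1_{\square_{j,k}}\}_{k\in\Gamma_j^\phi}$ has pairwise disjoint supports because the half-open dyadic cells $\square_{j,k}$ tile $\Omega$. First I would observe that for each sample point $x_\ell$ the feature vector $\phi(x_\ell)=[1_{\square_{j,1}}(x_\ell),\dots,1_{\square_{j,n}}(x_\ell)]'$ is an indicator (``one-hot'') vector: it equals $1$ in the single coordinate $k$ for which $x_\ell\in\square_{j,k}$ and $0$ elsewhere, and likewise for $\phi(y_\ell)$. Consequently the columns of $\Phi(\mathbb{X})$ and $\Phi(\mathbb{Y})$ are standard basis vectors of $\mathbb{R}^n$, and the whole calculation reduces to counting how samples fall into cells.

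Next I would compute the two products that enter $M_{n,m}$. Writing $N_k:=\sum_{s\leq m}1_{\square_{j,k}}(x_s)$ for the number of inputs landing in $\square_{j,k}$, disjointness of supports forces $\Phi(\mathbb{X})\Phi(\mathbb{X})'$ to be the diagonal matrix with entries $N_k$, since its $(k,k')$ entry is $\sum_\ell 1_{\square_{j,k}}(x_\ell)1_{\square_{j,k'}}(x_\ell)$ and vanishes unless $k=k'$. Its Moore--Penrose pseudoinverse is then the diagonal matrix whose $k$th entry is $1/N_k$ when $N_k>0$ and $0$ when $N_k=0$. Separately, the $(i,k)$ entry of $\Phi(\mathbb{Y})\Phi(\mathbb{X})'$ is $\sum_{\ell\leq m}1_{\square_{j,i}}(y_\ell)1_{\square_{j,k}}(x_\ell)$, the number of samples whose input lies in $\square_{j,k}$ and whose output lies in $\square_{j,i}$.

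Assembling $M_{n,m}=\Phi(\mathbb{Y})\Phi(\mathbb{X})'\left(\Phi(\mathbb{X})\Phi(\mathbb{X})'\right)^{\dagger}$, right-multiplication by the diagonal pseudoinverse scales column $k$ by $1/N_k$ (or annihilates it when $N_k=0$), giving $(M_{n,m})_{i,k}=N_k^{-1}\sum_\ell 1_{\square_{j,i}}(y_\ell)1_{\square_{j,k}}(x_\ell)$ for nonempty cells. For $f=\sum_{i\leq n}c_i1_{\square_{j,i}}$ with coefficient vector $c$, the EDMD output $(\mathcal{U}^{edmd}_{j,m}f)(x)=\phi'(x)M_{n,m}'c$ is the piecewise-constant function whose value on $\square_{j,k}$ is the $k$th coordinate $\sum_i(M_{n,m})_{i,k}c_i$ of $M_{n,m}'c$. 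Substituting the computed entry reproduces exactly the claimed $p_{j,k}$, and the empty-cell branch $p_{j,k}=0$ is precisely the pseudoinverse convention at $N_k=0$; rewriting the result as $\sum_{k\in\Gamma_j^\phi}p_{j,k}1_{\square_{j,k}}(x)$ finishes the derivation.

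The main point to get right is the behavior of the pseudoinverse on cells receiving no input samples: I must confirm that $\left(\mathrm{diag}(N_k)\right)^{\dagger}$ inverts the positive diagonal entries and keeps the zero entries zero (the standard diagonal pseudoinverse), and that this matches the two-branch definition of $p_{j,k}$. The only other thing to verify is that each $x_\ell$ and $y_\ell$ lies in exactly one cell, so the feature vectors are genuinely one-hot; this is guaranteed by the half-open form of $\square_{j,k}$ from Example \ref{ex:haar_basis}, under which the cells partition $\Omega$. Everything else is bookkeeping. I would also remark, as an interpretive aside, that since $\sum_i 1_{\square_{j,i}}(y_\ell)c_i=f(y_\ell)=f(w(x_\ell))=(\mathcal{U}f)(x_\ell)$, the coefficient $p_{j,k}$ is exactly the empirical average of $(\mathcal{U}f)(x_\ell)$ over the samples with $x_\ell\in\square_{j,k}$, a conditional empirical mean, which explains the subsequent identification with empirical risk minimization.
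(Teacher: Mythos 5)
Your proposal is correct and follows essentially the same route as the paper: both arguments hinge on the disjoint supports of the $1_{\square_{j,k}}$ making the columns of $\Phi(\mathbb{X})$ one-hot, so that $\Phi(\mathbb{X})\Phi(\mathbb{X})'$ is the diagonal matrix of cell counts whose pseudoinverse inverts the nonzero entries and annihilates the empty-cell ones. The only cosmetic difference is that the paper permutes the sample columns to group them by cell before reading off the diagonal structure, whereas you compute the matrix entries directly; the conclusion and the handling of empty cells are identical.
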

\begin{proof}
In terms of the discussion in Example \ref{ex:edmd}, we have 
$$
\mathcal{U}^{edmd}_{j,n}f:=\phi'(x) 
\left (\Phi(\mathbb{X})\Phi'(\mathbb{X})\right )^{\dagger} \Phi(\mathbb{X})\Phi'(\mathbb{Y})c
$$
for $f=\sum_{i\leq n}c_i 1_{\square_{j,k}}$ and $c:=\{c_1,\ldots,c_n\}'\in \mathbb{R}^n$, 
\begin{align*}
    \Phi(\mathbb{X}):=\left [  
    \begin{array}{ccc}
    1_{\square_{j,1}}(x_1)& \cdots& 1_{\square_{j,1}}(x_m)\\
    \vdots &\ddots & \vdots \\
    1_{\square_{j,n}}(x_1)& \cdots & 
    1_{\square_{j,n}}(x_m)
    \end{array}
    \right ],
\end{align*}
and 
\begin{align*}
    \Phi(\mathbb{Y}):=\left [  
    \begin{array}{ccc}
    1_{\square_{j,1}}(y_1)& \cdots& 1_{\square_{j,n}}(y_m)\\
    \vdots &\ddots & \vdots \\
    1_{\square_{j,1}}(y_1)& \cdots & 
    1_{\square_{j,n}}(y_m)
    \end{array}
    \right ].
\end{align*}
Since the supports of the basis functions define a partition of the domain $\Omega$, each column of $\Phi(\mathbb{X})$, or for that matter  of $\Phi(\mathbb{Y})$, is zero except for one entry that is equal to one. 
For the purposes of this proof only, suppose that we permute the columns of $\Phi(\mathbb{X})$ so that the first $n_1$ columns correspond to the samples in $\mathbb{X}$  that fall in $\square_{j,1}$, the next $n_2$ columns correspond to samples in $\square_{j,2}$, and so forth, with $m=\sum_{i\leq n}n_i$. We permute the matrix $\mathbb{Y}$ based on the new ordering of $\mathbb{X}$. 
 With this reordering of samples, $\Phi(\mathbb{X})\Phi(\mathbb{X})'$ is a  diagonal matrix, and  each entry $(k,k)$ of the major diagonal of $\Phi(\mathbb{X})\Phi'(\mathbb{X})$ is equal to the number of samples in that lie in $\square_{j,k}$. It then holds that 
 $$
[ \left (\Phi(\mathbb{X})\Phi(\mathbb{X})' \right )^+]_{s,t}:=
 \left \{
 \begin{array}{ccc}
 0 &  & s\not = t, \\
 0 &  & s=t \text{ and } \{x_k\}_{k\leq m} \cap  \square_{j,s}=\emptyset, \\
 \left (\sum_{k\leq m} \square_{j,s}(x_k) \ \right)^{-1}
 & & s=t \text{ and }  \{x_k\}_{k\leq m} \cap  \square_{j,s}\not =\emptyset.
 \end{array}
 \right .
 $$
 The form in the above theorem now follows by explicitly expanding 
 $$\Phi(\mathbb{X})^{\dagger}:=\Phi(\mathbb{X})'\left (\Phi(\mathbb{X})\Phi(\mathbb{X})' \right )^{\dagger}.$$
\end{proof}
We note that the above derivation has been provided for completeness, and a slightly more general result of the same nature can be found in \cite{klus2016}. In the next section we show how the rates of convergence can derived from Theorem \ref{eq:error_EDMD}.

\subsection{Distribution-Free  Learning Theory}

Again, our interest is in approximating the action of the Koopman operator on a any function $f\in \mathcal{F}$.  We  we  given noisy measurements $\{(x_i,y_i)\}_{i\leq m}$ of the input-output behavior of the system. We do not known the distribution of this process since neither $w$ nor $\mu$ is known.  We will see that the EDMD approximation of the Koopman operator is closely related to approximations that arise in results of distribution-free learning theory.  \cite{gyorfy, vapnik} 

Note that, since it is assumed here that the observable function  $f$ is known, we can define another measurement process
$$
\tilde{y}_n:=f(y_n):=f(x_{n+1})=(f\circ w)(x_n):=(\mathcal{U}f)(x_n).
$$
Each $\tilde{y}_n$ can be computed from $(x_n,y_n)$ since $f$ is known. 
For the remainder of this section, let us just consider the measurement process $\{(x_i,\tilde{y}_i)\}_{i\leq m}$, which we will use to estimate the function $f\circ w$ in $\tilde{y}=f(w(x))$. Since the distribution of the  measurement process $\{(x_i,y_i)\}_{i\leq m}$ is unknown, so is the distribution of    the measurement process $\{(x_i,\tilde{y}_i)\}_{i\leq m}$.   In studies of the convergence of data-driven algorithms it is common to make assumptions regarding the underlying distribution  in order to establish convergence. These assumptions can be cast in terms of ergodicity or mixing of the process, for example. \cite{klusdmd} 

Here we depart from these conventional approaches in Koopman theory and employ a somewhat different strategy that is popular in statistical and distribution-free  learning theory. As discussed earlier, we are ultimately most interested in the case we are given observations of states along the sample path of a Markov chain. But again, error estimates are usually simpler to derive assuming that the samples are IID. We start by considering the IID assumption, and then discuss how the analysis can be extended to more general dependent processes such as Markov chains.

In view of the above we first suppose that the measurements $\{(x_i,\tilde{y}_i)\}_{i\leq m}$ are independent and identically distributed with respect to  some fixed but unknown  measure $\tilde{\mu}$ on 
 $Z:=\Omega \times \Omega$.  Such a collection can be obtained assuming that a set of initial conditions $\{x_i\}_{i\leq m}$ are selected randomly according to an unknown fixed (marginal) measure $\mu$ on $\Omega$, and then observing the state $y_i$ after a  single step of the dynamics. 
 If we define the unknown function $g:=f\circ w$, approaches in distribution-free learning theory seek to construct estimates $g^*\in L^2_\mu(\Omega)$ that approximately minimize the cost function 
$$
\mathcal{J}(g):= \frac{1}{2}  \int_{z:=(x,\tilde{y})\in Z} (\tilde{y}-g(x))^2\tilde{\mu}(dz),
$$
$$
g^*=\underset{g\in L^2_\mu(\Omega)}{\text{argmin}} \  \ {\mathcal{J}(g)}.
$$
 Any measure  $\tilde{\mu}$ on $Z$ can be factored as $\tilde{\mu}(dz):= \tilde{\mu}(dy,x){\mu}(dx)$ where $z:=(x,y)$, $\mu$ is the marginal probability 
$$
\mu(A)=\tilde{\mu}(A\times \Omega)
$$ 
for all measurable $A\subset \Omega$, and $\tilde{\mu}(A,x)$ is the conditional probability distribution of a measurable set $A\subset \Omega$ given $x\in \Omega$.
It is an elementary exercise that the function $g^*$ that achieves the minimum of the ideal functional $\mathcal{J}$ is the regressor function given by 
$$
g^*(x):=\int_\Omega \tilde{y} \tilde{\mu}(d\tilde{y},x).
$$
Since $\tilde{\mu}$ and $\tilde{\mu}(dy,x)$ are  unknown, however, it is not possible to use this expression or the cost function $\mathcal{J}$ directly for the construction  of estimates. \cite{cs2002}

It is one of the fundamental results of distribution-free learning theory that it is possible to construct useful estimates of the minimizer $g^*$ from the principle of empirical risk minimization.  That is, we seek a $g^*_{j,z}$ that minimizes the empirical risk
\begin{align*}
\mathcal{J}_z(g):=\mathcal{J}_z(g;f):=\frac{1}{m} \sum_{i\leq m} (\tilde{y}_i - g(x_i))^2=
\frac{1}{m} \sum_{i\leq m} (f({y}_i) - g(x_i))^2, 
\end{align*}
\begin{align*}
g^*_{j,z}&:=
\underset{g\in A_j}{\text{argmin}} \ \ \mathcal{J}_z(g) =\underset{g\in A_j}{\text{argmin}}\ \  \mathcal{J}_z(g;f).
\end{align*}
The notation $\mathcal{J}_z(g;f)$ emphasizes that here the empirical risk $\mathcal{J}_z(g)$ depends parametrically on the fixed function $f\in \mathcal{F}$. There is no such dependence in the classical problems of distribution-free learning theory, and in this respect, the approximation problem solved by  the EDMD algorithm is more general. 

It is shown in \cite{binevI} that, under the hypotheses of Theorem \ref{eq:error_EDMD},  the solution $g^*_{j,z}$ of the above empirical risk minimization problem is  
 written  as  
\begin{align}
g^*_{j,z}(x)
&:=  \sum_{k\in \Gamma^\phi_j}g_{j,k}
1_{{\square}_{j,k} }(x)
\end{align}
 with 
$$
g_{j,k}:=
\left \{ 
\begin{array}{ccc}
\frac{ 
\sum_{\ell\leq m}  1_{\square_{j,k}}(x_\ell)  \tilde{y}_\ell
}  
{ \sum_{s\leq m}1_{{\square}_{j,k} }(x_s)}
 &  & \{x_s\}_{s\leq m}\cap \square_{j,k}\not = \emptyset, \\
 &  & \\ 
0 & & \{x_s\}_{s\leq m}\cap \square_{j,k} = \emptyset .
\end{array} 
\right .
$$
The similarity of the solution $g^{*}_{j,z}$ of the empirical risk minimization problem to the construction of the EDMD approximation $\mathcal{U}^{edmd}_{n,m}$ is immediate when we note that, for any $f\in A_j$ having a representation $f:=\sum_{i\leq n}c_i 1_{\square_{j,i}}$, we have 
$$
\tilde{y}_\ell =f(y_\ell)=\sum_{i\leq n} 1_{\square_{j,i}}(x_\ell) c_i.
$$
We conclude that if $f\in A_j$, the approximant obtained by the EDMD algorithm coincides with that obtained by empirical risk minimization. 
Based on empirical risk minimization, we therefore can define 
\begin{align*}
(\mathcal{U}^{erm}_{j,z}f)&:=g^*_{j,z}, 
\end{align*}
for the function $f\in \mathcal{F}$. The above discussion illustrates that 
$$
\mathcal{U}^{edmd}_{j,z}:=\mathcal{U}^{erm}_{j,z}\Pi_j
$$
when $A_j:=\text{span}\left \{1_{\square_{j,k}} \ | \ k\in \Gamma_{j}^\phi \right\}$ and $\Pi_j$ is the $L^2(\Omega)$-orthonormal projection onto $A_j$.

\subsection{Error Bounds for EDMD}
We see from the above study that when $f\in A_j$, the approximations   generated by the EDMD algorithm and that constructed from empirical risk minimization coincide. This result is instructive and of interest in its own right. In addition, it is possible to use this analogy to obtain rates of convergence of approximation that rely on choices of priors, just as in the last few sections.  Recall again that $U:=L^2_\mu(U)$ in this section. Using the decomposition of the error in terms of approximation space and variance contributions as discussed in Section \ref{sec:mu_unknown_p_known}, we have 
\begin{align}
    \|\mathcal{U}f&-\mathcal{U}^{edmd}_{j,z}f\|_U  \notag  \\
    &\leq 
    \underbrace{\|\mathcal{U}f-\mathcal{U}_jf\|_U}_{\text{bias}} + \underbrace{\|\mathcal{U}_jf-\mathcal{U}^{erm}_{j,z}f\|_U}_{\text{variance}}+ \|\mathcal{U}^{erm}_{j,z}f-\mathcal{U}_{j,z}^{edmd}f\|_U. \label{eq:vb_edmd}
\end{align}
The first term will be bounded by a constant $O(2^{-rj/2})$ when $\mathcal{U}{f}\in {A}^{r,2}(U)$, and the second term will be bounded by a constant $\epsilon$ that holds for all samples outside a bad set of samples.  As before the  measure of the bad set of samples decays exponentially as a function of the number of samples, but grows as a function of the dimension of the approximant space $A_j$.
We have discussed the analysis of these terms and the form of the bounds that arise for them.  Only the last term is new in this inequality, so we study it more carefully. 
We have 
\begin{align*}
\|\mathcal{U}^{erm}_{j,z}f-\mathcal{U}_{j,z}^{edmd}f\|_U
&= \|\mathcal{U}^{erm}_{j,z}(I-\Pi_j)f\|_U.
\end{align*}

But, in this section  it is assumed that the function $f\in C(\Omega)$ to ensure that the evaluation operator $f\mapsto f(x_{\ell})$ is well-defined for all samples. It follows that for any fixed $z=\{(x_\ell,y_\ell)\}_{1\leq m}$ and $j\in \mathbb{N}$ we have 
\begin{align*}
\|\mathcal{U}_{j,z}^{erm}f\|_{L^\infty(\Omega)}:=\sup_{x\in \Omega} \left |  \sum_{k\in \Gamma_j^\phi} 
\frac{\sum_{\ell\leq m}1_{\square_{j,k}}(x_\ell)f(y_\ell)}{\sum_{s\leq m}1_{\square_{j,k}}(x_s)} 1_{\square_{j,k}}(x) \right | \leq \|f\|_{L^\infty(\Omega)},
\end{align*}
and this inequality holds for all $f\in C(\Omega)$. 
Now, since we have assumed that $A^{r,2}(U)\subset C(\Omega) \subset U$,  we have 
\begin{align*}
\|\mathcal{U}^{erm}_{j,z}f-\mathcal{U}_{j,z}^{edmd}f\|^2_U & 
= \int \left | \left (\mathcal{U}^{erm}_{j,z} (I-\Pi_j)f  \right)(x)  \right |^2 \mu(dx),
\\ 
&\lesssim \mu^2(\Omega) \|\mathcal{U}^{erm}_{j,z} (I-\Pi_j)f \|_{L^\infty(\Omega)}^2, \\
&\lesssim \|(I-\Pi_j)f\|_{L^\infty(\Omega)}^2.
\end{align*}
In this section the approximant space $A_j$ is the span of the Haar scaling functions on a grid having resolution level $j$, and the operators $\Pi_j$ are the $L^2(\Omega)$-orthonormal projections onto $A_j$.  Just for this particular basis, the orthonormal projection $\Pi_jf$ is identical to the averaging operator studied  in Example \ref{ex:two_cases}.  From that example, we know that $\|(I-\Pi_j)f\|_{L^\infty(\Omega)} \approx 2^{-rj}$ when $f\in \text{Lip}^*(r,L^\infty(\Omega))$. 
The overall estimate of the error in the EDMD approximation now results from combining this bound with the triangle inequality in Equation \ref{eq:vb_edmd}. One way to derive an   error bound based on Equation \ref{eq:vb_edmd} assumes   IID samples $\{(x_i,y_i)\}_{i\leq m}$, such as a those generated from a  random collection of initial conditions and measurement of the resulting one step response state. The analysis for this case is essentially the same as in our previous discussion.  We really are interested with the samples are along the sample path of a Markov chain. We briefly discuss this case next. 

There are a number of precise upper bounds in the literature on the second term  in Equation \ref{eq:vb_edmd}. A good overview can be found in \cite{binevI}. Given the tradeoff between the approximation space and bias errors, much effort has been devoted to ``equilibrating'' the leftmost two terms in Equation \ref{eq:vb_edmd} to achieve a balanced  overall rate of convergence.  
Let us suppose that $f\in A_j$ to keep the exposition simple, so that the rightmost term in Equation \ref{eq:vb_edmd} is zero. Furthermore, also suppose that $f$ is in a compact, convex subset of $C(\Omega)$.  The equilibration techniques work by choosing a relationship between the number of samples $m$ and the dimension of  approximant space $n_j$ as these numbers  grow. One example  rule in \cite{binevI}  calculates the  required mesh resolution $j$ of the space $A_j$, and therefore the number of basis functions to be used,  in terms of the number of samples $m$. 
With that rule, Equation \ref{eq:vb_edmd} can be  used following  \cite{binevI} to show that 
$$
\mathbb{E}_{{\nu}^m}\left (
\|\mathcal{U}f - \mathcal{U}^{edmd}_{j,z}f\|_{U}
\right ) \lesssim \left (\frac{\log m}{m}\right )^{2r/(2r+1)}.
$$
Here $\mathbb{E}_{{\nu}^m}(\cdot)$ is the expectation over the samples $z:=\{(x_i,\tilde{y}_i)\}_{i\leq m}$ with respect to the product measure ${\nu}^m$ on $Z^m$. It is known that this estimate is the best possible rate of convergence except for the $\log m$ term, and therefore it is referred to as a semi-optimal bound. \cite{devore2006approxmethodsuperlearn}
As discussed in more detail in \cite{kurdiladep}, an analogous result follows for some strongly mixing Markov chains. By modifying the rule derived in \cite{binevI} to employ the effective number of samples $e(m)$ as in  Reference \cite{kurdiladep}, it follows that 
$$
\mathbb{E}_{\mathbb{P}^m_{\{z\}}} \left (\|\mathcal{U}f - \mathcal{U}^{edmd}_{j,z}f\|_{U}  \right )\lesssim 
\left (\frac{\log e(m)}{e(m)}\right )^{2r/(2r+1)}.
$$

\bibliographystyle{siamplain}
\bibliography{references}

\clearpage 
\appendix
%
%%  examples
%
\section{Detailed Examples of $L^2(\Omega)-$Orthonormal Wavelets}
\label{sec:app_A}
\begin{framed}
 \begin{example}[Daubechies' Orthonormal Wavelets]
\label{ex:ON_wave}
We initially study the case when $\Omega\subseteq \mathbb{R}$ and $f:\Omega \rightarrow \mathbb{R}$.  Once the conventions for the one dimensional domain are understood, we briefly discuss the generalization for certain sets  $\Omega\subseteq \mathbb{R}^d$ and for vector-valued functions $f:\Omega\rightarrow \mathbb{R}^d$.   In this example  we  consider bases for the approximation space that are given by Daubechies orthonormal, compactly supported wavelets. This example can be considered a model or prototype for more general multiscale bases discussed later. It introduces the specifics of the  conventional numbering, indexing, and definition of bases when the approximation space is defined in terms of a wavelet basis. This case has perhaps the most limited applicability among the examples we discuss: the  limitations for the most part arise due to the fact that the class of orthonormal, compactly supported  wavelets in \cite{daubechies88} do not have a closed form expression. Still, computations can be carried out using quadratures that can derived as described in \cite{dahmenmultiscale}.  Despite these limitations, the approach discussed in this example can be applied directly  to Perron-Frobenius and Koopman operators defined over the torus $T^d\subseteq \mathbb{R}^d$.  This is an important application in itself. 

When $\Omega\subseteq \mathbb{R}$, we choose the orthonormal scaling function  to be $\phi:= \presuper{N}{\phi}$,  the Daubechies orthonormal scaling function of order $N\geq 2 $.  \cite{daubechies88} The associated  orthonormal wavelet is denoted as $\psi:=\presuper{N}{\psi}$. The support of the functions $\presuper{N}{\phi}$ and $\presuper{N}{\psi}$ is $[0,2N-1]$.  It is well-known that these wavelets can be constructed so that they have  arbitrarily high Lipschitz smoothness by choosing the index $N$ large enough.  Moreover, the   scaling functions reproduce polynomial functions of order $N-1$ and the wavelets have $N-1$ vanishing moments. These properties are  crucial in establishing the approximation rates for spaces of piecewise polynomials contained in the span of these functions. In fact, we find that approximation by the order $N$ Daubechies scaling functions and wavelets yield convergence that is at least  $O(2^{-rj})$ for functions in $A^{r,2}(L^2(\Omega)) \approx W^{r}(L^2(\Omega))$ with $N=r$.  This topic is discussed in more detail in Sections \ref{sec:approx_splines} and \ref{sec:zero_moments}. 

The scaling functions and wavelets over a dyadic grid of cubes having side length $2^{-j}$ are defined to be $\phi_{j,k}(x):=2^{jd/2}\phi(2^jx-k)$ and $\psi_{j,k}(x):=2^{jd/2}\psi(2^jx-k)$ for $k\in \mathbb{Z}^d$ and $j\in \mathbb{N}_0$ with $d=1$. In the conventional notation of multiresolution analyses, we set the scaling spaces $V_j:={\text{span}}\left \{\phi_{j,k} \ | \ k\in \Gamma^\phi_j \right \}$ and detail spaces $W_j:={\text{span}}\left \{\psi_{j,k} \ | \   k\in \Gamma^\psi_j \right \}$ with $\Gamma^\phi_j$ and $\Gamma^\psi_j$ the set of admissible translates for the scaling functions and wavelets on grid level $j$. In this first example, when   we assume  $\Omega:= \mathbb{R}$, we have $\Gamma^\phi_j=\Gamma^\psi_j=\mathbb{Z}$. But this will change when we consider compact domains $\Omega\subset \mathbb{R}^d$ below or multiwavelets in the next example. 
We have the representation 
\begin{equation}
f:= \sum_{k\in \Gamma^\phi_0}(f,\phi_{0,k})_{U}\phi_{0,k} + \sum_{j\in \mathbb{N}_0} \sum_{k\in \Gamma^\psi_j} (f,\psi_{j,k})_{U} \psi_{j,k}
\end{equation}
for any $f\in U$. 
It is standard to modify this representation and create  the multiscale summation 
\begin{equation}
f:= \sum_{j\in \mathbb{N}_0} \sum_{\tilde{k} \in \Gamma^{\tilde{\psi}}_j} (f,\tilde{\psi}_{j,\tilde{k}})_{L^2} \tilde{\psi}_{j,\tilde{k}} \label{eq:multiscale_ON}
\end{equation}
with the modified set of indices 
\begin{align*}
    \begin{array}{llll}
    \tilde{\psi}_{j,k}:=\psi_{j,k},
    &  k\in \tilde{\Gamma}^\psi_{j}:=\Gamma_j^\psi
    & \text{ for }  
    & 1\leq j \leq \infty
    \\
    \tilde{\psi}_{0,\tilde{k}}:=\phi_{0,k}, 
    &\tilde{k}:=(k,0) \in \tilde{\Gamma}_0^\psi:= \Gamma^\phi_{0}\times  \Gamma^\psi_0, 
    & \text{ for } 
    & j=0, 
    \\
    \tilde{\psi}_{0,\tilde{k}}:=\psi_{0,k}, 
    &\tilde{k}:=(0,k) \in \tilde{\Gamma}_0^\psi:= \Gamma^\phi_{0}\times  \Gamma^\psi_0, 
    & \text{ for } 
    & j=0.
     \end{array}
\end{align*}
%$\tilde{\psi}_{j,k}:=\psi_{j,k}$ and $\tilde{\Gamma}^\psi_{j}:=\Gamma_j^\psi$ for $1\leq j \leq \infty$, $\left \{ \tilde{\psi}_{0,\tilde{k}}\right \}_{\tilde{k}\in \tilde{\Gamma}_j} = \left \{ \phi_{0,k} \right \}_{{k}\in {\Gamma}^\phi_0} \cup \left \{ \psi_{0,m}\right \}_{{m}\in {\Gamma}^\psi_0}$, and $\tilde{k}:=(k,m)$. 
%

\noindent 
In the discussion that follows, whenever we refer to a multiscale representation, we suppress the $\tilde{\cdot}$ notation and assume that the wavelets, scaling functions, and index sets have been modified so as to obtain a form similar to that in  Equation \ref{eq:multiscale_ON}, or just as in Equation \ref{eq:Wspace}. 

So far we have selected $\Omega=\mathbb{R}$. In this case the index sets $\Gamma_j^\phi,\Gamma_j^\psi:=\mathbb{Z}$ are infinite. However, suppose that we instead consider the compact domain $\Omega\subset \mathbb{R}$. For illustration suppose $\Omega:=[0,1]$. When we identify basis functions modulo periodization  over $[0,1]$,  we can arrange  that the index sets $\Gamma^\psi_j$ and $\Gamma^\phi_j$  contain $O(2^j)$ functions for all $j$ larger than some fixed level $j_0$. For example, for $\phi:= \presuper{N}{\phi}$ with $N=3$,   we obtain $2^{j}$ indices in $\Gamma^\phi_{j}$ for $j\geq 3$. 
The  approximation spaces are defined to be $A_{j+1}:=V_{j+1}:=V_j \oplus W_j$ for each $j_0\leq j\in \mathbb{N}_0$, and  now $V_j,W_j$ are finite dimensional. In the one dimensional case we have 
\begin{align*}
    \#(\Gamma_j^\phi)=2^j \quad \text{ and } \quad 
    \#(\Gamma_j^\psi)=2^j
\end{align*}
for all $j\geq j_0$ large enough. 
The orthogonal projections $\Pi_{j}:= U \rightarrow A_j$ for each $j\geq j_0$ are written as 
\begin{align*}
\Pi_{j}f&:=\sum_{k\in \Gamma^\phi_j} (\phi_{j,k},f)_{U} \phi_{j,k}
=  \sum_{ j_0\leq m \leq j} \sum_{k\in \Gamma^\psi_m} (\psi_{j,k},f)_U \psi_{j,k},
\end{align*}
where again the latter multiscale decomposition is modified to include both wavelets and scaling functions in the lowest level resolution grid $j_0$. 
We thereby obtain easily computed representations of the norm and inner product on the  approximation spaces $A^{r,2}(U)$. We have 
\begin{align*}
(f,g)_{A^{r,2}(U)}&= \sum_{j_0\leq j\in \mathbb{N}_0} 2^{2jr} \sum_{k\in \Gamma^\psi_{j}} (f,\psi_{j,k})_{U}(g,\psi_{j,k})_{U},\\
\|f\|_{A^{r,2}(U)}^2&= \sum_{j_0\leq j\in \mathbb{N}_0} 2^{2jr} \sum_{k\in \Gamma^\psi_j} |(f,\psi_{j,k})_{U}|^2,
\end{align*}
for $f,g\in A^{r,2}(U)$. No generality is lost in that the summation above starts for a coarsest grid level $j_0$ that may be greater that $0$. 

The modification of the above framework for domains $\Omega\subseteq \mathbb{R}^d$ for $d>1$ often employs tensor products of bases. First, consider only the case when $f:\Omega \rightarrow \mathbb{R}$ and $\Omega:=\mathbb{R}^d$. We set $\psi^{0}=\phi$ and $\psi^1:=\psi$. Let $e:=(e_1,e_2,\ldots,e_d)$ be the coordinates of the corners of the unit cube $\left \{0,1 \right \}^d$ in $\mathbb{R}^d$. We set the tensor product basis $\psi_e(x):=\psi^{e_1}(x_1)\cdots \psi^{e_d}(x_d)$, and analogously to the setup above we define the dilated and translated wavelets as 
$ \psi_{j,(e,k)}:=2^{jd/2}\psi_{e}(2^jx-k)$. We then have an expansion of the form in Equation \ref{eq:multiscale_ON} with $\tilde{k}:=(e,k)\in \tilde{\Gamma}_j^{\tilde{\psi}}:= \left \{0,1\right \}^d \times  \Gamma_j^\psi  
$ for $j\geq 1$.  Modifications are also made for $\tilde{\Gamma}^{\tilde{\psi}}_0$, similar to the above, to include scaling functions and wavelets on the coarsest level $j_0\geq 0$. 
The restriction of the domain $\Omega\subseteq \mathbb{R}^d$ above to generate bases for periodic functions on the torus $\mathbb{T}^d$ is straightforward. 

Finally, we note that when the family of  functions  
$$\left \{ \psi_{j,(e,k)}\ | \  j\in \mathbb{N}_0, e\in \left \{0,1\right \}^d, k\in \Gamma_j^\psi \right \}
$$
form an orthonormal basis for $U:=L^2(\Omega)$, it also generates a component-wise orthonormal basis for the functions  $f:= \Omega \to \mathbb{R}^d$ on $\Omega\subseteq \mathbb{R}^d$. If $\left \{ E_i\right \}_{1\leq i\leq d}$ is the canonical basis for $\mathbb{R}^d$, then 
$$
\left \{\psi_{j,(e,k)}E_\ell \biggl | \ j\in \mathbb{N}_0, e\in \left \{ 0,1\right \}^d, k\in {\Gamma}_{j}^{{\psi}}, 1\leq \ell\leq d \right \}
$$ 
is an orthonormal basis for $U:=(L^2(\Omega))^d$ in its usual inner product. This family of vector-valued functions serves as the basis for the approximation spaces $A^{r,2}(U)$ in this case. 
\end{example}
%
%%	multiwavelets
%
\begin{example}[Orthonormal  Multiwavelets]
\label{ex:ON_multi}
In our discussion of Example \ref{ex:ON_wave} it was noted that the orthonormal wavelets, not having a closed form expression, can be more difficult to employ than other wavelet constructions. The definition of wavelets that have many of the desirable properties of orthonormal, compactly supported wavelets has been studied carefully over the past 25 years.  In this example we consider compactly supported multiwavelets \cite{keinert,dgh}. These wavelets have closed form expressions, are written in terms of piecewise polynomials, exhibit useful symmetry and antisymmetry properties, and have good approximation properties. The essential difference in this framework is that there is a finite family of multiscaling functions $\left \{ \phi_{p}\right \}_{p=1,\ldots,b}$ and an associated family of multiwavelets $\left \{ \psi_q\right\}_{q=1,\ldots,a}$. The multiscaling functions and multiwavelets are then defined as 
$
\phi_{j,(p,k)}:=2^{jd/2}\phi_p(2^{j}x-k) $ 
and
$
\psi_{j,(q,k)}:=2^{jd/2}\psi_q(2^jx-k)
$
, respectively. We set  
 the spaces 
\begin{align*}
V_j&:=\text{span}\left \{ \phi_{j,(p,k)} \ | \   (p,k) \in \Gamma^\phi_j \right \},
\\ 
W_j&:=\text{span}\left \{ \phi_{j,(q,k)} \ | \  (q,k) \in \Gamma^\psi_j \right \},
\end{align*}
for $j\in \mathbb{N}_0$. For any $f\in L^2(\mathbb{R}^d)$, when the level $j=0$ is redefined to include both scaling functions and wavelets, the multiscale  expansion takes the form 
\begin{equation}
f:= \sum_{j\in \mathbb{N}_0}
\sum_{(q,k)\in \Gamma^{{\psi}}_j} (f,\psi_{j,(q,k)})_{L^2} \psi_{j,(q,k)},  \label{eq:multiwavelet_ON2}
\end{equation}
which again can be recast in the form of Equations  \ref{eq:Wspace} or \ref{eq:multiscale_ON} with an appropriate re-definition of the indices and index set. 

The multiscale expansion in Equation \ref{eq:multiwavelet_ON2} contains an infinite number of terms in each $\Gamma_j^\psi$ when $\Omega=\mathbb{R}^d$, and it must be modified for representations of functions over a compact domain $\Omega$.  This will result in a finite number of terms in each $\Gamma_j^\psi$.  One of the nice features of these multiwavelets is that it is a simple matter to adapt the  expansion over compact domains that are the union of dyadic cubes on some fixed resolution level $j_0$. The index sets $\Gamma_{j}^\psi$ and $\Gamma_{j}^\phi$ are then modified by retaining only those linearly independent restrictions of basis functions whose support intersects  $\Omega$ and satisfy the desired boundary conditions on $\partial \Omega$. This is particularly easy task for domains $\sim [0,1]^d$ using symmetry and antisymmetry of the functions. \cite{dgh}

Once the modifications of the index sets $\Gamma_{j}^\phi,\Gamma_{j}^\psi$ is complete, so that they are finite dimensional, we again define the finite dimensional approximation spaces 
$A_{j+1}:=V_{j+1}:=V_j\oplus W_j$. 
The orthogonal projections $Q_{j}: L^2(\Omega) \rightarrow A_j$ are now written in the form  
\begin{align*}
Q_{j}f:=\sum_{(q,k)\in \Gamma^\psi_j} (\psi_{j,(q,k)},f)_{U} \psi_{j,(q,k)}.
\end{align*}
When $\Omega \subset \mathbb{R}$, that is $d=1$, the index set $\Gamma^\psi_j$ contains $O(a2^j)$ functions in this case. The norm on the approximations is written as 
\begin{align*}
\|f\|_{A^{r,2}(L^2)}^2&= \sum_{j\in \mathbb{N}_0} 2^{2jr} \sum_{(q,k)\in \Gamma^\psi_j} |(f,\psi_{j,(p,k)})_{U}|^2,
\end{align*}
which again just gives a weighted sum of the generalized Fourier coefficients of the function $f$ in the orthonormal multiwavelet basis. Modifications for $d>1$ are constructed similarly to the discussion above. 
\end{example}
\end{framed}

%
%%  stochastic processes 
%
\section{Discrete Stochastic Processes  and Markov Chains}
\label{sec:app_processes}
This section summarizes some of the basic definitions encountered in the study of stochastic processes and Markov chains. The reader is referred to \cite{meyn} for a full treatment of the details. Let $(Z,\Sigma_Z)$ be a measurable space and $(Q,\mu,\Sigma_Q)$ be a measure space.  A random variable is a function $f:Q \rightarrow Z$ such that $A\in \Sigma_Z$ implies that $f^{-1}(A)\in \Sigma_Q$. The probability distribution $\mu_f$ on $Z$  of the random variable $f$ is given by $\mu_f(A):=\mu(f^{-1}(A))$, or sometimes is expressed  in the form 
$$
\mu_f(A)=\text{Prob}\left \{ q \in Q \ | \ f(q) \in A \right \}=\mu\left \{ q \in Q \ | \ f(q) \in A \right \}
$$
for any measurable set $A\in \Sigma_Z$.
A discrete stochastic process $z:=\{z_i\}_{i\in \mathbb{N}_0}$  indexed by $\mathbb{N}_0$ is a sequence of random variables $z_i: Q\rightarrow Z$. If we denote the product space 
$
\mathbb{Z}:= Z^{\mathbb{N}_0},
$
then $z$ may be understood as a random variable $z:Q \rightarrow \mathbb{Z}$ from the measure space $(Q,\mu,\Sigma_Q)$ to the measurable space $(\mathbb{Z},\Sigma_{\mathbb{Z}})$. The probability distribution of the process $z$   is the probability measure $\mu_z:=\mu(z^{-1}(\mathbb{A}))$ for each
measurable $\mathbb{A} \subset \mathbb{Z}$.
We denote this probability measure on $\mathbb{A}$ by 
$$
\mathbb{P}_{\{z\}} 
(\mathbb{A})=\text{Prob} \left \{ \mathbb{A} \subset \mathbb{Z}
\right \}
$$
and refer to it as the distribution
of the process. 
A cylinder set  $C$ contained in $Z^m$  is a set that  has the form  $C^\ell =C_1 \times C_2\times \cdots \times C_m $
with each $C_i\in  \Sigma_Z$ for $1\leq i \leq m$. 
 We define  the law or probability distribution $\mu^m_{\{i_1,\ldots,i_m\}}$ on $Z^m$ for a  finite collection of coordinates indexed by $\{i_1,\ldots, i_m\}$ by setting 
$$
\mu^m_{\{i_1,\ldots,i_m\}}(C)
:=\mu\left \{ q\in Q \ | \  z_{i_k}(q) \in C_k ,\ 1\leq  k m \ell \right  \}
$$
for each cylinder set $C^\ell \subset Z^m$.
The statistical properties of the process $z$ is determined by the laws of all such  finite dimensional subsets of  coordinates. When the index set $\{i_1,\cdots,i_m\}$ is just the first $m$ coordinates,  we follow the convention above and  define the distribution of the first $m$ states of the process by 
$$
\mathbb{P}^m_{\{z\}}(C^m) := \mu^\ell_{\{1,\ldots,m\}}(C^m).
$$
We say that the  stochastic process is independent and identically distributed whenever  $\mu_{z_i}:=\mu$ for some fixed measure $\mu$ on $Z$  and all the $z_i$ are independent of one another. In this case for any finite 
set of indices 
$\{i_1,\ldots,i_m\}$ we have  
$$
\mu^m_{i_1,\ldots, i_m}(d\xi_1\cdots d\xi_m)=\mu_{z_{i_1}}(d\xi_1)\cdots \mu_{z_{i_m}}(d\xi_m)= \underset{k\leq \ell}{\otimes} \mu(d\xi_k)
$$
with $\mu^m(d\xi_1,\dots,d\xi_m):=\underset{k\leq \ell}\mu(d\xi_k)$ the product measure on $Z^\ell$. In this case we simply write 
$$
\mathbb{P}^m(C^m):=\mathbb{P}^\ell_{\{z\}}(C^m) = \mu^m(C^m),
$$
and the distribution of the first $m$ states of the process is just the product measure $\mu^m$.

We will limit considerations in this paper to stochastic processes that are Markov chains. A Markov chain on a state space $Z$ is a discrete  stochastic process whose statistics are entirely determined by a transition probability kernel. The transition probability kernel is a map on $\mathbb{P}:\Sigma_Z \times Z\mapsto [0,1]$. For each fixed measurable set $A\subset Z$, $\xi\mapsto \mathbb{P}(A,\xi)$ is a measurable 
function. And, for  each fixed $\xi\in Z$ the mapping $A\mapsto \mathbb{P}(A,\xi)$ is a probability measure. Intuitively, the transition probability kernel $\mathbb{P}(A,\xi)$ defines what the probability is that the next single step in the chain lands in the  set $A$ given that the current state is $\xi$. As discussed in \cite{meyn}, p. 66, if the probability distribution of the initial state $x_0$  is the probability measure $\mu_0$ on $Z$, the probability distribution of the first $m$ steps of the chain belonging to the cylinder set 
$
C:=C_1\times \cdots \times C_m
$
 is given by  
\begin{align*}
&\mathbb{P}^m_{\{x\}}(C_1 \times \cdots \times C_m)= \\
&\hspace*{.25in}\int_{\xi_{m-1}\in C_{m-1}} \cdots \int_{\xi_0\in A_0} \mathbb{P}(C_m, \xi_{m-1})\mathbb{P}(d\xi_{m-1},\xi_{m-2})  \cdots \mathbb{P}(d\xi_1,\xi_0)\mu_0(d\xi_o). 
\end{align*}
We are interested specifically in this paper in integrating expressions such as 
$$
\mathbb{E}_{\mathbb{P}^m_{\{x\}}}(g)
$$
for a function $g:Z^m\rightarrow \mathbb{R}$. For a general Markov chain this can be a formidable task. 

However, there are a few   cases in which the general expression simplifies. The first case is an  independent and identically distributed, or IID,  stochastic process.   For this process, each of the coordinates $z_i$ is independent of all other coordinates and  has the probability distibution  $\mu_{z_i}=\mu$  for some fixed measure $\mu$. In the language   of Markov chains the transition kernel is just $\mathbb{P}(dz,\xi)=\mu(dz)$.  This means that the  transition kernel does not care where it starts.  If we want to compute the expectation of a function $g:Z^n\rightarrow \mathbb{R}$ over the sequence of random variables $(z_1,\ldots, z_{m})$, we just have 
$$
\mathbb{E}_{\mathbb{P}^m}(g)= \mathbb{E}_{\mu^m}(g) = \mathbb{E}_{\mathbb{P}^m_{\{x\}}}(g)=
 \int_{Z^m} 
g({\xi_{1},\ldots, \xi_{m}})\mu(\xi_1)\mu(\xi_2)\cdots\mu(\xi_m)
$$
with $\mu^n$ the product measure on $Z^m$. 

In the second case,  if the process is just the deterministic system having the transition probability $\mathbb{P}(dy,x):=\delta_{w(x)}(dy)$, it can be shown that the probability  $\{z_1,\cdots,z_m\}\in C:=C_1\times \cdots C_m$ that $\{z_i\}_{i\leq m}$ is contained in the cylinder set $C$  is given by 
$$
\mathbb{P}^m_{\{x\}}(C_1\times \cdots \times  C_m)=
\delta_{w(x_0)}(C_1)\cdot\delta_{w^2(x_0)}(C_2)\cdots \cdot \delta_{w^n(x_0)}(C_n),
$$
and the expectation becomes 
\begin{align*}
\mathbb{E}_{\mathbb{P}^m_{\{x\}}}(g)&= \int_{Z^m} g(\xi_1,\ldots,\xi_m)\delta_{w(x_0)}(d\xi_1)\cdot\delta_{w^2(x_0)}(d\xi_2)\cdots \cdot \delta_{w^m(x_0)}(d\xi_m)\\
&=g(w^1(x_0),w^2(x_0), \cdots , w^m(x_0)).
\end{align*}

%
%%  spectral decomps
%
\section{Spectral Decomposition: Compact, Self-Adjoint Operators}
\label{app:spectral}
In this section we review the fundamental properties of compact, self-adjoint operators and their spectral decompositions, which are used extensively
in Section \ref{sec:rkhs}.  We first review the construction of the singular value decomposition for matrices, and subsequently discuss the generalization to the 
Schmidt decomposition of compact operators acting between Hilbert spaces.

For any matrix $T\in \mathbb{R}^{m\times n}$,  the symmetric and positive semidefinite  matrix $T^* T$  has a collection 
of real eigenvalues $\lambda_k:= \lambda_k(T^*T)$ that can be arranged  with multiplicities in 
nonincreasing order $\lambda_1 \geq \cdots \geq \lambda_n \geq 0$. When the corresponding 
eigenvectors $v_1,\ldots,v_n \in \mathbb{R}^{n\times n} $ are chosen to be of unit length and mutually orthogonal, we obtain the spectral factorization of $T^* T$ in the form 
\begin{align*}
T^*T & = V \Sigma^2_n V'
\end{align*}
where $\Sigma_n\in \mathbb{R}^{n \times n}$ is the diagonal matrix $\Sigma_n=\text{diag}(\sigma_1, \ldots, \sigma_n)$ and $V\in \mathbb{R}^{n\times n}$ is an orthogonal matrix whose columns are the associated eigenvectors $V=[v_1,\ldots,v_n]$. If we carry out the same construction for the matrix $TT^* \in \mathbb{R}^{m\times m}$, we obtain
\begin{align*}
T T^* & = U \Sigma^2_m U'
\end{align*}
where $\Sigma^2_m \in \mathbb{R}^{m\times m}$ is the diagonal matrix of non-increasing eigenvalues $\sigma_m:= \sqrt{\lambda_m}:=\sqrt{\lambda_m(TT^*)}$ and $U=[u_1,\ldots, u_m] \in \mathbb{R}^{m \times m}$ is an orthogonal matrix whose columns are the corresponding orthonormal eigenvectors of $TT^*$. It is a quick calculation to show that these decompositions are further related: we always have
\begin{equation*}
u_k = \frac{1}{\sigma_k} T v_k
\end{equation*}
for $k=1,\ldots,\text{min}(m,n)$.

The singular value decomposition of the matrix $T$ is then defined as 
\begin{equation*}
T=U\Sigma V'
\end{equation*}
where $\Sigma\in \mathbb{R}^{m\times n}$ is diagonal with entries $\sigma_1, \ldots, \sigma_{\text{min}(m,n)}$ on the diagonal.  It is convenient for comparison to the infinite dimensional operators discussed later to express these decompositions in terms of operators acting on arbitrary vectors.  We have 
\begin{align*}
Tx &= \sum_{k=1,\ldots,m} \sigma_k (v_k,x)_{\mathbb{R}^{n}} u_k, \\ 
TT^* y&= \sum_{k=1,\ldots,m} \sigma_k^2 (u_k,y)_{\mathbb{R}^{m}} u_k, \\ 
T^*T x &= \sum_{k=1,\ldots,n} \sigma_k^2 (v_k,x)_{\mathbb{R}^{n}} v_k, 
\end{align*}
for each $x\in \mathbb{R}^n$ and $y\in \mathbb{R}^m$.

Now consider the case when $U,V$ are Hilbert spaces and $T:V \rightarrow U$ is a linear, compact operator.  Since $T^*T$ is a compact, self-adjoint operator, all of its eigenvalues are real and
are contained in the interval $[0,\|T^*T\|]$. 
The number of eigenvalues greater than any given positive constant is finite, so the only possible accumulation point of the eigenvalues is zero.    Each
eigenspace corresponding to a nonzero eigenvalue is finite dimensional.  The eigenvalues $\lambda_k(T^*T)\equiv \lambda_k(TT^*)$ are assumed to be arranged 
in an extended enumeration that includes multiplicities in nonincreasing order
\begin{equation*}
\lambda_1 \geq \lambda_2 \geq \cdots \geq 0.
\end{equation*}
  The $k^{th}$ singular value $\sigma_k(T)$  of  $T: V \rightarrow U$  is defined as 
\begin{equation*}
\sigma_k(T):=\lambda^{1/2}_k( T^* T).
\end{equation*}
There exist orthonormal collections of eigenvectors $\left \{ u_k \right \}_{k=1}^\infty $ for $TT^*$ and $\left \{ v_k \right \}_{k=1}^\infty $ for $T^*T$ such that 
\begin{align*}
T f &= \sum_{k=1}^\infty  \sigma_k (v_k,f)_{V} u_k \quad \quad \text{convergence in } U,\\ 
TT^* g&= \sum_{k=1}^\infty \sigma_k^2 (u_k,g)_{U} u_k \quad \quad \text{convergence in } U, \\ 
T^*T f &= \sum_{k=1}^\infty  \sigma_k^2 (v_k,f)_{V} v_k  \quad \quad \text{convergence in } V,
\end{align*}
for each $f\in V$ and $g\in U$.

The expansions above are vital to the development in Section \ref{sec:rkhs}. One further result is also needed to define the square root operator $\sqrt{T}$. Suppose that $g:\mathbb{R}\rightarrow \mathbb{R}$. For  any self-adjoint compact operator $T:U \rightarrow U$, the function $g(T)$ can be defined in terms of the spectral expansion 
\begin{align*}
(g(T))(f):=\sum_{k=1}^\infty g(\lambda_i)(u_k,f)_H u_k,
\end{align*}
provided that $g$ is continuous on the spectrum of $T$. 
%
%%  Schatten Class
%
\section{Schatten Operators}
The spectral decomposition summarized in Section \ref{app:spectral} motivates the definition of the Schatten class of operators $S^p(V,U)$ acting between the Hilbert spaces $V$ and $U$. %{\color{red} This can't be right. Works for $V=U$, but fails when  $V\not=U$. Equations for $S^1,S^2$ norms below don't make sense. Look it up.} 
Whenever $T:V\rightarrow U$ is compact, the operator $T^*T:V\rightarrow V$ is compact, nonnegative, and self-adjoint.  By the spectral theory presented in Section  \ref{app:spectral}, there is a unique square root operator $(T^*T)^{1/2}$. We define  $|T|:=(T^*T)^{1/2}:V\rightarrow V$ given by
$$
|T|f:=\sum_{k=1}^\infty \sigma_k(v_k,f)_V v_k, \quad \quad \text{convergence in } V,
$$
for each $v\in V$. 
We say the  operator $T\in S^p(V,U)$ for $0<p<\infty$ provided that $T$ is compact and  the $S^p$-norm defined as 
$$
\|T\|_{S^p}:=\left ( \sum_{k=1}^\infty \sigma_k^p(T)\right )^{1/p} :=\left \| \left \{ \sigma_j \right \}_{j=1}^\infty \right \|_{\ell^p}< \infty 
$$
with $\sigma_j(T)$ the singular values of $|T|$.\
We also define $$S^\infty(V,U):=\left \{T\in \mathcal{L}(V,U) \ | \ T \text{ is compact } \right \}.$$ Perhaps the two most well-known subclasses of the Schatten operators are the Hilbert-Schmidt operators $S^2(V,U)$ and the trace class operators $S^1(V,U)$. 
For these two cases we have 
\begin{align*}
\|T\|_{S^1}&:=\sum_{j=1}^\infty (Tv_i,v_i)_V, \\
\|T\|_{S^2}&:= \| \left \{\sigma_j \right \}_{j=1}^\infty \|_{\ell^2}=\left (\sum_{j=1}^\infty (Tv_i,v_j)_{V}\right)^{1/2}
\end{align*}
%
%
%%	approximation spaces
%
\section{Approximation Spaces}
\label{sec:approx_spaces}
Our error estimates will be based on either linear  approximation methods as they arise 
in the construction of approximations spaces.  See \cite{piestch1981approxspaces}, or more recently \cite{devore1993constapprox}, for a thorough theoretical discussion of these spaces.  An approximation
method is a pair $(X,\left \{A_n\right\}_{n\in \mathbb{N}_0})$ where $X$ is a (quasi-)Banach space and $\left \{ A_n \right \}_{n\in \mathbb{N}_0}$ is a sequence of subsets of $X$ that satisfy the following conditions:
\begin{itemize}
\item[(i)] $ \left \{0\right\} = A_0 \subseteq A_1 \subseteq A_2 \subseteq \cdots \subseteq X$,
\item[(ii)] $a A_n = A_n$ for all $a\in \mathbb{R}$ and $n=1,2,\ldots$, 
\item[(iii)] there is a constant $c$ such that  $ A_n + A_n \subset A_{cn}$ for $m,n=1,\ldots$, 
\item[(iv)] $\cup_{n\in \mathbb{N}_0} A_n$ is dense in X,
\item[(v)] for each $f\in X$ there exists a best approximation of $f$ in $A_n.$
\end{itemize}
In general the subsets $\{A_n\}_{n\in \mathbb{N}_0}$ need not be linear spaces, and the assumptions above define the foundation of nonlinear approximation methods.  
In this paper we only consider the case when they are linear subspaces and  refer to the $A_n$ as the approximant subspaces.  
When we define the  $n^{th}$ approximation number as 
\begin{equation}
a_n(f,X):=E_{n-1}(f,X):= \inf_{a \in A_{n-1}} \| f-a \|_X 
\end{equation}
for each $n>0$,  the approximation space $A^{r,q}:=A^{r,q}(X)$ for $r>0, 1\leq q\leq \infty$ is defined to be the collection of $f\in X$ for which the sequence
$$\seq{n^{r-1/q} a_n(f,X)}_{n\in \mathbb{N}} \in \ell^q.$$  The approximation space $A^{r,q}$  is a (quasi-)Banach space when we set 
\begin{align*}
\| f\|_{A^{r,q}}:&=\left \| \seq{n^{r-1/q} a_n(f,X) }_{n\in \mathbb{N}} \right \|_{\ell^q} 
= \left ( \sum_{n=1}^\infty \left [ n^r E_{n-1}(f,X) \right ]^q \frac{1}{n} \right )^{1/q}.  
\end{align*}
Note that since the first term in the sequence above is $\|f\|_X$, so that if $\|f\|_{A^{r,u}}=0$, we have $f=0$.  
Many of the basic properties of approximation spaces are described in \cite{piestch1981approxspaces,piestch1982tensorproducsequenfunctoperat, devore1993constapprox}. 

One of the important properties that we will use in this paper is a theorem that guarantees the representation of elements in an 
approximation space in terms of quasi-geometric sequences.  A sequence of integers $\seq{n_k}_{k=0}^\infty$ is said to be quasi-geometric
if $n_0=1$ and there are two positive constants $a,b$ such that 
\begin{equation*}
1< a \leq \frac{n_{k+1}}{n_k} \leq b <\infty
\end{equation*}
for all $k\geq 0$. The representation theorem \cite{piestch1982tensorproducsequenfunctoperat} is stated below:
\begin{prop}
\label{prop:representer}
Let $\seq{n_k}_{k=0}^\infty$ be a quasi-geometric series. 
% The expression
% \begin{equation*}
% \| f\|_{X^r_u} \approx \| \seq{n_k^r a_{n_k}}_k \|_{\ell_u}
% \end{equation*}
% defines an equivalent quasi-norm on $X^r_u$.
A function $f\in X$ belongs to $A^{r,q}$ if and only if there exists a representation of the form  
\begin{equation*}
f=\sum_{k=0}^\infty  x_{n_k}  
\end{equation*} 
such that $x_{n_k}\in A_{n_k}$ and 
\begin{equation*}
\seq{n_k^r \|x_{n_k} \|_X}_k \in \ell^q.
\end{equation*}
 In this case we have
 \begin{equation*}
 \|f\|_{A^{r,q}} \approx \inf_{\overset{f=\sum_{k}x_{n_k}}{x_{n_k} \in A_{n_k}} } \left \| \seq{ n_k^r x_{n_k} }_k\right \|_{\ell^q} .
 \end{equation*}
\end{prop}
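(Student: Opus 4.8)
The plan is to prove the two implications of the equivalence separately and then read off the norm comparison from the quantitative bounds produced in each direction. Throughout I write $E_m(f):=E_m(f,X)=\inf_{a\in A_m}\|f-a\|_X$, so that $a_n(f,X)=E_{n-1}(f,X)$. The single observation that makes everything run smoothly is that the approximant spaces here are \emph{linear} and \emph{nested}, $A_0\subseteq A_1\subseteq\cdots$, so that any finite sum $\sum_{i\le k}u_i$ with $u_i\in A_{n_i}$ already lies in $A_{n_k}$ exactly. This lets me avoid the dimension-doubling axiom (iii) altogether and keeps every index shift trivial, which is the technical point that would otherwise force constants like $c^{\log k}$ into the argument.

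For sufficiency, suppose $f=\sum_{k\ge0}u_k$ with $u_k\in A_{n_k}$ and $b:=(n_k^r\|u_k\|_X)_k\in\ell^q$. Setting $s_k:=\sum_{i\le k}u_i\in A_{n_k}$, I would first bound $E_{n_k}(f)\le\|f-s_k\|_X\le\sum_{i>k}\|u_i\|_X$. Because the sequence is quasigeometric, $n_k/n_i\le a^{-(i-k)}$ for $i>k$, so $n_k^r E_{n_k}(f)\le\sum_{i>k}a^{-r(i-k)}b_i$, which is the convolution of $b$ with a geometrically decaying kernel; a discrete Young (or Hardy) inequality then gives $\|(n_k^rE_{n_k}(f))_k\|_{\ell^q}\lesssim\|b\|_{\ell^q}$. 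For necessity, let $a_{n_k}\in A_{n_k}$ be a best approximation of $f$, which exists by hypothesis (v), and set $u_0:=a_{n_0}$ and $u_k:=a_{n_k}-a_{n_{k-1}}$ for $k\ge1$. Nestedness and linearity give $u_k\in A_{n_k}$, the telescoping sum satisfies $\sum_{k\le K}u_k=a_{n_K}\to f$ since membership in $A^{r,q}$ forces $E_n(f)\to0$, and $\|u_k\|_X\le E_{n_k}(f)+E_{n_{k-1}}(f)\le 2E_{n_{k-1}}(f)$. Using $n_k\approx n_{k-1}$ this yields $\|(n_k^r u_k)_k\|_{\ell^q}\lesssim\|(n_k^rE_{n_k}(f))_k\|_{\ell^q}$, exhibiting a representation whose $\ell^q$ cost is controlled by the approximation numbers.

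Both halves thus reduce the problem to comparing the series $\sum_{n\ge1}(n^rE_n(f))^q\tfrac1n$ that defines $\|f\|_{A^{r,q}}$ with the subsampled series $\sum_k(n_k^rE_{n_k}(f))^q$. I would establish this equivalence by a Cauchy-condensation argument driven by the quasigeometric spacing: on each block $n\in[n_k,n_{k+1})$, monotonicity of $E_m$ together with $n\approx n_k\approx n_{k+1}$ gives $n_k^rE_{n_{k+1}}(f)\lesssim n^rE_n(f)\lesssim n_k^rE_{n_k}(f)$, while $\sum_{n_k\le n<n_{k+1}}\tfrac1n\approx\log(n_{k+1}/n_k)\approx 1$ because $a\le n_{k+1}/n_k\le b$. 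Summing over $k$ produces the two-sided comparison. Combining this with the sufficiency estimate gives $\|f\|_{A^{r,q}}\lesssim\inf(\cdots)$, and combining it with the necessity construction gives $\inf(\cdots)\lesssim\|f\|_{A^{r,q}}$, which is exactly the asserted $\|f\|_{A^{r,q}}\approx\inf(\cdots)$ and simultaneously proves both membership statements.

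The step I expect to require the most care is the condensation equivalence together with the Young/Hardy convolution bound, since these are where the quasigeometric hypothesis and the $\ell^q$ structure genuinely interact; handling the endpoint case $q=\infty$ (where the sums become suprema) and the lowest-level term $k=0$ are the routine but unavoidable bookkeeping details. Fortunately the precise forms of these discrete inequalities are exactly the representation lemma of \cite{piestch1982tensorproducsequenfunctoperat}, so I would cite them where convenient rather than reproving the convolution estimates from scratch.
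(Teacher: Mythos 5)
Your proposal is correct. Note that the paper does not actually prove Proposition \ref{prop:representer}; it states it and cites \cite{piestch1982tensorproducsequenfunctoperat}, and your argument is essentially the standard proof from that literature: telescoping best approximations for necessity, partial sums plus a discrete Young/Hardy convolution bound for sufficiency, and a Cauchy-condensation comparison between $\sum_n (n^r E_{n-1}(f,X))^q\,n^{-1}$ and the subsampled sum $\sum_k (n_k^r E_{n_k}(f,X))^q$ driven by the quasigeometric spacing. Your shortcut of placing $\sum_{i\le k}u_i$ exactly in $A_{n_k}$ is legitimate here because the paper's standing assumption is that the $A_n$ are nested \emph{linear} subspaces (so axiom (iii) holds with $c=1$); in the general nonlinear setting of the cited reference that step would instead invoke axiom (iii) and accumulate constants, which is the only point where your argument is less general than the source.
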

\noindent This last characterization of the approximation spaces $A^{r,q}$ sometimes are presented in slightly different forms.  As noted in \cite{devore1993constapprox},
we can derive an equivalent expression for the seminorm $|f|_{A^{r,u}}$  choosing the quasigeometric sequence $n_k\equiv 2^k$ for $k=0,1,\ldots.$ 
\begin{equation*}
|f|_{A^{r,q}}=\left (  \sum_{n=0}^\infty \left [ 2^{nr}E_{2^n}(f,X)\right ]^q\right )^{1/q}.
\end{equation*}

Approximation spaces have been studied for a wide range of choices of the underlying space $X$ and sequence of 
approximating subsets $\seq{A_n}_{n\in \mathbb{N}_0}$.  In this paper we will restrict consideration 
to approximation spaces $X$ is in fact a Hilbert space and the subsets $\seq{A_n}_{n\in \mathbb{N}_0}$ are defined in terms of an orthonormal basis for $X$.  See \cite{kerkyacharian2003entropunivercodinapproxbasesproper}
Section (2) for  discussion of this classical choice, where it is compared to the more general cases in which the family 
is generated from an unconditional or greedy basis for a (quasi-)Banach space  $X$.  
\subsection{Linear Approximation Methods}
\label{sec:linear_approx}
Suppose that $\left \{ x_k \right \}_{k=1}^\infty$ is an orthonormal basis for the Hilbert space $X$.  A linear approximation method 
chooses the sets $A_n:=\text{span}\seq{x_k}_{k\leq n}$. In this case we have
\begin{equation*}
a_n = \inf_{g\in A_{n-1}} \| f- g\|_X = \| (I-\Pi_{n-1})f \|_X = E_{n-1}(f,X)
\end{equation*}
where $\Pi_n$ is the orthogonal projection onto $A_{n}$.  We always have
\begin{equation*}
f=\sum_{k\in \mathbb{N}_0}^\infty  Q_k f := \sum_{k} ( \Pi_{2^{k}} - \Pi_{2^{k-1}}) f
\end{equation*}
when we define $\Pi_{2^{-1}}:=0$. 
It follows that for $1\leq q< \infty$ we have a  seminorm on $A^{r,q}$
\begin{equation*}
|f|_{A^{r,q}} =
\left \{
\begin{array}{ccc}
\left \| \seq{2^{kr} \| Q_{k} f \|_X }_{k\in\mathbb{N}_0}  \right \|_{\ell^q} 
&  & 1\leq q <\infty\\
&  &   \\
\sup_{n\in \mathbb{N}_0} [2^{nr}E_{2^n}(f)] & & q=\infty
\end{array}
\right .
\end{equation*}
%
%%	approximation spaces from splines
%
\subsection{Linear Approximation Spaces for Dyadic Splines}
\label{sec:approx_splines}
A various points in the paper, we refer to  the connection of the linear approximation spaces 
$$A^{r,q}(X;\{A_j\}_{j\in \mathbb{N}_0})$$ 
to other more common spaces such as the generalized Lipschitz spaces  $\text{Lip}(r,L^p(\Omega))$ and Sobolev spaces $W^{r}(L^q(\Omega))$. Here we summarize some of these relationships for approximations constructed from families of dyadic splines on $\Omega = [0,1]$.  A dyadic B-spline approximation  over the domain $\Omega=[0,1]$ on a grid of  level $j$ selects the knots of the spline to be the dyadic grid points $\Delta_{j}:=\left\{ k2^{-j} \ | \ 1 \leq k \leq 2^j-1\right \}$. A spline of order $r$ on grid level $j$ is a piecewise polynomial of order $r-1$ over each of the dyadic intervals of the mesh.  We denote by $\mathcal{S}_r(\Delta_{j})$ the linear space of the dyadic splines of order $r$ having dyadic knots $\Delta_{j}$ in $[0,1]$. The normalized B-splines $N^r$ over the interval $[0,r]$ having  integer knots $\left \{0, \ldots, r \right\}$ are used to construct bases for $\mathcal{S}_r(\Delta_{j})$. The functions $N^r$ can be defined by induction. Let  $N^1:=\chi_{[0,1]}$ be the characteristic function of $[0,1]$. Then $N^r$ is defined by recursion of the convolution  $N^r:=N^{r-1}*\chi_{[0,1]}$ for $r\in \mathbb{N}$.   Given $N^r$, we set   $N^r_{j,k}:=N^r(2^jx-k)$  for $x\in [0,1]$ and $0\leq k\leq n_j-1$.  We have  $n_j:=2^j=\#(\mathcal{S}_r(\Delta_j))$.  
Each function $f$ in $\mathcal{S}_r(\Delta_{j})$ has a spline series representation 
$$
f= \sum_{k\leq n_j} c_{j,k}N^r_{j,k}
$$
for a unique set of coefficients $\{c_{j,k}\}_{k\leq n_j}$. The following theorem gives a concise characterization of the approximation spaces in terms of the Besov spaces \cite{devore1993constapprox,runst}, a class of smoothness spaces that contain many well-known function spaces. 
\begin{theorem}[Theorem 3.3, page 361, \cite{devore1993constapprox}]
\label{th:approx_besov}
Let $r\in \mathbb{N}$, $1\leq p\leq \infty$, $s:=r-1+1/p$, and 
$$
A^{\alpha,q}(L^p(\Omega)):=A^{\alpha,q}(L^p(\Omega);\{\mathcal{S}_r(\Delta_j)\}_{j\in \mathbb{N}_0})
$$ 
be the approximation spaces generated by the approximant spaces $A_j:=\mathcal{S}_r(\Delta_j)$ of dyadic splines. For all $0<\alpha<s$ and $0<q\leq  \infty$, the approximation space  $A^{\alpha,q}(L^p(\Omega))$ is equivalent to the Besov space $B^{\alpha,q}(L^p(\Omega))$. 
\end{theorem}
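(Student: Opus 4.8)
The plan is to prove this equivalence through the classical Jackson--Bernstein paradigm of approximation theory, which reduces the identification of a linear approximation space with a smoothness space to the verification of a matched pair of direct and inverse estimates, followed by an appeal to the general interpolation machinery. This is exactly the strategy behind the cited result in \cite{devore1993constapprox}, and I would reconstruct it in the present setting of dyadic splines on $\Omega=[0,1]$.

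First I would establish the direct (Jackson) estimate
$$
E_{2^j}(f, L^p(\Omega)) \lesssim \omega_r(f, 2^{-j})_p
$$
for every $f\in L^p(\Omega)$, where $\omega_r$ is the $r$-th modulus of smoothness defined in Section \ref{sec:notation}. The standard route is to build a quasi-interpolant operator $Q_j : L^p(\Omega) \to \mathcal{S}_r(\Delta_j)$ from the normalized B-splines $N^r_{j,k}$ that reproduces polynomials of degree $<r$ and is bounded on $L^p$ uniformly in $j$. Local Whitney-type estimates then bound the local error of best polynomial approximation on each dyadic cell by the local modulus of smoothness, and summing in $\ell^p$ over the cells of $\Delta_j$ yields the global Jackson inequality. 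Near the endpoints of $[0,1]$ the quasi-interpolant must be adjusted to respect the boundary, but this is a routine modification.

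Next I would establish the inverse (Bernstein) estimate: for any spline $s\in \mathcal{S}_r(\Delta_j)$ and any $0<\beta< s:=r-1+1/p$,
$$
|s|_{B^{\beta,q}(L^p)} \lesssim 2^{\beta j}\,\|s\|_{L^p(\Omega)}.
$$
This is the step where the critical smoothness $s=r-1+1/p$ enters. Since a spline of order $r$ is a piecewise polynomial whose $(r-1)$-st derivative is only piecewise constant and therefore jumps across knots, its Besov smoothness in the $L^p$ scale is capped at $s$. I would estimate $\omega_r(s,t)_p$ by splitting into the range $t\lesssim 2^{-j}$, where derivative bounds and the Markov inequality on each cell apply, and $t\gtrsim 2^{-j}$, where the modulus saturates at $\|s\|_{L^p(\Omega)}$, and then sum the two regimes against the $t^{-\beta}$ weight defining the Besov seminorm.

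With the matched Jackson and Bernstein pair in hand, I would invoke the general equivalence theorem for approximation spaces (Chapter 7 of \cite{devore1993constapprox}): such a pair identifies $A^{\alpha,q}(L^p(\Omega); \{\mathcal{S}_r(\Delta_j)\}_{j\in\mathbb{N}_0})$ with the real interpolation space $(L^p(\Omega), Y)_{\alpha/s, q}$ for the high-order smoothness space $Y$ associated with order-$r$ splines, valid for $0<\alpha<s$. The final step identifies this interpolation space with $B^{\alpha,q}(L^p(\Omega))$ using the standard characterization of Besov spaces as real interpolation spaces between $L^p$ and Sobolev (or higher modulus-of-smoothness) spaces. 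I expect the Bernstein inequality to be the main obstacle, in particular pinning down the sharp exponent $s=r-1+1/p$ with the correct $1/p$-dependence arising from the jump in the top derivative measured in $L^p$; the Jackson estimate and the interpolation identification are comparatively standard once the quasi-interpolant is in place.
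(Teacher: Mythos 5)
The paper does not prove this statement at all: it is imported verbatim as Theorem 3.3, p.~361 of \cite{devore1993constapprox}, with no argument supplied. Your outline is the standard Jackson--Bernstein route used in that reference --- direct estimate via a polynomial-reproducing quasi-interpolant, inverse estimate for splines with the sharp exponent $r-1+1/p$ coming from the jump of the top derivative in $L^p$, then the Chapter~7 identification of $A^{\alpha,q}$ with a real interpolation space and the interpolation characterization of Besov spaces --- so it is correct and matches the canonical proof; the only blemish is the notational clash of using $s$ both for the spline and for the critical smoothness index.
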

The scale of  Besov spaces $B^{\alpha,q}(L^p(\Omega))$ contains a wide variety of other common function spaces, and it is useful to us  specifically because it contains the Sobolev and (generalized) Lipschitz spaces. From \cite{runst}, page 14, we see that 
\begin{align*}
\text{Lip}^*(\alpha,L^p(\Omega))=B^{\alpha,\infty}(L^p(\Omega)) & \quad \alpha>0, \ 1<p\leq \infty, \\
\text{Lip}(\alpha,L^p(\Omega))=B^{\alpha,\infty}(L^p(\Omega)) & \quad \text{noninteger } \ \alpha>0, \ 1< p\leq \infty, \\
W^{\alpha}(L^p(\Omega))= B^{\alpha,p}(L^p(\Omega)) & \quad \text{noninteger } \alpha >0 , \  1<p\leq \infty,  \\
W^{\alpha}(L^2(\Omega))=B^{\alpha,2}(L^2(\Omega)) &  \quad \alpha > 0.
\end{align*}

\subsubsection{Approximation by Haar Wavelets}
\label{sec:approx_by_haar}
The results above improve our  understanding of several special cases used in our examples.  The first of these cases are the examples that employ Haar scaling functions and wavelets for approximation of the Koopman or Perron-Frobenius operators. The span of scaled and translated Haar scaling functions are equivalent to the span of first order splines over $\Omega=[0,1]$, 
$$
A_j:=\text{span}\left \{
\phi_{j,k} \ | \  k\in \Gamma_j^\phi 
\right \} \equiv \text{span} \left \{  N^1_{j,k} \ | \ k\in \Gamma_{j}^\phi 
\right \}
$$
From Theorem \ref{th:approx_besov}, it follows that 
\begin{align*}
A^{\alpha,\infty}(L^p(\Omega);\{A_j \}_{j\in \mathbb{N}_0} )& = \text{Lip}^*(\alpha, L^p(\Omega)) , \quad \text{and} \\
A^{\alpha,2}(L^2(\Omega),\{A_j\}_{j\in \mathbb{N}_0} )& = W^\alpha(L^2(\Omega)),
\end{align*}
for $0<\alpha<1/2$.  

\subsubsection{Approximation by Orthonormal Wavelets that Reproduce Polynomials}
\label{sec:zero_moments}
In this paper we have elected to employ families of $L^2(\Omega)-$orthonormal wavelets  that reproduce certain polynomials. This general property is referred to the degree of exactness of a wavelet or multiwavelet system, and it is often described in terms of ``zero moment'' conditions on wavelets \cite{burrus, daubechies88, daubechiesbook}.  This property states, roughly speaking,  that if $\phi,\psi$ are orthonormal scaling functions and wavelets,  then if the wavelets satisfy $N$ moment conditions of the form 
$$
\int_\mathbb{R} x^k \psi(x)dx =0 \quad 0\leq k \leq N-1,
$$
then the scaling function $\phi$  reproduces polynomials in the sense that 
$$
x^k=\sum_{\ell} a_p \phi(x-\ell) \quad 0\leq k \leq N-1.
$$
When the scaling functions $\phi$ are compactly supported, the summation at a fixed $x$ in the reproduction formula involves only a finite number of terms. 

Recall that each B-spline $N^r$ of order $r$ is a piecewise polynomial of degree at most $r-1$ between its knots. This means that each $N^r$ can be expressed in terms of a finite linear combination of the scaling functions $\phi(x-k)$, provided the wavelets satisfy zero moment conditions for $0,\ldots, r-1$.  From this we conclude that the spaces of approximants
$$
\tilde{A}_j:=\mathcal{S}_r(\Delta_j)\subseteq \{ \phi_{j,k}\ | \ k\in \Gamma_j^\phi \} :=A_j
$$
are nested. From the definition of the seminorm
$$
|f|^q_{A^{r,q}(X;\{A_j\}_{j\in \mathbb{N}_0}}:=\sum_{n\in \mathbb{N}_0} [2^{nr}E_{2^n}(f,X;A_{2^n})]^q, 
$$
we conclude that 
$$|f|_{A^{r,q}(X;\{A_j\}_{j\in \mathbb{N}_0})}\leq 
|f|_{A^{r,q}(X;\{\tilde{A}_j\}_{j\in \mathbb{N}_0})}.
$$
Among other things, this inclusion implies that linear approximation by the wavelet functions converge at least as fast as approximations by the B-splines that are contained in the span of the wavelets. By Theorem \ref{th:approx_besov} we conclude that the approximation rates using wavelets is at least $O(2^{-\alpha j})$ for the range of $0<\alpha < s=r-1+1/p$ with $r$ the order of the splines contained in the range of the wavelets. 
\end{document}